\titleformat{\section}[block]
 {\bfseries}
 {\thesection.}
 {\fontdimen2\font}
 {}
\newcommand{\periodafter}[1]{#1.}
\titleformat{\subsection}[runin]
 {\bfseries}
 {\thesubsection.}
 {\fontdimen2\font}
 {\periodafter}
\setlist{noitemsep}
\newtheorem{theorem}{Theorem}[section]
\newtheorem{corollary}[theorem]{Corollary}
\newtheorem{proposition}[theorem]{Proposition}
\theoremstyle{definition}
\newtheorem{example}[theorem]{Example}
\newtheorem{definition}[theorem]{Definition}
\newtheorem{question}{Question}
\DeclareMathOperator{\s}{\mathbb{S}}
\DeclareMathOperator{\uhr}{\upharpoonright}
\DeclareMathOperator{\Z}{\mathbb{Z}}
\DeclareMathOperator{\od}{od}
\DeclareMathOperator{\id}{id}
\DeclareMathOperator{\iR}{\mathbb{P}}
\DeclareMathAlphabet{\mathpzc}{OT1}{pzc}{m}{it}
\DeclareMathOperator{\sel}{\mathpzc{V\mkern-5mu_{cs}}}
\renewcommand{\emptyset}{\varnothing}
\numberwithin{equation}{section}
\begin{document}

\author{Valentin Gutev}
\address{Institute of Mathematics and Informatics, Bulgarian Academy of Sciences,
Acad. G. Bonchev 8, 1113 Sofia, Bulgaria}

\email{\href{mailto:gutev@math.bas.bg}{gutev@math.bas.bg}}

\subjclass[2010]{Primary 54B20, 54C20, 54C65, 54D15, 54F05, 54F45,
  54F65; Secondary 54D20, 54D45, 54D55, 54G05, 54G12, 05C20, 05C21}

\keywords{Vietoris topology, continuous selection, extension,
  orderable-like space, collectionwise Hausdorff space, (strongly)
  zero-dimensional space, tournament.}

\title{Extension of Hyperspace Selections}

\begin{abstract}
  In 1951, Ernest Michael wrote a definitive seminal article on
  hyperspaces raising a general question that became known as the
  \emph{hyperspace selection problem}. The present paper contains some
  aspects of this problem, along with several open questions. Most of
  these considerations and questions are related to the usual map
  extension problem, but now in the setting of hyperspaces and
  hyperspace selections.
\end{abstract}

\date{\today}
\maketitle

\section{Introduction}

All spaces in this paper are infinite Hausdorff topological spaces. We
will use $\mathscr{F}(X)$ for the set of all nonempty closed subsets
of a space $X$. Also, for a family $\mathscr{V}$ of subsets of $X$,
let
\[
\langle\mathscr{V}\rangle = \left\{S\in \mathscr{F}(X) : S\subset
  \bigcup \mathscr{V}\ \ \text{and}\ \ S\cap V\neq \emptyset,\
  \hbox{for every}\ V\in \mathscr{V}\right\}.
\]
In case $\mathscr{V}=\{V_1,\dots, V_n\}$, we will simply write
$\langle V_1,\dots, V_n\rangle$ instead of
$\langle\mathscr{V}\rangle$.\medskip

The \emph{Vietoris topology} $\tau_V$ on $\mathscr{F}(X)$ is generated
by all collections of the form $ \langle\mathscr{V}\rangle $, where
$\mathscr{V}$ runs over the finite families of open subsets of $X$,
and we refer to $(\mathscr{F}(X),\tau_V)$ as the \emph{Vietoris
  hyperspace} of $X$. In the sequel, any subset
${\mathscr{D}\subset \mathscr{F}(X)}$ will carry the relative Vietoris
topology as a subspace of $(\mathscr{F}(X),\tau_V)$. The following
subspaces of $\mathscr{F}(X)$ will play an important role in this
paper.
\begin{equation}
  \label{eq:ext-sel-v15:1}
  \left\{
    \begin{aligned}
      {[X]^n}&=\{S\subset X: |S| =n\}\ \ \text{and}\ \
      \mathscr{F}_n(X)=\bigcup_{k=1}^n [X]^k,\quad n\geq1,\\[-5pt]
      \Sigma(X)&=\bigcup_{n\geq1} [X]^n=\bigcup_{n\geq1}
      \mathscr{F}_n(X),\ \quad\text{and}\\
      \mathscr{C}(X)&=\{S\in \mathscr{F}(X): S\ \text{is compact}\}.
\end{aligned}\right.
\end{equation}
The singletons of a space $X$ give a natural homeomorphism with the
hyperspace $[X]^1=\mathscr{F}_{1}(X)$, which represents the fact that
the Vietoris topology is \emph{admissible} in the sense of Michael
\cite{michael:51}. \medskip

For a space $X$, a map $f:\mathscr{D}\to X$ is a \emph{selection} for
a subset $\mathscr{D}\subset \mathscr{F}(X)$ if $f(S)\in S$ for
every $S\in\mathscr{D}$, and $f$ is \emph{continuous} if it is
continuous with respect to the Vietoris topology on $\mathscr{D}$. In
what follows, we will use $\sel[\mathscr{D}]$ for the set of all
\emph{Vietoris continuous selections} for $\mathscr{D}$.\medskip

In the present paper, we are interested in the problem of extending
hyperspace selections. If
$\emptyset\neq \mathscr{G}\subset \mathscr{H}\subset \mathscr{F}(X)$,
then $\sel[\mathscr{H}]\neq \emptyset$ implies
$\sel[\mathscr{G}]\neq \emptyset$ because
${h\uhr \mathscr{G}\in \sel[\mathscr{G}]}$ for every
$h\in\sel[\mathscr{H}]$. The inverse implication is not true, which
makes provision for the following two hyperspace selection-extension
properties.

\begin{definition}
  \label{definition-ext-sel-v4:1}
  Let
  $\emptyset\neq \mathscr{G}\subset \mathscr{H}\subset
  \mathscr{F}(X)$. We shall say that the pair
  $(\mathscr{G},\mathscr{H})$ has the \emph{weak selection-extension
    property} if $\sel[\mathscr{G}]\neq \emptyset$ implies
  $\sel[\mathscr{H}]\neq \emptyset$. If moreover
  $\sel[\mathscr{G}]\neq \emptyset$ implies that
  $\sel[\mathscr{G}]=\left\{h\uhr \mathscr{G}: h\in
    \sel[\mathscr{H}]\right\}$, then we will say that
  $(\mathscr{G},\mathscr{H})$ has the \emph{selection-extension
    property}.
\end{definition}

In Definition \ref{definition-ext-sel-v4:1}, the term ``extension'' is
used in the usual context of the map extension problem. Namely, a
selection $h\in\sel[\mathscr{H}]$ is an \emph{extension} of
$g\in \sel[\mathscr{G}]$ if $h\uhr \mathscr{G}=g$. Thus, the pair
$(\mathscr{G},\mathscr{H})$ has the weak selection-extension property
precisely when $\mathscr{G}$ has a selection $g\in \sel[\mathscr{G}]$
which can be extended to a selection $h\in
\sel[\mathscr{H}]$. Similarly, this pair has the selection-extension
property when each selection $g\in\sel[\mathscr{G}]$ can be extended
to a selection $h\in \sel[\mathscr{H}]$. The following trivial
solution of these selection-extension problems is known in different
terms, and will be frequently used in this paper.

\begin{proposition}
  \label{proposition-ext-sel-v32:2}
  For a space $X$ and families
  $\emptyset\neq \mathscr{G}\subset \mathscr{H}\subset \mathscr{F}(X)$
  with $\sel[\mathscr{H}]\neq \emptyset$, the pair
  $(\mathscr{G},\mathscr{H})$ has the weak selection-extension
  property. If moreover $\mathscr{G}$ is $\tau_V$-clopen in
  $\mathscr{H}$, then $(\mathscr{G},\mathscr{H})$ also has the
  selection-extension property.
\end{proposition}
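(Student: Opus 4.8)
The plan is to dispatch the weak part immediately: the weak selection-extension property asks only that $\sel[\mathscr{G}]\neq\emptyset$ imply $\sel[\mathscr{H}]\neq\emptyset$, and since $\sel[\mathscr{H}]\neq\emptyset$ is assumed outright, this implication holds with nothing to check.

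For the selection-extension property, I would verify the set equality $\sel[\mathscr{G}]=\left\{h\uhr\mathscr{G}:h\in\sel[\mathscr{H}]\right\}$ by two inclusions. The inclusion ``$\supseteq$'' is precisely the observation recorded just before Definition~\ref{definition-ext-sel-v4:1}: restricting any $h\in\sel[\mathscr{H}]$ to $\mathscr{G}$ yields a Vietoris continuous selection for $\mathscr{G}$, so it lies in $\sel[\mathscr{G}]$; this uses neither the clopenness of $\mathscr{G}$ nor that $\sel[\mathscr{H}]\neq\emptyset$.

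The content lies in the inclusion ``$\subseteq$'', i.e.\ the extension step. Given $g\in\sel[\mathscr{G}]$, I would fix some $h_0\in\sel[\mathscr{H}]$ (available since $\sel[\mathscr{H}]\neq\emptyset$) and define $h\colon\mathscr{H}\to X$ by $h(S)=g(S)$ when $S\in\mathscr{G}$ and $h(S)=h_0(S)$ when $S\in\mathscr{H}\setminus\mathscr{G}$. Then $h(S)\in S$ on each of the two pieces, so $h$ is a selection for $\mathscr{H}$, and $h\uhr\mathscr{G}=g$ by construction. Continuity is the one point to argue, and it is exactly where the hypothesis is used: because $\mathscr{G}$ is $\tau_V$-clopen in $\mathscr{H}$, the sets $\mathscr{G}$ and $\mathscr{H}\setminus\mathscr{G}$ form a finite clopen cover of $\mathscr{H}$ on each member of which $h$ agrees with a continuous map ($g$ on $\mathscr{G}$, and $h_0$ on $\mathscr{H}\setminus\mathscr{G}$); concretely, for open $U\subseteq X$ the set $h^{-1}(U)$ is the union of $g^{-1}(U)$ and $h_0^{-1}(U)\cap(\mathscr{H}\setminus\mathscr{G})$, each of which is open in $\mathscr{H}$. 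Hence $h\in\sel[\mathscr{H}]$ extends $g$. (If $\mathscr{G}=\mathscr{H}$ one simply takes $h=g$.)

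I do not anticipate any real obstacle: the only step deserving mention is the clopen pasting, which is also what explains why ``$\tau_V$-clopen'' cannot in general be weakened to merely ``open'' or merely ``closed''. Combining the two inclusions yields the asserted equality, and therefore the selection-extension property for $(\mathscr{G},\mathscr{H})$.
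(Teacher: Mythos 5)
Your proof is correct and is exactly the routine argument the paper leaves implicit when it calls the proposition a ``trivial solution \dots known in different terms'': the weak part is immediate from $\sel[\mathscr{H}]\neq\emptyset$, the inclusion ``$\supseteq$'' is just restriction, and the extension step is the standard pasting of $g$ with an arbitrary $h_0\in\sel[\mathscr{H}]$ along the $\tau_V$-clopen decomposition $\mathscr{H}=\mathscr{G}\cup(\mathscr{H}\setminus\mathscr{G})$. Nothing is missing.
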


Another aspect of extending hyperspace selections is their natural
relationship with pairs $(Z,X)$ of a space $X$ and a (closed) subspace
$Z\subset X$. Namely, for a space $Y$, let
$\mathscr{D}(Y)\subset \mathscr{F}(Y)$ be a subspace defined by some
property ``$\mathscr{D}$'' of closed subsets, see e.g.\
\eqref{eq:ext-sel-v15:1}. Then for a (closed) subspace $Z\subset X$ of
a space $X$, we have the corresponding pair
$(\mathscr{D}(Z),\mathscr{D}(X))$ of subsets of
$\mathscr{F}(X)$. Accordingly, the (weak) selection-extension property
for this pair can be naturally attributed to the pair $(Z,X)$. In
other words, this represents a special \emph{$\mathscr{D}$-hyperspace}
selection-extension problem for the pair $(Z,X)$.\medskip

The paper is organised as follows. A brief review of orderable-like
spaces and continuous hyperspace selections is given in the next
section. Section \ref{sec:extens-hypersp-over} deals with
$\mathscr{D}$-hyperspace selection-extension properties for pairs
$(Z,X)$ of a space $X$ and a discrete subspace $Z\subset X$. The
essential part of this section is when $\mathscr{D}=\mathscr{F}_2$,
i.e.\ for hyperspaces of at most two-point sets, see
\eqref{eq:ext-sel-v15:1}. Section \ref{sec:extens-hypersp-over-1} is
dedicated to a similar hyperspace selection-extension problem, but now
for pairs $(Z,X)$, where $Z\subset X$ is a closed subspace of $X$. In
Section \ref{sec:extens-probl-hypersp}, we consider the
selection-extension problem for pairs $(\mathscr{G},\mathscr{H})$ of
hyperspaces
$\emptyset\neq\mathscr{G}\subset \mathscr{H}\subset \mathscr{F}(X)$,
where $\mathscr{G}$ is $\tau_V$-dense in $\mathscr{H}$. Section
\ref{sec:weak-extens-probl} deals with a natural reduction of the weak
selection-extension problem for hyperspaces of finite sets to
selections on complete graphs. The last Section
\ref{sec:append-tourn-select} is an appendix providing various
picturesque interpretations of ``weak'' selections on complete graphs
in terms of tournaments on such graphs.

\section{Selections and Orderable-Like Spaces}

A space $X$ is \emph{orderable} (or \emph{linearly ordered}) if it is
endowed with the open interval topology generated by some linear order
on $X$, called \emph{compatible} for $X$. Subspaces of orderable
spaces are not necessarily orderable, they are termed
\emph{suborderable}. A space $X$ is \emph{weakly orderable} if there
exists a coarser orderable topology on $X$ with respect to some linear
order on it (called \emph{compatible} for $X$). The weakly orderable
spaces were introduced by Eilenberg \cite{eilenberg:41}, and are often
called ``Eilenberg orderable''. They were called ``weakly orderable''
in
\cite{zbMATH03355968,zbMATH03379800,dalen-wattel:73,mill-wattel:81}.\medskip

A selection $\sigma:\mathscr{F}_2(X)\to X$ is called a \emph{weak
  selection} for $X$. It generates a natural order-like relation
$\leq_\sigma$ on $X$ \cite[Definition 7.1]{michael:51} defined for
$x,y\in X$ by $x\leq_\sigma y$ iff $\sigma(\{x,y\})=x$. The relation
$\leq_\sigma$ is both \emph{total} and \emph{antisymmetric}, but not
necessarily \emph{transitive}. For convenience, we write $x<_\sigma y$
provided $x\leq_\sigma y$ and $x\neq y$. \label{page-strict-order}
Also, for subsets $A,B\subset X$, we write $A\leq_\sigma B$ if
$x\leq_\sigma y$ for every $x\in A$ and $y\in B$. Similarly,
$A<_\sigma B$ means that $x<_\sigma y$ for every $x\in A$ and
$y\in B$. Finally, we will also use the standard notation for the
intervals generated by $\leq_\sigma$. For instance,
$(\leftarrow, x)_{\leq_\sigma}$ will stand for all $y\in X$ with
$y<_\sigma x$; $(\leftarrow, x]_{\leq_\sigma}$ for that of all
$y\in X$ with $y\leq_\sigma x$; the $\leq_\sigma$-intervals
$(x,\to)_{\leq_\sigma}$, $[x,\to)_{\leq_\sigma}$,
$(x,y)_{\leq_\sigma}$, $[x,y]_{\leq_\sigma}$, etc., are likewise
defined. However, working with such intervals should be done with
caution keeping in mind that the relation $\leq_\sigma$ may lack
transitivity. \medskip

The strict relation $x<_\sigma y$ plays an important role in
describing continuity of a weak selection $\sigma$ for $X$. Namely,
$\sigma$ is continuous iff for every $x,y\in X$ with $x<_\sigma y$,
there are open sets $U,V\subset X$ such that $x\in U$, $y\in V$ and
$U<_\sigma V$, see \cite[Theorem 3.1]{gutev-nogura:01a}. Accordingly,
continuity of weak selections is expressed only in terms of the
elements of $[X]^2$. Moreover, each selection for $[X]^2$ has a unique
extension to a selection for $\mathscr{F}_2(X)$. Based on this, we
will make no difference between selections for $[X]^2$ and those for
$\mathscr{F}_2(X)$. Let us also remark that continuity of a weak
selection $\sigma$ for $X$ implies that all open
$\leq_\sigma$-intervals $(\leftarrow, x)_{\leq_\sigma}$ and
$(x,\to)_{\leq_\sigma}$, $x\in X$, are open in $X$ \cite{michael:51},
but the converse is not true \cite[Example 3.6]{gutev-nogura:01a} (see
also \cite[Corollary 4.2 and Example 4.3]{gutev-nogura:09a}).\medskip

Each weak selection $\sigma$ for $X$ generates a complementary weak
selection $\sigma^*$ defined by $S=\{\sigma(S),\sigma^*(S)\}$ for
$S\in \mathscr{F}_2(X)$. In other words, for $x,y\in X$,
\begin{equation}
  \label{eq:ext-sel-v17:1}
  x\leq_{\sigma^*} y$ \quad \text{if and only if}\quad $y\leq_\sigma x. 
\end{equation}
Accordingly, $\sigma$ is continuous precisely when so is
$\sigma^*$. Moreover, it is evident from \eqref{eq:ext-sel-v17:1} that
${\sigma=\left[\sigma^*\right]^*}$. Motivated by this, we shall say
that weak selections $\sigma$ and $\eta$ for $X$ are
\emph{equivalent}, written $\sigma\sim \eta$, if $\eta=\sigma$ or
$\eta={\sigma}^*$. Here is a simple description of this equivalence
relation. In this property, $\sigma\uhr Z$ is used to denote the
restriction of a weak selection $\sigma$ on $\mathscr{F}_2(Z)$.

\begin{theorem}
  \label{theorem-ext-sel-v45:1}
  If $\sigma$ and $\eta$ are weak selections for a set $X$, then
  $\sigma\sim \eta$ if and only if $\sigma\uhr Z\sim \eta\uhr Z$ for
  every $Z\in [X]^3$. 
\end{theorem}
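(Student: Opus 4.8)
The plan is to base everything on two elementary remarks about the complementation operation $\sigma\mapsto\sigma^*$. First, by \eqref{eq:ext-sel-v17:1} the selections $\sigma$ and $\sigma^*$ disagree on every $S\in[X]^2$. Second, again by \eqref{eq:ext-sel-v17:1} (and since $\sigma$ and $\eta$ automatically agree on singletons), two weak selections $\sigma$ and $\eta$ for a set satisfy $\eta=\sigma^*$ if and only if they disagree on every element of $[X]^2$; consequently $\eta\ne\sigma$ iff $\sigma$ and $\eta$ disagree on some pair, and $\eta\ne\sigma^*$ iff they agree on some pair. The implication $\sigma\sim\eta\Rightarrow\bigl(\sigma\uhr Z\sim\eta\uhr Z\text{ for all }Z\in[X]^3\bigr)$ is then immediate, since $\eta=\sigma$ gives $\eta\uhr Z=\sigma\uhr Z$ and $\eta=\sigma^*$ gives $\eta\uhr Z=(\sigma\uhr Z)^*$ for every $Z$.

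For the converse I would argue by contraposition. Assume $\sigma\not\sim\eta$; by the remarks above there are a \emph{disagreement pair} $\{a,b\}\in[X]^2$ (one with $\sigma(\{a,b\})\ne\eta(\{a,b\})$) and an \emph{agreement pair} $\{c,d\}\in[X]^2$ (one with $\sigma(\{c,d\})=\eta(\{c,d\})$). The key point is that whenever an agreement pair $A$ and a disagreement pair $D$ meet in exactly one point, the set $Z:=A\cup D\in[X]^3$ satisfies $\sigma\uhr Z\not\sim\eta\uhr Z$: indeed $\eta\uhr Z\ne\sigma\uhr Z$ because the two disagree on $D$, while $\eta\uhr Z\ne(\sigma\uhr Z)^*$ because $(\sigma\uhr Z)^*$ disagrees with $\sigma\uhr Z$ on $A$ whereas $\eta$ agrees with $\sigma$ on $A$. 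It remains to produce such a pair of pairs. If $\{a,b\}$ and $\{c,d\}$ already meet in exactly one point we are done; if they are disjoint, I look at $\{b,c\}\in[X]^2$, and depending on whether $\sigma$ and $\eta$ agree or disagree on it, either $\{b,c\}$ (as agreement pair) together with $\{a,b\}$, or $\{b,c\}$ (as disagreement pair) together with $\{c,d\}$, is a suitable pair of pairs meeting in a single point. In all cases we obtain $Z\in[X]^3$ with $\sigma\uhr Z\not\sim\eta\uhr Z$, which is precisely the contrapositive of the desired implication.

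I do not anticipate a real obstacle here: the argument reduces to the two remarks about $\sigma^*$ recorded above, a one-line case split on whether the two chosen witnessing pairs happen to be disjoint, and a direct unwinding of what $\sim$ means on the three-point set $Z$. The only degenerate case, $|X|\le 2$, needs no work: then $[X]^3=\emptyset$ while any two weak selections for $X$ are already equivalent, so the stated equivalence holds trivially.
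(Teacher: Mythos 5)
Your proposal is correct. Both your argument and the paper's start from the same dichotomy: $\sigma\not\sim\eta$ yields a \emph{disagreement} pair (where $\sigma\neq\eta$, since $\eta\neq\sigma$) and an \emph{agreement} pair (where $\sigma=\eta$, since $\eta\neq\sigma^*$), and both ultimately exhibit a triple $Z$ containing one pair of each kind, which forces $\sigma\uhr Z\not\sim\eta\uhr Z$ exactly as in your key lemma. The routes differ in how such a triple is produced. The paper argues by contradiction: assuming $\sigma\uhr Z\sim\eta\uhr Z$ for all triples $Z$ containing the agreement pair $\{p,q\}$, it upgrades equivalence to equality on those triples, observes that the four witnessing points must be distinct, and then uses this equality (via the triple $\{y,p,q\}$, with a case split on whether $y<_\sigma p$ or $p<_\sigma y$) to transfer agreement onto the pair $\{y,p\}$ adjacent to the disagreement pair $\{x,y\}$, so that $\{x,y,p\}$ is the desired witness. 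You instead argue directly by contraposition: if the disagreement and agreement pairs do not already share a point, you test the bridging pair $\{b,c\}$ and, according to its status, pair it with whichever of the original pairs makes an agreement/disagreement pair meeting in one point. Your version is a bit more elementary — it needs no hypothesis on other triples, no contradiction, and it constructs the witnessing triple outright — while the paper's version illustrates how the assumed equivalence on a family of triples propagates the relation $\leq_\sigma$ to $\leq_\eta$, a technique it reuses elsewhere. Your handling of the degenerate case $|X|\le 2$ is also fine (the paper sidesteps it by assuming all spaces infinite).
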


\begin{proof}
  Evidently, $\sigma\sim \eta$ implies that
  $\sigma\uhr Z\sim \eta\uhr Z$ for every $Z\subset X$. Conversely, if
  $\sigma\not\sim \eta$, then $\sigma\neq \eta\neq \sigma^*$ and there
  are point $x,y,p,q\in X$ such that
  \begin{equation}
    \label{eq:ext-sel-v17:2}
    x<_\sigma y<_\eta x,\quad p<_\sigma q\quad\text{and}\quad p<_\eta
    q. 
  \end{equation}
  Set $\mathscr{Z}(p,q)=\big\{Z\in [X]^3: p,q\in Z\big\}$ and assume
  that $\sigma\uhr Z\sim \eta\uhr Z$ for every
  $Z\in \mathscr{Z}(p,q)$. Then it follows that 
  \begin{equation}
    \label{eq:equivalent-orders}
    \eta\uhr Z= \sigma\uhr Z,\ Z\in
    \mathscr{Z}(p,q),\quad\text{and}\quad S=\{x,y,p,q\}\notin
    \mathscr{Z}(p,q). 
  \end{equation}
  Indeed, by \eqref{eq:ext-sel-v17:1} and \eqref{eq:ext-sel-v17:2},
  $\eta\uhr\{x,y\}={\sigma}^*\uhr\{x,y\}$ and
  $\eta\uhr\{p,q\}=\sigma\uhr\{p,q\}$. Accordingly, the set
  $S=\{x,y,p,q\}$ cannot be a triple, otherwise
  $S\in \mathscr{Z}(p,q)$ will imply that
  $\eta\uhr\{x,y\}=\sigma\uhr\{x,y\}$ which is impossible. \smallskip

  We are also ready to show that, for instance,
  $\eta\uhr Z\not\sim \sigma\uhr Z$, where $Z=\{x,y,p\}$. Namely, if
  $y<_\sigma p$, then $y<_\sigma p<_\sigma q$ and by
  \eqref{eq:equivalent-orders}, $y<_\eta p<_\eta q$. Consequently,
  $x<_\sigma y<_\sigma p$ and $y<_\eta \{x,p\}$. Similarly, if
  $p<_\sigma y$, then $p<_\sigma\{y,q\}$ and for the same reason,
  $p<_\eta \{y,q\}$. Hence, we now have that $p<_\eta y<_\eta x$ and
  $\{p,x\}<_\sigma y$. Thus, $\eta\uhr Z\not\sim \sigma\uhr Z$
  and the proof is complete.
\end{proof}

If a space $X$ is weakly orderable with respect to a linear order
$\leq$, then it also has a continuous weak selection $\sigma$ with
$\leq_\sigma=\leq$. Accordingly, spaces with continuous weak
selections are a natural generalisation of weakly orderable
spaces. Furthermore, in the setting of connected spaces, these are
precisely the weakly orderable spaces. The following result was
obtained in \cite[Lemma 7.2]{michael:51}, its second part is due to
Eilenberg \cite{eilenberg:41}.

\begin{theorem}[\cite{eilenberg:41,michael:51}]
  \label{theorem-ext-sel-v15:2}
  Let $X$ be a connected space with a continuous weak selection
  $\sigma$. Then $\leq_\sigma$ is a linear order on $X$, and $X$ is
  weakly orderable with respect to $\leq_\sigma$. Moreover,
  $\eta\sim \sigma$ for any other continuous weak selection $\eta$ for
  $X$.
\end{theorem}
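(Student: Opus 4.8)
The plan is to prove Theorem~\ref{theorem-ext-sel-v15:2} in three stages: first establish that $\leq_\sigma$ is transitive (hence a linear order) on a connected $X$, then show $X$ is weakly orderable with respect to it, and finally derive the uniqueness-up-to-equivalence of continuous weak selections.

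For transitivity, I would argue by contradiction. Suppose $x,y,z\in X$ witness a failure, say $x<_\sigma y$, $y<_\sigma z$, but $z<_\sigma x$ (the only essentially different way transitivity can fail for a total antisymmetric relation). The idea is to use the continuity criterion quoted from \cite{gutev-nogura:01a}: each strict inequality $a<_\sigma b$ comes with open sets $U\ni a$, $V\ni b$ with $U<_\sigma V$. Using continuity at the three pairs $(x,y)$, $(y,z)$, $(z,x)$, I would produce open neighborhoods of $x$, $y$, $z$ that are ``cyclically ordered'' by $\leq_\sigma$, and then examine the sets
\[
A=(\leftarrow,x)_{\leq_\sigma}\cup(x,\to)_{\leq_\sigma},\qquad
\text{or better, the three ``cyclic arcs'' determined by } x,y,z.
\]
Concretely, for a point $w\in X$ one places $w$ into one of the arcs according to the pattern of $\leq_\sigma$-comparisons of $w$ with $x,y,z$; the continuity criterion shows each such arc is open, they are pairwise disjoint, nonempty (they contain $x$, $y$, $z$ respectively), and cover $X$ — contradicting connectedness. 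The main obstacle is precisely organizing this covering argument cleanly: because $\leq_\sigma$ need not be transitive one cannot naively talk about ``the interval between $x$ and $y$'', so the classification of an arbitrary point $w$ has to be done by brute comparison with all of $x,y,z$ and one must check the resulting pieces really are open and really do partition $X$.

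Once $\leq_\sigma$ is a linear order, weak orderability means the open-interval topology of $\leq_\sigma$ is coarser than that of $X$; equivalently every set $(\leftarrow,a)_{\leq_\sigma}$ and $(a,\to)_{\leq_\sigma}$ is open in $X$. But this is already recorded in the excerpt (continuity of $\sigma$ implies these rays are open in $X$), so this stage is essentially immediate once transitivity is in hand — the rays are genuine order-rays now that $\leq_\sigma$ is transitive.

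For the final clause, let $\eta$ be another continuous weak selection. By the previous stages $\leq_\eta$ is also a linear order making $X$ weakly orderable. The goal is $\eta\sim\sigma$, i.e.\ $\leq_\eta\in\{\leq_\sigma,\leq_{\sigma}^{*}\}$. By Theorem~\ref{theorem-ext-sel-v45:1} it suffices to check $\sigma\uhr Z\sim\eta\uhr Z$ for every $Z\in[X]^3$; this reduces the problem to showing that on each triple the two linear orders agree or are reversed — and since a linear order on a $3$-element set is determined by nothing more than its cyclic data, the real content is ruling out the ``mixed'' case $x<_\sigma y<_\sigma z$ with, say, $x<_\eta z<_\eta y$. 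I would rule this out by the same connectedness device used for transitivity: if $\sigma$ and $\eta$ disagreed in this mixed way on some triple, one can again partition $X$ into nonempty disjoint open sets built from the $\leq_\sigma$- and $\leq_\eta$-rays through $x,y,z$, contradicting connectedness. So both non-trivial stages rest on the same ``open partition from rays'' trick, and getting that trick stated in a reusable form is where the work lies.
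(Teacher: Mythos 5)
Your first two stages are essentially sound, but note one literal slip: in the transitivity step the three ``cyclic arcs'' $A=(z,\to)_{\leq_\sigma}\cap(\leftarrow,y)_{\leq_\sigma}$, $B=(x,\to)_{\leq_\sigma}\cap(\leftarrow,z)_{\leq_\sigma}$, $C=(y,\to)_{\leq_\sigma}\cap(x \text{-free analogue})$ do \emph{not} cover $X$: a point lying $\leq_\sigma$-below all of $x,y,z$, or above all three, belongs to no arc. The fix is immediate and stays within your scheme: classify points by their full comparison pattern with $x,y,z$; every pattern class is an intersection of open rays (rays are open since $\sigma$ is continuous), so $X$ decomposes into the three arcs together with the two extra classes ``below all'' and ``above all'', five pairwise disjoint open sets covering $X$; then any one of them, say $A\ni x$, is clopen, nonempty and proper, contradicting connectedness. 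Note also that the paper gives no proof of this theorem at all --- it quotes Eilenberg and Michael --- so you are not deviating from a written argument; you are reconstructing one.

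The genuine gap is in your third stage. Ruling out the ``mixed'' case $x<_\sigma y<_\sigma z$, $x<_\eta z<_\eta y$ is \emph{not} another instance of the same open-partition trick, and the step as you describe it does not go through. Build the pattern pieces from the $\sigma$- and $\eta$-rays through $x,y,z$: they again partition $X\setminus\{x,y,z\}$ into open sets, but now no grouping of them is forced to be clopen. All that connectedness yields is, for instance, that $y$ and $z$ must both lie in the closure of the piece $(y,z)_{\leq_\sigma}\cap(z,y)_{\leq_\eta}$ (otherwise sets such as $(y,\to)_{\leq_\sigma}\cap(\leftarrow,y)_{\leq_\eta}$, which contain $z$ but not $x$, would be clopen); that conclusion is perfectly consistent, and iterating it with the continuity criterion only reproduces the mixed configuration on new triples, never a contradiction. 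The uniqueness clause is genuinely deeper than transitivity: classically it is Eilenberg's theorem that for connected $X$ the open halves $\{(u,v):u<_\sigma v\}$ and its inverse are exactly the two components of $X\times X$ minus the diagonal, after which the clopen symmetric ``agreement set'' of $\sigma$ and $\eta$ must be empty or everything, so $\eta=\sigma$ or $\eta=\sigma^*$; an alternative route first extracts density and Dedekind completeness of $\leq_\sigma$ from connectedness and then argues globally, not on a single triple. The reduction to triples via Theorem \ref{theorem-ext-sel-v45:1} is fine as bookkeeping, but that theorem is purely combinatorial and cannot supply the missing global ingredient; as written, your plan for the final clause is an unsupported claim rather than a proof.
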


The question if connectedness in Theorem \ref{theorem-ext-sel-v15:2}
is essential in the presence of compactness was raised by van Douwen
in \cite{douwen:90}. It was resolved by van Mill and Wattel who
obtained the following fundamental result in \cite[Theorem
1.1]{mill-wattel:81}.

\begin{theorem}[\cite{mill-wattel:81}]
  \label{theorem-ext-sel-v47:1}
  A compact space $X$ is orderable if and only if it has a continuous
  weak selection.
\end{theorem}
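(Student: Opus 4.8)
For the forward implication, suppose $X$ is orderable with a compatible linear order $\leq$, and let $\sigma$ be the weak selection sending each pair to its $\leq$-smaller element, so that $\leq_\sigma=\leq$. To check that $\sigma$ is continuous I would use the criterion recalled above from \cite{gutev-nogura:01a}: if $x<_\sigma y$, then either there is a point $z$ with $x<z<y$, in which case $U=(\leftarrow,z)$ and $V=(z,\to)$ satisfy $x\in U$, $y\in V$ and $U<_\sigma V$; or $(x,y)=\emptyset$, in which case $U=(\leftarrow,y)$ and $V=(x,\to)$ do the job, since $u\in U$, $v\in V$ and $v\leq u$ would force $x<v<y$.

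For the converse, let $X$ be compact with a continuous weak selection $\sigma$. The first step is to reduce orderability to \emph{weak} orderability: it suffices to produce a linear order $\preceq$ on $X$ whose order topology is contained in the topology $\tau$ of $X$, because then the identity map $(X,\tau)\to(X,\tau_\preceq)$ is a continuous bijection from a compact space onto a Hausdorff space, hence a homeomorphism, so $\tau_\preceq=\tau$. The natural candidate for $\preceq$ is $\leq_\sigma$ itself. Continuity of $\sigma$, in the form recalled before Theorem~\ref{theorem-ext-sel-v15:2}, says precisely that $\{(x,y):x<_\sigma y\}$ is open in $X\times X$; consequently $\leq_\sigma$ is a closed, total, antisymmetric relation whose open rays $(\leftarrow,x)_{\leq_\sigma}$ and $(x,\to)_{\leq_\sigma}$ are all $\tau$-open. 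Thus the only property $\leq_\sigma$ may lack is transitivity.

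A total antisymmetric relation is transitive exactly when it has no $3$-cycle, so the obstruction is a triple $x<_\sigma y<_\sigma z<_\sigma x$. Applying the continuity criterion to each of the three inequalities and intersecting the neighbourhoods, such a triple thickens to pairwise disjoint nonempty open sets $A\ni x$, $B\ni y$, $C\ni z$ with $A<_\sigma B<_\sigma C<_\sigma A$. From here the plan is to use compactness to organise these cyclic configurations: I would try to show that the strongly connected components of $\leq_\sigma$ are $\tau$-clopen (the closedness of $\leq_\sigma$ together with compactness is what should make this work), so that the condensation of $\leq_\sigma$ is a genuine linear order on the set of components, and then reorder the points inside each clopen component — recursing on the continuous weak selection it inherits — into a transitive total relation. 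Since such a reordering only recombines the already $\tau$-open $\leq_\sigma$-rays with clopen sets, the rays of the resulting linear order $\preceq$ remain $\tau$-open, whence $\tau_\preceq\subseteq\tau$, and the first step finishes the proof.

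I expect the main obstacle to be exactly this last step — upgrading the purely local thickening of $3$-cycles to a single global linear order, resolving all cyclic configurations coherently and making sure the procedure terminates. This is where compactness is indispensable (there are non-compact spaces with continuous weak selections that are not even weakly orderable) and where the real content of \cite{mill-wattel:81} lies. An alternative organisation of the same difficulty, which I would be prepared to pursue instead, is to bypass the reordering and verify directly that $X$ meets the van Dalen--Wattel subbase characterisation of orderable spaces \cite{dalen-wattel:73}, using suitable families built from the open $\leq_\sigma$-rays as the two interlocking nests; the delicate point there is again to check the nesting conditions, which comes down once more to controlling the $3$-cycles of $\leq_\sigma$.
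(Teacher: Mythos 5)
Your forward implication is fine, and so is the reduction of ``orderable'' to ``weakly orderable'' via the continuous bijection $(X,\tau)\to(X,\tau_\preceq)$ onto a Hausdorff space. But the converse --- which is the entire content of the theorem (the paper does not prove it either; it is quoted from van Mill and Wattel) --- is not established by your sketch, and the specific plan you outline breaks down. First, the strongly connected components of $\leq_\sigma$ need not be clopen: on the convergent sequence $\{0\}\cup\{1/n:n\geq 1\}$ with $\sigma=\min$ in the usual order there are no cycles, so every component is a singleton, and $\{0\}$ is not clopen. Second, and more fatally, the condensation step can make no progress at all: take the Cantor set, split it into three nonempty clopen pieces $A,B,C$, order each piece linearly by a continuous selection and set $A<_\sigma B<_\sigma C<_\sigma A$. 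This $\sigma$ is continuous, yet any two points of $X$ lie on a common $\leq_\sigma$-cycle (two points of $A$ are joined through $B$ and $C$, etc.), so the whole space is a single strongly connected component; the condensation is a point, and ``recursing on the component'' just returns the original problem for the same space and the same selection. This also shows that the eventual compatible order cannot in general be obtained by refining the condensation of $\leq_\sigma$: the order witnessing orderability of the Cantor set here is not built from $\leq_\sigma$-cycle classes at all. Even in cases where the components are proper, you give no well-foundedness argument for the recursion, so termination is unaddressed.

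In short, you have correctly identified where compactness must enter and you candidly flag the missing step, but that missing step is precisely van Mill and Wattel's theorem. The known arguments (the original one in \cite{mill-wattel:81}, or one via the van Dalen--Wattel nest characterisation \cite{dalen-wattel:73} that you mention) do not condense $\leq_\sigma$; they use compactness to build, from the open $\leq_\sigma$-rays and suitable clopen modifications, two interlocking nests of open sets whose union is a $T_1$-separating subbase, and the verification of the nest conditions is a genuinely delicate global construction, not a local thickening of $3$-cycles. As it stands, the proposal proves the easy direction and reduces the hard one to a strategy that provably cannot work in the stated form, so there is a genuine gap.
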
  

In the same paper, the authors posed the following general question
regarding the role of compactness in Theorem
\ref{theorem-ext-sel-v47:1}. It became known as \emph{van Mill and
  Wattel's problem}, or the \emph{weak orderability problem}.

\begin{question}[\cite{mill-wattel:81}]
  \label{question-ext-sel-v19:3}
  If $X$ is a space with a continuous weak selection, then is it true
  that $X$ is weakly orderable?
\end{question}

In 2009, Michael Hru{\v s}{\'a}k and Iv{\'a}n Mart{\'\i}nez-Ruiz
answered Question \ref{question-ext-sel-v19:3} in the negative by
constructing a separable, first countable and locally compact space
which has a continuous weak selection but is not weakly orderable
\cite[Theorem 2.7]{hrusak-martinez:09}. However, the space in their
counterexample is a special Isbell-Mr\'owka space which is not
normal. The question whether the weak orderability problem may depend
on separation axioms was raised in \cite{gutev-nogura:09a}, and is
still open. The interested reader is referred to
\cite{gutev-2013springer,gutev-nogura:09a} for a detailed discussion
on this and other aspects of the hyperspace selection problem.\medskip

The \emph{components} (called also \emph{connected components}) are
the maximal connected subsets of a space $X$. They form a partition of
$X$, and each $p\in X$ is contained in a unique component
$\mathscr{C}[p]$ called the \emph{component} of this point. The
\emph{quasi-component} $\mathscr{Q}[p]$ of $p\in X$ is the
intersection of all clopen subsets of $X$ containing the point
$p$. Evidently, $\mathscr{C}[p]\subset \mathscr{Q}[p]$ but the
converse is not necessarily true.  However, it was shown by R. Duda in
\cite[Theorem 1]{MR0235524} that these components coincide in the
realm of weakly orderable spaces. The result was further extended in
\cite[Theorem 4.1]{gutev-nogura:00b} to all spaces with continuous
weak selections, see also \cite[Corollary 2.3]{MR3430989}.

\begin{theorem}[\cite{gutev-nogura:00b}]
  \label{theorem-ext-sel-v16:1}
  If $X$ is a space with a continuous weak selection, then the
  components of $X$ coincide with the quasi-components.
\end{theorem}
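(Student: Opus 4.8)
The plan is to prove that every quasi-component $\mathscr{Q}[p]$ is connected; since $\mathscr{C}[p]$ is always a connected set containing $p$, hence $\mathscr{C}[p]\subseteq\mathscr{Q}[p]$, connectedness of $\mathscr{Q}[p]$ forces the reverse inclusion and equality. Fix a continuous weak selection $\sigma$ for $X$. I will use two facts about $\sigma$. First, the open rays $(\leftarrow,x)_{\leq_\sigma}$ and $(x,\to)_{\leq_\sigma}$ are open in $X$ (Michael's observation, recalled in Section~2). Second — the one extra input — the closed rays $(\leftarrow,x]_{\leq_\sigma}$ and $[x,\to)_{\leq_\sigma}$ are \emph{closed} in $X$: if a net $x_\alpha\to x$ satisfies $x_\alpha\leq_\sigma y$ for all $\alpha$ but $y<_\sigma x$, the continuity criterion of Section~2 gives open sets $V\ni y$ and $U\ni x$ with $V<_\sigma U$, so $x_\alpha\in U$ eventually and $y<_\sigma x_\alpha$, contradicting antisymmetry; hence $x\leq_\sigma y$.

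Next I would prove a convexity lemma: every connected $A\subseteq X$ is $\leq_\sigma$-convex, i.e.\ no $z\in X\setminus A$ satisfies $a_1<_\sigma z<_\sigma a_2$ with $a_1,a_2\in A$, since such a $z$ would split $A$ into the nonempty relatively clopen pieces $A\cap(\leftarrow,z)_{\leq_\sigma}$ and $A\cap(z,\to)_{\leq_\sigma}$. Applied to $K:=\mathscr{C}[p]$ this gives, for every $q\in X\setminus K$, that $K<_\sigma q$ or $q<_\sigma K$; replacing $\sigma$ by $\sigma^*$ if necessary (still a continuous weak selection, with the reversed relation and the same components and quasi-components), we may assume $K<_\sigma q$. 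Thus $X=X_L\sqcup K\sqcup X_R$ with $X_L:=\{x:x<_\sigma K\}$, $X_R:=\{x:K<_\sigma x\}$, and $q\in X_R$. Now suppose $\mathscr{Q}[p]$ is disconnected and fix such a $q\in\mathscr{Q}[p]\setminus K$; the aim is a clopen $C\subseteq X$ with $p\in C$, $q\notin C$, contradicting $q\in\mathscr{Q}[p]$. If $K$ has no $\leq_\sigma$-largest element, then $X_R=\bigcap_{k\in K}[k,\to)_{\leq_\sigma}$ is closed, so $X_L\cup K=X\setminus X_R$ is open; and since $\bigcap_{k\in K}(\leftarrow,k]_{\leq_\sigma}$ is closed, $X_L\cup K=K\cup\bigl(\bigcap_{k\in K}(\leftarrow,k]_{\leq_\sigma}\bigr)$ is closed too — so $X_L\cup K$ is the desired clopen set. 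If $K$ has a $\leq_\sigma$-largest element $M$, convexity gives $X_R=(M,\to)_{\leq_\sigma}$, which is open with $\overline{X_R}\subseteq[M,\to)_{\leq_\sigma}=X_R\cup\{M\}$, and when $M\notin\overline{X_R}$ the same computation again makes $X_L\cup K$ clopen. The one case that resists this is $M\in\overline{(M,\to)_{\leq_\sigma}}$: the component $K$ of $p$ has a $\leq_\sigma$-greatest point which is a one-sided limit point of $X$.

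This last case is where all the work lies; it is precisely the mechanism behind the classical examples (combs, harmonic fans) in which quasi-components properly contain components, and the theorem asserts that a continuous weak selection forbids it. Here the obvious candidate $(\leftarrow,q)_{\leq_\sigma}$ is open, contains $K$, omits $q$, and has closure at most $(\leftarrow,q)_{\leq_\sigma}\cup\{q\}$ by the closed-ray fact, but need not be closed. My plan is to pass to the closed subspace $Y:=[M,\to)_{\leq_\sigma}$ (which contains $q$), on which $\sigma$ restricts to a continuous weak selection with $M$ as $\leq_\sigma$-least element, $M$ non-isolated in $Y$, and the component of $M$ in $Y$ equal to $\{M\}$; by Theorem~\ref{theorem-ext-sel-v15:2} the connected subsets of $X$ are order-convex generalized intervals, so $Y$ near $M$ is a ``half-line order with a limit endpoint'', and one analyses this configuration — if necessary stripping off successive endpoint-limit layers in a Cantor–Bendixson-type transfinite recursion — to separate $M$ from $q$ inside $Y$ by a relatively clopen set, which then combines with $K$ and $X_L$ into a genuine clopen subset of $X$ separating $p$ from $q$. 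I expect the delicate point to be exactly the control of this endpoint-limit structure; everything preceding it is soft.
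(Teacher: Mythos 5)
Your preparatory reductions are correct and cleanly argued: the closed rays are indeed closed (in fact automatically, since by totality and antisymmetry $(\leftarrow,y]_{\leq_\sigma}$ is the complement of the open ray $(y,\to)_{\leq_\sigma}$, so your net argument is not even needed); the convexity lemma for connected sets is right; the decomposition $X=X_L\sqcup K\sqcup X_R$ with $K=\mathscr{C}[p]$, the clopenness of $X_L\cup K$ when $K$ has no $\leq_\sigma$-largest element, and the easy half of the case with a largest element $M$ (namely $M\notin\overline{(M,\to)_{\leq_\sigma}}$) all check out, as does the observation that a relatively clopen subset of $Y=[M,\to)_{\leq_\sigma}$ containing $M$ and missing $q$ would combine with $X_L\cup K$ to give the desired clopen separation in $X$.

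The problem is that everything you have actually proved is, as you say yourself, the soft part, and the one remaining case --- $K$ has a $\leq_\sigma$-greatest point $M$ with $M\in\overline{(M,\to)_{\leq_\sigma}}$ --- is the entire content of Theorem \ref{theorem-ext-sel-v16:1}, and for it you offer only a plan, not an argument. Worse, the plan as stated is circular: a relatively clopen subset of $Y$ containing $M$ and omitting $q$ exists precisely when $q\notin\mathscr{Q}_Y[M]$, and since (as you note) the component of $M$ in $Y$ is the singleton $\{M\}$, what you need is exactly the statement of the theorem for the space $Y$, which again carries a continuous weak selection. Rerunning your own case analysis inside $Y$ with $K'=\{M\}$ puts you immediately back into the same hard case, because the greatest element of $K'$ is $M$ and $M$ lies in the closure of $(M,\to)_{\leq_\sigma}$ by construction; so the proposed ``stripping of endpoint-limit layers'' makes no visible progress, and no rank function, termination argument, or even a precise definition of a ``layer'' is given for the claimed transfinite recursion (the appeal to Theorem \ref{theorem-ext-sel-v15:2} adds nothing here beyond your convexity lemma, since $Y$ need not be connected). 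Note also that the paper itself does not prove this theorem but imports it from \cite{gutev-nogura:00b}; the missing step in your proposal is precisely the substantive idea of that reference, so as it stands the proposal is an honest reduction to the hard case rather than a proof.
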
 

Regarding the selection problem for weakly orderable spaces, the
following general extension construction is due to Michael \cite[Lemma
7.5.1]{michael:51}.

\begin{theorem}[\cite{michael:51}]
  \label{theorem-hsp-ver-5:7}
  Let $X$ be a weakly orderable space, $\leq$ be a compatible linear
  order on $X$ and
  \begin{equation}
    \label{eq:hsp-ver-13:1}
    \mathscr{K}(X,\leq)=\big\{S\in \mathscr{F}(X): \text{each $T\in
      \mathscr{F}(S)$ has a first $\leq$-element}\big\}.
  \end{equation}
  Then $\mathscr{C}(X)\subset \mathscr{K}(X,\leq)$ and
  ${f(S)=\min_{\leq} S}$, for $S\in \mathscr{K}(X,\leq)$, is a
  continuous selection $f:\mathscr{K}(X,\leq)\to X$ with
  $\leq_f=\leq$.
\end{theorem}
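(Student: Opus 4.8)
The plan is to establish the four assertions in turn; only the continuity of $f$ needs real work. For $\mathscr{C}(X)\subseteq\mathscr{K}(X,\leq)$ I would argue that if $S\in\mathscr{C}(X)$ and $T\in\mathscr{F}(S)$, then $T$ is compact in $X$, hence compact in the (coarser) order topology induced by $\leq$, and a nonempty compact subspace of a linearly ordered space has a $\leq$-least element: otherwise $\{(t,\to)_{\leq}:t\in T\}$ would be an open cover of $T$ with no finite subcover, since a finite union of upper $\leq$-rays is again an upper ray, which omits its own (minimal) left endpoint, a point of $T$. So every $T\in\mathscr{F}(S)$ has a first $\leq$-element and $S\in\mathscr{K}(X,\leq)$. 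Since each $S\in\mathscr{K}(X,\leq)$ is itself a closed subset of $S$, it has a first $\leq$-element, so $f(S)=\min_{\leq}S$ is well defined and lies in $S$; thus $f$ is a selection. For $\leq_f=\leq$ I would note that each $\{x,y\}\in[X]^2$ is finite, hence in $\mathscr{C}(X)\subseteq\mathscr{K}(X,\leq)$, and $f(\{x,y\})=\min_{\leq}\{x,y\}$, so $x\leq_f y\iff x\leq y$.

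For continuity, fix $S_0\in\mathscr{K}(X,\leq)$, put $x_0=f(S_0)=\min_{\leq}S_0$, and let $W$ be an open neighbourhood of $x_0$ in $X$. It suffices to produce a $\tau_V$-open $\mathscr{O}\subseteq\mathscr{F}(X)$ with $S_0\in\mathscr{O}$ and $f(S)\in W$ for every $S\in\mathscr{O}\cap\mathscr{K}(X,\leq)$. If $S_0\subseteq W$, take $\mathscr{O}=\langle W\rangle$. Otherwise $S_0\setminus W$ is a nonempty closed subset of $S_0$, so $t_0:=\min_{\leq}(S_0\setminus W)$ exists; one has $x_0<_{\leq}t_0$ (as $x_0\in W$) and $S_0\cap(\leftarrow,t_0)_{\leq}\subseteq W$, since a point of $S_0$ below $t_0$ but outside $W$ would lie in $S_0\setminus W$ below its $\leq$-minimum. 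The plan now is to ``cut'' $X$ just below $t_0$: choose $b$ with $x_0<_{\leq}b$ by taking any $b\in(x_0,t_0)_{\leq}$ if this interval is nonempty, and $b=t_0$ otherwise. In the second case $[t_0,\to)_{\leq}=(x_0,\to)_{\leq}$ is open in $X$ (it is an order-open ray and $\leq$ is compatible), so $(\leftarrow,t_0)_{\leq}$ is closed in $X$. I would record the uniform consequence $\overline{(\leftarrow,b)_{\leq}}\subseteq(\leftarrow,t_0)_{\leq}$ (closure in $X$): in general $\overline{(\leftarrow,b)_{\leq}}\subseteq(\leftarrow,b]_{\leq}$, because $(b,\to)_{\leq}$ is an open neighbourhood of every $z>_{\leq}b$ disjoint from $(\leftarrow,b)_{\leq}$; if $b<_{\leq}t_0$ this gives the claim, and if $b=t_0$ the set $(\leftarrow,t_0)_{\leq}$ is already closed.

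Next I would set $D=(\leftarrow,b)_{\leq}\setminus W$ and $G=X\setminus\overline{D}$, the latter open in $X$. The key point is $\overline{D}\cap S_0=\emptyset$: since $D\subseteq(\leftarrow,b)_{\leq}$ and $D\subseteq X\setminus W$ with $X\setminus W$ closed, $\overline{D}\subseteq(\leftarrow,t_0)_{\leq}\cap(X\setminus W)$, which meets $S_0$ in $S_0\cap(\leftarrow,t_0)_{\leq}\cap(X\setminus W)=\emptyset$ by the inclusion $S_0\cap(\leftarrow,t_0)_{\leq}\subseteq W$. Hence $S_0\subseteq G$, and moreover $G\cap(\leftarrow,b)_{\leq}=(\leftarrow,b)_{\leq}\setminus\overline{D}\subseteq(\leftarrow,b)_{\leq}\cap W\subseteq W$. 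Taking
\[
\mathscr{O}=\bigl\langle\,G\cap(\leftarrow,b)_{\leq},\ G\,\bigr\rangle=\bigl\{S\in\mathscr{F}(X):S\subseteq G\ \text{and}\ S\cap(\leftarrow,b)_{\leq}\neq\emptyset\bigr\},
\]
one gets $S_0\in\mathscr{O}$ (as $S_0\subseteq G$ and $x_0\in S_0$ with $x_0<_{\leq}b$), and for $S\in\mathscr{O}\cap\mathscr{K}(X,\leq)$ one has $f(S)=\min_{\leq}S\in S\subseteq G$ and, choosing $s\in S\cap(\leftarrow,b)_{\leq}$, $f(S)\leq_{\leq}s<_{\leq}b$; hence $f(S)\in G\cap(\leftarrow,b)_{\leq}\subseteq W$. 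This finishes continuity.

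I expect the main obstacle to be exactly what forces the two-case choice of $b$: neither $[x_0,\to)_{\leq}$ nor $[t_0,\to)_{\leq}$ need be open in $X$ (the order topology being merely coarser than that of $X$), so one cannot force $\min_{\leq}S\geq_{\leq}x_0$ by a crude upper Vietoris restriction $S\subseteq[x_0,\to)_{\leq}$. Sliding the cut point $b$ strictly below $t_0$ (whenever that is possible) circumvents this, the decisive fact being that $S_0$ stays inside $W$ below $t_0$, so the ``bad set'' $D$ can cling, within $X$, only to the closed set $X\setminus W$ and therefore stays off $S_0$.
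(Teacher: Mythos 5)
Your argument is correct, and since the paper gives no proof of this statement (it is quoted from Michael's 1951 paper, Lemma 7.5.1), there is nothing in the text to compare it against; judged on its own it is a sound direct verification. The routine parts are fine: the cover $\{(t,\to)_{\leq}:t\in T\}$ argument for $\mathscr{C}(X)\subset \mathscr{K}(X,\leq)$ works because order-open rays are open in $X$ (the order is compatible), and $\leq_f=\leq$ is immediate from $\mathscr{F}_2(X)\subset\mathscr{C}(X)$. The genuinely delicate point of the continuity proof is the one you isolate: a set $S$ Vietoris-close to $S_0$ may contain points of $(\leftarrow,t_0)_{\leq}\setminus W$ that do not belong to $S_0$, so a naive neighbourhood such as $\langle W\cap(\leftarrow,t_0)_{\leq},\,W\cup(x_0,\to)_{\leq}\rangle$ fails; your excision of $\overline{D}$ with $D=(\leftarrow,b)_{\leq}\setminus W$, together with the two-case choice of $b$ guaranteeing $\overline{D}\subset(\leftarrow,t_0)_{\leq}\cap(X\setminus W)$ and hence $\overline{D}\cap S_0=\emptyset$, resolves exactly this difficulty, and the final verification that $f(S)\in G\cap(\leftarrow,b)_{\leq}\subset W$ for $S\in\mathscr{O}\cap\mathscr{K}(X,\leq)$ is complete. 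In spirit this is the same order-interval technique Michael used; I see no gap.
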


In the setting of connected spaces, Michael used the above
construction to obtain the following crucial result in
\cite[Proposition 7.6]{michael:51}.

\begin{theorem}[\cite{michael:51}]
  \label{theorem-hsp-ver-5:8}
  Let $g$ be a continuous weak selection for a connected space $X$,
  and $\mathscr{K}(X,{\leq_g})$ be as in \eqref{eq:hsp-ver-13:1}. Then
  $g$ can be extended to a continuous selection for
  $\mathscr{K}(X,{\leq_g})$. Moreover, this extension is unique.
\end{theorem}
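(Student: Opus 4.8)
My plan is to reduce the statement to the two preceding results together with a connectedness argument, the only genuine work being the uniqueness. By Theorem~\ref{theorem-ext-sel-v15:2}, since $X$ is connected with the continuous weak selection $g$, the relation $\leq_g$ is a linear order on $X$ and $X$ is weakly orderable with respect to it; thus $\leq_g$ is a compatible linear order and Theorem~\ref{theorem-hsp-ver-5:7} applies with $\leq={\leq_g}$. It yields a continuous selection $f(S)=\min_{\leq_g}S$ for $\mathscr{K}(X,{\leq_g})$ with $\leq_f={\leq_g}$, and since $\mathscr{F}_2(X)\subset\mathscr{C}(X)\subset\mathscr{K}(X,{\leq_g})$ and a weak selection is uniquely determined by its associated order, $f\uhr\mathscr{F}_2(X)=g$. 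So $f$ is the required extension.

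For uniqueness, let $h\in\sel[\mathscr{K}(X,{\leq_g})]$ satisfy $h\uhr\mathscr{F}_2(X)=g$; I must show $h(S)=\min_{\leq_g}S$ for every $S\in\mathscr{K}(X,{\leq_g})$. The set $\{S:h(S)=f(S)\}$ is $\tau_V$-closed because $X$ is Hausdorff, and the finite sets $\Sigma(X)$ are $\tau_V$-dense in $\mathscr{F}(X)$ (every nonempty basic Vietoris-open set contains a finite set) and lie in $\mathscr{K}(X,{\leq_g})$, so it suffices to prove $h(F)=\min_{\leq_g}F$ for $F\in\Sigma(X)$. I would argue this by induction on $|F|$, the cases $|F|\le 2$ being precisely the hypothesis $h\uhr\mathscr{F}_2(X)=g$.

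The inductive step is the main point. Write $F=\{x_0,\dots,x_n\}$ with $x_0<_g\cdots<_g x_n$ and suppose $h(F)=x_j$ with $j\ge 1$; the idea is to deform $F$ by sliding one of its points along a $\leq_g$-interval and to use connectedness to pin down $h$. For $j\ge 2$ consider the map $y\mapsto\{y\}\cup\{x_1,\dots,x_n\}$ on $[x_0,x_1]_{\leq_g}$; it is $\tau_V$-continuous (singletons embed continuously and adjoining a fixed finite set is $\tau_V$-continuous) and takes values in $\mathscr{K}(X,{\leq_g})$, sending $x_0$ to $F$ and $x_1$ to the $n$-element set $\{x_1,\dots,x_n\}$, on which $h$ equals $x_1$ by the inductive hypothesis. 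All of its $h$-values lie in $[x_0,x_1]_{\leq_g}\cup\{x_2,\dots,x_n\}$, in which $[x_0,x_1]_{\leq_g}$ is clopen (it is $X$-closed and its complement in that set is finite); hence the connected image, containing $x_1\in[x_0,x_1]_{\leq_g}$, lies inside $[x_0,x_1]_{\leq_g}$ — contradicting $h(F)=x_j\notin[x_0,x_1]_{\leq_g}$. The case $j=1$ is symmetric, using $y\mapsto\{x_0\}\cup\{y\}\cup\{x_2,\dots,x_n\}$ on $[x_1,x_2]_{\leq_g}$, whose endpoints are $F$ and $\{x_0,x_2,\dots,x_n\}$ (the latter with $h$-value $x_0$ by induction, which does not lie in $[x_1,x_2]_{\leq_g}$). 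So $h(F)=x_0=\min_{\leq_g}F$, closing the induction, whence $h=f$ by the density argument above.

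The one auxiliary fact, which I would isolate as a lemma and where the connectedness of $X$ is actually used, is that every closed $\leq_g$-interval $[a,b]_{\leq_g}$ is connected. If $[a,b]_{\leq_g}=C\sqcup D$ with $a\in C$, $b\in D$, both nonempty and clopen in $[a,b]_{\leq_g}$, then — as the open $\leq_g$-rays are open in $X$, so $[a,b]_{\leq_g}$ is $X$-closed and hence so are $C$ and $D$ — the sets $C\cup(\leftarrow,a)_{\leq_g}$ and $D\cup(b,\to)_{\leq_g}$ are $X$-closed (their closures add at most the points $a\in C$, $b\in D$), disjoint, nonempty, and cover $X$, contradicting connectedness. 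Apart from this lemma the argument is routine Vietoris bookkeeping, so I do not anticipate further difficulties; the only delicate place is the deformation in the inductive step, and even there the work is in choosing which point of $F$ to slide.
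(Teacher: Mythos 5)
The paper itself gives no proof of this statement --- it is quoted from Michael's 1951 paper (his Proposition 7.6) --- so there is nothing in the text to compare against; judged on its own, your argument is correct. The existence half is the intended derivation: Theorem \ref{theorem-ext-sel-v15:2} makes $\leq_g$ a compatible linear order on the connected space $X$, Theorem \ref{theorem-hsp-ver-5:7} then produces the continuous selection $f(S)=\min_{\leq_g}S$ on $\mathscr{K}(X,{\leq_g})$ with $\leq_f={\leq_g}$, and this forces $f\uhr\mathscr{F}_2(X)=g$. Your uniqueness argument --- the coincidence set of two continuous selections is $\tau_V$-closed, $\Sigma(X)\subset\mathscr{K}(X,{\leq_g})$ is $\tau_V$-dense, so it suffices to treat finite sets, and then induction with the sliding maps $y\mapsto\{y\}\cup\{x_1,\dots,x_n\}$ --- is sound: union with a fixed finite set is Vietoris continuous, all values stay in $\mathscr{K}(X,{\leq_g})$, and $[x_0,x_1]_{\leq_g}$ is indeed relatively clopen in $[x_0,x_1]_{\leq_g}\cup\{x_2,\dots,x_n\}$ because continuity of $g$ makes the open rays open (so the interval is closed in $X$) while the leftover points form a finite closed set.

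One imprecision deserves mention, though it does not create a genuine gap. Your lemma claims every $[a,b]_{\leq_g}$ is connected, but your proof only excludes relatively clopen partitions $C\sqcup D$ with $a\in C$ and $b\in D$; a disconnection could a priori place both endpoints in the same piece. This is harmless for two reasons. First, the inductive step only needs the endpoint-separating case: pulling back the relatively clopen set $[x_0,x_1]_{\leq_g}$ under $h\circ\phi$ gives a clopen partition of $[x_0,x_1]_{\leq_g}$ with $x_1$ in one piece and $x_0$ in the other, which is exactly what you excluded. Second, the full lemma follows with one extra line: if a piece $D$ contains neither $a$ nor $b$, then $D\subset(a,b)_{\leq_g}$, and writing $D=V\cap[a,b]_{\leq_g}$ with $V$ open gives $D=V\cap(a,b)_{\leq_g}$, so $D$ is open in $X$ as well as closed in $X$ (being closed in the closed interval), contradicting connectedness of $X$ outright; hence any disconnection must separate the endpoints. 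With that remark inserted, your proof is complete and is a perfectly reasonable substitute for Michael's original argument.
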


Using Theorems \ref{theorem-ext-sel-v15:2} and
\ref{theorem-hsp-ver-5:8}, Michael also obtained the following result
in \cite[Proposition 7.8]{michael:51}.

\begin{corollary}
  \label{corollary-hsp-ver-14:1}
  Let $X$ be a connected space with a continuous weak selection, and
  $\mathscr{D}\subset \mathscr{F}(X)$ be such that
  $\mathscr{F}_2(X)\subset \mathscr{D}\subset \mathscr{C}(X)$. Then
  \begin{equation}
    \label{eq:hsp-ver-14:1}
  \big|\sel[\mathscr{F}(X)]\big|\leq
  \big|\sel[\mathscr{D}]\big|=\big|\sel[\mathscr{F}_2(X)]\big|=2.
\end{equation}
\end{corollary}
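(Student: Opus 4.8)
The assertion packages three claims: $|\sel[\mathscr{F}_2(X)]|=2$, the equality $|\sel[\mathscr{D}]|=|\sel[\mathscr{F}_2(X)]|$, and the inequality $|\sel[\mathscr{F}(X)]|\le|\sel[\mathscr{D}]|$. The plan is to run everything through the two restriction maps $\rho\colon\sel[\mathscr{F}(X)]\to\sel[\mathscr{D}]$, $h\mapsto h\uhr\mathscr{D}$, and $r\colon\sel[\mathscr{D}]\to\sel[\mathscr{F}_2(X)]$, $f\mapsto f\uhr\mathscr{F}_2(X)$, remembering that a selection for $\mathscr{F}_2(X)$ is the same thing as a weak selection. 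First, since $X$ has a continuous weak selection, $\sel[\mathscr{F}_2(X)]\ne\emptyset$; fixing $\sigma$ in it, Theorem~\ref{theorem-ext-sel-v15:2} gives that $\leq_\sigma$ is a linear order on $X$ and that every continuous weak selection is equivalent to $\sigma$, i.e.\ lies in $\{\sigma,\sigma^*\}$. As $X$ is infinite, $\leq_\sigma$ has points $x<_\sigma y$, so $\sigma(\{x,y\})=x\ne y=\sigma^*(\{x,y\})$; hence $\sigma\ne\sigma^*$ and $\sel[\mathscr{F}_2(X)]=\{\sigma,\sigma^*\}$ has exactly two elements.

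Next I would show that $r$ is surjective, which already yields $|\sel[\mathscr{D}]|\ge 2$. By Theorem~\ref{theorem-ext-sel-v15:2}, $X$ is weakly orderable with compatible order $\leq_\sigma$, so Theorem~\ref{theorem-hsp-ver-5:7} gives $\mathscr{C}(X)\subset\mathscr{K}(X,\leq_\sigma)$ and shows that $S\mapsto\min_{\leq_\sigma}S$ is a continuous selection for $\mathscr{K}(X,\leq_\sigma)$ whose restriction to $\mathscr{F}_2(X)$ is $\sigma$. Restricting this selection to $\mathscr{D}\subset\mathscr{C}(X)\subset\mathscr{K}(X,\leq_\sigma)$ produces $f_\sigma\in\sel[\mathscr{D}]$ with $r(f_\sigma)=\sigma$, and the same construction applied to $\sigma^*$ gives $f_{\sigma^*}$ with $r(f_{\sigma^*})=\sigma^*$. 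Thus $r$ maps onto $\{\sigma,\sigma^*\}=\sel[\mathscr{F}_2(X)]$.

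The heart of the matter is the injectivity of $r$: I claim every $f\in\sel[\mathscr{D}]$ is completely determined by $g:=f\uhr\mathscr{F}_2(X)$, namely $f(S)=\min_{\leq_g}S$ for each $S\in\mathscr{D}$. (This minimum exists: by Theorem~\ref{theorem-ext-sel-v15:2} the order topology of $\leq_g$ is coarser than that of $X$, so the compact set $S$ is compact in that order topology and hence has a $\leq_g$-least element.) To prove it, fix $S\in\mathscr{D}$, put $p=f(S)$, and suppose toward a contradiction that some $y\in S$ satisfies $y<_g p$. Using the continuity criterion for weak selections recalled in Section~2 (\cite[Theorem~3.1]{gutev-nogura:01a}), pick disjoint open $U\ni y$ and $V\ni p$ with $U<_g V$, and then exploit Vietoris continuity of $f$ at $S$ to locate a member of $\mathscr{D}$ close to $S$ — morally ``the part of $S$ not lying $\leq_g$-above $p$'' — on which $f$ is forced to take a value inside $U$, contradicting $p\in V$ and $U\cap V=\emptyset$. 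Connectedness of $X$ enters here so that the relevant order intervals behave well (for instance $p$ lies in the closure of the open ray $(\leftarrow,p)_{\leq_g}$), as does the uniqueness half of Theorem~\ref{theorem-hsp-ver-5:8}, which together with Theorem~\ref{theorem-hsp-ver-5:7} identifies the only continuous extension of $g$ over the larger hyperspace $\mathscr{K}(X,\leq_g)\supset\mathscr{D}$ as $S\mapsto\min_{\leq_g}S$. This is the step I expect to be the main obstacle: surjectivity of $r$ used only the \emph{existence} part of Michael's extension theorem, whereas here one must squeeze out \emph{uniqueness}, and the delicate point is that $\mathscr{D}$ need not be closed under the set operations one would naturally use to build the approximating sets — so the contradiction argument has to be carried out inside $\mathscr{D}$ itself, or else one first shows that $f$ extends continuously to $\mathscr{K}(X,\leq_g)$ and only then invokes uniqueness. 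Granting this, $r$ is a bijection and $|\sel[\mathscr{D}]|=|\sel[\mathscr{F}_2(X)]|=2$.

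Finally, for $|\sel[\mathscr{F}(X)]|\le|\sel[\mathscr{D}]|$ it is enough to observe that $\rho$ is injective (which also covers the case $\sel[\mathscr{F}(X)]=\emptyset$): if $h,h'\in\sel[\mathscr{F}(X)]$ agree on $\mathscr{D}$ then they agree on $\mathscr{F}_2(X)\subset\mathscr{D}$, and the argument of the previous paragraph, run with $\mathscr{F}(X)$ in place of $\mathscr{D}$, forces $h(S)$ (and likewise $h'(S)$) to be a $\leq_g$-least element of $S$ for every $S\in\mathscr{F}(X)$, where $g$ is their common restriction to $\mathscr{F}_2(X)$; since a linear order has at most one least element, $h=h'$. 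Combining the four steps gives $|\sel[\mathscr{F}(X)]|\le|\sel[\mathscr{D}]|=|\sel[\mathscr{F}_2(X)]|=2$, as required.
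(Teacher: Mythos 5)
Your opening steps are correct and follow the route the paper indicates: Theorem \ref{theorem-ext-sel-v15:2} gives $\sel[\mathscr{F}_2(X)]=\{\sigma,\sigma^*\}$ with $\sigma\neq\sigma^*$, and Theorem \ref{theorem-hsp-ver-5:7}, applied to $\leq_\sigma$ and $\leq_{\sigma^*}$, shows the restriction map $r\colon\sel[\mathscr{D}]\to\sel[\mathscr{F}_2(X)]$ is onto, so $|\sel[\mathscr{D}]|\geq2$. But the heart of the corollary is exactly the step you label ``Granting this'': that every $f\in\sel[\mathscr{D}]$ is determined by $g=f\uhr\mathscr{F}_2(X)$, namely $f=\min_{\leq_g}$ on $\mathscr{D}$. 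You do not prove it, and it is not a formality: it is the substance of Michael's Proposition 7.8, which the paper cites (via Theorems \ref{theorem-ext-sel-v15:2} and \ref{theorem-hsp-ver-5:8}) rather than reproves. As written, your proposal establishes only $|\sel[\mathscr{F}_2(X)]|=2$ and $|\sel[\mathscr{D}]|\geq2$.

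Moreover, the local continuity argument you sketch cannot be completed from the bare inclusions $\mathscr{F}_2(X)\subset\mathscr{D}\subset\mathscr{C}(X)$. If $S\in\mathscr{D}$ has at least three points, choose three pairwise disjoint open sets each meeting $S$; the resulting basic Vietoris neighbourhood of $S$ contains no member of $\mathscr{F}_2(X)$, so unless $\mathscr{D}$ happens to contain other sets near $S$, the set $S$ is an isolated point of $\mathscr{D}$ and continuity of $f$ at $S$ imposes no relation whatsoever between $f(S)$ and $g$ (for instance, with $X=[0,1]$ and $\mathscr{D}=\mathscr{F}_2(X)\cup\bigl\{\{0,1/2,1\}\bigr\}$, any of the three values at the triple yields a continuous selection). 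So the determination claim cannot be carried out ``inside $\mathscr{D}$ itself''; a correct argument has to exploit the presence of all finite sets in the families actually at stake, which is precisely what the route through Theorem \ref{theorem-hsp-ver-5:8} uses, and your fallback of first extending $f$ over $\mathscr{K}(X,\leq_g)$ is just a restatement of what must be proved. Note, by contrast, that the inequality $|\sel[\mathscr{F}(X)]|\leq|\sel[\mathscr{D}]|$ does follow cleanly from the quoted theorems, by a trick you missed: for $h\in\sel[\mathscr{F}(X)]$ with $g=h\uhr\mathscr{F}_2(X)$, the restriction $h\uhr\mathscr{K}(X,\leq_g)$ already is a continuous extension of $g$ over $\mathscr{K}(X,\leq_g)$ (no extension problem arises, since $\mathscr{K}(X,\leq_g)\subset\mathscr{F}(X)$), hence equals $\min_{\leq_g}$ by the uniqueness clause of Theorem \ref{theorem-hsp-ver-5:8}; in particular $h$ is determined on $\Sigma(X)$, and then everywhere by $\tau_V$-density of $\Sigma(X)$ (Proposition \ref{proposition-ext-sel-v22:1}). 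The asymmetry --- this works for selections defined on a family containing $\mathscr{K}(X,\leq_g)$, but not for selections defined only on a subfamily of $\mathscr{C}(X)$ --- is exactly where your plan for the injectivity of $r$ breaks down.
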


In the same setting, the case of exactly two continuous selection for
$\mathscr{F}(X)$ was characterised in \cite[Theorem
1]{nogura-shakhmatov:97a}.

\begin{theorem}[\cite{nogura-shakhmatov:97a}]
  \label{theorem-hsp-ver-17:1}
  A connected space $X$ is compact and orderable if and only if it has
  exactly two continuous selections for $\mathscr{F}(X)$.
\end{theorem}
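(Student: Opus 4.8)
The plan is to treat the two implications separately. One elementary fact is used throughout: the finite subsets $\Sigma(X)$ are $\tau_V$-dense in $\mathscr{F}(X)$, because for any basic neighbourhood $\langle V_1,\dots,V_n\rangle$ of $S\in\mathscr{F}(X)$ the finite set $\{a_1,\dots,a_n\}$, with $a_i\in S\cap V_i$, again lies in $\langle V_1,\dots,V_n\rangle$; since $X$ is Hausdorff, a continuous selection for $\mathscr{F}(X)$ is therefore determined by its restriction to $\Sigma(X)$. For the ``if'' direction, suppose $X$ is compact and orderable, fix a compatible linear order $\leq$, and observe that $X$ has the continuous weak selection $\min_\leq\uhr\mathscr{F}_2(X)$. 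By compactness $\mathscr{F}(X)=\mathscr{C}(X)$, and every nonempty closed (hence compact) $S\subseteq X$ attains its $\leq$-infimum, so $\mathscr{K}(X,\leq)=\mathscr{F}(X)$ (see \eqref{eq:hsp-ver-13:1}); Theorem~\ref{theorem-hsp-ver-5:7} then makes $S\mapsto\min_\leq S$ a continuous selection for $\mathscr{F}(X)$, and the same applied to the reverse order yields $S\mapsto\max_\leq S$. These two differ since $X$ is infinite, so $\big|\sel[\mathscr{F}(X)]\big|\geq2$, while Corollary~\ref{corollary-hsp-ver-14:1} with $\mathscr{D}=\mathscr{C}(X)=\mathscr{F}(X)$ gives $\big|\sel[\mathscr{F}(X)]\big|\leq\big|\sel[\mathscr{C}(X)]\big|=2$; hence exactly two.

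For the ``only if'' direction, assume $\sel[\mathscr{F}(X)]=\{h_1,h_2\}$ with $h_1\neq h_2$ (only $h_1\neq h_2$ is used; the bound $\big|\sel[\mathscr{F}(X)]\big|\leq 2$ is automatic from Corollary~\ref{corollary-hsp-ver-14:1}). Put $\sigma=h_1\uhr\mathscr{F}_2(X)$; since $X$ is connected, Theorem~\ref{theorem-ext-sel-v15:2} makes $\leq\ :=\ \leq_\sigma$ a linear order with $X$ weakly orderable with respect to it, so by Theorem~\ref{theorem-hsp-ver-5:7} the maps $\min_\leq$ and $\max_\leq$ are continuous on $\mathscr{K}(X,\leq)\supseteq\Sigma(X)$ and $\mathscr{K}(X,\geq)\supseteq\Sigma(X)$ respectively. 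By Corollary~\ref{corollary-hsp-ver-14:1} with $\mathscr{D}=\Sigma(X)$ one has $\big|\sel[\Sigma(X)]\big|=2$, hence $\sel[\Sigma(X)]=\{\min_\leq\uhr\Sigma(X),\max_\leq\uhr\Sigma(X)\}$ (the two are distinct, $X$ being infinite); and since $h_1,h_2$ must restrict differently to $\Sigma(X)$ (else $h_1=h_2$ by density), we may take $h_1\uhr\Sigma(X)=\min_\leq\uhr\Sigma(X)$ and $h_2\uhr\Sigma(X)=\max_\leq\uhr\Sigma(X)$.

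The crux is to promote this to $h_1=\min_\leq$ on all of $\mathscr{F}(X)$. Given $A\in\mathscr{F}(X)$ and $a_0\in A$, for each member $W$ of a finite family $\mathscr{W}$ of open sets meeting $A$ choose $a_W\in A\cap W$ and set $F_{\mathscr{W}}=\{a_0\}\cup\{a_W:W\in\mathscr{W}\}\subseteq A$; ordering such families by inclusion produces a net with $F_{\mathscr{W}}\to A$ in $\tau_V$, while $h_1(F_{\mathscr{W}})=\min_\leq F_{\mathscr{W}}\in(\leftarrow,a_0]_\leq$, a set closed in $X$ because $(a_0,\to)_\leq$ is open (continuity of $\sigma$). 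Continuity of $h_1$ then gives $h_1(A)\leq_\sigma a_0$, and as $a_0\in A$ was arbitrary, $h_1(A)=\min_\leq A$. Thus every nonempty closed subset of $X$ has a $\leq$-least element, i.e.\ $\mathscr{K}(X,\leq)=\mathscr{F}(X)$, and symmetrically $\mathscr{K}(X,\geq)=\mathscr{F}(X)$ using $h_2$.

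It remains to deduce that $X$ is compact and orderable. From $\mathscr{K}(X,\leq)=\mathscr{K}(X,\geq)=\mathscr{F}(X)$, $X$ has a $\leq$-least element $h_1(X)$ and a $\leq$-greatest element $h_2(X)$, and $\leq$ is Dedekind complete, since the upper bounds of any bounded-above set $C$ form a nonempty intersection $\bigcap_{c\in C}[c,\to)_\leq$, closed in $X$, hence with a $\leq$-least element, its supremum (dually, infima exist). So $\leq$ is a complete lattice and its order topology is compact; moreover that topology coincides with the topology of $X$: it is coarser by weak orderability, while conversely, given open $O\ni x$, the closed sets $(X\setminus O)\cap(\leftarrow,x]_\leq$ and $(X\setminus O)\cap[x,\to)_\leq$, when nonempty, have a $\leq$-greatest element $x^-<_\sigma x$ and a $\leq$-least element $x^+>_\sigma x$, and then the order interval $(x^-,x^+)_\leq$ --- truncated at an endpoint of $X$ if one of these sets is empty --- is an open set with $x\in(x^-,x^+)_\leq\subseteq O$. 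Hence $X$ is orderable and compact (once compactness is known, orderability also follows from Theorem~\ref{theorem-ext-sel-v47:1}). The one genuinely delicate point is the ``crux'' step --- converting the identity of selections on $\Sigma(X)$ into the pointwise formula $h_1=\min_\leq$ on every closed set --- which is precisely where the density of $\Sigma(X)$, the explicit description of selections on finite sets as $\min$ or $\max$, and the continuity of $h_1$ are combined; the rest is bookkeeping with $\leq_\sigma$-intervals, which are open in $X$ by continuity of $\sigma$.
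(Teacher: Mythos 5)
Your argument is correct. Note that the paper itself gives no proof of this theorem (it is quoted from Nogura--Shakhmatov), so the comparison is with the toolkit the paper assembles rather than with an internal proof. Your ``if'' direction is the standard one: by compactness $\mathscr{F}(X)=\mathscr{C}(X)\subset\mathscr{K}(X,\leq)$, Theorem~\ref{theorem-hsp-ver-5:7} yields $\min_\leq$ and $\max_\leq$ as distinct continuous selections, and Corollary~\ref{corollary-hsp-ver-14:1} caps the count at two. For the converse, your crux step --- approximating $A\in\mathscr{F}(X)$ by the net of finite subsets $F_{\mathscr{W}}\ni a_0$, using that $h_1$ agrees with $\min_\leq$ on $\Sigma(X)$ and that $(\leftarrow,a_0]_{\leq_\sigma}$ is closed, to conclude $h_1(A)=\min_\leq A$ --- is sound, and it is essentially the same approximation-by-finite-sets device the paper uses in the proof of Proposition~\ref{proposition-ext-sel-v34:2}; in fact, once you know the two restrictions to $\mathscr{C}(X)$ exhaust $\sel[\mathscr{C}(X)]$ (which follows from Corollary~\ref{corollary-hsp-ver-14:1} and the density of $\Sigma(X)$), you could invoke Proposition~\ref{proposition-ext-sel-v34:2} directly and skip this computation. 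Likewise, your final paragraph (least and greatest elements of all closed sets imply orderability and compactness via Dedekind completeness and the coincidence of the order topology with that of $X$) is a correct re-derivation of what the paper already records as Proposition~\ref{proposition-ext-sel-v34:1} together with the Haar--K\"onig Theorem~\ref{theorem-ext-sel-v33:1}, so it could be replaced by a citation. Also, as you observe, connectedness plus the mere existence of two distinct elements of $\sel[\mathscr{F}(X)]$ already forces ``exactly two'' by Corollary~\ref{corollary-hsp-ver-14:1}, so your use of only $h_1\neq h_2$ is legitimate. No gaps; only the redundancies noted above.
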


Regarding Theorem \ref{theorem-hsp-ver-17:1}, let us remark that the
disjoint union $X=[0, 1) \oplus [0, 1)$ of two copies of $[0, 1)$ has
exactly two continuous selections for $\mathscr{F}(X)$, but is not
compact \cite[Example 7]{nogura-shakhmatov:97a}.  However, a space $X$
which has exactly one continuous selection for $\mathscr{F}(X)$ must
be connected, see \cite[Lemma 14]{nogura-shakhmatov:97a}. Related to
Theorem \ref{theorem-hsp-ver-17:1}, let us also point out the
following result of Haar and K\"onig \cite{haar-konig:10}.   

\begin{theorem}[\cite{haar-konig:10}]
  \label{theorem-ext-sel-v33:1}
  Let $X$ be a space which is orderable with respect to some linear
  order $\leq$ on it. Then $X$ is compact if and only if each
  nonempty closed subset of $X$ has both a first $\leq$-element and a
  last $\leq$-one.
\end{theorem}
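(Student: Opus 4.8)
The plan is to prove both implications by hand, using only the description of the order topology; earlier results in the paper are not needed.

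\emph{The implication ``$X$ compact $\Rightarrow$ every nonempty closed set has a first and last element''.} Let $F\subset X$ be nonempty and closed, hence compact in the subspace topology. If $F$ had no $\leq$-first element, then for each $z\in F$ there would be $a\in F$ with $a<z$, so $\{(a,\to)\cap F:a\in F\}$ would be an open cover of $F$; a finite subcover is indexed by a finite $\{a_1,\dots,a_n\}\subset F$, and then its $\leq$-least member lies in $F$ but in no $(a_i,\to)$ --- a contradiction. Thus $F$ has a first element, and symmetrically, using the rays $(\leftarrow,a)$, it has a last one.

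\emph{Reducing the converse to a completeness property.} Assume every nonempty closed subset of $X$ has a first and a last $\leq$-element. Applied to $X$ itself this gives a minimum $0$ and a maximum $1$ of $(X,\leq)$. For an arbitrary nonempty $A\subset X$, the set $A^{\uparrow}=\bigcap_{a\in A}[a,\to)$ of upper bounds of $A$ is closed, since each ray $[a,\to)=X\setminus(\leftarrow,a)$ is closed, and it is nonempty because $1\in A^{\uparrow}$; by hypothesis it has a first element, which is precisely $\sup A$. (Dually, every nonempty subset has an infimum.) Hence $(X,\leq)$ is Dedekind complete with least and greatest element, and the proof is reduced to showing that such a linearly ordered space is compact.

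\emph{Completeness implies compactness.} This is the classical Heine--Borel argument. Given an open cover $\mathscr{U}$ of $X$, put $B=\{x\in X:[0,x]\text{ is covered by finitely many members of }\mathscr{U}\}$. Then $0\in B$, so $s=\sup B$ exists; pick $U\in\mathscr{U}$ with $s\in U$. Since $U$ is a neighbourhood of $s$ in the order topology, either $[0,s]\subset U$ or there is $a<s$ with $(a,s]\subset U$; in the latter case $a$ is not an upper bound of $B$ (because $a<\sup B$), so there is $b\in B$ with $a<b\leq s$ and $[0,s]\subset[0,b]\cup(a,s]$ is finitely covered. Either way $s\in B$. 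If $s<1$ one reaches a contradiction: if $s$ has an immediate successor $s^{+}$, then $[0,s^{+}]=[0,s]\cup\{s^{+}\}$ is finitely covered and $s^{+}\in B$ exceeds $\sup B$; otherwise $U$ contains an interval $[s,x]$ with $x>s$, so $[0,x]=[0,s]\cup[s,x]$ is finitely covered and $x\in B$ exceeds $\sup B$. Hence $s=1$ and $X=[0,1]$ is covered by finitely many members of $\mathscr{U}$.

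The only step needing care is the last one --- the dichotomy around $\sup B$ according to whether it has an immediate successor, together with the bookkeeping of which basic open set of the order topology lies inside $U$ --- but this is entirely routine; everything else is immediate from the definitions of the order topology and of supremum.
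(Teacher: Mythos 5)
Your proof is correct. Note that the paper itself gives no proof of this statement --- it is quoted as a classical result of Haar and K\"onig with a citation only --- so there is no internal argument to compare against; your self-contained treatment is the standard one. The forward direction (a closed, hence compact, set without a least element would be covered by the relatively open rays $(a,\to)\cap F$, $a\in F$, with no finite subcover) is fine, and the converse is correctly organised: the hypothesis applied to the closed set of upper bounds $\bigcap_{a\in A}[a,\to)$ yields Dedekind completeness with endpoints, and the Heine--Borel sweep over $B=\{x:[0,x]\ \text{finitely covered}\}$ is carried out carefully, including the only delicate points --- the dichotomy for a basic order-neighbourhood of $s=\sup B$ (either $[0,s]\subset U$ or $(a,s]\subset U$ for some $a<s$) and the case split at $s<1$ according to whether $s$ has an immediate successor. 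This is exactly the kind of argument the citation stands for, and it meshes with how the theorem is used in the paper (e.g.\ in Proposition \ref{proposition-ext-sel-v34:1}, which reduces compactness of a weakly orderable space to this statement).
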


Complementary to Theorem \ref{theorem-ext-sel-v33:1} is the following
property of weakly orderable spaces which is implicitly present in
\cite[Theorem 5.1]{gutev-nogura:01a}.  

\begin{proposition}
  \label{proposition-ext-sel-v34:1}
  Let $X$ be a weakly orderable space with respect to some linear
  order $\leq$ such that each nonempty closed subset of $ X$ has both
  a first $\leq$-element and a last $\leq$-one. Then $X$ is orderable
  with respect to $\leq$, hence compact as well.
\end{proposition}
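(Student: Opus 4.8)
The plan is to prove directly that the open‑interval topology $\tau_{\leq}$ of the linear order $\leq$ coincides with the topology $\tau$ of $X$: orderability with respect to $\leq$ is precisely this equality, and once it is established, compactness is immediate from Theorem~\ref{theorem-ext-sel-v33:1}, whose hypotheses are then satisfied. Since $X$ is weakly orderable with respect to $\leq$, we already have $\tau_{\leq}\subseteq\tau$, so the only thing left to show is $\tau\subseteq\tau_{\leq}$. Throughout, $<$ denotes the strict part of $\leq$.

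The first observation I would record is that $\tau_{\leq}\subseteq\tau$ makes every $\leq$-ray $(\leftarrow,x)_{\leq}$ and $(x,\to)_{\leq}$ a $\tau$-open set, hence, by totality and antisymmetry of $\leq$, makes the complementary rays $(\leftarrow,x]_{\leq}$ and $[x,\to)_{\leq}$ $\tau$-closed. Now fix a $\tau$-open set $U$, a point $x\in U$, and let $C=X\setminus U$; the aim is to trap $x$ inside $U$ by a single $\leq$-interval. Split $C$ around $x$: put $C^{-}=C\cap(\leftarrow,x]_{\leq}$ and $C^{+}=C\cap[x,\to)_{\leq}$. Both are $\tau$-closed, being intersections of two $\tau$-closed sets; neither contains $x$, since $x\notin C$; and $C=C^{-}\cup C^{+}$ by totality of $\leq$.

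The heart of the argument, and the only place where the hypothesis on closed sets is used, is the choice of the endpoints of the interval. If $C^{-}\neq\emptyset$, then by hypothesis it has a $\leq$-greatest element $a$, and $a<x$ because $a\in C$ while $x\notin C$; moreover the maximality of $a$ in $C^{-}$ forces $(a,x]_{\leq}\cap C=\emptyset$. Symmetrically, if $C^{+}\neq\emptyset$ it has a $\leq$-least element $b$ with $x<b$ and $[x,b)_{\leq}\cap C=\emptyset$. Take $V=(a,b)_{\leq}$, with the conventions that $V=(\leftarrow,b)_{\leq}$ if $C^{-}=\emptyset$, $V=(a,\to)_{\leq}$ if $C^{+}=\emptyset$, and $V=X$ if $C=\emptyset$. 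Then $V$ is $\tau_{\leq}$-open, $x\in V$, and a short case check on the position of a putative point of $V\cap C$ relative to $x$ shows $V\cap C=\emptyset$; hence $x\in V\subseteq U$. Consequently $U$ is a union of $\tau_{\leq}$-open sets, so $\tau\subseteq\tau_{\leq}$, i.e.\ $X$ is orderable with respect to $\leq$, and Theorem~\ref{theorem-ext-sel-v33:1} then delivers compactness.

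I do not foresee a genuine difficulty. The two things needing a little care are the step ``$\leq$-open rays are $\tau$-open $\Rightarrow$ $\leq$-closed rays are $\tau$-closed'' (elementary, using that $\leq$ is a linear order) and the bookkeeping of the degenerate situations in which $x$ is a $\leq$-endpoint of $X$ or one of $C^{-},C^{+}$ is empty --- all of which are absorbed into the stated conventions on $V$. It is worth stressing that no completeness of $\leq$ is assumed; what plays the role of completeness here is exactly the assumption that every nonempty closed subset has a first and a last $\leq$-element, which is precisely what produces the endpoints $a$ and $b$. One could alternatively invoke Michael's construction --- the hypothesis gives $\mathscr{F}(X)=\mathscr{K}(X,\leq)$ in the notation of Theorem~\ref{theorem-hsp-ver-5:7}, hence a continuous selection $S\mapsto\min_{\leq}S$ on all of $\mathscr{F}(X)$ --- but that is a detour, as it does not by itself yield orderability.
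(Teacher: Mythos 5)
Your proof is correct and takes essentially the same route as the paper's: for a point $x$ of a $\tau$-open set $U$ you trap $x$ in an order interval whose endpoints are the last $\leq$-element of $C\cap(\gets,x]_{\leq}$ and the first $\leq$-element of $C\cap[x,\to)_{\leq}$ (with $C=X\setminus U$), which is exactly the device in the paper's proof, and then compactness follows from Theorem~\ref{theorem-ext-sel-v33:1}. The only difference is bookkeeping: the paper first intersects $U$ with $(x_0,x_1)_{\leq}$, where $x_0=\min_{\leq}X$ and $x_1=\max_{\leq}X$, so that the two closed pieces are automatically nonempty, whereas you absorb the degenerate cases into the convention that $V$ may be a ray or all of $X$.
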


\begin{proof}
  Take an open set $U\subset X$ with $U\neq X$, and a point $p\in
  U$. Since $S=X\setminus U$ is a nonempty closed set, by condition,
  there are points $s,t\in S$ with $s=\min_\leq S$ and
  $t=\max_\leq S$. If $p=\max_\leq X$, then
  $p\in (s,\to)_{\leq} \subset U$ because $p\in U$. Similarly,
  $p\in (\gets,t)_\leq\subset U$ provided that $p=\min_\leq X$. If
  $x_0=\min_\leq X<p<\max_\leq X=x_1$, set $V=(x_0,x_1)_\leq\cap U$
  and $T=X\setminus V$. Next, consider the nonempty closed sets
  $Y=T\cap (\gets,p]_\leq$ and $Z=T\cap [p,\to)_\leq$. It now follows
  that $p\in (y,z)_\leq\subset V\subset U$, where $y=\max_\leq Y$ and
  $z=\min_\leq Z$. Thus, $X$ is orderable with respect to $\leq$ and
  by Theorem \ref{theorem-ext-sel-v33:1}, it is also compact.
\end{proof}

Regarding the selection problem for compact-like spaces, several
authors contributed to the following fundamental result.

\begin{theorem}[\cite{artico-marconi-pelant-rotter-tkachenko:02, douwen:90, 
  garcia-ferreira-sanchis:04, glicksber:59, miyazaki:01b,
  venkataraman-rajagopalan-soundararajan:72}]
  \label{theorem-ext-sel-v21:1}
  Every countably compact space with a continuous weak selection is
  sequentially compact, and every pseudocompact space with a
  continuous weak selection is suborderable. In particular, for a
  Tychonoff space $X$ with a continuous weak selection, the following
  are equivalent\textup{:}
  \begin{enumerate}
  \item $X$ is countably compact.
  \item $X$ is pseudocompact.
  \item $X$ is sequentially compact. 
  \end{enumerate}
\end{theorem}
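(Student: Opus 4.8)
\emph{Plan.} The idea is to isolate one substantial implication and to derive everything else from elementary properties of suborderable spaces. For any Tychonoff space, countable compactness implies pseudocompactness and sequential compactness implies countable compactness; hence, once the two displayed assertions are proved, the equivalence of (a), (b) and (c) will follow as soon as we know that a \emph{pseudocompact suborderable space is sequentially compact}. I would verify this auxiliary fact in two routine steps. First, a suborderable space is monotonically normal, hence collectionwise normal, so an infinite closed discrete subset $D$ of a suborderable space $X$ would admit a discrete open expansion $\{U_d:d\in D\}$; choosing, by complete regularity, functions $f_d$ in $C(X)$ with $f_d(d)$ large and $f_d\equiv0$ off $U_d$, and summing them, one obtains an unbounded member of $C(X)$. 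Thus a pseudocompact suborderable space is countably compact. Second, fix a compatible linear order $\leq$ on the suborderable space $X$: every sequence in a linearly ordered set has a $\leq$-monotone subsequence, every sequence in a countably compact space has a cluster point, and since the rays $(\leftarrow,a)_\leq$ and $(a,\to)_\leq$ are open in $X$, a monotone sequence must converge to such a cluster point. Hence a countably compact suborderable space is sequentially compact, which completes the auxiliary step.

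Granting this, it remains to prove: (I) a countably compact space with a continuous weak selection is sequentially compact, and (II) a pseudocompact space with a continuous weak selection is suborderable. Since countable compactness implies pseudocompactness, (II) together with the auxiliary fact already yields (I), and (b) is the unique implication of the ``in particular'' part that is not formal; so the entire weight of the argument rests on (II), which is the step I expect to be the main obstacle. The source of the difficulty is that the order-like relation $\leq_\sigma$ attached to a continuous weak selection $\sigma$ (see the discussion preceding Theorem~\ref{theorem-ext-sel-v45:1}) need not be transitive, so a compatible generalized ordering cannot simply be read off from $\sigma$.

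To prove (II), let $\sigma$ be a continuous weak selection for a pseudocompact space $X$, and recall that then every ray $(\leftarrow,x)_{\leq_\sigma}$ and $(x,\to)_{\leq_\sigma}$ is open in $X$. As a preliminary reduction, Theorem~\ref{theorem-ext-sel-v16:1} shows that the components of $X$ coincide with its quasi-components, while by Theorem~\ref{theorem-ext-sel-v15:2} each component, carrying the restricted continuous weak selection, is connected and hence weakly orderable, so that suborderability of the components is automatic; the essential difficulty is therefore already present when $X$ is totally disconnected. In that case I would invoke the van Dalen--Wattel subbase characterization of suborderability \cite{dalen-wattel:73}: it suffices to produce a $T_0$-separating open subbase for $X$ that is the union of two nests. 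The natural candidate is $\{(\leftarrow,x)_{\leq_\sigma}:x\in X\}\cup\{(x,\to)_{\leq_\sigma}:x\in X\}$, which is open and $T_0$-separating; the crux of the argument --- and the place where pseudocompactness, equivalently feeble compactness, is indispensable --- is to show that the failures of transitivity of $\leq_\sigma$ are ``locally finite'', so that, after thinning, the two parts become nests and still generate the topology of $X$. Concretely, an infinite, suitably separated family of $3$-cycles of $\leq_\sigma$ would, via the continuity criterion for weak selections, produce an infinite locally finite family of nonempty open sets, contradicting feeble compactness; controlling these defects is the technical heart of the proof and is where the cited works are used. Finally, one reassembles the orderings of the components along the suborderable structure of the totally disconnected quotient to obtain a compatible generalized ordering on $X$, which finishes (II) and hence the theorem.
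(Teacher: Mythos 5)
The decisive content of the theorem --- that a pseudocompact space with a continuous weak selection is suborderable --- is not actually established in your proposal: you correctly reduce everything else to it, but then only sketch a van Dalen--Wattel nest construction whose crucial step (``the failures of transitivity of $\leq_\sigma$ are locally finite, so after thinning the two families of rays become nests and still generate the topology'') you explicitly defer to ``the cited works''. That step \emph{is} the theorem, so the proposal has a genuine gap exactly where all the weight lies; it is also not how the result is obtained in the literature or in this paper, where the key element is the selection-extension result of Theorem \ref{theorem-ext-sel-v13:1}: a continuous weak selection on a pseudocompact $X$ extends to one on $\beta X$, after which $\beta X$ is orderable by van Mill--Wattel (Theorem \ref{theorem-ext-sel-v47:1}) and $X\subset\beta X$ is therefore suborderable. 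Your preliminary reduction is also shaky: Theorem \ref{theorem-ext-sel-v15:2} gives \emph{weak} orderability of components, not suborderability; components (closed subsets) of a pseudocompact space need not be pseudocompact; and the final ``reassembling'' of the component orders along a totally disconnected quotient is asserted rather than argued.

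Independently, your derivation of the first assertion from the second fails. The first assertion concerns arbitrary (Hausdorff) countably compact spaces; such a space need not be Tychonoff, and a non-regular one can never be suborderable (suborderable spaces are collectionwise normal, hence Tychonoff), yet the theorem still claims sequential compactness. The paper itself exhibits a non-regular countably compact space with a continuous weak selection at the end of Section \ref{sec:extens-hypersp-over}, and Motooka's Theorem \ref{theorem-ext-sel-v37:1} shows that regularity is precisely what is needed to upgrade weak orderability to suborderability in the countably compact case. So the chain ``countably compact $\Rightarrow$ pseudocompact $\Rightarrow$ suborderable $\Rightarrow$ sequentially compact'' cannot prove the first claim; the known argument is direct: given a sequence, use Ramsey's theorem to extract a subsequence on which $\leq_\sigma$ is linear and monotone in the index, take a cluster point (countable compactness), and use the continuity criterion for $\sigma$ to show the subsequence converges to it. (Your auxiliary fact that pseudocompact suborderable spaces are sequentially compact is correct in substance, though the convergence step needs the convex neighbourhood base of a suborderable space, not merely the openness of the $\leq$-rays.)
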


In fact, the key element in the proof of Theorem
\ref{theorem-ext-sel-v21:1} was the following selection-extension
result to the \emph{{\v C}ech-Stone compactification} $\beta X$ of a
pseudocompact space $X$, it was explicitly summarised in \cite[Theorem
3.7 and Corollary 3.12]{gutev-2013springer}.

\begin{theorem}
  \label{theorem-ext-sel-v13:1}
  If $X$ is a pseudocompact space, then each continuous weak selection
  for $X$ can be extended to a continuous weak selection for the {\v
    C}ech-Stone compactification of $X$. 
\end{theorem}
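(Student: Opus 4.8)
The plan is to construct the extension $\widetilde\sigma\in\sel[\mathscr{F}_2(\beta X)]$ by a \emph{stabilisation} recipe, from which continuity and the extension property are almost immediate, leaving one genuinely topological statement as the crux. (Here $X$ is Tychonoff, as is implicit once $\beta X$ is mentioned.) We use the continuity criterion for weak selections: the set
\[
L=\{(x,y)\in X\times X : x<_\sigma y\}
\]
is open in $X\times X$, its mirror image $L^{-1}=\{(x,y):y<_\sigma x\}$ is also open, and $X\times X=L\cup L^{-1}\cup\Delta_X$ is a partition, where $\Delta_X$ is the diagonal. For distinct $p,q\in\beta X$ I would declare $\widetilde\sigma(\{p,q\})=p$ precisely when there are open sets $U\ni p$ and $V\ni q$ in $\beta X$ with $x<_\sigma y$ for all $x\in U\cap X$ and $y\in V\cap X$; the rule $\widetilde\sigma(\{p,q\})=q$ is defined symmetrically, and $\widetilde\sigma(\{p\})=p$.

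The easy verifications come first. The rule is unambiguous: if both alternatives applied to $\{p,q\}$, intersecting the two pairs of neighbourhoods and picking, by density of $X$, points $x\in(U\cap U')\cap X$ and $y\in(V\cap V')\cap X$ would force $x<_\sigma y$ and $y<_\sigma x$ at once. It restricts to $\sigma$ on $\mathscr{F}_2(X)$: for $x,y\in X$ with $x<_\sigma y$, continuity of $\sigma$ supplies $X$-open neighbourhoods $U_0\ni x$, $V_0\ni y$ with $U_0<_\sigma V_0$, and writing $U_0=U\cap X$, $V_0=V\cap X$ with $U,V$ open in $\beta X$ shows $\widetilde\sigma(\{x,y\})=x=\sigma(\{x,y\})$. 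And $\widetilde\sigma$ is continuous directly from its definition: if $\widetilde\sigma(\{p,q\})=p$ is witnessed by $U,V$ (chosen disjoint), then for all $p'\in U$ and $q'\in V$ the same pair $U,V$ witnesses $\widetilde\sigma(\{p',q'\})=p'$, which is the continuity criterion for $\widetilde\sigma$. Thus the whole theorem reduces to showing that $\widetilde\sigma$ is \emph{total}, i.e.\ that at least one alternative applies to every pair of distinct points; unwinding the definition, this is exactly the inclusion
\[
\overline{L}\cap\overline{L^{-1}}\ \subseteq\ \Delta_{\beta X},
\]
closures taken in $\beta X\times\beta X$ (if some $(p,q)$ with $p\ne q$ lay in both closures, neither neighbourhood condition could hold, and conversely).

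This inclusion is where pseudocompactness is used, and it is the main obstacle. By Theorem \ref{theorem-ext-sel-v21:1}, $X$ is countably compact; hence, by the classical theorem that the product of a countably compact space with a pseudocompact one is pseudocompact, $X\times X$ is pseudocompact, and Glicksberg's theorem then gives $\beta(X\times X)=\beta X\times\beta X$, so $X\times X$ is $C^*$-embedded in $\beta X\times\beta X$. Fix distinct $p,q\in\beta X$, choose open $W_p\ni p$, $W_q\ni q$ in $\beta X$ with $\overline{W_p}\cap\overline{W_q}=\emptyset$, put $A=\overline{W_p}$, $B=\overline{W_q}$ (regular-closed), and set $K=(A\cap X)\times(B\cap X)$, a closed subset of $X\times X$ which is a product of two closed, hence countably compact, subsets of $X$ and whose closure in $\beta X\times\beta X$ is $A\times B$. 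Since $K\cap\Delta_X=\emptyset$, the open sets $L\cap K$ and $L^{-1}\cap K$ are disjoint and cover $K$, so they form a clopen partition of $K$ and are completely separated there. I would then push this separation out to $\beta X\times\beta X$ — via the $C^*$-embedding of $X\times X$ and a further application of the product theorems to $K=(A\cap X)\times(B\cap X)$ — to conclude that $L\cap K$ and $L^{-1}\cap K$ have disjoint closures in $\beta X\times\beta X$; since any net in $L$ (resp.\ $L^{-1}$) converging to $(p,q)$ eventually lies in $K$, this gives $(p,q)\notin\overline{L}\cap\overline{L^{-1}}$, as needed.

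The delicate technical point — the step I expect to be the real work — is precisely this passage from a clopen partition of the box $K$ to disjoint closures in $\beta X\times\beta X$, carried out without invoking normality of $X\times X$ (which can fail): one must check that $K$, and its factors $A\cap X$ and $B\cap X$, are $C^*$-embedded in their respective closures, and it is countable compactness of $X$ together with the Frolík--Glicksberg product machinery that is there to guarantee this. Once it is in place, $\widetilde\sigma\in\sel[\mathscr{F}_2(\beta X)]$ is a continuous weak selection for $\beta X$ extending $\sigma$, which completes the proof.
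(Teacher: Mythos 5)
Your construction of $\widetilde\sigma$, the checks of unambiguity, of the extension property, of continuity, and the reduction of the whole theorem to the inclusion $\overline{L}\cap\overline{L^{-1}}\subseteq\Delta_{\beta X}$ are all correct; since the paper only cites \cite[Theorem 3.7 and Corollary 3.12]{gutev-2013springer} for this statement, your argument has to stand on its own, and the problems are concentrated in the step you yourself flag as the crux. Two issues arise already before that step. First, the ``classical theorem'' you invoke is false as stated: the product of a countably compact space with a pseudocompact one need not be pseudocompact (by Nov\'ak's construction there are countably compact $A,B\subseteq\beta\omega$ with $A\cap B=\omega$, and then the singletons $\{(n,n)\}$, $n\in\omega$, form an infinite locally finite family of open sets in $A\times B$). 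What is true, and what you need, is that the product of a pseudocompact space with a \emph{sequentially} compact one is pseudocompact; Theorem \ref{theorem-ext-sel-v21:1} does give sequential compactness of $X$, so this is repairable. Second, invoking Theorem \ref{theorem-ext-sel-v21:1} at all is delicate: the paper states explicitly that Theorem \ref{theorem-ext-sel-v13:1} --- the very statement you are proving --- was the key element in the proof of Theorem \ref{theorem-ext-sel-v21:1}. To avoid circularity you must rely on (and cite) a proof of ``pseudocompact $+$ continuous weak selection $\Rightarrow$ sequentially compact'' that does not pass through the extension to $\beta X$, such as the direct argument of Garc\'{\i}a-Ferreira and Sanchis \cite{garcia-ferreira-sanchis:04}.

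The genuine gap is the passage from the clopen partition of $K=(A\cap X)\times(B\cap X)$ to disjoint closures in $\beta X\times\beta X$, which you do not carry out, and the mechanism you sketch for it would fail: closed, even regular-closed, subspaces of countably compact or pseudocompact spaces need not be $C^*$-embedded (in any countably compact non-normal space, e.g.\ $\omega_1\times(\omega_1+1)$, the union of two disjoint closed sets that are not completely separated is a closed subspace witnessing this), and neither Glicksberg's theorem nor Frol\'{\i}k-type product theorems yield $C^*$-embeddedness of $K$ in $X\times X$. The conclusion you want is nevertheless true and can be reached without it: choose open $W_p',W_q'\subseteq\beta X$ with $\overline{W_p}\subseteq W_p'$, $\overline{W_q}\subseteq W_q'$ and $\overline{W_p'}\cap\overline{W_q'}=\emptyset$, take continuous $a,b:\beta X\to[0,1]$ with $a\equiv1$ on $\overline{W_p}$, $a\equiv0$ off $W_p'$, $b\equiv1$ on $\overline{W_q}$, $b\equiv0$ off $W_q'$, and let $s$ be the $\{0,1\}$-valued ``sign'' of $\sigma$, which is locally constant on $L\cup L^{-1}$. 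Then $F(x,y)=a(x)b(y)\,s(x,y)$ on $(W_p'\times W_q')\cap(X\times X)$ and $F=0$ elsewhere is a bounded continuous function on $X\times X$ (continuity at points outside the box follows from $|F|\leq a(x)b(y)$), it equals $0$ on $L\cap(W_p\times W_q)$ and $1$ on $L^{-1}\cap(W_p\times W_q)$, and its continuous extension over $\beta X\times\beta X$, provided by the $C^*$-embedding coming from Glicksberg's theorem applied to the pseudocompact square $X\times X$, cannot take both values $0$ and $1$ at $(p,q)$; this is exactly the disjointness of closures you need. With that replacement and the repairs above your scheme goes through, but as submitted the argument is incomplete at its central step; note also that the proofs in the literature obtain totality directly from feeble compactness by a locally finite family argument, avoiding Glicksberg and product theorems altogether.
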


Regarding the role of the {\v C}ech-Stone compactification in such
hyperspace selec\-tion-extension properties, let us also mention the
following result.

\begin{theorem}[\cite{mill-wattel:84}]
  \label{theorem-compact-3}
  A Tychonoff space $X$ is suborderable if and only if it has a
  continuous weak selection $\sigma$ such that for every
  $ p\in \beta X\setminus X$, the selection $\sigma$ can be extended
  to a continuous weak selection for $X\cup\{p\}$.
\end{theorem}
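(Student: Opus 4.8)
The plan is to prove the two implications of Theorem~\ref{theorem-compact-3} separately. The ``only if'' direction is a fairly direct compactification argument; the ``if'' direction is the substantial one, and is where I expect the difficulty to lie.

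\emph{The ``only if'' part.} Suppose $X$ is suborderable. Then $X$ embeds as a \emph{dense} subspace of some compact linearly ordered space $(K,\leq)$ (embed $X$ in a LOTS, pass to its Dedekind completion with endpoints adjoined to obtain a compact LOTS, and replace it by the closure of $X$, which is again a compact LOTS). Let $\sigma$ be the weak selection on $X$ induced by $\leq$, so that $\leq_\sigma={\leq}\uhr X$ is a genuine linear order, and let $\phi\colon\beta X\to K$ be the canonical continuous extension of $\id_X$. The first step is the observation that $\phi^{-1}(q)=\{q\}$ for every $q\in X$: since $X$ carries the subspace topology from $K$, any closed subset $Z$ of $X$ avoiding $q$ has $q\notin\overline Z^{K}$, and this forces every point of $\beta X$ mapped by $\phi$ to $q$ to equal $q$. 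Hence for $p\in\beta X\setminus X$ the point $q:=\phi(p)$ lies in $K\setminus X$, so $L=(\leftarrow,q)_{\leq}\cap X$ and $R=(q,\to)_{\leq}\cap X$ partition $X$ into two clopen pieces; therefore $\overline L^{\beta X}$ and $\overline R^{\beta X}$ are complementary clopen subsets of $\beta X$, and $p$ lies in exactly one of them, say in $\overline L^{\beta X}$. I would then extend $\sigma$ to a weak selection $\sigma_p$ on $X\cup\{p\}$ by placing $p$ in the cut $(L,R)$---that is, $x<_{\sigma_p}p$ for $x\in L$ and $p<_{\sigma_p}x$ for $x\in R$, with $\sigma_p$ agreeing with $\sigma$ on pairs from $X$---and verify continuity via the standard continuity criterion for weak selections. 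For pairs from $X$ continuity is inherited, since $\{p\}$ is closed in $X\cup\{p\}$ and so open subsets of $X$ stay open; for a pair $\{x,p\}$ one separates $x$ from $q$ inside $K$ by two $\leq$-convex open sets with the right order relation, pulls the one around $q$ back by $\phi$, intersects it with the clopen set $\overline L^{\beta X}$ (which contains $p$) to get a neighbourhood of $p$, and reads the required inequality off from the separation in $K$.

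\emph{The ``if'' part.} Here I would argue by contraposition: given a continuous weak selection $\sigma$ on $X$ with the stated extension property, if $X$ is not suborderable then I must produce $p\in\beta X\setminus X$ over which $\sigma$ has no continuous extension. There are two ways the candidate order $\leq_\sigma$ can fail to witness suborderability, and both must be ruled out. First, $\leq_\sigma$ might not be transitive: then there is a ``bad triple'' $a<_\sigma b<_\sigma c<_\sigma a$, which by continuity of $\sigma$ (together with the structural facts about spaces with continuous weak selections---e.g.\ Theorem~\ref{theorem-ext-sel-v16:1}, so that enough clopen sets are available) is screened by pairwise disjoint clopen neighbourhoods of $a$, $b$, $c$; choosing $p$ in the $\beta X$-closure of a suitable clopen set straddling the triple, the continuity constraints imposed on any extension by the three clopen pieces turn out to be mutually incompatible. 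Second, $\leq_\sigma$ may already be a linear order---so that $X$ is weakly orderable, since continuity of $\sigma$ makes every open $\leq_\sigma$-ray open---but $\tau_X$ may lack a base of $\leq_\sigma$-convex sets: then some $x\in X$ has an open neighbourhood $U$ admitting no $\leq_\sigma$-convex neighbourhood of $x$ inside it, so there is a set $C\subseteq X\setminus U$ of points $\leq_\sigma$-cofinal on one side of $x$; its closure meets $\beta X\setminus X$, and for such a point $p$ any extension $\sigma_p$ is forced by continuity---through the pairs $\{x,p\}$ and $\{c,p\}$, $c\in C$---into an impossible placement of $p$ relative to $x$. Either way the extension hypothesis fails; hence $\leq_\sigma$ is a linear order for which $\tau_X$ has a base of convex sets, i.e.\ $X$ is suborderable.

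The main obstacle is this ``if'' direction, and within it the construction of the obstructing remainder point: one must convert a purely order-theoretic defect of $(X,\leq_\sigma)$---non-transitivity, or a cut at which $\tau_X$ is not convex---into an actual point of $\beta X\setminus X$, and then show that the \emph{hypothesised} extension $\sigma_p$ is ruled out by its own continuity. The delicate part is ensuring that the defect genuinely surfaces in $\beta X\setminus X$ instead of being absorbed within $X$; this is where the Tychonoff separation axiom and the clopen structure furnished by continuity of $\sigma$ do the work.
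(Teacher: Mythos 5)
The paper does not prove this statement at all: it is quoted from van Mill and Wattel \cite{mill-wattel:84}, so your attempt can only be judged on its own merits. Your ``only if'' half is essentially sound: a suborderable space does embed densely in a compact LOTS $(K,\leq)$, the fibres of the canonical map $\phi\colon\beta X\to K$ over points of $X$ are singletons, and placing $p$ at the cut of $X$ determined by $\phi(p)\in K\setminus X$ does yield a continuous extension of the order selection; the verification you sketch goes through.

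The ``if'' half, however, has a fundamental gap. Your contrapositive scheme presupposes that the \emph{given} selection $\sigma$ must itself witness suborderability, i.e.\ that the extension hypothesis forces $\leq_\sigma$ to be transitive and $\tau_X$ to have a base of $\leq_\sigma$-convex sets; your closing sentence even asserts this. That is false. When $X$ is compact we have $\beta X\setminus X=\emptyset$, so the extension hypothesis is vacuous, yet continuous weak selections with cyclic triples abound: split the Cantor set into three nonempty clopen pieces $A,B,C$, order each piece as a subset of the reals and declare $A<_\sigma B<_\sigma C<_\sigma A$. This $\sigma$ is continuous, $\leq_\sigma$ is not transitive, and there is no remainder point available to obstruct anything, so your Case 1 cannot produce the required $p$ --- non-transitivity of $\leq_\sigma$ simply does not contradict the hypothesis. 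Case 2 fails for the same reason: the order witnessing suborderability need not be $\leq_\sigma$, so a cut at which $\tau_X$ is not $\leq_\sigma$-convex is not a defect you are entitled to convert into a non-extendable remainder point. Indeed, the ``if'' direction contains Theorem \ref{theorem-ext-sel-v47:1} (compact space with a continuous weak selection is orderable) as the special case $\beta X=X$, so any correct proof must construct a compatible linear order globally from $\sigma$ --- the genuinely hard content of van Mill and Wattel's work in \cite{mill-wattel:81,mill-wattel:84} --- rather than translate local defects of the (possibly non-transitive) relation $\leq_\sigma$ into points of $\beta X\setminus X$.
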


Answering a question of \cite{MR3122363} about the role of countable
compactness in Theorem \ref{theorem-ext-sel-v21:1}, see also
\cite[Problem 3.10]{gutev-2013springer}, the following complementary
result was obtained in \cite[Theorems 1.5 and 1.6]{Motooka2019}.

\begin{theorem}[\cite{Motooka2019}]
  \label{theorem-ext-sel-v37:1}
  Each countably compact space $X$ with a continuous weak selection is
  weakly orderable. If moreover $X$ is regular, then it is also
  suborderable.
\end{theorem}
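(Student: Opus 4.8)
The plan is to reduce everything to the compact case via the {\v C}ech--Stone compactification, dealing separately with the loss of complete regularity. Throughout I will use the elementary reformulation that a space $Y$ is weakly orderable if and only if it carries a continuous weak selection $\eta$ whose induced relation $\leq_\eta$ is transitive: if $\leq_\eta$ is a linear order, then its open rays $(\leftarrow,y)_{\leq_\eta}$ and $(y,\to)_{\leq_\eta}$ are open in $Y$ and generate the order topology of $\leq_\eta$, which is therefore coarser than that of $Y$; the converse is the remark following Theorem~\ref{theorem-ext-sel-v15:2}. So the first assertion amounts to producing, on any countably compact $X$ with a continuous weak selection, a \emph{transitive} such selection.

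\emph{The Tychonoff case.} A countably compact space is pseudocompact, so by Theorem~\ref{theorem-ext-sel-v13:1} the selection $\sigma$ extends to a continuous weak selection for $\beta X$; then $\beta X$ is compact with a continuous weak selection, hence orderable by Theorem~\ref{theorem-ext-sel-v47:1}, and restricting a compatible linear order on $\beta X$ to $X$ realises $X$ as a subspace of a linearly ordered space. Thus here $X$ is even suborderable, which settles both assertions when $X$ happens to be Tychonoff.

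\emph{The general Hausdorff case.} Now the plan is to build a transitive continuous weak selection on $X$ directly (equivalently, a coarser completely regular topology admitting a continuous weak selection, to which the previous paragraph applies). This is where countable compactness of the possibly non-regular $X$ is really used, via its consequence that $X$ is sequentially compact (Theorem~\ref{theorem-ext-sel-v21:1}). The idea is that sequential compactness lets one ``straighten out'' $\sigma$: every $\leq_\sigma$-increasing sequence then converges, and its limit must behave as a least upper bound (no increasing sequence converges strictly below its terms, and so on), so the obstructions to transitivity of $\leq_\sigma$ are localised and can be reoriented coherently by a transfinite recursion running along $\leq_\sigma$. I expect this to be the main obstacle: many three-cycles of $\leq_\sigma$ may have to be re-oriented simultaneously without breaking continuity, and it is the sequential-compactness estimates that make the recursion close.

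\emph{The regular case.} Finally, suppose $X$ is also regular and let $\leq$ be a linear order on $X$ with open $\leq$-rays, supplied by the previous step. It suffices to show each point has a neighbourhood base of $\leq$-convex open sets, i.e.\ that every open $U\ni x$ contains a $\leq$-convex open $W\ni x$. Splitting $C=X\setminus U$ as $C^-\cup C^+$ with $C^-\subset(\leftarrow,x)_{\leq}$ and $C^+\subset(x,\to)_{\leq}$, one wants bounds $a<x<b$ (or the relevant one-sided versions) with $C^-\subset(\leftarrow,a]_{\leq}$ and $C^+\subset[b,\to)_{\leq}$, after which $W=(a,b)_{\leq}$ works. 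The trouble is that $C^\pm$ need not be closed and could be cofinal towards $x$; here regularity (to shrink $U$ so that $x\notin\overline{C^\pm}$) together with countable compactness (so that the bounded closed pieces involved attain extreme elements, exactly as in the proof of Proposition~\ref{proposition-ext-sel-v34:1}, a Dedekind gap of $\leq$ at such a boundary being clopen and hence harmless) is what keeps $C^\pm$ away from $x$ and produces $a$ and $b$. Controlling the position of $C^\pm$ relative to $\leq$ is the technical heart of the ``moreover'' clause.
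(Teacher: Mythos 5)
Your ``Tychonoff case'' paragraph is correct, but it only reproves what the paper already records in Theorems \ref{theorem-ext-sel-v13:1} and \ref{theorem-ext-sel-v47:1} (the pseudocompact part of Theorem \ref{theorem-ext-sel-v21:1}). The whole content of the statement is the case of a countably compact space that is merely Hausdorff, together with the passage from regularity to suborderability, and for that your text contains no argument. The ``general Hausdorff case'' paragraph only announces that the violations of transitivity of $\leq_\sigma$ are to be ``reoriented coherently by a transfinite recursion running along $\leq_\sigma$'', and you yourself flag that doing this without destroying continuity is the main obstacle --- but that obstacle \emph{is} the theorem. Nothing is said about what the recursion is indexed by (note that $\leq_\sigma$ is not transitive, so ``running along $\leq_\sigma$'' is not a well-founded scheme), which pairs are reversed, or why the modified selection is still Vietoris continuous; the only use of countable compactness is the citation of sequential compactness plus the unproved assertion that $\leq_\sigma$-increasing sequences converge and act as least upper bounds, which itself requires proof when $\leq_\sigma$ is not transitive. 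Equivalently, producing ``a coarser completely regular topology admitting a continuous weak selection'' is exactly as hard as the theorem, so it is not a reduction.

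The ``moreover'' clause has the same defect, and its pivotal step is unjustified as stated: you appeal to countable compactness ``so that the bounded closed pieces involved attain extreme elements, exactly as in the proof of Proposition \ref{proposition-ext-sel-v34:1}''. That proposition \emph{assumes} the existence of first and last elements of every nonempty closed set; it does not derive it, and with weak orderability that hypothesis forces compactness. Indeed, if countable compactness alone yielded such extrema for closed sets, Proposition \ref{proposition-ext-sel-v34:1} would make every regular countably compact space with a continuous weak selection compact, which $\omega_1$ refutes; so any workable version of your argument must exploit order-boundedness of $C^-$ and $C^+$ in an essential way, and when $\leq$ is only compatible with a coarser topology an $X$-closed, $\leq$-bounded set need not attain a supremum --- sequential compactness controls sequences, not the order, and the convergence of $\leq$-monotone sequences to their would-be bounds is again something to be proved. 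In sum, you have a correct treatment of the already known Tychonoff case and a plan whose decisive steps are missing; the actual argument (which the present paper does not reproduce, quoting the result from \cite{Motooka2019}) has to work directly with the structure countable compactness imposes on the cycles of $\leq_\sigma$, and none of that work appears in your proposal.
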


A space $X$ is \emph{totally disconnected} if each quasi-component is
a singleton or, equivalently, if each point of $X$ is an intersection
of clopen sets. A space $X$ is \emph{zero-dimensional} if it has a
base of clopen sets, and $X$ is \emph{strongly zero-dimensional} if
its \emph{covering dimension} is zero. \medskip

A metric $d$ on a set $X$ is \emph{non-Archimedean}, or an
\emph{ultrametric}, if
\[
d(x,y)\leq \max\big\{d(x,z),d(z,y)\big\}\quad \hbox{for every
  $x,y,z\in X$.}
\]
In this case, we say that $(X,d)$ is a \emph{non-Archimedean metric
  space}, or simply an \emph{ultrametric space}. It was shown by J. de
Groot \cite{groot:56} that a metrizable space $X$ has a compatible
non-Archimedean metric iff it is strongly zero-dimensional. Moreover,
it was shown by Papi\v{c} \cite{MR0073964} and Kurepa \cite{MR0102789}
that a non-Archimedean metrizable space is orderable. Generalising a
result of I.\,L. Lynn \cite{MR0138089}, H. Herrlich showed in
\cite{herrlich:65} that a totally disconnected metrizable space $X$ is
orderable precisely when it is strongly zero-dimensional. These
results are summarised below.

\begin{theorem}
  \label{theorem-hsp-ver-10:9}
  For a metrizable space $X$, the following are
  equivalent\textup{:}
  \begin{enumerate}
  \item $X$ is totally disconnected and orderable.
  \item $X$ is strongly zero-dimensional.
  \item $X$ has a compatible non-Archimedean metric. 
  \end{enumerate}
\end{theorem}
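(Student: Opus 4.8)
The plan is to prove Theorem~\ref{theorem-hsp-ver-10:9} by establishing the cyclic chain of implications (c)$\Rightarrow$(b)$\Rightarrow$(a)$\Rightarrow$(c), drawing on the classical results attributed above to de Groot, Papi\v{c}--Kurepa, Lynn, and Herrlich. The implication (b)$\Rightarrow$(c) is precisely de Groot's theorem \cite{groot:56}: a metrizable space is strongly zero-dimensional if and only if it admits a compatible non-Archimedean metric. For (c)$\Rightarrow$(a), I would argue in two steps. First, any ultrametric space is totally disconnected: if $x\neq y$ and $r=d(x,y)>0$, then the open ball $B_r(x)=\{z:d(x,z)<r\}$ is clopen (the ultrametric inequality forces every ball to be clopen, since a point at distance $<r$ from any point of the ball has its whole $r$-ball inside $B_r(x)$), it contains $x$, and it misses $y$; hence the quasi-component of $x$ is the singleton $\{x\}$. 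Second, an ultrametrizable space is orderable --- this is the Papi\v{c}--Kurepa result \cite{MR0073964,MR0102789}. Then (a)$\Rightarrow$(b) is Herrlich's theorem \cite{herrlich:65} (refining Lynn \cite{MR0138089}): a totally disconnected orderable (indeed suborderable) metrizable space is strongly zero-dimensional.

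Alternatively, since the three cited theorems more or less hand us the equivalences directly, a cleaner write-up simply records them: (b)$\Leftrightarrow$(c) by \cite{groot:56}; (c)$\Rightarrow$(a) by combining the elementary total-disconnectedness of ultrametric spaces with \cite{MR0073964,MR0102789}; and (a)$\Rightarrow$(b) by \cite{herrlich:65}. The only part genuinely requiring an argument rather than a citation is the clopenness of balls in an ultrametric space and the resulting total disconnectedness, which is a short standard computation with the strong triangle inequality. Everything else is an appeal to the literature surveyed in the paragraph preceding the statement.

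The main obstacle, such as it is, is organizational rather than mathematical: one must be careful that the cited results are stated for the right category of spaces --- metrizable throughout, with ``orderable'' (not merely suborderable) as the conclusion in the Papi\v{c}--Kurepa direction and as the hypothesis in the Herrlich direction, and with total disconnectedness (equivalently, each point an intersection of clopen sets, as defined in the excerpt) as the bridging separation property. Since Herrlich's theorem already gives the equivalence of (a) and (b) for metrizable spaces, and de Groot's gives (b)$\Leftrightarrow$(c), the loop closes once we check (c)$\Rightarrow$(a), i.e.\ that an ultrametrizable space is both totally disconnected (immediate) and orderable (Papi\v{c}--Kurepa). I expect the proof in the paper to be essentially a one-paragraph assembly of these citations, possibly with the remark that (c)$\Rightarrow$(b) can also be seen directly since every ultrametric space is strongly zero-dimensional by a refinement argument on ball covers.
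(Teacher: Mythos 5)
Your proposal is correct and matches the paper exactly: the paper gives no separate proof, stating the theorem as a summary of the results of de Groot \cite{groot:56}, Papi\v{c}--Kurepa \cite{MR0073964,MR0102789} and Herrlich \cite{herrlich:65} cited in the preceding paragraph, which is precisely the assembly you describe (with the clopenness of ultrametric balls as the only step needing a direct argument).
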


The following theorem is complementary to Theorem
\ref{theorem-hsp-ver-10:9}. It was obtained by Choban \cite[Corollary
2.1]{choban:70a}, using a different approach it was also obtained by
Engelking, Heath and Michael \cite[Corollary
1.2]{engelking-heath-michael:68}.

\begin{theorem}[\cite{choban:70a,engelking-heath-michael:68}] 
  \label{theorem-ext-sel-v26:4}
  Each completely metrizable strongly zero-dimensional space $X$ has a
  continuous selection for $\mathscr{F}(X)$.
\end{theorem}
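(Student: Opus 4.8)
The plan is to follow the Engelking--Heath--Michael approach: use a complete metric together with zero-dimensionality to build, at each scale $2^{-n}$, a partition of $X$ into clopen sets, and then define a selection by a greedy descent through these partitions. First I fix a complete compatible metric $\rho$ on $X$, available since $X$ is completely metrizable. Since $X$ is metrizable and $\dim X=0$, the classical fact that every open cover of such a space refines to a partition into clopen sets gives, for each $\varepsilon>0$, a partition of $X$ into clopen sets of $\rho$-diameter $<\varepsilon$; passing to successive common refinements I obtain partitions $\mathscr P_0,\mathscr P_1,\dots$ of $X$ into nonempty clopen sets with $\mathscr P_{n+1}$ refining $\mathscr P_n$ and $\operatorname{diam}_\rho P<2^{-n}$ for every $P\in\mathscr P_n$. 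The key structural point, used repeatedly below, is that since the members of each $\mathscr P_n$ are clopen and partition $X$, the union of any subfamily of $\mathscr P_n$ is clopen. Finally I fix a well-ordering $<_n$ of $\mathscr P_n$ for each $n$.

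Next I define the selection. For $S\in\mathscr F(X)$ put, recursively, $f_0(S)=\min_{<_0}\{P\in\mathscr P_0:P\cap S\neq\varnothing\}$ and $f_n(S)=\min_{<_n}\{P\in\mathscr P_n:P\subset f_{n-1}(S)\ \text{and}\ P\cap S\neq\varnothing\}$; since $\mathscr P_n$ refines $\mathscr P_{n-1}$ the latter set is nonempty, so each $f_n(S)$ is a well-defined clopen set with $f_{n-1}(S)\supset f_n(S)$, $f_n(S)\cap S\neq\varnothing$ and $\operatorname{diam}_\rho f_n(S)<2^{-n}$. Then the sets $f_n(S)\cap S$ form a decreasing sequence of nonempty closed sets whose diameters tend to $0$, so by completeness of $\rho$ their intersection is a single point; let $f(S)$ be that point. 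Then $f(S)\in S$, so $f$ is a selection for $\mathscr F(X)$, and $f(S)\in f_n(S)$ for every $n$.

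It remains to show that $f$ is Vietoris continuous, which is the heart of the matter, and I would do this in two steps. Step one: each $f_n$, regarded as a map into the discrete space $\mathscr P_n$, is locally constant, proved by induction on $n$. Given $S$, set $W_n(S)=\bigcup\{P\in\mathscr P_n:P\subset f_{n-1}(S)\ \text{and}\ P<_n f_n(S)\}$, and $W_0(S)=\bigcup\{P\in\mathscr P_0:P<_0 f_0(S)\}$; by minimality of $f_n(S)$ one has $S\cap W_n(S)=\varnothing$, and $W_n(S)$ is clopen by the structural point above. Intersecting a Vietoris neighbourhood of $S$ on which $f_{n-1}$ is constant (by the inductive hypothesis; for $n=0$ this is vacuous) with the basic Vietoris set $\langle X\setminus W_n(S),\,f_n(S)\rangle$ produces a Vietoris neighbourhood $\mathscr U$ of $S$, and every $T\in\mathscr U$ satisfies $f_{n-1}(T)=f_{n-1}(S)$, meets $f_n(S)$, and avoids every member of $\mathscr P_n$ contained in $f_{n-1}(S)$ and $<_n$-preceding $f_n(S)$; hence $f_n(T)=f_n(S)$. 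Step two: given an open $U\ni f(S)$, choose $n$ with $f_n(S)\subset B_\rho\big(f(S),2^{-n}\big)\subset U$, which is possible because $f(S)\in f_n(S)$ and $\operatorname{diam}_\rho f_n(S)<2^{-n}$; then $f$ maps the Vietoris neighbourhood of $S$ on which $f_n\equiv f_n(S)$ into $f_n(S)\subset U$. Hence $f$ is continuous.

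I expect two points to need the most care. The first is producing the clopen partitions $\mathscr P_n$ of arbitrarily small mesh, which is precisely where metrizability and $\dim X=0$ are used, via the classical refinement of open covers of zero-dimensional metrizable spaces to clopen partitions. The second, and the genuinely delicate one, is the continuity verification: its decisive point is that the union $W_n(S)$ of the $<_n$-predecessors of $f_n(S)$ is clopen, so that $\langle X\setminus W_n(S),\,f_n(S)\rangle$ is a legitimate Vietoris-basic neighbourhood which, with a single such set, forces a nearby $T$ to meet $f_n(S)$ and to avoid all of those predecessors. Completeness of $\rho$ serves only to make the nested intersection $\bigcap_n f_n(S)$ nonempty; equivalently, one could start from a complete compatible non-Archimedean metric and take $\mathscr P_n$ to be its balls of radius $2^{-n}$, though the argument above needs only the stated hypotheses.
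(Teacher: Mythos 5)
Your argument is correct. The paper itself gives no proof of this theorem --- it is quoted from Choban and from Engelking--Heath--Michael --- and your construction is essentially the standard argument of those sources: nested clopen partitions $\mathscr{P}_n$ of mesh $<2^{-n}$ (available since a strongly zero-dimensional metrizable space is ultraparacompact, equivalently admits a non-Archimedean metric, cf.\ Theorem \ref{theorem-hsp-ver-10:9}), a greedy choice $f_n(S)$ by a well-ordering of each $\mathscr{P}_n$, completeness to pin down the point $f(S)=\bigcap_n\bigl(f_n(S)\cap S\bigr)$, and Vietoris continuity via the local constancy of each $f_n$, where the decisive observation that the union $W_n(S)$ of the $<_n$-predecessors is clopen (being a union of members of a clopen partition) is exactly right.
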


Finally, let us explicitly remark that each metrizable space $X$ with
a continuous selection for $\mathscr{F}(X)$ is completely
metrizable \cite[Corollary 3.5]{mill-pelant-pol:96}, see also
\cite{zbMATH06882273}.

\begin{theorem}[\cite{mill-pelant-pol:96}]
  \label{theorem-ext-sel-v47:2}
  If a metrizable space $X$ has a continuous selection for
  $\mathscr{F}(X)$, then it is completely metrizable.
\end{theorem}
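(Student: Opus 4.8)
The plan is to use the selection $f\in\sel[\mathscr{F}(X)]$ to exhibit a winning strategy for the nonempty player, Player~II, in the \emph{strong Choquet game} on $X$. Recall this game: in round $n$, Player~I plays a pair $(x_n,U_n)$ with $U_n\subseteq X$ open, $x_n\in U_n$, and $U_n\subseteq V_{n-1}$ for $n\geq1$; Player~II answers with an open $V_n$ such that $x_n\in V_n\subseteq U_n$; Player~II wins the run when $\bigcap_n U_n=\bigcap_n V_n\neq\emptyset$. By the classical game-theoretic characterization of topological completeness, a metrizable space is completely metrizable if and only if Player~II has a winning strategy in this game, so it suffices to produce such a strategy from $f$.

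First I would record two normalizations. Since $X$ is metrizable, hence regular, Player~II may always insist that $\overline{V_n}\subseteq U_n$; then $\overline{U_{n+1}}\subseteq\overline{V_n}\subseteq U_n$, so the closed sets $F_n:=\overline{U_n}$ form a decreasing sequence with $\bigcap_n F_n=\bigcap_n U_n$, and Player~II wins precisely when $\bigcap_n F_n\neq\emptyset$. Moreover $F_m\subseteq F_n$ whenever $m\geq n$, so every cluster point of the sequence $p_n:=f(F_n)\in F_n$ lies in $\bigcap_n F_n$. Consequently, it is enough for Player~II to play so that $(p_n)$ has a cluster point in $X$.

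Forcing $(p_n)$ to cluster is the heart of the matter, and the step I expect to be the main obstacle. One cannot simply make $\operatorname{diam}(U_n)\to0$: the ambient metric need not be complete, so such a nest of closed sets may have empty intersection and carry no information. Instead, continuity of $f$ must be used in an essential way. The idea I would pursue is to run, interleaved with the game, an auxiliary decreasing sequence of closed sets $K_0\supseteq K_1\supseteq\cdots$ carrying the commitments $p_n=f(K_n)$, choosing Player~II's moves $V_n$ not as small metric balls but (roughly) as finite unions of small pieces spread across a basic Vietoris neighbourhood of $K_n$ dictated by continuity of $f$ at $K_n$, so that (i) each $K_n$ is pinned inside the current $U_n$, hence $p_n\in U_n$, and (ii) the sets $K_n$ converge, in the Vietoris topology, to a nonempty closed set $K$. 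Granting (i) and (ii), continuity of $f$ gives $p_n=f(K_n)\to f(K)\in K\subseteq\bigcap_n U_n$, and Player~II wins; the game characterization then delivers complete metrizability of $X$. The delicate part is clause (ii): upper Vietoris convergence of a decreasing sequence of closed sets is a genuine compactness-type constraint, and the entire difficulty is to choose the $V_n$ so that Player~I cannot break it while respecting $x_n\in V_n\subseteq U_n$ — metrizability being what supplies the piecewise diameter control and the regular shrinkings in (i). In contrapositive terms, a "bad" play ought to force an honestly non-convergent Cauchy-type behaviour of the sets $K_n$, and then the continuity of $f$ at the missing limit is contradicted; turning this heuristic into an actual strategy for Player~II is precisely the content of the argument.
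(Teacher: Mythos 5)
The framework you chose is reasonable --- the theorem is quoted in the paper from \cite{mill-pelant-pol:96}, and one of the cited treatments (\cite{zbMATH06882273}) does indeed proceed through Choquet-completeness, i.e.\ through a winning strategy for the nonempty player in the strong Choquet game --- but what you have written is a plan, not a proof, and the missing part is exactly the theorem. Your clause (ii) asks that the decreasing closed sets $K_n$ converge in the Vietoris topology to a nonempty closed set $K$. For a decreasing sequence of closed sets, an easy check shows that if a Vietoris limit exists at all it must equal $\bigcap_n K_n$; so (ii) already asserts that the intersection of the sets carrying the play is nonempty, which is precisely what Player~II must achieve. Granting (ii), the win is immediate (continuity of $f$ is then barely needed), so the entire content of the argument has been pushed into an assumption. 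Moreover, the concluding heuristic --- that a ``bad'' play would contradict ``continuity of $f$ at the missing limit'' --- is not something one can argue directly: $f$ is defined only on $\mathscr{F}(X)$, and if $\bigcap_n K_n=\emptyset$ there is no point of $\mathscr{F}(X)$ at which continuity could be violated; a contradiction has to be manufactured by a concrete recursive construction, which you do not supply and explicitly defer.

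A second, related problem is that the mechanism you propose points in an unpromising direction. Vietoris convergence comes essentially for free only for \emph{increasing} sequences, where a sequence of (say finite) sets $T_n$ with $T_n\subset T_{n+1}$ is $\tau_V$-convergent to $\overline{\bigcup_n T_n}$; this is the device used repeatedly in the present paper (see the proofs of Proposition \ref{proposition-ext-sel-v25:1} and Theorem \ref{theorem-ext-sel-v32:1}) and, in more elaborate form, in the published proofs of the theorem itself, where one recursively builds finite sets spread across prescribed Vietoris neighbourhoods and plays the values of $f$ on them against each other. For a decreasing sequence of closed sets in a noncomplete metrizable space, upper Vietoris convergence is a genuine compactness-type constraint that Player~I can in principle destroy, and you give no rule for choosing $V_n$ (nor for producing the sets $K_n$ from the moves $(x_n,U_n)$) that would prevent this. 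Until such a rule is exhibited and shown to force either $\bigcap_n\overline{U_n}\neq\emptyset$ or a violation of continuity of $f$ at an actual member of $\mathscr{F}(X)$, the argument is incomplete: the reduction to the strong Choquet game is correct, but the winning strategy --- the heart of the theorem of \cite{mill-pelant-pol:96} --- has not been constructed.
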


\section{Extension of Selections and Discrete Sets} 
\label{sec:extens-hypersp-over}

In this section, we first consider the hyperspace selection-extension
problem for weak selections defined on discrete subspaces $Z\subset X$
of a space $X$. Evidently, each weak selection for such a subspace $Z$
is continuous. In the special case when $Z\subset X$ is a triple,
Theorems \ref{theorem-ext-sel-v45:1} and \ref{theorem-ext-sel-v15:2}
imply the following description of totally disconnected spaces.

\begin{theorem}
  \label{theorem-ext-sel-v4:1}
  Let $X$ be a space with a continuous weak selection. Then $X$ is
  totally disconnected if and only if
  \begin{equation}
    \label{eq:ext-sel-v4:3}
    \sel[\mathscr{F}_2(Z)]=\big\{\sigma\uhr \mathscr{F}_2(Z):
    \sigma\in \sel[\mathscr{F}_2(X)]\big\}\ \ \text{for every $Z\in[X]^3$.}
  \end{equation}
  In particular, $X$ is totally disconnected precisely when for every
  $Z\in [X]^3$ there are selection
  $\sigma, \eta\in\sel[\mathscr{F}_2(X)]$ with
  $\eta\uhr Z\not\sim \sigma\uhr Z$.
\end{theorem}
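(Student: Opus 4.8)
The plan is to establish the implication loop: $X$ totally disconnected $\Rightarrow$ \eqref{eq:ext-sel-v4:3} holds for every $Z\in[X]^3$ $\Rightarrow$ for every $Z\in[X]^3$ there are $\sigma,\eta\in\sel[\mathscr{F}_2(X)]$ with $\eta\uhr Z\not\sim\sigma\uhr Z$ $\Rightarrow$ $X$ totally disconnected. The middle step is essentially free: a $3$-point set $Z$ is discrete, so $\sel[\mathscr{F}_2(Z)]$ consists of all eight weak selections on $Z$ (and the inclusion $\{\sigma\uhr Z:\sigma\in\sel[\mathscr{F}_2(X)]\}\subset\sel[\mathscr{F}_2(Z)]$ is automatic, so \eqref{eq:ext-sel-v4:3} just says that all eight are realized as restrictions); since these eight split into four two-element classes under $\sim$, realizing all of them in particular realizes two that are not $\sim$-equivalent. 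So the work lies in the first and third implications.

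For ``$X$ totally disconnected $\Rightarrow$ \eqref{eq:ext-sel-v4:3}'' I would fix a continuous weak selection $\sigma$ for $X$ (one exists by hypothesis), a triple $Z=\{a,b,c\}$, and an arbitrary $\tau\in\sel[\mathscr{F}_2(Z)]$, and build an extension of $\tau$ lying in $\sel[\mathscr{F}_2(X)]$. Since every quasi-component of $X$ is a singleton, the points $a,b,c$ are separated by clopen sets, so there is a partition $X=U_a\cup U_b\cup U_c$ into pairwise disjoint clopen sets with $a\in U_a$, $b\in U_b$ and $c\in U_c$. Define $\eta:\mathscr{F}_2(X)\to X$ so that $\eta$ agrees with $\sigma$ on every pair contained in a single piece, while for $x\in U_s$ and $y\in U_t$ with $s\neq t$ we set $\eta(\{x,y\})=x$ if $\tau(\{s,t\})=s$ and $\eta(\{x,y\})=y$ otherwise. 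Then $\eta$ is a weak selection with $\eta\uhr\mathscr{F}_2(Z)=\tau$, and the crux is its Vietoris continuity, which I would verify with the criterion of \cite[Theorem 3.1]{gutev-nogura:01a}: given $x<_\eta y$, if $x,y$ lie in a common piece $U_t$ then $x<_\sigma y$ and one intersects $U_t$ with the open sets witnessing continuity of $\sigma$; if $x\in U_s$ and $y\in U_t$ with $s\neq t$ then $\tau(\{s,t\})=s$ and the clopen sets $U_s,U_t$ themselves are the required witnesses. Note that $\leq_\tau$ may be one of the two cyclic (non-transitive) orders on $Z$, but continuity of a weak selection never uses transitivity, so this causes no trouble and all eight weak selections on $Z$ occur as restrictions.

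For the third implication I would argue contrapositively: assume $X$ is not totally disconnected. Since $X$ has a continuous weak selection, Theorem \ref{theorem-ext-sel-v16:1} identifies quasi-components with components, so $X$ has a component $C$ with $|C|\geq 2$; being a connected Hausdorff space with more than one point, $C$ is infinite, so fix $Z\in[C]^3$. For any $\zeta,\zeta'\in\sel[\mathscr{F}_2(X)]$ the restrictions $\zeta\uhr C$ and $\zeta'\uhr C$ are continuous weak selections for the connected space $C$, hence $\zeta'\uhr C\sim\zeta\uhr C$ by Theorem \ref{theorem-ext-sel-v15:2}, and therefore $\zeta'\uhr Z\sim\zeta\uhr Z$ by the trivial direction of Theorem \ref{theorem-ext-sel-v45:1}. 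Thus $\{\zeta\uhr Z:\zeta\in\sel[\mathscr{F}_2(X)]\}$ lies inside a single two-element $\sim$-class, so it cannot equal the eight-element set $\sel[\mathscr{F}_2(Z)]$ and it contains no pair $\sigma\uhr Z\not\sim\eta\uhr Z$; this negates both conditions for this $Z$, closing the loop. The main obstacle is the continuity verification for the pasted selection $\eta$; the remaining steps are routine bookkeeping with the structural Theorems \ref{theorem-ext-sel-v45:1}, \ref{theorem-ext-sel-v15:2} and \ref{theorem-ext-sel-v16:1}.
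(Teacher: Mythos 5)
Your proposal is correct and takes essentially the same approach as the paper: the forward direction uses the same clopen partition $\{U_z: z\in Z\}$ and the same pasted selection (your case-by-case definition of $\eta$ is exactly the paper's $\eta(S)=\sigma\left(S\cap U_{\gamma(Z_S)}\right)$), and the converse is the same component argument via Theorems \ref{theorem-ext-sel-v16:1}, \ref{theorem-ext-sel-v15:2} and \ref{theorem-ext-sel-v45:1}. The only differences are organizational: you arrange the equivalences as an implication loop that also covers the ``in particular'' clause, and you write out the continuity verification that the paper states as evident.
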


\begin{proof}
  Suppose that \eqref{eq:ext-sel-v4:3} holds, but $X$ is not totally
  disconnected. According to Theorem \ref{theorem-ext-sel-v16:1}, $X$
  has a connected component $Y$ with $|Y|\geq 2$. Take a continuous
  weak selection $\eta$ for $X$, a triple $Z\in [Y]^3$ and a weak
  selection $\gamma$ for $Z$ with $\gamma\not\sim \eta\uhr Z$. Then by
  \eqref{eq:ext-sel-v4:3}, $\gamma$ can be extended to a continuous
  weak selection $\sigma$ for $X$. Hence, by Theorem
  \ref{theorem-ext-sel-v15:2}, $\eta\uhr Y\sim \sigma\uhr Y$. However,
  by Theorem \ref{theorem-ext-sel-v45:1}, this is impossible because
  $\eta\uhr Z\not\sim \sigma\uhr Z$. Thus, $X$ must be totally
  disconnected.\smallskip

  Conversely, suppose that $X$ is totally disconnected and
  $Z\in [X]^3$. Then exists a clopen partition
  $\left\{U_z:z\in Z\right\}$ of $X$ such that $z\in U_z$ for every
  $z\in Z$. Take a weak selection $\gamma$ for $Z$ and
  $\sigma\in\sel[\mathscr{F}_2(X)]$. Next, for every
  $S\in \mathscr{F}_2(X)$, let
  $Z_S=\left\{z\in Z: S\cap U_z\neq\emptyset \right\}\in
  \mathscr{F}_2(Z)$.  Finally, set
  $\eta(S)=\sigma\left(S\cap U_{\gamma(Z_S)}\right)$ for every
  $S\in \mathscr{F}_2(X)$. Clearly, $\eta:\mathscr{F}_2(X)\to X$ is a
  continuous selection. Moreover, it is an extension of $\gamma$
  because
  $\eta(S)=\sigma\left(S\cap
    U_{\gamma(Z_S)}\right)=\sigma(\{\gamma(S)\})=\gamma(S)$ for every 
  $S\in \mathscr{F}_2(Z)$.
\end{proof}

It is evident that the second part of the proof of Theorem
\ref{theorem-ext-sel-v4:1} does not depend on the number of the
elements of $Z$ as far as this number is finite. Accordingly, we also
have the following consequence from this proof.

\begin{corollary}
  \label{corollary-ext-sel-v5:1}
  Let $X$ be a space which has a continuous weak selection. Then $X$
  is totally disconnected if and only if for each $Z\in\Sigma(X)$,
  each weak selection for $Z$ can be extended to a continuous weak
  selection for $X$.
\end{corollary}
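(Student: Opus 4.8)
The plan is to read this off from Theorem~\ref{theorem-ext-sel-v4:1} and, for one direction, from the construction in the second half of its proof. The only new point is the one already announced before the statement: in that construction a set $S\in\mathscr{F}_2(X)$ meets at most two members of a clopen partition indexed by $Z$ simply because $|S|\le 2$, no matter how large the finite set $Z$ is. So I would make this precise rather than reprove anything.

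For the forward implication, suppose that for every $Z\in\Sigma(X)$ each weak selection for $Z$ extends to a continuous weak selection for $X$. Since $[X]^3\subset\Sigma(X)$, this holds in particular for every triple $Z\in[X]^3$, where it reads $\sel[\mathscr{F}_2(Z)]\subset\{\sigma\uhr\mathscr{F}_2(Z):\sigma\in\sel[\mathscr{F}_2(X)]\}$. The reverse inclusion is automatic, because $\mathscr{F}_2(Z)$ is finite and discrete, so every restriction $\sigma\uhr\mathscr{F}_2(Z)$ with $\sigma\in\sel[\mathscr{F}_2(X)]$ is a (trivially continuous) weak selection for $Z$. Hence \eqref{eq:ext-sel-v4:3} holds and, by Theorem~\ref{theorem-ext-sel-v4:1}, $X$ is totally disconnected.

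For the reverse implication, assume $X$ is totally disconnected and fix $Z\in\Sigma(X)$; the case $|Z|=1$ is trivial, so let $|Z|\ge 2$. Because $Z$ is finite and each of its points is an intersection of clopen sets, there is a clopen partition $\{U_z:z\in Z\}$ of $X$ with $z\in U_z$ for every $z\in Z$. Let $\sigma$ be a continuous weak selection for $X$ (available by hypothesis) and let $\gamma$ be any weak selection for $Z$. For $S\in\mathscr{F}_2(X)$ put $Z_S=\{z\in Z:S\cap U_z\neq\emptyset\}$; as $|S|\le 2$ and the $U_z$ are pairwise disjoint, $Z_S\in\mathscr{F}_2(Z)$ and $S\cap U_{\gamma(Z_S)}$ is a nonempty at most two-point subset of $S$. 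Define $\eta(S)=\sigma\!\left(S\cap U_{\gamma(Z_S)}\right)$, which is then a selection for $\mathscr{F}_2(X)$.

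It remains to see that $\eta$ is continuous and extends $\gamma$, and here the argument is verbatim that of Theorem~\ref{theorem-ext-sel-v4:1}: the map $S\mapsto Z_S$ is locally constant, since for each nonempty $A\subset Z$ one has $\{S\in\mathscr{F}_2(X):Z_S=A\}=\langle U_z:z\in A\rangle\cap\mathscr{F}_2(X)$ and these finitely many sets are clopen and partition $\mathscr{F}_2(X)$; on the clopen piece where $Z_S=A$ one has $\eta(S)=\sigma(S\cap U_{z^{*}})$ for the fixed point $z^{*}=\gamma(A)\in A$, and $S\mapsto S\cap U_{z^{*}}$ is Vietoris continuous there, since intersection with a fixed clopen set is a continuous operation on the closed sets that do meet it; composing with the continuous $\sigma$ gives continuity of $\eta$ on each piece, hence on all of $\mathscr{F}_2(X)$. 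Finally, for $S\in\mathscr{F}_2(Z)$ we have $Z_S=S$ and $S\cap U_{\gamma(S)}=\{\gamma(S)\}$, so $\eta(S)=\sigma(\{\gamma(S)\})=\gamma(S)$. I expect no genuine obstacle; the only point needing attention is that $Z_S$ has at most two elements because $|S|\le 2$ and not because of any restriction on $|Z|$, which is exactly what lets the finite-triple argument of Theorem~\ref{theorem-ext-sel-v4:1} run unchanged.
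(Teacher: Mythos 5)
Your proof is correct and follows essentially the same route as the paper: the forward direction is exactly Theorem \ref{theorem-ext-sel-v4:1} applied to triples (with the automatic reverse inclusion you note), and the reverse direction is the second half of that theorem's proof, whose independence of $|Z|$ (beyond finiteness, and the fact that $|S|\le 2$ forces $Z_S\in\mathscr{F}_2(Z)$) is precisely the remark the paper uses to deduce the corollary. Your added details on the clopen partition and the local constancy of $S\mapsto Z_S$ are fine but not a different argument.
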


In fact, the hyperspace selection-extension property in Theorem
\ref{theorem-ext-sel-v4:1} remains valid for arbitrary discrete sets
$Z\in \mathscr{F}(X)$ under the following extra condition. 

\begin{proposition}
  \label{proposition-ext-sel-v9:1}
  Let $X$ be a space and $Z\in \mathscr{F}(X)$ be such that there
  exists a discrete collection $\{U_z:z\in Z\}$ of clopen sets with
  $z\in U_z$, for each $z\in Z$. If $X$ has a continuous weak
  selection, then each weak selection for $Z$ can be extended to a
  continuous weak selection for $X$.
\end{proposition}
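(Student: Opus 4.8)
The plan is to adapt the construction from the second part of the proof of Theorem~\ref{theorem-ext-sel-v4:1}, but now dealing with a possibly infinite discrete set $Z$ together with the discrete collection $\{U_z:z\in Z\}$ of clopen sets. First I would fix a continuous weak selection $\sigma$ for $X$ and an arbitrary weak selection $\gamma:\mathscr{F}_2(Z)\to Z$ to be extended. Set $U=\bigcup_{z\in Z}U_z$. For a pair $S=\{a,b\}\in \mathscr{F}_2(X)$, define $Z_S=\{z\in Z: S\cap U_z\neq \emptyset\}$; since the $U_z$ are pairwise disjoint, $|Z_S|\leq 2$, so $Z_S\in \mathscr{F}_2(Z)$ whenever $Z_S\neq \emptyset$. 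The natural candidate extension is
\[
  \eta(S)=
  \begin{cases}
    \sigma\big(S\cap U_{\gamma(Z_S)}\big), & Z_S\neq \emptyset,\\
    \sigma(S), & Z_S=\emptyset.
  \end{cases}
\]
One checks as before that $\eta$ is a selection and that $\eta\uhr \mathscr{F}_2(Z)=\gamma$: if $S\subset U$ meets exactly one $U_z$, then $S\subset U_z$, $Z_S=\{z\}$, and $\eta(S)=\sigma(S)\in S$; if $S=\{z,z'\}\subset Z$ with $z\neq z'$, then $Z_S=S$ and $\eta(S)=\sigma(\{z\})=z=\gamma(S)$, using $S\cap U_{\gamma(S)}=\{\gamma(S)\}$.

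The real work is continuity of $\eta$, which I would verify using the criterion of \cite[Theorem~3.1]{gutev-nogura:01a}: it suffices to show that for $x,y\in X$ with $x<_\eta y$ there are open $V\ni x$, $W\ni y$ with $V<_\eta W$. I would split into cases according to how $x$ and $y$ sit relative to $U$. When neither $x$ nor $y$ lies in $U$, near $x$ and $y$ the selection $\eta$ agrees with $\sigma$ on pairs avoiding $U$ (shrink neighbourhoods into the open set $X\setminus \overline{\bigcup_{z}U_z}$, which is legitimate since the collection is discrete, hence closure-preserving and its union is closed), so continuity of $\sigma$ gives the required $V,W$. When exactly one of the two points, say $x$, lies in some $U_z$, then for points $x'$ near $x$ we have $x'\in U_z$ as well, and $\eta(\{x',y'\})$ for $y'$ near $y$ and outside $U$ is determined by comparing, via $\sigma$, either $x'$ itself or the unique point of $\{x',y'\}\cap U_{z'}$ — but since $y'\notin U$ there is no such $z'$, so $Z_{\{x',y'\}}=\{z\}$ and $\eta(\{x',y'\})=\sigma(\{x',y'\})$; continuity of $\sigma$ again applies after intersecting the neighbourhood of $x$ with $U_z$. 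The case $x,y\in U$ divides further: if $x,y\in U_z$ for the same $z$, then locally $\eta$ coincides with $\sigma$ on pairs inside the clopen set $U_z$; if $x\in U_z$, $y\in U_{z'}$ with $z\neq z'$, then for $x'\in U_z$ and $y'\in U_{z'}$ we get $Z_{\{x',y'\}}=\{z,z'\}$, so $\eta(\{x',y'\})=\sigma(\{x',y'\}\cap U_{\gamma(\{z,z'\})})$, which is a \emph{constant} function of one variable (it picks out the $U_z$-coordinate or the $U_{z'}$-coordinate, according to $\gamma$), and constancy trivially yields the separating neighbourhoods $V=U_z$ shrunk appropriately, $W=U_{z'}$. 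There is one mixed subtlety: when $x\in U_z$ but $y\in \overline{U}\setminus U$ is a limit of the clopen family; here I would use discreteness of $\{U_z\}$ to find a neighbourhood of $y$ meeting no $U_w$ at all, reducing to the earlier ``$y\notin U$'' analysis.

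The step I expect to be the main obstacle is precisely this last point: controlling the behaviour of $\eta$ near points of $\overline{U}\setminus U$, and more generally making sure that the local description of $\eta$ as ``$\sigma$ composed with a projection'' is valid on a genuinely open neighbourhood. Discreteness of $\{U_z:z\in Z\}$ is exactly what rescues the argument — it guarantees that $\bigcup_z U_z$ is closed (so its complement is open and usable as a neighbourhood of points outside $U$) and that every point of $X$ has a neighbourhood meeting at most one $U_z$, which is what collapses $Z_S$ to a single point or the empty set in every relevant local computation. Once that is in hand, each case reduces either to continuity of $\sigma$ on a clopen piece or to local constancy of $\eta$, and the \cite[Theorem~3.1]{gutev-nogura:01a} criterion closes the proof. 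I would also remark that when $Z$ is finite the discreteness hypothesis is automatic (any finite collection of pairwise disjoint clopen sets, obtained by separating the finitely many points, is discrete), so Proposition~\ref{proposition-ext-sel-v9:1} indeed generalises the relevant half of Theorem~\ref{theorem-ext-sel-v4:1} and of Corollary~\ref{corollary-ext-sel-v5:1}.
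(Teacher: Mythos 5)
Your proposal is correct and essentially the paper's own argument: the paper simply observes that the hypothesis is equivalent to having a clopen partition of $X$ indexed by $Z$ (the union of the discrete clopen family is clopen, so the leftover can be merged into one piece) and then repeats the construction from the second part of the proof of Theorem \ref{theorem-ext-sel-v4:1}, which is exactly your $\eta(S)=\sigma\big(S\cap U_{\gamma(Z_S)}\big)$ with your ``$Z_S=\emptyset$'' default playing the role of the merged piece. One small slip in your continuity check: when $x'\in U_z$ and $y'\notin\bigcup_{w}U_w$, your own definition gives $\eta(\{x',y'\})=x'$, the unique point of the pair in $U_{\gamma(\{z\})}=U_z$, not $\sigma(\{x',y'\})$, so that case is settled by the same local-constancy argument as the two-piece case ($U_z<_\eta X\setminus\bigcup_w U_w$) rather than by continuity of $\sigma$ --- the conclusion is unaffected.
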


\begin{proof}
  Briefly, the condition is equivalent to the existence of a clopen
  partition $\{U_z:z\in Z\}$ of $X$ with $z\in U_z$, for each
  $z\in Z$. Hence, we can proceed precisely as in the second part of
  the proof of Theorem \ref{theorem-ext-sel-v4:1}.
\end{proof}

For countable discrete subspaces $Z\in \mathscr{F}(X)$, we also have
the following partial inverse of Proposition
\ref{proposition-ext-sel-v9:1}.

\begin{theorem}
  \label{theorem-ext-sel-v48:1}
  Let $X$ be a space such that for every countable discrete
  $Z\in \mathscr{F}(X)$, each weak selection for $Z$ can be extended
  to a continuous weak selection for $X$.  Then for every countable
  discrete $Z\in \mathscr{F}(X)$ there exists a pairwise-disjoint
  collection $\{U_z:z\in Z\}$ of clopen sets such that $z\in U_z$ for
  each $z\in Z$.
\end{theorem}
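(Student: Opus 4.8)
The plan is to first extract structural information from the hypothesis, then reduce the conclusion to a single‑point clopen separation, and finally prove that separation by contradiction. As a first step I would note that the hypothesis already forces the global shape of $X$: applying it to an arbitrary two‑point subset $\{a,b\}$ (closed and discrete, as $X$ is Hausdorff) shows that $X$ carries a continuous weak selection, and since every member of $\Sigma(X)$ is a nonempty finite — hence countable discrete closed — subset of $X$, the hypothesis yields exactly the right‑hand side of Corollary \ref{corollary-ext-sel-v5:1}, so $X$ is totally disconnected. In particular, any two distinct points of $X$ are separated by a clopen set. This already disposes of the case of a finite $Z=\{z_1,\dots,z_n\}$: pick clopen $C_{ij}\ni z_i$ with $z_j\notin C_{ij}$ for $j\neq i$, put $V_i=\bigcap_{j\neq i}C_{ij}$, and disjointify by $U_i=V_i\setminus\bigcup_{k<i}V_k$.

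For an infinite $Z=\{z_n:n\ge 1\}$ I would reduce as follows. A pairwise disjoint clopen family $\{U_n\}$ with $z_n\in U_n$ exists if and only if for each $n$ there is a clopen $C_n\subset X$ with $\{z_k:k>n\}\subset C_n$ and $z_n\notin C_n$: from such $C_n$ (which we may assume decreasing, after intersecting) and $C_0:=X$ one sets $U_n=C_{n-1}\setminus C_n$, and conversely $C_n=X\setminus(U_1\cup\dots\cup U_n)$. Since $\{z_k:k\ge n\}$ is again countable discrete closed, after relabelling the theorem comes down to the following: \emph{if $W=\{w_0\}\cup\{w_k:k\ge 1\}$ is a countable infinite discrete closed subset of $X$, then there is a clopen set $C\subset X$ with $C\cap W=\{w_0\}$} (take $C_n=X\setminus C$ with $W=\{z_k:k\ge n\}$, $w_0=z_n$).

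To prove this I would argue by contradiction: suppose no clopen set meets $W$ exactly in $\{w_0\}$, i.e.\ every clopen neighbourhood of $w_0$ hits $W':=\{w_k:k\ge 1\}$. First record that by total disconnectedness there are clopen sets $D_1\supset D_2\supset\cdots$ with $w_0\in D_i$ for all $i$ and $w_k\notin D_i$ whenever $k<i$. Now invoke the hypothesis on the weak selection $\sigma$ for $W$ for which $w_0$ is the $\leq_\sigma$‑minimum and $w_1>_\sigma w_2>_\sigma w_3>_\sigma\cdots$ (so that $w_0$ is forced to behave like the infimum of $W'$), and let $\widehat\sigma\in\sel[\mathscr{F}_2(X)]$ extend $\sigma$. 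By Michael's observation the interval $(w_0,\to)_{\leq_{\widehat\sigma}}$ is open; it contains $W'$ and omits $w_0$, while its complement $\{w_0\}\cup(\leftarrow,w_0)_{\leq_{\widehat\sigma}}$ meets $W$ exactly in $\{w_0\}$. Hence if that complement is open we are done, and otherwise $w_0\in\overline{(w_0,\to)_{\leq_{\widehat\sigma}}}$.

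The crux — and the step I expect to be the genuine obstacle — is to rule out $w_0\in\overline{(w_0,\to)_{\leq_{\widehat\sigma}}}$. Here I would play the continuity criterion of \cite[Theorem 3.1]{gutev-nogura:01a}, applied at the pairs $w_0<_{\widehat\sigma}w_k$ and at the descending pairs $w_{k+1}<_{\widehat\sigma}w_k$, against the clopen sets $D_i$, ultimately by exhibiting (were that membership to hold) a weak selection on a suitably chosen countable discrete closed subset of $X$ that admits no continuous extension, contradicting the hypothesis. Carrying out this last construction, in particular taming the failure of transitivity of $\leq_{\widehat\sigma}$ near $w_0$, is where essentially all the work lies; everything before it is bookkeeping resting on total disconnectedness and Corollary \ref{corollary-ext-sel-v5:1}.
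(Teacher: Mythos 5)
Your reductions are fine: the hypothesis applied to finite sets gives, via Corollary \ref{corollary-ext-sel-v5:1}, that $X$ has a continuous weak selection and is totally disconnected; the finite case of the conclusion follows; and your equivalence between the desired pairwise-disjoint clopen family and the statement ``for every countable infinite closed discrete $W$ and $w_0\in W$ there is a clopen $C$ with $C\cap W=\{w_0\}$'' is correct (every subset of a closed discrete set is closed discrete, so the relabelling is legitimate). But the proof stops exactly where the theorem actually lives. Having extended the selection with $w_0$ as $\leq_\sigma$-minimum and $w_1>_\sigma w_2>_\sigma\cdots$, you only get that $(w_0,\to)_{\leq_{\widehat\sigma}}$ is open; whether its complement is open is precisely the question, and your plan to rule out $w_0\in\overline{(w_0,\to)_{\leq_{\widehat\sigma}}}$ --- ``exhibit a weak selection on a suitably chosen countable discrete closed subset that admits no continuous extension'' --- is never constructed, as you yourself acknowledge. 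As sketched it is also doubtful: the decreasing clopen sets $D_i$ obtained from total disconnectedness give no control near $w_0$ (total disconnectedness provides no clopen neighbourhood base), and points harvested from $D_i\cap(w_0,\to)_{\leq_{\widehat\sigma}}$ need not form a closed discrete set, so the hypothesis cannot obviously be brought to bear. Moreover, for a ``linear'' selection of the type you prescribe, the clopen-ness of the rays at $w_0$ is exactly what can fail; nothing in the continuity criterion forces it.

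The missing idea is the one the paper uses: design the selection on $Z$ so that non-transitivity works for you rather than against you. The paper partitions $Z$ into triples $Z_n$, takes $\eta$ circular on each $Z_n$ (i.e.\ $u<_\eta v<_\eta w<_\eta u$) and with $Z_k<_\eta Z_n$ for $k<n$, and extends to $\sigma$. Circularity is the key: for a $\sigma$-circular triple the three intervals $(u,w)_{\leq_\sigma}$, $(w,v)_{\leq_\sigma}$, $(v,u)_{\leq_\sigma}$ are automatically \emph{clopen} (continuity at the pairs $w<_\sigma u$, $u<_\sigma v$, etc.\ makes the complements open), so no contradiction argument or case analysis on closures is needed; the block ordering of the triples gives open ``slabs'' $W_n$ separating consecutive blocks, and intersecting with clopen sets supplied by total disconnectedness produces the pairwise-disjoint clopen family directly. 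So your outer bookkeeping is sound and your reduction is a reasonable alternative framing, but the heart of the proof --- producing the clopen separation --- is absent, and the selection you chose to extend is the wrong one for producing it.
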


\begin{proof}
  We follow the idea in \cite[Proposition 2.2 and Lemma
  2.3]{Gutev2021a}. Namely, take an infinite countable discrete
  $Z\in \mathscr{F}(X)$, a partition $\mathscr{Z}\subset [Z]^3$ of $Z$
  into triples, and an enumeration $\mathscr{Z}=\{Z_n: n<\omega\}$ of
  this partition. Next, take a weak selection $\eta$ for $Z$ with the
  property that $\leq_\eta$ is not transitive on each $Z_n$,
  $n<\omega$, and $Z_k<_\eta Z_n$ precisely when $k<n$. Then, by
  condition, $\eta$ can be extended to a continuous weak selection
  $\sigma$ for $X$. For convenience, set $Z_{-1}=\emptyset$ and for
  each $n<\omega$, define an open subset $W_n\subset X$ by
  \begin{equation}
    \label{eq:ext-sel-v48:1}
    W_n=\left\{x\in X: Z_{k}<_\sigma x<_\sigma Z_{n+1}\ \text{for every
        $k<n$}\right\}. 
  \end{equation}
  Evidently, $Z_n\subset W_n$. Moreover, we also have that
  \begin{equation}
    \label{eq:ext-sel-v48:2}
    W_m\cap W_n=\emptyset\quad\text{for every $m\geq n+2$.}
  \end{equation}
  This is a simple consequence of \eqref{eq:ext-sel-v48:1} because
  $W_m\subset\left\{x\in X: Z_{n+1}<_\sigma x\right\}$ and
  $W_n\subset \left\{x\in X: x<_\sigma Z_{n+1}\right\}$.\smallskip

  We can now slightly shrink any $W_n$ to a clopen subset containing
  $Z_n$, namely for each $n<\omega$ there exists a clopen set
  $H_n\subset X$ such that
  \begin{equation}
    \label{eq:ext-sel-v48:3}
    Z_n\subset H_n\subset W_n.
  \end{equation}
  Indeed, the set $W_n$ is an intersection of the
  $\leq_\sigma$-intervals $(\gets,z)_{\leq_\sigma}$ for
  $z\in Z_{n+1}$, and $(z,\to)_{\leq_\sigma}$ for $z\in Z_k$,
  $0\leq k<n$. Moreover, the sets $Z_{n+1},Z_k$, $k<n$, are finite and
  disjoint from $Z_n$. However, by Theorem \ref{theorem-ext-sel-v4:1},
  $X$ is totally disconnected. Therefore, the triple $Z_n$ is
  contained in a clopen set $T_n\subset X$ such that
  $T_n\cap Z_{n+1}=\emptyset$ and $T_n\cap Z_k=\emptyset$,
  $k<n$. Accordingly, $H_n=W_n\cap T_n$ is as required in
  \eqref{eq:ext-sel-v48:3}. \smallskip

  We can finish the proof as follows. Since
  $\sigma\uhr Z_n=\eta\uhr Z_n$ and $\leq_\eta$ is not transitive on
  $Z_n$, this set is \emph{$\sigma$-circular} in the sense of
  \cite{Gutev2021a}. In other words, $Z_n$ consists of points
  $u,v,w\in Z_n$ such that $u<_\sigma v<_\sigma w<_\sigma
  u$. Accordingly, each two of these points define a clopen
  neighbourhood of the other, namely
  \begin{equation}
    \label{eq:ext-sel-v48:4}
    v\in
    T_v=(u,w)_{\leq_\sigma},\quad u\in T_u=(w,v)_{\leq_\sigma}\quad
    \text{and}\quad w\in T_w=(v,u). 
  \end{equation}
  It is also evident that these $\leq_\sigma$-intervals are
  pairwise-disjoint. We can now take $U_z=T_z\cap H_n$ for $z\in Z_n$
  and $n<\omega$. Accordingly, by \eqref{eq:ext-sel-v48:3}, we get a
  family $\{U_z:z\in Z\}$ of clopen sets with $z\in U_z$, $z\in Z$. To
  show that it is also pairwise-disjoint take points $y,z\in Z$ with
  $y\neq z$. If $y,z\in Z_n$ for some $n<\omega$, then
  $U_y\cap U_z=\emptyset$ because $T_y\cap T_z=\emptyset$, see
  \eqref{eq:ext-sel-v48:4}. Suppose that $y\in Z_m$ and $z\in Z_n$ for
  some $m>n$. Then by \eqref{eq:ext-sel-v48:2} and
  \eqref{eq:ext-sel-v48:3}, $U_y\cap U_z\subset W_m\cap W_n=\emptyset$
  provided that $m\geq n+2$. So, the remaining case is when
  $m=n+1$. In this case, by \eqref{eq:ext-sel-v48:4}, there is a point
  $x\in Z_m=Z_{n+1}$ such that $U_y\subset
  (x,\to)_{\leq_\sigma}$. However, by \eqref{eq:ext-sel-v48:1}, we
  also have that $U_z\subset W_n\subset
  (\gets,x)_{\leq_\sigma}$. Thus, we get again that
  $U_y\cap U_z=\emptyset$, and the proof is complete.
\end{proof}

Proposition \ref{proposition-ext-sel-v9:1} and Theorem
\ref{theorem-ext-sel-v48:1} are naturally related to some separation
properties of topological spaces. A space $X$ is (\emph{strongly})
\emph{$\tau$-collectionwise Hausdorff} if for each closed discrete set
$Z\subset X$ of cardinality at most $\tau$, there exists a
pairwise-disjoint (resp., discrete) collection $\{U_z:z\in Z\}$ of
open sets such that $z\in U_z$ for all $z\in Z$. A space $X$ is
(\emph{strongly}) \emph{collectionwise Hausdorff} if it is (strongly)
$\tau$-collectionwise Hausdorff for each cardinal number $\tau$. In
what follows, we will use the abbreviation ``cwH'' for
``collectionwise Hausdorff''. In these terms, a space $X$ as in
Theorem \ref{theorem-ext-sel-v48:1} is both totally disconnected and
$\omega$-cwH. Accordingly, we have the following natural question.

\begin{question}
  \label{question-ext-sel-v11:1}
  Let $X$ be a space such that for every discrete
  $Z\in \mathscr{F}(X)$, each weak selection for $Z$ can be
  extended to a continuous weak selection for $X$.  Then, is it
  true that for every discrete $Z\in \mathscr{F}(X)$ there exists a
  pairwise-disjoint collection $\{U_z:z\in Z\}$ of clopen sets such
  that $z\in U_z$ for each $z\in Z$? In particular, is it true that
  $X$ is (strongly) cwH?
\end{question}

Proposition \ref{proposition-ext-sel-v9:1} suggests another natural
question.

\begin{question}
  \label{question-ext-sel-v12:1}
  Let $X$ be a totally disconnected strongly cwH space. If $X$ has a
  continuous weak selection, then is it true that for every discrete
  $Z\in \mathscr{F}(X)$, each weak selection for $Z$ can be extended
  to a continuous weak selection for $X$?
\end{question}

For a zero-dimensional space $X$, the answer to Question
\ref{question-ext-sel-v12:1} is ``yes''. Namely, we have the following
immediate consequence of Proposition \ref{proposition-ext-sel-v9:1}.

\begin{corollary}
  \label{corollary-ext-sel-v45:1}
  Let $X$ be a zero-dimensional strongly cwH space. If $X$ has a
  continuous weak selection, then for each discrete
  $Z\in \mathscr{F}(X)$, each weak selection for $Z$ can be extended
  to a continuous weak selection for $X$.
\end{corollary}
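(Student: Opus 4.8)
The plan is to reduce directly to Proposition \ref{proposition-ext-sel-v9:1}, whose only hypothesis beyond the existence of a continuous weak selection is that the discrete set $Z$ admits a \emph{discrete} collection of clopen sets separating its points. So the entire task is to manufacture such a collection from the two standing assumptions on $X$, namely zero-dimensionality and strong collectionwise Hausdorffness.

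First I would invoke strong cwH. Since $Z\in \mathscr{F}(X)$ is closed and discrete, of some cardinality $\tau$, strong $\tau$-collectionwise Hausdorffness supplies a discrete family $\{V_z : z\in Z\}$ of open sets with $z\in V_z$ for every $z\in Z$. Next I would shrink each $V_z$ to a clopen set using zero-dimensionality: for every $z\in Z$ the point $z$ has a clopen neighbourhood contained in $V_z$; choosing one and calling it $U_z$, we get $z\in U_z\subset V_z$ with $U_z$ clopen in $X$.

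The only point that needs checking is that the shrunk family $\{U_z : z\in Z\}$ is still discrete. This is the routine observation that shrinking each member of a discrete family preserves discreteness: any point of $X$ has an open neighbourhood meeting at most one $V_z$, hence meeting at most one $U_z$ since $U_z\subset V_z$. Consequently $\{U_z : z\in Z\}$ is a discrete collection of clopen sets with $z\in U_z$ for each $z\in Z$, and Proposition \ref{proposition-ext-sel-v9:1} applies verbatim, yielding that every weak selection for $Z$ extends to a continuous weak selection for $X$.

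There is no genuine obstacle here; the statement really is immediate once the shrinking step is phrased correctly. The one thing to be mildly careful about is that the shrinking must be shown to preserve \emph{discreteness} of the family, not merely pairwise disjointness, since Proposition \ref{proposition-ext-sel-v9:1} requires a discrete collection — and that is exactly what the neighbourhood argument above records.
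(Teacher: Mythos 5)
Your argument is correct and is exactly the route the paper takes: the corollary is stated there as an immediate consequence of Proposition \ref{proposition-ext-sel-v9:1}, obtained by using strong cwH to separate the closed discrete set $Z$ by a discrete open family and then shrinking to clopen sets via zero-dimensionality. Your explicit remark that the shrinking preserves discreteness (not just pairwise disjointness) is the right point to record, and nothing further is needed.
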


Thus, we also have the following complementary question.

\begin{question}
  \label{question-ext-sel-v45:1}
  Let $X$ be a zero-dimensional $\tau$-cwH space with a continuous
  weak selection. Then, is it true that for every discrete
  $Z\in \mathscr{F}(X)$ with $|Z|\leq\tau$, each weak selection for
  $Z$ can be extended to a continuous weak selection for $X$?
\end{question}

A space $X$ is \emph{collectionwise normal} \cite{bing:51} if for
every discrete collection $\mathscr{D}$ of (closed) subsets of $X$
there is a discrete collection $\{U_D:D\in \mathscr{D}\}$ of open
subsets of $X$ such that ${D\subset U_D}$ for all $D\in
\mathscr{D}$. Collectionwise normality implies normality but the
converse is not true. In \cite[Examples G and H]{bing:51}, Bing
described a normal space which is not collectionwise normal, see also
\cite[5.1.23 Bing's Example]{engelking:89}. If a space $X$ is
collectionwise normal with respect to discrete families $\mathscr{D}$
of cardinality at most $\tau$, then it is called
\emph{$\tau$-collectionwise normal}. It is well known that for every
infinite cardinal $\tau$ there exists a $\tau$-collectionwise normal
space which is not $\tau^+$-collectionwise normal
\cite{przymusinski:78a}, where the cardinal $\tau^+$ is the immediate
successor of~$\tau$. In this setting, a space $X$ is normal precisely
when it is $\omega$-collectionwise normal, which follows easily from
Tietze-Urysohn extension theorem.\medskip

Since each normal (i.e.\ $\omega$-collectionwise normal) space is
strongly $\omega$-cwH, we have the following special case of Corollary
\ref{corollary-ext-sel-v45:1}.

\begin{corollary}
  \label{corollary-ext-sel-v45:2}
  Let $X$ be a zero-dimensional normal space. If $X$ has a continuous
  weak selection, then for each countable discrete
  $Z\in \mathscr{F}(X)$, each weak selection for $Z$ can be extended
  to a continuous weak selection for $X$.
\end{corollary}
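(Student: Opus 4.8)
The plan is to derive Corollary~\ref{corollary-ext-sel-v45:2} as a direct specialisation of Corollary~\ref{corollary-ext-sel-v45:1}. Since Corollary~\ref{corollary-ext-sel-v45:1} already delivers the desired conclusion for zero-dimensional strongly cwH spaces, it suffices to check that a zero-dimensional \emph{normal} space $X$ is strongly $\omega$-cwH, and then invoke Corollary~\ref{corollary-ext-sel-v45:1} with $Z\in\mathscr{F}(X)$ countable discrete (which is precisely a closed discrete set of cardinality at most $\omega$).

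First I would recall the standard fact, noted in the paragraph preceding the statement, that every normal space is $\omega$-collectionwise normal, and that $\omega$-collectionwise normality gives, for each closed discrete $Z\subset X$ with $|Z|\le\omega$, a discrete collection $\{V_z:z\in Z\}$ of open sets with $z\in V_z$; thus $X$ is strongly $\omega$-cwH. For completeness I would sketch why normality implies this: given a countable closed discrete $Z=\{z_n:n<\omega\}$, one inductively separates $z_n$ from the closed set $(Z\setminus\{z_n\})\cup\overline{\bigcup_{k<n}U_k}$ using normality (together with the fact that a countable union of closed sets each missing the relevant points can be controlled), producing open $U_n\ni z_n$ with pairwise-disjoint closures, and a routine argument upgrades pairwise-disjointness of closures to discreteness of $\{U_n\}$. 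This is the only point requiring any argument, and it is entirely classical.

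Having established that $X$ is (zero-dimensional and) strongly $\omega$-cwH, the conclusion is immediate: for a countable discrete $Z\in\mathscr{F}(X)$, Corollary~\ref{corollary-ext-sel-v45:1} applies verbatim — being zero-dimensional strongly $\omega$-cwH is exactly what is needed for that argument restricted to countable $Z$ — and every weak selection for $Z$ extends to a continuous weak selection for $X$.

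I do not anticipate a genuine obstacle here; the statement is a corollary precisely because the substantive work (shrinking a discrete clopen family to a clopen partition and transporting a weak selection across it, via Proposition~\ref{proposition-ext-sel-v9:1}) has already been done in Corollary~\ref{corollary-ext-sel-v45:1}. The only care needed is the bookkeeping identification ``countable discrete closed set'' $=$ ``closed discrete set of cardinality $\le\omega$'', and the observation that normality is exactly $\omega$-collectionwise normality, hence yields the strong $\omega$-cwH property rather than merely the pairwise-disjoint version. A one- or two-line proof along these lines is all that is required.
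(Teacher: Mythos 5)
Your proposal is correct and follows essentially the same route as the paper: the paper also obtains this corollary as a special case of Corollary~\ref{corollary-ext-sel-v45:1}, using the observation that each normal (i.e.\ $\omega$-collectionwise normal) space is strongly $\omega$-cwH. The classical fact you sketch (normality gives a discrete open expansion of a countable closed discrete set) is taken for granted in the paper via the Tietze--Urysohn theorem, so no further argument is needed.
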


This brings the following related question.

\begin{question}
  \label{question-ext-sel-v18:2}
  Let $X$ be a totally disconnected normal which has a continuous weak
  selection. Then, is it true that for every countable discrete
  $Z\in \mathscr{F}(X)$, each weak selection for $Z$ can be extended
  to a continuous weak selection for $X$?
\end{question}

A space $X$ is \emph{ultranormal} if every two disjoint closed subsets
are contained in disjoint clopen subsets; equivalently, if its
\emph{large inductive dimension} is zero. In the realm of normal
spaces, $X$ is ultranormal if and only if it is strongly
zero-dimensional. Each totally disconnected suborderable space is
ultranormal. This was essentially shown in H. Herrlich \cite[Lemma
1]{MR0185564} and explicitly stated in S. Purisch \cite[Proposition
2.3]{purisch:77}.\label{page:suborderable-zero} Moreover, each
suborderable space is both collectionwise normal and countably
paracompact, see \cite{MR0063646, MR0093753, MR0257985}.  Thus, we
have the following consequence from Corollary
\ref{corollary-ext-sel-v5:1} and Proposition
\ref{proposition-ext-sel-v9:1}.

\begin{corollary}
  \label{corollary-ext-sel-v18:1}
  For a suborderable space $X$, the following are equivalent\textup{:}
  \begin{enumerate}
  \item\label{item:ext-sel-v12:1} $X$ is strongly zero-dimensional.
  \item\label{item:ext-sel-v12:2} For every discrete
    $Z\in \mathscr{F}(X)$, each weak selection for $Z$ can be extended
    to a continuous weak selection for $X$.
  \item\label{item:ext-sel-v12:3} For every $Z\in \Sigma(X)$, each
    weak selection for $Z$ can be extended to a continuous weak
    selection for $X$.
  \end{enumerate}
\end{corollary}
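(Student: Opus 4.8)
The plan is to prove the cycle of implications
\ref{item:ext-sel-v12:1} $\Rightarrow$ \ref{item:ext-sel-v12:2} $\Rightarrow$ \ref{item:ext-sel-v12:3} $\Rightarrow$ \ref{item:ext-sel-v12:1},
leaning on the facts collected just before the statement, namely that a suborderable space is collectionwise normal (indeed, strongly $\tau$-cwH for every $\tau$) and that, in the realm of normal spaces, strong zero-dimensionality coincides with ultranormality.

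First, for \ref{item:ext-sel-v12:1} $\Rightarrow$ \ref{item:ext-sel-v12:2}: assume $X$ is a strongly zero-dimensional suborderable space. A suborderable space always carries a compatible linear order, hence a continuous weak selection (the ``min'' selection of Theorem~\ref{theorem-hsp-ver-5:7}), so the weak-selection hypothesis is automatic. Next I would note that a strongly zero-dimensional suborderable space is zero-dimensional — a GO-space whose covering dimension is zero has a base of clopen sets — and is strongly cwH, since every suborderable space is collectionwise normal and hence strongly $\tau$-cwH for all $\tau$. Thus $X$ satisfies the hypotheses of Corollary~\ref{corollary-ext-sel-v45:1}, which yields exactly \ref{item:ext-sel-v12:2}: for every discrete $Z\in\mathscr{F}(X)$, each weak selection for $Z$ extends to a continuous weak selection for $X$. (Concretely, strong cwH plus zero-dimensionality provides, for each such $Z$, a discrete collection $\{U_z:z\in Z\}$ of clopen sets with $z\in U_z$, and then Proposition~\ref{proposition-ext-sel-v9:1} does the extension.)

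The implication \ref{item:ext-sel-v12:2} $\Rightarrow$ \ref{item:ext-sel-v12:3} is immediate, since every $Z\in\Sigma(X)$ is a finite, hence closed and discrete, member of $\mathscr{F}(X)$, so \ref{item:ext-sel-v12:3} is just the restriction of \ref{item:ext-sel-v12:2} to finite sets. For the closing implication \ref{item:ext-sel-v12:3} $\Rightarrow$ \ref{item:ext-sel-v12:1}: from \ref{item:ext-sel-v12:3} applied to two-point sets, $X$ has a continuous weak selection, and applied to triples it gives, for every $Z\in[X]^3$, selections $\sigma,\eta\in\sel[\mathscr{F}_2(X)]$ with $\eta\uhr Z\not\sim\sigma\uhr Z$ (take $\gamma$ on $Z$ with $\gamma\not\sim\eta_0\uhr Z$ for a fixed continuous $\eta_0$ and extend). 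By Theorem~\ref{theorem-ext-sel-v4:1}, $X$ is therefore totally disconnected. Now $X$ is a totally disconnected suborderable space, hence ultranormal by the Herrlich--Purisch result recalled on page~\pageref{page:suborderable-zero}, and a suborderable space is normal; an ultranormal normal space is strongly zero-dimensional, giving \ref{item:ext-sel-v12:1}.

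The only point needing slight care — the ``main obstacle'' such as it is — is the inference in \ref{item:ext-sel-v12:1} $\Rightarrow$ \ref{item:ext-sel-v12:2} that a strongly zero-dimensional suborderable space is genuinely \emph{zero-dimensional} (clopen base), so that Corollary~\ref{corollary-ext-sel-v45:1} applies rather than only Proposition~\ref{proposition-ext-sel-v9:1} in its raw form; this holds for GO-spaces but deserves an explicit word. Everything else is bookkeeping with the quoted theorems, and the argument is symmetric in the sense that total disconnectedness is the common thread linking the selection-extension statements to strong zero-dimensionality via suborderability.
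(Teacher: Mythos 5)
Your proof is correct and follows essentially the paper's own route: the paper derives this corollary from Corollary~\ref{corollary-ext-sel-v5:1} and Proposition~\ref{proposition-ext-sel-v9:1} together with collectionwise normality of suborderable spaces and the Herrlich--Purisch fact that totally disconnected suborderable spaces are ultranormal, and your use of Corollary~\ref{corollary-ext-sel-v45:1} and Theorem~\ref{theorem-ext-sel-v4:1} is just a repackaging of those same ingredients. Your explicit remark that strong zero-dimensionality yields a clopen base here (via normality and ultranormality) is a fair point of care, but it matches what the paper implicitly uses.
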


Regarding the role of collectionwise normality, it is interesting to
consider the following special case of Question
\ref{question-ext-sel-v12:1}.

\begin{question}
  \label{question-ext-sel-v12:2}
  Let $X$ be a totally disconnected collectionwise normal space with a
  continuous weak selection. Then, is it true that for every discrete
  $Z\in \mathscr{F}(X)$, each weak selection for $Z$ can be extended
  to a continuous weak selection for $X$? What about if $X$ is
  paracompact or even metrizable?
\end{question}

The answer to Question \ref{question-ext-sel-v12:2} is ``yes'' if $X$
is also locally compact, which follows from Corollary
\ref{corollary-ext-sel-v45:1} because each totally disconnected
locally compact space is zero-dimensional, see \cite[Theorem
6.2.9]{engelking:89}. Related to this, let us remark that according to
\cite[Corollary 6.14]{gutev:07b}, a locally compact totally
disconnected paracompact space $X$ is orderable if and only if it has
a continuous weak selection. Thus, in this case, $X$ is an ultranormal
paracompact space. These spaces are often called
\emph{ultraparacompact} and have the property that every open cover
has a pairwise disjoint open refinement, see \cite{MR0261565}.\medskip

Regarding the role of the suborderability, we also have the
following question.

\begin{question}
  \label{question-ext-sel-v46:1}
  If $X$ is a (strongly) zero-dimensional weakly orderable space, then
  is it true that for every discrete $Z\in \mathscr{F}(X)$, each weak
  selection for $Z$ can be extended to a continuous weak selection for
  $X$?
\end{question}

Another interesting case of the hyperspace selection-extension problem
for discrete subspaces $Z\subset X$ is when $Z$ is not necessarily
closed in $X$. The simplest possible case is when $Z$ is a countable
discrete subspace.

\begin{proposition}
  \label{proposition-ext-sel-v13:2}
  Let $X$ be a space such that for every countable discrete subspace
  $Z\subset X$, each weak selection for $Z$ can be extended to a
  continuous weak selection for $X$. Then $X$ has no non-trivial
  convergent sequences.
\end{proposition}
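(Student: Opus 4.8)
The plan is to argue by contradiction: assuming $X$ has a non-trivial convergent sequence, I will produce a countable discrete subspace $Z\subset X$ together with a weak selection for $Z$ which admits no continuous extension to $X$, contrary to the hypothesis. First I would normalise the sequence. From a non-trivial convergent sequence one extracts, by passing to a subsequence, a sequence $\{x_n:n<\omega\}$ of pairwise distinct points with $x_n\to p$ and $p\notin\{x_n:n<\omega\}$; here one uses that $X$ is Hausdorff, so no single value $q\ne p$ can be attained infinitely often by a sequence converging to $p$. Put $Z=\{x_n:n<\omega\}$. Then $Z$ is a countably infinite discrete subspace of $X$: for each $n$, Hausdorffness gives a neighbourhood of $x_n$ missing $p$, hence containing only finitely many terms of the sequence, and intersecting it with suitable neighbourhoods of $x_n$ separating $x_n$ from those finitely many terms yields a neighbourhood of $x_n$ meeting $Z$ only in $x_n$. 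I also record the elementary consequence of $x_n\to p$ that every neighbourhood $V$ of $p$ contains all but finitely many points of $Z$.

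Next I would fix a bijection $\Z\to Z$, written $k\mapsto z_k$, and define a weak selection $\sigma$ for $Z$ by $\sigma(\{z_j,z_k\})=z_{\min\{j,k\}}$. Thus $\leq_\sigma$ is the usual order of $\Z$ transported to $Z$; in particular $z_0$ has infinitely many $<_\sigma$-predecessors and infinitely many $<_\sigma$-successors in $Z$. By hypothesis, $\sigma$ extends to a continuous weak selection $\sigma'$ for $X$. The main point uses the characterisation of continuity of weak selections (\cite[Theorem 3.1]{gutev-nogura:01a}, recalled in Section 2): since $z_0\ne p$ and $\leq_{\sigma'}$ is total, either $z_0<_{\sigma'}p$ or $p<_{\sigma'}z_0$. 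If $z_0<_{\sigma'}p$, continuity gives open $U\ni z_0$ and $V\ni p$ with $U<_{\sigma'}V$; as $V$ contains all but finitely many of the $z_k$, it follows that $z_0<_{\sigma'}z_k$, hence $z_0<_\sigma z_k$, for all but finitely many $k\in\Z$, so by antisymmetry of $\leq_\sigma$ only finitely many $z_k$ are $<_\sigma$-predecessors of $z_0$ — a contradiction. If $p<_{\sigma'}z_0$, the symmetric argument (open $U\ni p$, $V\ni z_0$ with $U<_{\sigma'}V$, and $U$ containing all but finitely many $z_k$) shows that only finitely many $z_k$ are $<_\sigma$-successors of $z_0$, again a contradiction. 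Hence $\sigma$ has no continuous extension, which proves that $X$ has no non-trivial convergent sequence.

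The step I expect to be the only subtle one — and the reason a naive choice of $\sigma$ fails — is that the limit $p$ need not behave like an endpoint of $\leq_{\sigma'}$: a convergent sequence together with its limit can carry a continuous weak selection in which the limit lies strictly between infinitely many points on each side (order type $\omega+1+\omega^*$, as produced by $\min_{\le}$ on such an orderable compactum via Theorem \ref{theorem-hsp-ver-5:7}), so one cannot hope to contradict monotonicity of $\sigma$ on $Z$. What rescues the argument is that, by totality of $\leq_{\sigma'}$, the point $p$ lies on exactly one $\leq_{\sigma'}$-side of the \emph{fixed} point $z_0$, and continuity then forces all but finitely many $z_k$ onto that same side; choosing $\leq_\sigma$ of order type $\Z$ makes both sides of $z_0$ infinite, so whichever side $p$ falls on produces the contradiction. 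Everything else is routine.
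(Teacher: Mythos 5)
Your proof is correct, and it is the same strategy as the paper's in spirit: assume a non-trivial convergent sequence exists, build on it a countable discrete subspace with a deliberately chosen weak selection, and contradict continuity of the extension at the limit point. The implementations differ, though. The paper splits the sequence into two disjoint subsequences $S,T\to p$, adds an anchor point $q$, and imposes the circular order $q<_\eta S<_\eta T<_\eta q$; sequential continuity of the extension $\sigma$ along $\{q,y_n\}\to\{q,p\}$ and $\{q,z_n\}\to\{q,p\}$ then forces both $q<_\sigma p$ and $p<_\sigma q$ at once. You instead order a single copy of the sequence with order type $\mathbb{Z}$, fix the middle point $z_0$, and use the $U<_{\sigma'}V$ characterisation of continuity from \cite[Theorem 3.1]{gutev-nogura:01a}: whichever side of $z_0$ the limit $p$ falls on, a cofinite tail of $Z$ is pushed to that side, contradicting that both $\leq_\sigma$-sides of $z_0$ in $Z$ are infinite. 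Both arguments are elementary and of comparable length; yours avoids the auxiliary point $q$ and the splitting into two subsequences, while the paper's avoids any appeal to the open-set characterisation, using only convergence of values along Vietoris-convergent sequences of doubletons. One cosmetic point: in your discreteness argument, a neighbourhood of $x_n$ merely \emph{missing} $p$ need not exclude a tail of the sequence; what you want (and what Hausdorffness gives) is a neighbourhood of $x_n$ disjoint from some neighbourhood of $p$, which then contains only finitely many terms — the rest of your argument is as stated.
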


\begin{proof}
  Assume that $X$ has a point $p\in X$ which is the limit of a
  sequence of points of $X\setminus \{p\}$. Then there are sequences
  $S=\{y_n\},T=\{z_n\}\subset X\setminus \{p\}$ such that
  $S\cap T=\emptyset$ and
  $\lim_{n\to \infty}y_n=p=\lim_{n\to \infty}z_n$. We can further
  assume that there is a point $q\in X\setminus \{p\}$ with
  $q\notin S\cup T$. Set $Z=S\cup\{q\}\cup T$ and define a weak
  selection $\eta$ for $Z$ such that
  $q<_\eta S<_\eta T<_\eta q$. Then by condition, $\eta$ can
  be extended to a continuous weak selection $\sigma$ for $X$. However,
  $q<_\sigma S$ implies that
  $\sigma(\{q,p\})= \lim_{n\to \infty}\eta(\{q,y_n\})=q$, so
  $q<_\sigma p$. Similarly, $T<_\sigma q$ implies that $p<_\sigma
  q$. Clearly, this is impossible.
\end{proof}

A space $X$ is \emph{scattered} if every nonempty subset $A\subset X$
has an element $p\in A$ which is isolated relative to $A$, i.e. $p$ is
an isolated point of $A$. Equivalently, $X$ is scattered if every
nonempty closed subset of $X$ has an isolated point. 

\begin{corollary}
  \label{corollary-ext-sel-v13:3}
  Let $X$ be a space such that for every countable discrete subspace
  $Z\subset X$, each weak selection for $Z$ can be extended to a
  continuous weak selection for $X$. If $\mathscr{F}(X)$ has a
  continuous selection, then $X$ is a scattered space.
\end{corollary}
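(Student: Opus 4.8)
The plan is to argue by contradiction, and the crux will be reduced to the following statement: a Hausdorff space with no isolated points that admits a continuous selection for its hyperspace of closed sets must contain a non-trivial convergent sequence. First, by Proposition~\ref{proposition-ext-sel-v13:2}, the extension hypothesis on $X$ forces $X$ to have no non-trivial convergent sequences. Now suppose, contrary to the claim, that $X$ is not scattered. Then there is a nonempty closed set $A\subset X$ with no isolated point, and since $A$ is closed in $X$ we have $\mathscr{F}(A)\subset\mathscr{F}(X)$, so the restriction $g:=f\uhr\mathscr{F}(A)$ of a selection $f\in\sel[\mathscr{F}(X)]$ is a continuous selection for $\mathscr{F}(A)$. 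In particular $\sigma:=g\uhr\mathscr{F}_2(A)$ is a continuous weak selection for $A$, with associated total antisymmetric (possibly non-transitive) relation $\leq_\sigma$. Since $A\subset X$, a non-trivial convergent sequence in $A$ is one in $X$ as well, and would contradict the first step; so it suffices to produce such a sequence in $A$.

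To this end, recall that continuity of $\sigma$ makes each ray $(\leftarrow,x)_{\leq_\sigma}$ and $(x,\to)_{\leq_\sigma}$ open in $A$, hence $[x,\to)_{\leq_\sigma}$ and $(\leftarrow,x]_{\leq_\sigma}$ closed, so that every ``interval'' $[x,y]_{\leq_\sigma}=\{z\in A: x\leq_\sigma z\leq_\sigma y\}$ is closed in $A$. Put $a_0=g(A)$. As $a_0$ is not isolated in $A$, it is a limit point of $(\leftarrow,a_0)_{\leq_\sigma}$ or of $(a_0,\to)_{\leq_\sigma}$; assume the latter, the other case being symmetric via $\sigma^*$-type bookkeeping. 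Passing to the closed set $B_0:=[a_0,\to)_{\leq_\sigma}$ and replacing $A$ by $B_0$ and $g$ by $g\uhr\mathscr{F}(B_0)$ (this keeps a continuous selection), we may assume $a_0=\min_{\leq_\sigma}A$ and that $a_0$ is a limit point of $R_0:=A\setminus\{a_0\}=(a_0,\to)_{\leq_\sigma}$. Then $R_0$ has no $\leq_\sigma$-least element, since if $y$ were one, $(\leftarrow,y)_{\leq_\sigma}\cap A=\{a_0\}$ would be an open neighbourhood of $a_0$ disjoint from $R_0$. Using this and the limit-point property, one recursively picks $x_1,x_2,\dots\in R_0$ with $x_{n+1}\in\bigcap_{i\le n}(\leftarrow,x_i)_{\leq_\sigma}$, obtaining a sequence of distinct points that is $\leq_\sigma$-decreasing in the strong sense $x_j<_\sigma x_i$ for all $i<j$; moreover $\{x_n:n\ge1\}$ is then a (faithfully enumerated) discrete subspace of $A$.

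The remaining, and main, step is to force this sequence to converge. The natural candidate for the limit is $p:=g(\mathrm{LB})$, where $\mathrm{LB}:=\bigcap_{n\ge1}(\leftarrow,x_n]_{\leq_\sigma}$ is the closed set of $\leq_\sigma$-lower bounds of $\{x_n:n\ge1\}$ and is nonempty because $a_0\in\mathrm{LB}$; one then aims to show that every neighbourhood of $p$ eventually contains $x_n$, by applying continuity of $g$ at the closed subsets of $A$ lying $\leq_\sigma$-between $p$ and the $x_n$ and the openness of the $\leq_\sigma$-rays to pin down the selected values. An alternative route is to feed the discrete set $\{x_n:n\ge1\}$ into the hypothesis of the corollary, extending a suitably chosen auxiliary weak selection on it (in the spirit of the proof of Proposition~\ref{proposition-ext-sel-v13:2}) and combining this with the absence of non-trivial convergent sequences. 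I expect this convergence step to be the real obstacle: producing the $\leq_\sigma$-monotone sequence is routine, but because $\leq_\sigma$ need not be transitive, neither $\mathrm{LB}$ nor the intervals $[x,y]_{\leq_\sigma}$ behave as in a genuine linear order, and the argument must repeatedly fall back on the openness of the rays and on continuity of $g$ to control where values are chosen. Once a non-trivial convergent sequence in $A$ is obtained, it contradicts the first step, and hence $X$ must be scattered.
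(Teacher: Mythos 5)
Your overall architecture is the same as the paper's: first invoke Proposition \ref{proposition-ext-sel-v13:2} to conclude that $X$ has no non-trivial convergent sequences, then reduce the claim to the statement that a continuous selection must choose an isolated point of each closed set (equivalently, that a closed set on which the selected point is non-isolated contains a non-trivial convergent sequence). The paper closes this second step by citing a known result, namely \cite[Proposition 4.4]{garcia-ferreira-gutev-nogura-sanchis-tomita:99}: if $f\in\sel[\mathscr{F}(X)]$ and $f(T)$ is not isolated in $T\in\mathscr{F}(X)$, then $T$ contains a non-trivial convergent sequence. You instead attempt to prove this key fact from scratch, and this is where the proposal has a genuine gap: the construction of a strictly $\leq_\sigma$-decreasing sequence $x_1,x_2,\dots$ is indeed routine, but such a sequence need not converge at all, and your proposed limit $p=g(\mathrm{LB})$ together with the plan to ``apply continuity of $g$ at the closed subsets lying $\leq_\sigma$-between $p$ and the $x_n$'' is never carried out. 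As you yourself note, without transitivity of $\leq_\sigma$ neither $\mathrm{LB}$ nor the intervals $[x,y]_{\leq_\sigma}$ behave like order intervals, and nothing in the sketch pins down how neighbourhoods of $p$ relate to tails of the sequence; so the crucial convergence step remains an expectation rather than an argument.

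The alternative route you mention (feeding the discrete set $\{x_n:n\geq1\}$ into the extension hypothesis, as in the proof of Proposition \ref{proposition-ext-sel-v13:2}) also does not obviously close the gap: that proposition derives a contradiction from an already existing convergent sequence, whereas here the whole difficulty is to produce one, and the set $\{x_n\}$ may well be closed and discrete in $X$ without contradicting anything. So either you must supply a genuine proof of the statement ``$f(T)$ non-isolated in $T$ implies $T$ contains a non-trivial convergent sequence'' (which uses continuity of the selection on the full hyperspace $\mathscr{F}(X)$, not merely the induced weak selection, and is a non-trivial result in the literature), or you should simply quote \cite[Proposition 4.4]{garcia-ferreira-gutev-nogura-sanchis-tomita:99} as the paper does; as written, the proposal does not prove the corollary.
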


\begin{proof}
  Take a selection $f\in \sel[\mathscr{F}(X)]$ and a closed set
  $T\in \mathscr{F}(X)$. Then $f(T)\in T$ is an isolated point of
  $T$. Indeed, if $f(T)$ is not isolated in $T$, then by
  \cite[Proposition
  4.4]{garcia-ferreira-gutev-nogura-sanchis-tomita:99}, $T$ contains a
  non-trivial convergent sequence. However, by Proposition
  \ref{proposition-ext-sel-v13:2}, this is impossible. Thus, $X$ is a
  scattered space.
\end{proof}

Regarding the role of the condition ``$\sel[\mathscr{F}(X)]\neq
\emptyset$'' in Corollary \ref{corollary-ext-sel-v13:3}, we have the
following question.  

\begin{question}
  \label{question-ext-sel-v13:4}
  Let $X$ be a space such that for every countable discrete subspace
  $Z\subset X$, each weak selection for $Z$ can be extended to a
  continuous weak selection for $X$. Then, is it true that $X$ is a
  scattered space?
\end{question}

A selection $f:\mathscr{F}(X)\to X$ is  a \emph{zero-selection}
if $f(S)$ is an isolated point of $S$ for every $S\in
\mathscr{F}(X)$. Evidently, each scattered space has a
zero-selection, and every space which has a zero-selection is
scattered. Continuous zero-selections imply some interesting
properties. For instance, it was shown in \cite{fujii-nogura:99} that
a compact space $X$ is an ordinal space if and only if
$\mathscr{F}(X)$ has a continuous zero-selection. Here, $X$ is an
\emph{ordinal space} if it is an ordinal equipped with the open
interval topology. For some related results in the non-compact case,
the interested reader is referred to \cite{artico-marconi:00,
  artico-marconi-pelant-rotter-tkachenko:02,
  fujii:02,gutev:00e,gutev-nogura:00d}. The proof of Corollary
\ref{corollary-ext-sel-v13:3} implies the following more restrictive
property on the continuous selections for $\mathscr{F}(X)$.

\begin{corollary}
  \label{corollary-ext-sel-v13:4}
  Let $X$ be a space such that for every countable discrete subspace
  $Z\subset X$, each weak selection for $Z$ can be extended to a
  continuous weak selection for $X$. Then each
  $f\in \sel[\mathscr{F}(X)]$ is a zero-selection for
  $\mathscr{F}(X)$.
\end{corollary}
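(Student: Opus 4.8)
The plan is to reuse the mechanism of the proof of Corollary \ref{corollary-ext-sel-v13:3}, but now applied to each individual closed set rather than to conclude the global property of being scattered. Fix $f\in \sel[\mathscr{F}(X)]$ and an arbitrary $S\in \mathscr{F}(X)$; I want to show $f(S)$ is an isolated point of $S$. Suppose, for contradiction, that $f(S)$ is not isolated in $S$. Then $S$ is a nonempty closed subset of $X$ in which the point $f(S)$ is an accumulation point, so by \cite[Proposition 4.4]{garcia-ferreira-gutev-nogura-sanchis-tomita:99} the set $S$ contains a non-trivial convergent sequence (this is exactly the step invoked in the proof of Corollary \ref{corollary-ext-sel-v13:3}). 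On the other hand, by Proposition \ref{proposition-ext-sel-v13:2}, the hypothesis on $X$ forces $X$ to have no non-trivial convergent sequences; since a convergent sequence in the subspace $S$ is a convergent sequence in $X$, this is a contradiction.

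Thus $f(S)$ is isolated in $S$ for every $S\in \mathscr{F}(X)$, which is precisely the assertion that $f$ is a zero-selection for $\mathscr{F}(X)$. Since $f\in \sel[\mathscr{F}(X)]$ was arbitrary, every continuous selection for $\mathscr{F}(X)$ is a zero-selection.

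I do not expect a genuine obstacle here: the argument is essentially a word-for-word repackaging of the contradiction already carried out in Corollary \ref{corollary-ext-sel-v13:3}, with the only shift being that we keep $S$ fixed and read off the conclusion for $f$ on that particular $S$, rather than quantifying over all $S$ to conclude scatteredness of $X$. The one point worth a sentence in the write-up is the observation that a non-trivial convergent sequence inside the closed subspace $S\subset X$ is also a non-trivial convergent sequence in $X$, so that Proposition \ref{proposition-ext-sel-v13:2} applies directly.
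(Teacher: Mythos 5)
Your argument is exactly the paper's: the proof of Corollary \ref{corollary-ext-sel-v13:4} simply observes that the computation in the proof of Corollary \ref{corollary-ext-sel-v13:3} already shows $f(T)$ is isolated in $T$ for every $T\in\mathscr{F}(X)$, via the same appeal to \cite[Proposition 4.4]{garcia-ferreira-gutev-nogura-sanchis-tomita:99} and Proposition \ref{proposition-ext-sel-v13:2}. Your proposal is correct and coincides with the paper's proof.
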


\begin{proof}
  As shown in the proof of Corollary \ref{corollary-ext-sel-v13:3}, for
  $f\in \sel[\mathscr{F}(X)]$ and $T\in \mathscr{F}(X)$, the point $f(T)$
  is isolated in $T$.
\end{proof}

The property obtained in Corollary \ref{corollary-ext-sel-v13:4} is
stronger than that of $X$ being a scattered space. It is evident from
the following further consequence.

\begin{corollary}
  \label{corollary-ext-sel-v13:5}
  Let $X$ be a compact space such that for every countable discrete
  subspace ${Z\subset X}$, each weak selection for $Z$ can be extended
  to a continuous weak selection for $X$. Then $X$ is a finite set.
\end{corollary}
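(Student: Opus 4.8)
The plan is to prove that $X$ is discrete; since a compact discrete space is finite, this gives the conclusion. The engine of the argument will be Corollary \ref{corollary-ext-sel-v13:4}, which tells us that, under the present hypothesis, \emph{every} continuous selection for $\mathscr{F}(X)$ is a zero-selection.

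First I would dispose of the trivial case $|X|\le 2$ and otherwise pick $Z\in[X]^3$; as a finite (hence discrete, hence closed) subspace, $Z$ carries a weak selection which is automatically continuous, and by hypothesis it extends to a continuous weak selection for $X$. Thus $\sel[\mathscr{F}_2(X)]\neq\emptyset$, and since $X$ is compact, Theorem \ref{theorem-ext-sel-v47:1} makes $X$ orderable; fix a compatible linear order $\leq$. Next I would manufacture continuous selections for all of $\mathscr{F}(X)$: compactness gives $\mathscr{F}(X)=\mathscr{C}(X)$, so Theorem \ref{theorem-hsp-ver-5:7} applied to $\leq$ yields $\mathscr{K}(X,\leq)=\mathscr{F}(X)$ and the continuous selection $f(S)=\min_{\leq}S$. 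Applying the same theorem to the reverse order $\leq^{*}$ (defined by $x\leq^{*}y\iff y\leq x$), which generates the same order topology so that $X$ is orderable with respect to it as well, I get a second continuous selection $g(S)=\min_{\leq^{*}}S=\max_{\leq}S$.

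Now I would feed carefully chosen closed sets into Corollary \ref{corollary-ext-sel-v13:4}. Fixing $x\in X$, the set $[x,\to)_{\leq}$ is closed (its complement $(\gets,x)_{\leq}$ is open) with $\min_{\leq}[x,\to)_{\leq}=x$, so $f([x,\to)_{\leq})=x$ must be isolated in $[x,\to)_{\leq}$; running through the possible basic $\leq$-neighbourhoods of $x$, this says that either $x=\max_{\leq}X$ or $(x,d)_{\leq}=\emptyset$ for some $d>x$, i.e.\ $x$ has an immediate $\leq$-successor. Symmetrically, using $g$ on the closed set $(\gets,x]_{\leq}$ shows that $x$ has an immediate $\leq$-predecessor unless $x=\min_{\leq}X$. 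Hence every point of $X$ is isolated (an interior point $x$ satisfies $\{x\}=(p,q)_{\leq}$ for its predecessor $p$ and successor $q$, and $\{\min_{\leq}X\}=(\gets,q)_{\leq}$, $\{\max_{\leq}X\}=(p,\to)_{\leq}$), so $X$ is discrete and therefore finite.

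I expect the only delicate point to be the implication ``$x$ isolated in $[x,\to)_{\leq}$ $\Rightarrow$ $x$ has an immediate successor or is the maximum'': one has to note that a basic order-neighbourhood of $x$ meets $[x,\to)_{\leq}$ in $\{x\}$ exactly when it forces $(x,d)_{\leq}=\emptyset$ for its right endpoint $d$ (or when the whole ray $[x,\to)_{\leq}$ is the singleton $\{x\}$). The remaining bookkeeping --- checking that $\leq^{*}$ is a genuine compatible order so that $\max_{\leq}$ really is covered by Theorem \ref{theorem-hsp-ver-5:7}, and treating the two endpoints of $X$ when concluding discreteness --- is routine. It is perhaps worth remarking that scatteredness (Corollary \ref{corollary-ext-sel-v13:3}) and Proposition \ref{proposition-ext-sel-v13:2} are not invoked directly here; their content is already packaged inside Corollary \ref{corollary-ext-sel-v13:4}.
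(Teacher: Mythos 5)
Your proof is correct, but it follows a genuinely different route from the paper's. Both arguments begin identically: the hypothesis applied to finite (hence countable) discrete subspaces yields a continuous weak selection, van Mill--Wattel's theorem (Theorem \ref{theorem-ext-sel-v47:1}) then makes the compact space $X$ orderable, and Corollary \ref{corollary-ext-sel-v13:4} forces every member of $\sel[\mathscr{F}(X)]$ to be a zero-selection. At that point the paper quotes the Fujii--Nogura characterisation of compact ordinal spaces via continuous zero-selections and finishes with Proposition \ref{proposition-ext-sel-v13:2}, since an infinite compact ordinal space contains non-trivial convergent sequences. You instead stay within the toolkit already displayed in the paper: compactness gives $\mathscr{F}(X)=\mathscr{C}(X)\subset\mathscr{K}(X,\leq)$, so Michael's Theorem \ref{theorem-hsp-ver-5:7}, applied to a compatible order and to its reverse, produces the two extreme selections $\min_\leq$ and $\max_\leq$; feeding the closed rays $[x,\to)_\leq$ and $(\gets,x]_\leq$ into the zero-selection property shows that every point has an immediate successor and an immediate predecessor (or is an endpoint), hence is isolated, and a compact discrete space is finite. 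Your treatment of the one delicate step --- that isolatedness of $x$ in $[x,\to)_\leq$ forces an immediate successor unless $x=\max_\leq X$ --- is sound, as is the use of the reverse order (it generates the same order topology, so Theorem \ref{theorem-hsp-ver-5:7} applies; alternatively the Haar--K\"onig Theorem \ref{theorem-ext-sel-v33:1} would supply last elements of closed sets directly). What your route buys is independence from the external Fujii--Nogura theorem and a self-contained, elementary order argument; what the paper's route buys is brevity, since after that citation the contradiction with Proposition \ref{proposition-ext-sel-v13:2} is immediate.
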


\begin{proof}
  Since $X$ is compact and has a continuous weak selection, it follows
  from \cite[Theorem 1.1]{mill-wattel:81} that
  $\sel[\mathscr{F}(X)]\neq\emptyset$. Hence, by Corollary
  \ref{corollary-ext-sel-v13:4}, $\mathscr{F}(X)$ has a continuous
  zero-selection. Accordingly, by \cite[Theorem 1]{fujii-nogura:99},
  $X$ is an ordinal space. However, by Proposition
  \ref{proposition-ext-sel-v13:2}, $X$ has no non-trivial convergent
  sequences. Therefore, $X$ must be a finite set.  
\end{proof}

A space $X$ is \emph{sequential} if for each set $A\subset X$, each
point of $\overline{A}$ is the limit of a sequence of points of
$A$. Regarding discrete spaces, we also have the following immediate
consequence of Proposition \ref{proposition-ext-sel-v13:2}.

\begin{corollary}
  \label{corollary-ext-sel-v13:2}
  Let $X$ be a sequential space such that for every countable discrete
  subspace $Z\subset X$, each weak selection for $Z$ can be extended
  to a continuous weak selection for $X$. Then $X$ is a discrete
  space.
\end{corollary}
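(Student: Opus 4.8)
The plan is to combine Proposition~\ref{proposition-ext-sel-v13:2} with the fact that a sequential space with no non-trivial convergent sequences must be discrete. First I would invoke Proposition~\ref{proposition-ext-sel-v13:2}: the hypothesis on $X$ is exactly the hypothesis of that proposition, so $X$ has no non-trivial convergent sequences. Next I would use sequentiality to force discreteness. Suppose, towards a contradiction, that $X$ is not discrete, i.e.\ there is a non-isolated point $p\in X$. Then $p\in \overline{X\setminus\{p\}}$, and since $X$ is sequential, $p$ is the limit of a sequence of points of $X\setminus\{p\}$. Such a sequence is a non-trivial convergent sequence, contradicting Proposition~\ref{proposition-ext-sel-v13:2}. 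Hence every point of $X$ is isolated, i.e.\ $X$ is discrete.

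There is essentially nothing that plays the role of a genuine obstacle here, since the substantive content has already been extracted into Proposition~\ref{proposition-ext-sel-v13:2}; the only mild care is to notice that ``sequential'' as defined in the paper applies to the closure of any set $A$, and to take $A = X\setminus\{p\}$ so that $\overline{A}=X$ whenever $p$ is non-isolated. One could also phrase it contrapositively: if $X$ is not discrete then it has a non-isolated point, hence (by sequentiality) a non-trivial convergent sequence, which is ruled out. Either way the argument is a two-line deduction chaining the two facts.

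If one wanted to be slightly more self-contained, instead of quoting Proposition~\ref{proposition-ext-sel-v13:2} wholesale, one could note that its proof only used a single convergent sequence together with a disjoint companion sequence and an extra point, all of which can be assembled inside any countable discrete subspace of a non-discrete space; but since the excerpt permits assuming earlier results, the clean route is simply to cite it. So the proof reduces to: $X$ sequential $+$ no non-trivial convergent sequences $\Rightarrow$ $X$ discrete, with the second hypothesis supplied by Proposition~\ref{proposition-ext-sel-v13:2}.
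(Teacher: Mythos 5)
Your proof is correct and is exactly the route the paper intends: the corollary is stated there as an immediate consequence of Proposition \ref{proposition-ext-sel-v13:2}, with sequentiality (as defined in the paper) turning a non-isolated point into a non-trivial convergent sequence, which that proposition forbids.
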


Sequential spaces are very similar to the \emph{countably tight}
spaces, namely to those spaces $X$ in which for every $A\subset X$ and
a point $p\in \overline{A}$, there exists a countable set $Z\subset A$
with $p\in \overline{Z}$.

\begin{question}
  \label{question-ext-sel-v13:3}
  Let $X$ be a countably tight space such that for every countable
  discrete subspace $Z\subset X$, each weak selection for $Z$ can be
  extended to a continuous weak selection for $X$. Then, is it true
  that $X$ is discrete?
\end{question}

For arbitrary discrete subspaces, we also have the following general
question.

\begin{question}
  \label{question-ext-sel-v13:3}
  Let $X$ be a space such that for every discrete subspace
  ${Z\subset X}$, each weak selection for $Z$ can be extended to a
  continuous weak selection for $X$. Then what can one say about $X$?
  Does there exist a non-discrete space $X$ with this property?
\end{question}

In the setting of general continuous selections for hyperspaces of 
discrete subspaces, we have the following alternative version of
Proposition \ref{proposition-ext-sel-v9:1}.

\begin{proposition}\label{pr:extension-1}
  Let $X$ be a space and $Z\in \mathscr{F}(X)$ be such that there
  exists a discrete collection $\{U_z:z\in Z\}$ of clopen sets with
  $z\in U_z$, for each $z\in Z$. If $\mathscr{F}(X)$ has a continuous
  selection, then each continuous selection for $\mathscr{F}(Z)$ can
  be extended to a continuous selection for $\mathscr{F}(X)$.
\end{proposition}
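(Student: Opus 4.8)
The plan is to reduce the problem to the two pieces of data available: a continuous selection $g\in\sel[\mathscr{F}(X)]$ and a continuous selection $f\in\sel[\mathscr{F}(Z)]$. Since $\{U_z:z\in Z\}$ is a discrete collection of clopen sets with $z\in U_z$, its union $U=\bigcup_{z\in Z}U_z$ is clopen, and by replacing $U_{z_0}$ with $U_{z_0}\cup(X\setminus U)$ for one fixed $z_0\in Z$ we may assume $\{U_z:z\in Z\}$ is in fact a discrete clopen \emph{partition} of $X$ (this is the same normalisation used in Proposition \ref{proposition-ext-sel-v9:1}). Discreteness of the partition is what makes the ``trace'' map well behaved on closed sets: for $S\in\mathscr{F}(X)$ put
\[
Z_S=\{z\in Z: S\cap U_z\neq\emptyset\}.
\]
Because the partition is discrete, $Z_S$ is a closed subset of $Z$, and the assignment $S\mapsto Z_S$ is continuous from $\mathscr{F}(X)$ to $\mathscr{F}(Z)$ with respect to the Vietoris topologies; this is the hyperspace-level analogue of the observation that $S\cap U_z$ depends continuously on $S$ when $U_z$ is clopen.

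Next I would define the extension $\widehat{f}:\mathscr{F}(X)\to X$ by first using $f$ to pick out which block of the partition to land in, and then using $g$ to pick a point inside that block. Concretely, for $S\in\mathscr{F}(X)$ set
\[
\widehat{f}(S)=g\bigl(S\cap U_{f(Z_S)}\bigr).
\]
Note $S\cap U_{f(Z_S)}$ is a nonempty closed subset of $X$ since $f(Z_S)\in Z_S$ means $S$ meets $U_{f(Z_S)}$, so $\widehat{f}(S)$ is well defined and lies in $S$; thus $\widehat{f}$ is a selection for $\mathscr{F}(X)$. It extends $f$: if $S\in\mathscr{F}(Z)$, then for each $z\in Z$ we have $S\cap U_z=\{z\}$ if $z\in S$ and $\emptyset$ otherwise, hence $Z_S=S$ and $S\cap U_{f(S)}=\{f(S)\}$, so $\widehat{f}(S)=g(\{f(S)\})=f(S)$.

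The remaining — and main — point is continuity of $\widehat{f}$. I would decompose it as the composite of three maps: $S\mapsto Z_S$ from $\mathscr{F}(X)$ to $\mathscr{F}(Z)$ (continuous by the discreteness-of-the-partition argument above, which is the technical heart); then $S\mapsto\bigl(S,f(Z_S)\bigr)$ landing in a suitable space where the second coordinate is the chosen block index; then $S\mapsto S\cap U_{f(Z_S)}$. For the last two steps, the key is that the set $\{z\in Z: f(Z_S)=z\}$ is locally constant in $S$ because $f$ is continuous and the $U_z$ are clopen, so locally around any $S_0$ the index $f(Z_S)$ equals a fixed $z_0$, and there $\widehat{f}(S)=g(S\cap U_{z_0})$, which is continuous as the composite of $g$ with the continuous map $S\mapsto S\cap U_{z_0}$ (intersection with a fixed clopen set is Vietoris-continuous on the clopen piece of $\mathscr{F}(X)$ consisting of sets that meet $U_{z_0}$). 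Patching these local descriptions over a clopen cover of $\mathscr{F}(X)$ gives continuity of $\widehat{f}$. I expect the one genuinely delicate verification to be that $S\mapsto Z_S$ is Vietoris-continuous — in particular that $Z_S$ is closed in $Z$ and that the preimages of basic Vietoris sets $\langle\mathscr{V}\rangle$ in $\mathscr{F}(Z)$ are open; here one uses crucially that the collection $\{U_z\}$ is \emph{discrete}, not merely pairwise disjoint, so that membership "$S$ meets $U_z$" cannot oscillate and limits of the $U_z$ do not interfere.
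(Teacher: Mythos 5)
Your proposal is correct and follows essentially the same route as the paper: the same normalisation of $\{U_z:z\in Z\}$ to a clopen partition of $X$, the same trace map $S\mapsto Z_S$, and the same defining formula (the paper writes $h(S)=f\bigl(S\cap U_{g(Z_S)}\bigr)$ with the roles of $f$ and $g$ interchanged relative to your notation). The only difference is that you spell out the continuity verification (continuity of $S\mapsto Z_S$ and local constancy of the chosen block), which the paper leaves implicit by reference to the proof of Theorem \ref{theorem-ext-sel-v4:1}.
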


\begin{proof}
  The proof is almost identical to that one in Theorem
  \ref{theorem-ext-sel-v4:1}. Namely, as in the proof of Proposition
  \ref{proposition-ext-sel-v9:1}, the discrete collection
  $\{U_z:z\in Z\}$ can be assumed to be a cover of $X$. Next, let
  $Z_S=\left\{z\in Z: S\cap U_z\neq\emptyset\right\}\in
  \mathscr{F}(Z)$ for $S\in \mathscr{F}(X)$. Then $Z_S=S$ for
  every $S\in \mathscr{F}(Z)$. Hence, for $g\in \sel[\mathscr{F}(Z)]$
  and $f\in\sel[\mathscr{F}(X)]$, we can define an extension
  $h\in \sel[\mathscr{F}(X)]$ of $g$ by
  $h(S)=f\left(S\cap U_{g(Z_S)}\right)$, $S\in \mathscr{F}(X)$.
\end{proof}

Regarding Proposition \ref{pr:extension-1}, let us explicitly remark
that each discrete space $Z$ has a continuous selection for
$\mathscr{F}(Z)$.\label{page-ref-discrete} For instance, take a
well-order $\leq$ on $Z$, and define $g:\mathscr{F}(Z)\to Z$ by
$g(S)=\min S$ for $S\in \mathscr{F}(Z)$. However, not every selection
for $\mathscr{F}(Z)$ is continuous. In contrast, every selection for
$\Sigma(Z)=\mathscr{C}(Z)$ is continuous because the Vietoris
hyperspace $\Sigma(Z)$ is discrete. On the other hand, $\Sigma(Z)$ is
not necessarily $\tau_V$-closed in $\mathscr{F}(X)$. In fact, it is
$\tau_V$-dense in $\mathscr{F}(Z)$ which brings the following
characterisation of finite discrete spaces.

\begin{proposition}
  \label{proposition-ext-sel-v25:1}
  A discrete space $Z$ is finite if and only if each selection for
  $\Sigma(Z)$ can be extended to a continuous selection for
  $\mathscr{F}(Z)$.
\end{proposition}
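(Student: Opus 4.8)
The plan is to prove the nontrivial direction by contraposition: assuming $Z$ is an infinite discrete space, I will construct a selection $g$ for $\Sigma(Z)$ that admits no continuous extension to $\mathscr{F}(Z)$. The opposite implication needs no work, since if $Z$ is finite then $\mathscr{F}(Z)=\Sigma(Z)=\mathscr{C}(Z)$ is a finite, hence $\tau_V$-discrete, subspace of the Vietoris hyperspace; so every selection for $\Sigma(Z)$ is already a continuous selection for $\mathscr{F}(Z)$ and is its own extension.

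For the construction I would fix a one-to-one sequence $\{d_n:n<\omega\}\subset Z$, set $D=\{d_n:n<\omega\}$ (which lies in $\mathscr{F}(Z)$ because $Z$ is discrete), and put $F_m=\{d_0,\dots,d_m\}\in\Sigma(Z)$ for $m<\omega$. The first point to check is that $F_m\to D$ in the Vietoris topology: given a basic neighbourhood $\langle V_1,\dots,V_k\rangle$ of $D$, each $V_j$ contains some $d_{n_j}$, so for every $m\geq\max_j n_j$ we have $F_m\subset D\subset\bigcup_j V_j$ and $F_m\cap V_j\neq\emptyset$, whence $F_m\in\langle V_1,\dots,V_k\rangle$. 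Next I would define $g\colon\Sigma(Z)\to Z$ by letting $g(F_m)=d_0$ for even $m$ and $g(F_m)=d_1$ for odd $m$ (legitimate since $d_0,d_1\in F_m$ for $m\geq1$ and $F_0=\{d_0\}$), while on every other member of $\Sigma(Z)$ we set $g(S)$ equal to the $\preceq$-least element of $S$ for a fixed well-order $\preceq$ of $Z$. As the sets $F_m$ are pairwise distinct, this is a well-defined selection, and it is automatically continuous since $\Sigma(Z)$ is $\tau_V$-discrete.

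To conclude, suppose toward a contradiction that $f\in\sel[\mathscr{F}(Z)]$ extends $g$. Continuity of $f$ together with $F_m\to D$ gives $f(F_m)\to f(D)$ in $Z$; since $Z$ is discrete and $f(D)\in D$, this forces $f(F_m)=f(D)$ for all sufficiently large $m$, whereas $f(F_m)=g(F_m)$ alternates between $d_0$ and $d_1$ and is therefore not eventually constant --- a contradiction. Hence $g$ has no continuous extension to $\mathscr{F}(Z)$, so $Z$ cannot be infinite. I do not expect a genuine obstacle here; the only places needing a moment's care are the verification that $F_m\to D$ in $\tau_V$ (this is exactly the mechanism making $\Sigma(Z)$ $\tau_V$-dense but not $\tau_V$-closed in $\mathscr{F}(Z)$) and checking that the oscillating prescription for $g$ is consistent with the selection condition $g(S)\in S$.
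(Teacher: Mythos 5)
Your proof is correct and follows essentially the same route as the paper's: both exploit the $\tau_V$-convergence of the increasing finite sets $F_m=\{d_0,\dots,d_m\}$ to the infinite closed discrete set $D$, with the selection on $\Sigma(Z)$ rigged so that the values $g(F_m)$ cannot converge, forcing a contradiction with continuity of any extension. The only (cosmetic) difference is that the paper makes the values escape by setting $g(F_m)=d_m$, while you make them alternate between $d_0$ and $d_1$; both devices work equally well in a discrete space.
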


\begin{proof}
  If $Z$ is finite, then $\mathscr{F}(Z)=\Sigma(Z)$ and there is
  nothing to prove. Conversely, assume that $Z$ is an infinite
  discrete space possessing the above hyperspace selection-extension
  property. Then $Z$ has a partition into sets $S$ and $H$ such that
  $S$ is countable and infinite. Take an enumeration
  $S=\{s_0,s_1,\dots, s_n,\dots\}$ of $S$, and define a selection
  $\sigma:\Sigma(S)\to S$ with the property that
  $\sigma(\{s_0,\dots, s_n\})=s_n$ for every $n<\omega$. Also, take
  any selection $\eta:\Sigma(H)\to H$, and define a selection
  $g:\Sigma(Z)\to Z$ by $g(T)=\sigma(T)$ if $T\in \Sigma(S)$, and
  $g(T)=\eta(T\cap H)$ otherwise.  Then by condition, $g$ can be
  extended to a continuous selection $f:\mathscr{F}(Z)\to Z$. It now
  remains to observe that the sequence $S_n=\{s_0,\dots, s_n\}$,
  $n<\omega$, is $\tau_V$-convergent to
  $\overline{\bigcup_{n<\omega}S_n}=\overline{S}=S$. Hence,
  $f(S)=\lim_{n\to\infty}f(S_n)= \lim_{n\to\infty} s_n$ which is
  impossible because $S$ is discrete.
\end{proof}

Propositions \ref{pr:extension-1} and \ref{proposition-ext-sel-v25:1}
imply the following characterisation of countably compact totally
disconnected spaces.

\begin{corollary}
  \label{corollary-ext-sel-v25:1}
  Let $X$ be a space with $\sel[\mathscr{F}(X)]\neq\emptyset$.  Then
  $X$ is countably compact and totally disconnected if and only if for
  each discrete set ${Z\in \mathscr{F}(X)}$, each selection for
  $\Sigma(Z)$ can be extended to a continuous selection for
  $\mathscr{F}(X)$.
\end{corollary}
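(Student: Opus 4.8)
The plan is to prove the two implications separately. For the forward direction, assume $X$ is countably compact and totally disconnected, and fix a discrete $Z\in\mathscr{F}(X)$. Countable compactness forbids an infinite closed discrete subspace, so $Z=\{z_1,\dots,z_n\}$ is finite; total disconnectedness (each point being an intersection of clopen sets) separates its points, so choosing clopen $C_{ij}$ with $z_i\in C_{ij}$ and $z_j\notin C_{ij}$ and setting $U_{z_i}=\bigcap_{j\neq i}C_{ij}$ produces a pairwise-disjoint — hence, being finite, discrete — family $\{U_z:z\in Z\}$ of clopen sets with $z\in U_z$. Since $\sel[\mathscr{F}(X)]\neq\emptyset$, Proposition \ref{pr:extension-1} then shows that every continuous selection for $\mathscr{F}(Z)$ extends to one for $\mathscr{F}(X)$; and because $Z$ is finite and discrete, $\mathscr{F}(Z)=\Sigma(Z)$ is a discrete Vietoris hyperspace, so \emph{every} selection for $\Sigma(Z)$ is continuous. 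This yields the required extension property.

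For the converse, assume the stated extension property and first derive total disconnectedness. Restricting any $f\in\sel[\mathscr{F}(X)]$ to $\mathscr{F}_2(X)$ produces a continuous weak selection, so Corollary \ref{corollary-ext-sel-v5:1} applies, and it is enough to extend an arbitrary weak selection for a finite $Z\in\Sigma(X)$ to a continuous weak selection for $X$. Given such a weak selection for $Z$, I extend it to a selection $g$ for $\Sigma(Z)=\mathscr{F}(Z)$ by choosing values arbitrarily on subsets of cardinality $\neq2$, use the hypothesis to extend $g$ to some $f\in\sel[\mathscr{F}(X)]$, and restrict $f$ back to $\mathscr{F}_2(X)$; this is a continuous weak selection for $X$ extending the given one. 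Hence $X$ is totally disconnected by Corollary \ref{corollary-ext-sel-v5:1}.

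It remains to show $X$ is countably compact, and here I reuse the mechanism behind Proposition \ref{proposition-ext-sel-v25:1}. If $X$ were not countably compact it would contain a countably infinite subset with no accumulation point, i.e.\ a countably infinite discrete $D\in\mathscr{F}(X)$; enumerate $D=\{s_n:n<\omega\}$ and define a selection $g:\Sigma(D)\to D$ with $g(\{s_0,\dots,s_n\})=s_n$ for all $n<\omega$ (arbitrary values on the other finite subsets of $D$). Since $\Sigma(D)$ is discrete, $g$ is continuous, so by hypothesis it extends to some $f\in\sel[\mathscr{F}(X)]$. The crucial observation is that $S_n=\{s_0,\dots,s_n\}$ is $\tau_V$-convergent to $D$ in $\mathscr{F}(X)$: in any basic neighbourhood $\langle V_1,\dots,V_k\rangle$ of $D$, each $V_i$ meets $D$ and hence contains some $s_{j_i}$, so $S_n\in\langle V_1,\dots,V_k\rangle$ once $n\geq\max_i j_i$. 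Continuity of $f$ then forces $s_n=f(S_n)\to f(D)\in D$, which is impossible because $D$ is closed and discrete in $X$. Therefore $X$ is countably compact.

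The main obstacle is precisely this last step: verifying the Vietoris convergence $S_n\to D$, which relies essentially on $D$ being \emph{closed} in $X$ (so that $D$ is genuinely the limit of the $S_n$), exactly as in Proposition \ref{proposition-ext-sel-v25:1}; everything else amounts to a routine combination of Proposition \ref{pr:extension-1} and Corollary \ref{corollary-ext-sel-v5:1}.
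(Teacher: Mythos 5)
Your proof is correct and follows essentially the same route as the paper: the forward direction combines finiteness of closed discrete sets in a countably compact space with total disconnectedness and Proposition \ref{pr:extension-1}, while the converse gets total disconnectedness via Corollary \ref{corollary-ext-sel-v5:1} and countable compactness from the convergence argument of Proposition \ref{proposition-ext-sel-v25:1}. The only cosmetic difference is that you re-derive that proposition's mechanism inline (applying it to an infinite closed discrete set and verifying the Vietoris convergence $S_n\to D$ directly), whereas the paper simply cites Proposition \ref{proposition-ext-sel-v25:1} after restricting the extended selection to $\mathscr{F}(Z)$.
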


\begin{proof}
  Let $X$ be countably compact and totally disconnected, and
  $Z\in \mathscr{F}(X)$ be a discrete set. Then $Z$ must be a finite
  set because each infinite set in a countably compact space has an
  accumulation point, see \cite[Theorem
  3.10.3]{engelking:89}. Accordingly, since $X$ is also totally
  disconnected, there exists a pairwise-disjoint (i.e.\ discrete)
  collection $\{U_z:z\in Z\}$ of clopen sets with $z\in U_z$, for each
  $z\in Z$. Hence, the property follows from Proposition
  \ref{pr:extension-1}.\smallskip

  Conversely, suppose that $Z\in \mathscr{F}(X)$ is a discrete set
  such that each selection for $\Sigma(Z)$ can be extended to a
  continuous selection for $\mathscr{F}(X)$. Then by Proposition
  \ref{proposition-ext-sel-v25:1}, $Z$ is finite. Accordingly, each
  infinite subsets of $X$ has an accumulation point. Hence, for the
  same reason as before (see again \cite[Theorem
  3.10.3]{engelking:89}), $X$ must be countably compact. Moreover, by
  Corollary \ref{corollary-ext-sel-v5:1}, $X$ is also totally
  disconnected.
\end{proof}

Evidently, Corollary \ref{corollary-ext-sel-v25:1} remains valid if
$Z\in \mathscr{F}(X)$ is only assumed to be countable.\medskip

Since each suborderable space is Tychonoff (being normal), it follows
from Theorems \ref{theorem-ext-sel-v21:1} and
\ref{theorem-ext-sel-v37:1} that for a regular space $X$ with
$\sel[\mathscr{F}(X)]\neq \emptyset$, the hyperspace
selection-extension property in Corollary
\ref{corollary-ext-sel-v25:1} is equivalent to $X$ being a totally
disconnected pseudocompact space and, in particular, such a space must
be also ultranormal. Thus, we have the following consequence which is
complementary to Corollary \ref{corollary-ext-sel-v25:1}. 

\begin{corollary}
  \label{corollary-ext-sel-v40:1}
  Let $X$ be a regular space with $\sel[\mathscr{F}(X)]\neq\emptyset$.
  Then $X$ is countably compact and zero-dimensional if and only
  if for each discrete set ${Z\in \mathscr{F}(X)}$, each selection for
  $\Sigma(Z)$ can be extended to a continuous selection for
  $\mathscr{F}(X)$.
\end{corollary}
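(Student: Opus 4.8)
The plan is to derive Corollary \ref{corollary-ext-sel-v40:1} from Corollary \ref{corollary-ext-sel-v25:1} by showing that, for a regular space $X$ with $\sel[\mathscr{F}(X)]\neq\emptyset$, the combination ``countably compact $+$ totally disconnected'' is equivalent to ``countably compact $+$ zero-dimensional''. One direction is trivial since zero-dimensional spaces are totally disconnected. For the converse, suppose $X$ is countably compact and totally disconnected with a continuous selection for $\mathscr{F}(X)$, hence with a continuous weak selection. Since every selection is in particular a weak selection, $X$ is a Tychonoff-type target for Theorem \ref{theorem-ext-sel-v21:1}; more precisely, a regular countably compact space with a continuous weak selection is, by Theorem \ref{theorem-ext-sel-v21:1} (the pseudocompact clause) together with the fact that countable compactness implies pseudocompactness, suborderable, and in fact by Theorem \ref{theorem-ext-sel-v37:1} a regular countably compact space with a continuous weak selection is suborderable. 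A totally disconnected suborderable space is ultranormal (Herrlich--Purisch, page~\pageref{page:suborderable-zero}), i.e.\ strongly zero-dimensional, and in particular zero-dimensional.

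So the first step is to record that $\sel[\mathscr{F}(X)]\neq\emptyset$ yields a continuous weak selection $\sigma$ for $X$ (restrict to $\mathscr{F}_2(X)$). The second step is to invoke Corollary \ref{corollary-ext-sel-v25:1}: $X$ satisfies the stated hyperspace selection-extension property if and only if $X$ is countably compact and totally disconnected. The third step is to show that, under the standing hypothesis that $X$ is regular and has a continuous weak selection, ``countably compact and totally disconnected'' is the same as ``countably compact and zero-dimensional''. For ``$\Rightarrow$'', apply Theorem \ref{theorem-ext-sel-v37:1}: a regular countably compact space with a continuous weak selection is suborderable; then a totally disconnected suborderable space is ultranormal and hence strongly zero-dimensional, so in particular zero-dimensional. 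For ``$\Leftarrow$'', zero-dimensional spaces have singleton quasi-components, hence are totally disconnected. Chaining these equivalences gives the corollary.

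I should also make explicit the remark preceding the statement in the excerpt: for regular $X$ with $\sel[\mathscr{F}(X)]\neq\emptyset$, the property in Corollary \ref{corollary-ext-sel-v25:1} is equivalent to $X$ being totally disconnected and pseudocompact (using that countable compactness, pseudocompactness, and sequential compactness coincide here by Theorem \ref{theorem-ext-sel-v21:1}), and such a space is ultranormal; so one may also phrase the proof as: the property holds $\iff$ $X$ is totally disconnected and countably compact $\iff$ $X$ is totally disconnected, pseudocompact and (being suborderable by Theorem \ref{theorem-ext-sel-v37:1}, hence normal) ultranormal $\iff$ $X$ is zero-dimensional and countably compact. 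The main obstacle — really the only nontrivial point — is the implication ``totally disconnected $+$ countably compact $+$ continuous weak selection $+$ regular $\Rightarrow$ zero-dimensional''; this is precisely where Theorem \ref{theorem-ext-sel-v37:1} (countably compact regular with continuous weak selection $\Rightarrow$ suborderable) and the Herrlich--Purisch fact (totally disconnected suborderable $\Rightarrow$ ultranormal $\Rightarrow$ strongly zero-dimensional) do the work, and everything else is bookkeeping with the already-established Corollary \ref{corollary-ext-sel-v25:1}.

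\begin{proof}
  Since $\sel[\mathscr{F}(X)]\neq\emptyset$, restricting any
  $f\in\sel[\mathscr{F}(X)]$ to $\mathscr{F}_2(X)$ gives a continuous
  weak selection for $X$. If $X$ is countably compact and
  zero-dimensional, then each quasi-component of $X$ is a singleton,
  so $X$ is totally disconnected; hence by Corollary
  \ref{corollary-ext-sel-v25:1}, for each discrete
  $Z\in \mathscr{F}(X)$ every selection for $\Sigma(Z)$ extends to a
  continuous selection for $\mathscr{F}(X)$.

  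Conversely, assume the stated hyperspace selection-extension
  property. By Corollary \ref{corollary-ext-sel-v25:1}, $X$ is
  countably compact and totally disconnected. Being regular and
  countably compact with a continuous weak selection, $X$ is
  suborderable by Theorem \ref{theorem-ext-sel-v37:1}. A totally
  disconnected suborderable space is ultranormal (see
  page~\pageref{page:suborderable-zero}), hence strongly
  zero-dimensional, and in particular zero-dimensional. Thus $X$ is
  countably compact and zero-dimensional.
\end{proof}
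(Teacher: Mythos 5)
Your proof is correct and follows essentially the same route the paper indicates in the remark preceding the statement: reduce to Corollary \ref{corollary-ext-sel-v25:1}, then use Theorem \ref{theorem-ext-sel-v37:1} (regular countably compact with a continuous weak selection is suborderable) together with the Herrlich--Purisch fact that totally disconnected suborderable spaces are ultranormal, hence zero-dimensional, while the converse direction is just ``zero-dimensional implies totally disconnected''. No gaps.
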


There are non-regular weakly orderable countably compact spaces. For
instance, let $X$ be the space obtained from the ordinal space
$\omega_1+1$ by making the set of all countable limit ordinals closed
in $X$, see \cite[3.10.B]{engelking:89}. Since the identity map
$h:X\to \omega_1+1$ is continuous, $X$ is weakly orderable with
respect to the usual linear order ``$\leq$'' on $\omega_1+1$. In
particular, each nonempty subset of $X$ has a first $\leq$-element
and, therefore, $\mathscr{F}(X)$ has a continuous selection, see
Theorem \ref{theorem-hsp-ver-5:7}. Moreover, $X$ is totally
disconnected and satisfies the selection-extension
property in Corollary \ref{corollary-ext-sel-v40:1}, but is not
zero-dimensional because it is not regular.

\section{Extension of Selections and Closed Sets}
\label{sec:extens-hypersp-over-1}

The Vietoris hyperspace $\mathscr{F}(X)$ is partially ordered with
respect to the usual set-theoretic inclusion. To state our first
observation, it will be convenient to consider the following special
subsets of $\mathscr{F}(X)$. We shall say that
$\mathscr{D}\subset \mathscr{F}(X)$ is a \emph{hyperspace-ideal} in
$\mathscr{F}(X)$ if $[X]^1=\mathscr{F}_1(X)\subset \mathscr{D}$ and
$S\in \mathscr{D}$ for every $S\in \mathscr{F}(X)$ such that
$S\subset T$ for some $T\in \mathscr{D}$.  Let us remark that in some
sources, such a subset in a partially ordered set is called a
\emph{weak ideal}.  Evidently, all collections defined in
\eqref{eq:ext-sel-v15:1} are hyperspace-ideals in
$\mathscr{F}(X)$.\medskip

For a hyperspace-ideal $\mathscr{D}\subset \mathscr{F}(X)$ and
$Z\in \mathscr{F}(X)$, let 
\begin{equation}
  \label{eq:ext-sel-v23:1}
  \mathscr{D}(Z)=\{S\in \mathscr{D}: S\subset Z\}=\{S\cap Z: S\in
  \mathscr{D}\ \text{and}\ 
  S\cap Z\neq \emptyset\}.
\end{equation}
Accordingly, $\mathscr{D}(Z)$ is a hyperspace-ideal in
$\mathscr{F}(Z)$. Moreover, $\mathscr{D}(X)=\mathscr{D}$ which gives a
natural pair $(\mathscr{D}(Z),\mathscr{D}(X))$ of subsets of
$\mathscr{F}(X)$. 

\begin{theorem}
  \label{theorem-ext-sel-v46:1}
  Let $X$ be a space and $\mathscr{D}\subset \mathscr{F}(X)$ be a
  hyperspace-ideal such that $\mathscr{F}_2(X)\subset \mathscr{D}$ and
  for every $Z\in \mathscr{F}(X)$, each continuous selection for
  $\mathscr{D}(Z)$ can be extended to a continuous selection
  $\mathscr{D}=\mathscr{D}(X)$. Then $X$ is both normal and strongly
  zero-dimensional, i.e.\ $X$ is ultranormal.
\end{theorem}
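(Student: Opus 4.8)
The plan is to prove normality and strong zero-dimensionality separately, in each case using the selection-extension hypothesis for a cleverly chosen closed set $Z$ built from a pair of disjoint closed sets in $X$. First I would establish strong zero-dimensionality. Let $A, B \subset X$ be disjoint nonempty closed sets; I must produce a clopen set $U$ with $A \subset U$ and $U \cap B = \emptyset$. Pick points $a \in A$ and $b \in B$ and set $Z = A \cup B$, a closed subset of $X$. On $\mathscr{D}(Z)$ I would construct a continuous selection $g$ which ``sees'' the partition: the idea is to arrange $g$ so that it prefers elements of $A$ over elements of $B$ — for instance, declare the weak selection on $[Z]^2$ by $A \leq_g B$ (which is automatically continuous since $A$ and $B$ are relatively clopen in $Z$, hence $A <_g B$ is witnessed by the open sets $A, B$ themselves), and extend this to a selection on the hyperspace-ideal $\mathscr{D}(Z)$ by always choosing the $\leq_g$-least element available, which exists and varies continuously precisely because the two ``blocks'' $A$ and $B$ are clopen in $Z$. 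By hypothesis $g$ extends to a continuous selection $h \in \sel[\mathscr{D}(X)]$. Now I would argue that the set $U = \{x \in X : x <_h b\} \cup \{b' : b' \leq_h x \text{ fails for all...}\}$ — more precisely, I would use $h$ restricted to $\mathscr{F}_2(X) \subset \mathscr{D}$ to get a continuous weak selection $\sigma = h \uhr \mathscr{F}_2(X)$ on $X$, and consider the $\leq_\sigma$-interval separating $A$ from $B$. The point is that $a <_\sigma b$ (since $h$ extends $g$), so by continuity of $\sigma$ there is a separation; iterating/patching over all of $A$ and $B$ using compactness-free arguments for clopen intervals, one extracts a clopen $U$ separating $A$ and $B$.

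The cleaner route, which I would actually pursue, is: since $h \uhr \mathscr{F}_2(X)$ is a continuous weak selection $\sigma$ for $X$ with $A <_\sigma B$, and the relation $\leq_\sigma$ on $X$ has open initial and final intervals, I would take $U = \bigcup_{b \in B}(\gets, b)_{\leq_\sigma}$. This is open and contains $A$. Its complement is $X \setminus U = \{x : b \leq_\sigma x \text{ for some } b \in B\}$; to see this is also open I would instead work with the complementary selection $\sigma^*$ and the final intervals, showing $X \setminus U = \bigcup_{b \in B}[b, \to)_{\leq_\sigma}$ is open as well — here is where I expect to need more care, because $\leq_\sigma$ is not transitive, so these ``interval'' descriptions require genuine verification rather than formal manipulation. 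I anticipate this is the \textbf{main obstacle}: controlling the non-transitive order $\leq_\sigma$ well enough to conclude that the separating set is clopen, not merely open. The hypothesis that the extension property holds for \emph{every} closed $Z$ (not just finite ones) is what gives the extra leverage — I would exploit it by choosing $Z$ to also include auxiliary points, or by applying the extension hypothesis to a modified selection on $Z = A \cup B$ that forces a clopen cut, e.g. a selection that is ``constant-block-like'' so that its extension's level sets are forced clopen.

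Once strong zero-dimensionality is in hand, normality would follow, but I would in fact get normality in parallel: given disjoint closed $A, B$, the clopen $U$ separating them already gives disjoint open sets $U \supset A$ and $X \setminus U \supset B$, so $X$ is normal. Combined with strong zero-dimensionality this is exactly the statement that $X$ is ultranormal (every two disjoint closed sets lie in disjoint clopen sets), matching the definition recalled on page~\pageref{page:suborderable-zero}. I would close by remarking that the hypothesis $\mathscr{F}_2(X) \subset \mathscr{D}$ is used only to guarantee that $h$ restricts to a genuine continuous \emph{weak} selection on $X$, which is the sole structure from $h$ that the argument consumes; the full strength of $\mathscr{D}$ being a hyperspace-ideal and the extension property holding on all of $\mathscr{D}(Z)$ is what lets us build the block-respecting $g$ in the first place.
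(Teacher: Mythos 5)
Your overall strategy (apply the extension hypothesis to a selection on a closed set built from the two disjoint closed sets $A,B$, then separate them using $\leq_\sigma$-intervals of the induced continuous weak selection $\sigma=h\uhr\mathscr{F}_2(X)$) is the right one, but the proof has a genuine gap exactly at the point you flag: you never actually produce the clopen separator, and the concrete candidate you offer does not work. With $A<_\sigma B$, the set $U=\bigcup_{b\in B}(\gets,b)_{\leq_\sigma}$ is open and contains $A$, but it also meets $B$ as soon as $|B|\geq 2$ (for distinct $b,b'\in B$ one of them is $\leq_\sigma$-below the other, hence lies in $U$), so it does not separate; moreover your description of its complement as $\bigcup_{b\in B}[b,\to)_{\leq_\sigma}$ is the negation error ``for some'' versus ``for all'' and is false precisely because $\leq_\sigma$ need not be transitive. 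So the ``cleaner route'' collapses, and the fallback you mention (auxiliary points, or a block-like selection whose level sets are forced clopen) is only a gesture toward the missing idea, not an argument.

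The missing idea is the paper's use of \emph{two auxiliary points and a deliberately non-transitive (circular) pattern}. One takes $p,q\notin A_1\cup A_2$, sets $Z=A_1\cup A_2\cup\{p,q\}$, and builds $g\in\sel[\mathscr{D}(Z)]$ (patching restrictions of an already existing selection for $\mathscr{D}$ on the blocks, which exist since the hypothesis applied to a singleton gives $\sel[\mathscr{D}]\neq\emptyset$) so that $A_1<_g A_2\cup\{p,q\}$ and $p<_g A_2<_g q<_g p$. If $h\in\sel[\mathscr{D}]$ extends $g$, then $q<_h p$, and this single circularity is what forces $(p,q)_{\leq_h}=[p,q]_{\leq_h}$: the open interval is open because $(p,\to)_{\leq_h}$ and $(\gets,q)_{\leq_h}$ are open for a continuous weak selection, while the closed interval is closed because its complement is $(\gets,p)_{\leq_h}\cup(q,\to)_{\leq_h}$, and the two intervals coincide since $p\leq_h q$ fails. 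This set $U$ is therefore clopen, contains $A_2$ (as $p<_h A_2<_h q$), and misses $A_1$ (as $A_1<_h p$ and $U\subset(p,\to)_{\leq_h}$), which yields ultranormality in one stroke. Your intuition that the hypothesis for \emph{arbitrary} closed $Z$, not just pairs, supplies the needed leverage is correct, but without this two-point circular trick (or an equivalent device that converts an open $\leq_h$-interval into a closed one) the argument does not close.
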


\begin{proof}
  Take disjoint sets $A_1,A_2\in \mathscr{F}(X)$ such that there are
  points ${p,q\in X}$ with $p,q\notin A_1\cup A_2$. Also, take
  selections $g_i\in \sel[\mathscr{D}(A_i)]$, $i=1,2$, and set
  $Z=A_1\cup A_2\cup\{p,q\}\in \mathscr{F}(Z)$.  Next, define a
  selection $g:\mathscr{D}(Z)\to Z$ by letting for
  $S\in \mathscr{D}(Z)$ that
  \[
    g(S)=
    \begin{cases}
      g_1(S\cap A_1) &\text{if $S\cap A_1\neq \emptyset$,} \\
      g_2(S\cap A_2) &\text{if $S\subset A_2\cup\{q\}$ and $S\cap
        A_2\neq \emptyset$,}\\ 
      p &\text{if $p\in S\subset \{p\}\cup A_2$,}\\
      q &\text{if $S=\{p,q\}$}.
    \end{cases}
  \]
  Continuity of $g$ follows easily from the fact that $A_1$, $A_2$,
  $\{p\}$ and $\{q\}$ form a clopen partition of $Z$. In fact, the
  important property of this selection is that $A_1<_g A_2\cup\{p,q\}$
  and $p<_g A_2<_g q<_g p$. By condition, $g$ can be extended to a
  selection $h\in \sel[\mathscr{D}]$. In particular,
  $h\uhr \mathscr{F}_2(X)$ is a continuous weak selection for $X$
  because $\mathscr{F}_2(X)\subset \mathscr{D}$, so the relation
  $\leq_h$ is an extension of the relation $\leq_g$. Moreover,
  $U=(p,q)_{\leq_h}=[p,q]_{\leq_h}$ is a clopen set with
  $A_2\subset U$. Since $U\subset (p,\to)_{\leq_h}$ and
  $A_1\subset (\gets,p)_{\leq_h}$, we also have that
  $A_1\subset X\setminus U$. The proof is complete.
\end{proof}

Since each normal space is $\omega$-collectionwise normal, we have the
following question which is complementary to Question
\ref{question-ext-sel-v11:1}.

\begin{question}
  \label{question-ext-sel-v11:2}
  Let $X$ be a space such that for every $Z\in \mathscr{F}(X)$, each
  continuous weak selection for $Z$ can be extended to a continuous
  weak selection for $X$.  Then, is it true that $X$ is collectionwise
  normal?
\end{question}

Another natural question is about the possible inverse of Theorem
\ref{theorem-ext-sel-v46:1}.

\begin{question}
  \label{question-ext-sel-v11:3}
  Let $X$ be a space with a continuous weak selection. If $X$ is
  ultranormal, then is it true that for every $Z\in \mathscr{F}(X)$,
  each continuous weak selection for $Z$ can be extended to a
  continuous weak selection for $X$? What about if $X$ is ultraparacompact?
\end{question}

The answer is ``yes'' for metrizable spaces, where we have a stronger
result. To this end, let us recall that a set-valued mapping
$\varphi:Y\to \mathscr{F}(X)$ is \emph{lower semi-continuous}, or
l.s.c., if the set
$\varphi^{-1}(U)=\{y\in Y:\varphi(y)\cap U\neq\emptyset\}$ is open in
$Y$ for every open $U\subset X$. A map $f:Y\to X$ is a
\emph{selection} for $\varphi$ if $f(y)\in\varphi(y)$ for every
$y\in Y$. The following selection theorem was obtained by Michael in
\cite{michael:56}.

\begin{theorem}[\cite{michael:56}]
  \label{th:set-valued-selections-1}
  Let $Y$ be an ultraparacompact space and $X$ be a metrizable
  space. Then each l.s.c.\ mapping $\varphi:Y\to \mathscr{C}(X)$ has a
  continuous selection. If moreover $X$ is completely metrizable, then
  the same is valid for every l.s.c.\ mappings
  $\varphi:Y\to \mathscr{F}(X)$.
\end{theorem}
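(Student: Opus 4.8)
The plan is to establish the second (stronger) assertion first, by a successive–approximation scheme adapted to the zero-dimensional setting, and then to deduce the compact-valued case by passing to a completion of $X$. Throughout I would fix a metric $d\leq 1$ on $X$ and exploit the defining feature of ultraparacompactness recalled just above: every open cover of $Y$ admits a \emph{pairwise disjoint} open refinement, that is, a partition of $Y$ into clopen sets each contained in some member of the cover. The point is that an ``approximate selection'' can always be chosen \emph{locally constant} — constant on each block of such a partition — with no compatibility to verify between blocks, and this is precisely what replaces the convexity hypothesis of the classical Michael selection theorem.

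The core is a construction of continuous, in fact locally constant, maps $f_n\colon Y\to X$, $n\geq 0$, with $d\big(f_n(y),\varphi(y)\big)<2^{-n}$ for all $y\in Y$, and $d\big(f_n(y),f_{n-1}(y)\big)<2^{-n+2}$ for $n\geq 1$. The single move used at each stage is the following: if $\psi$ is any set-valued map on $Y$ with nonempty values for which $\{y\colon\psi(y)\cap U\neq\emptyset\}$ is open for every open $U\subset X$, then covering $X$ by the open balls $B(c,r)$ of radius $r$, pulling them back under $\psi$ to an open cover of $Y$, choosing a clopen partition of $Y$ refining it, and setting $g$ equal on each block to a centre $c$ with $\psi(y)\cap B(c,r)\neq\emptyset$ there, produces a locally constant $g\colon Y\to X$ with $d(g(y),\psi(y))<r$ for every $y$. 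For $f_0$ apply this with $\psi=\varphi$ and $r=1$ (legitimate since $d\leq 1$). Inductively, given $f_{n-1}$, apply it with $r=2^{-n}$ to $\psi(y)=\varphi(y)\cap B\big(f_{n-1}(y),2^{-n+1}\big)$: this has nonempty values by the estimate $d(f_{n-1}(y),\varphi(y))<2^{-n+1}$, and it satisfies the openness condition because $f_{n-1}$ is locally constant, so on each block where $f_{n-1}$ is constant $\psi$ is merely $\varphi$ intersected with one fixed ball. From $\psi(y)\subset\varphi(y)$ we get $d(f_n(y),\varphi(y))<2^{-n}$, and choosing $z\in\psi(y)$ with $d(f_n(y),z)<2^{-n}$ and $d(z,f_{n-1}(y))<2^{-n+1}$ gives $d(f_n(y),f_{n-1}(y))<2^{-n+2}$.

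Consequently $(f_n)$ is uniformly Cauchy. If $X$ is completely metrizable it converges uniformly to a continuous map $f\colon Y\to X$, and $d(f(y),\varphi(y))=\lim_n d(f_n(y),\varphi(y))=0$ together with closedness of $\varphi(y)$ forces $f(y)\in\varphi(y)$; this proves the statement for completely metrizable $X$ and closed-valued $\varphi$. For the compact-valued statement with $X$ merely metrizable, pass to a completion $\widehat X\supset X$: composing $\varphi$ with the inclusion $X\hookrightarrow\widehat X$ yields a l.s.c.\ map $Y\to\mathscr{F}(\widehat X)$ whose values $\varphi(y)$, being compact, are closed in $\widehat X$, and $\widehat X$ is completely metrizable, so the part already proved gives a continuous selection $f\colon Y\to\widehat X$. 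Since $f(y)\in\varphi(y)\subset X$, this $f$ is the desired selection into $X$.

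The only genuinely delicate point is checking that each auxiliary map $\psi$ satisfies the openness (lower semi-continuity) condition at the inductive stage, which is exactly where local constancy of the preceding approximation is invoked — and that local constancy is available only because $Y$ is ultraparacompact. Everything else is routine bookkeeping with the geometric-series estimates.
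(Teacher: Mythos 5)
Your argument is correct and is essentially Michael's own proof of this theorem (which the paper only cites, without giving a proof): locally constant approximate selections produced from pairwise disjoint clopen refinements, improved by successive approximation, with the compact-valued case reduced to the closed-valued one over a completion of $X$. The only point to make explicit is that in the completely metrizable case the metric $d\leq 1$ must be chosen complete (truncating a complete metric at $1$ preserves completeness), since otherwise uniform Cauchyness of the sequence $(f_n)$ need not yield convergence in $X$.
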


This implies the following result regarding Question
\ref{question-ext-sel-v11:3}.

\begin{theorem}
  \label{theorem-ext-sel-v11:1}
  For a metrizable space $X$, the following are equivalent\textup{:}
  \begin{enumerate}
  \item\label{item:ext-sel-v11:1} For every $\tau_V$-closed set
    $\mathscr{G}\subset \mathscr{C}(X)$, each continuous selection for
    $\mathscr{G}$ can be extended to a continuous selection for
    $\mathscr{C}(X)$.
  \item\label{item:ext-sel-v11:2} For every $Z\in \mathscr{F}(X)$,
    each continuous weak selection for $Z$ can be extended to a
    continuous weak selection for $X$.
  \item\label{item:ext-sel-v11:3} $X$ is totally disconnected
    and orderable.  
  \end{enumerate}
\end{theorem}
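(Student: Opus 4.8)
The plan is to establish the loop \textbf{(a)$\Rightarrow$(c)$\Rightarrow$(a)} together with \textbf{(b)$\Rightarrow$(c)$\Rightarrow$(b)}; the two implications into (c) will come directly from Theorem~\ref{theorem-ext-sel-v46:1}, and the two implications out of (c) from Michael's selection theorem (Theorem~\ref{th:set-valued-selections-1}) applied to a suitable lower semi-continuous ``correction'' of the given selection. For \textbf{(b)$\Rightarrow$(c)} I would apply Theorem~\ref{theorem-ext-sel-v46:1} to the hyperspace-ideal $\mathscr{D}=\mathscr{F}_2(X)$: here $\mathscr{D}(Z)=\mathscr{F}_2(Z)$ for every $Z\in\mathscr{F}(X)$, and since continuous selections for $\mathscr{F}_2(Z)$ are precisely the continuous weak selections for $Z$, condition (b) is literally the hypothesis of that theorem; hence $X$ is ultranormal, and being metrizable (so normal) it is strongly zero-dimensional, whence Theorem~\ref{theorem-hsp-ver-10:9} gives (c). For \textbf{(a)$\Rightarrow$(c)} I would instead take $\mathscr{D}=\mathscr{C}(X)$: for a closed $Z\subset X$ the set $\mathscr{D}(Z)=\mathscr{C}(Z)$ is $\tau_V$-closed in $\mathscr{C}(X)$, its complement $\{S\in\mathscr{C}(X):S\cap(X\setminus Z)\neq\emptyset\}$ being $\tau_V$-open, so condition (a) applied with $\mathscr{G}=\mathscr{C}(Z)$ again supplies the hypothesis of Theorem~\ref{theorem-ext-sel-v46:1}, and the same argument yields (c).

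For the implications out of (c), assume $X$ is totally disconnected and orderable, so by Theorem~\ref{theorem-hsp-ver-10:9} it carries a compatible non-Archimedean metric $d$. The key preliminary observation is that the Hausdorff metric $d_H$ induced by $d$ on $\mathscr{C}(X)$ also satisfies the strong triangle inequality---a one-line check from the ultrametric inequality for $d$, using compactness to realise the relevant distances---and generates the Vietoris topology. Hence $\mathscr{C}(X)$ is non-Archimedean metrizable, therefore strongly zero-dimensional by Theorem~\ref{theorem-hsp-ver-10:9}, and so is every subspace of it (the restriction of $d_H$ is again an ultrametric); being metrizable, $\mathscr{C}(X)$ and all its subspaces are then ultraparacompact (balls of a fixed radius in an ultrametric space form a clopen partition, which refines to a partition refinement of any open cover).

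Now for \textbf{(c)$\Rightarrow$(a)}, let $\mathscr{G}\subset\mathscr{C}(X)$ be $\tau_V$-closed and $g\in\sel[\mathscr{G}]$, and define $\varphi\colon\mathscr{C}(X)\to\mathscr{C}(X)$ by $\varphi(S)=\{g(S)\}$ for $S\in\mathscr{G}$ and $\varphi(S)=S$ otherwise. Then $\varphi$ is l.s.c.: for open $U\subset X$ one has $\varphi^{-1}(U)=\{S\in\mathscr{C}(X):S\cap U\neq\emptyset\}\setminus\big(\mathscr{G}\setminus g^{-1}(U)\big)$, where the first set is $\tau_V$-open, $g^{-1}(U)\subset\{S:S\cap U\neq\emptyset\}$ since $g(S)\in S$, and $\mathscr{G}\setminus g^{-1}(U)$ is closed in $\mathscr{G}$ (hence in $\mathscr{C}(X)$) by continuity of $g$ and $\tau_V$-closedness of $\mathscr{G}$. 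Since $\mathscr{C}(X)$ is ultraparacompact and $X$ is metrizable, Theorem~\ref{th:set-valued-selections-1} gives a continuous selection $f$ for $\varphi$, and then $f\in\sel[\mathscr{C}(X)]$ with $f\uhr\mathscr{G}=g$. For \textbf{(c)$\Rightarrow$(b)}, given $Z\in\mathscr{F}(X)$ and a continuous weak selection $\sigma$ for $Z$, I would run the identical argument with $\mathscr{F}_2(X)$ in place of $\mathscr{C}(X)$ and $\mathscr{F}_2(Z)$ in place of $\mathscr{G}$---here $\mathscr{F}_2(Z)$ is $\tau_V$-closed in the ultraparacompact space $\mathscr{F}_2(X)$, and $\varphi(S)=\{\sigma(S)\}$ for $S\subset Z$, $\varphi(S)=S$ otherwise, takes values in $\mathscr{C}(X)$---to obtain a continuous weak selection for $X$ extending $\sigma$.

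The main obstacle is the lower semi-continuity verification in the two implications out of (c): one must reconcile the fact that $\varphi$ collapses a two-point $S$ to the single point $g(S)$ (resp.\ $\sigma(S)$) over the closed exceptional set while leaving $S$ intact off it, and the clean way to do this is exactly the identity $\varphi^{-1}(U)=\{S:S\cap U\neq\emptyset\}\setminus(\mathscr{G}\setminus g^{-1}(U))$ combined with the inclusion $g^{-1}(U)\subset\{S:S\cap U\neq\emptyset\}$. A secondary point that must be set up first is that $\mathscr{C}(X)$ (and its subspaces $\mathscr{C}(Z)$, $\mathscr{F}_2(X)$) inherit strong zero-dimensionality, hence ultraparacompactness, from $X$---this is why the non-Archimedean Hausdorff-metric remark precedes everything else.
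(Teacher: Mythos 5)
Your proposal is correct and follows essentially the same route as the paper: implications into (c) via Theorem \ref{theorem-ext-sel-v46:1} together with Theorem \ref{theorem-hsp-ver-10:9}, and implications out of (c) via the lower semi-continuous map $\varphi$ (collapsing $S\in\mathscr{G}$ to $\{g(S)\}$), the non-Archimedean Hausdorff metric on $\mathscr{C}(X)$, and Michael's zero-dimensional selection theorem (Theorem \ref{th:set-valued-selections-1}). The only differences are organizational — the paper runs the cycle (a)$\Rightarrow$(b)$\Rightarrow$(c)$\Rightarrow$(a), noting that (a)$\Rightarrow$(b) is trivial since $\mathscr{F}_2(Z)$ is $\tau_V$-closed in $\mathscr{C}(X)$, so your separate (c)$\Rightarrow$(b) application of Michael's theorem is redundant — and that you spell out the l.s.c.\ verification and the ultraparacompactness of $\mathscr{C}(X)$, which the paper handles by citation and by the strong zero-dimensionality of $(\mathscr{C}(X),H(d))$.
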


\begin{proof}
  The implication
  \ref{item:ext-sel-v11:1}$\implies$\ref{item:ext-sel-v11:2} is
  trivial because $\mathscr{G}=\mathscr{F}_2(Z)\subset \mathscr{C}(X)$
  is a $\tau_V$-closed set, while
  \ref{item:ext-sel-v11:2}$\implies$\ref{item:ext-sel-v11:3} follows
  from Theorems \ref{theorem-hsp-ver-10:9} and
  \ref{theorem-ext-sel-v46:1}. To show that
  \ref{item:ext-sel-v11:3}$\implies$\ref{item:ext-sel-v11:1}, take a
  $\tau_V$-closed set $\mathscr{G}\subset \mathscr{C}(X)$ and a
  selection ${g\in \sel[\mathscr{G}]}$. Next, define a set-valued
  mapping $\varphi:\mathscr{C}(X)\to \mathscr{C}(X)$ by
  \[
  \varphi(S)=
  \begin{cases}
    \{g(S)\}\ &\text{if $S\in \mathscr{G}$,\ \ and}\\
    S\ &\text{if $S\notin \mathscr{G}$.}
  \end{cases}
  \]
  Then $\varphi$ is l.s.c.\ with respect to the Vietoris topology on
  $\mathscr{C}(X)$, see \cite[Examples 1.3 and 1.3*]{MR0077107}. To
  show that $\varphi$ has a continuous selection, using Theorem
  \ref{theorem-hsp-ver-10:9}, take a non-Archimedean metric $d$ on $X$
  compatible with the topology of $X$. Next, consider the Hausdorff
  metric $H(d)$ on $\mathscr{C}(X)$:
  \[
    H(d)(S,T)=\sup_{x\in X}\big|d(x,S)-d(x,T)\big|,\quad
    S,T\in \mathscr{C}(X).
  \]
  It is well known that the Hausdorff topology on $\mathscr{C}(X)$
  generated by $H(d)$ coincides with the Vietoris topology
  $\tau_V$. Moreover, $H(d)$ is also a non-Archimedean metric, hence
  $\mathscr{C}(X)$ is strongly zero-dimensional as well. Accordingly,
  by Theorem \ref{th:set-valued-selections-1}, $\varphi$ has a
  continuous selection $f$. Clearly, $f$ is a continuous selection for
  $\mathscr{C}(X)$ extending $g$, which completes the proof.
\end{proof}

Theorem \ref{theorem-ext-sel-v11:1} suggests some related
questions. Regarding non-Archimedean metrics, the strong triangular
inequality goes back to works of J. K\"ursch\'ak \cite{MR1580869} and
A. Ostrowski \cite{MR1555153} on algebraic number theory; for general
metric spaces, see F. Hausdorff \cite{hausdorff:34}. Generalising this
property to arbitrary topological spaces, A.\,F. Monna
\cite{MR0035982,MR0035983} defined a space $X$ to be
\emph{non-Archimedean} if it has a base such that for any two members
$B_1$ and $B_2$ of this base with $B_1\cap B_2\neq\emptyset$, we have
either $B_1\subset B_2$ or $B_2\subset B_1$. A base with this property
is said to be \emph{of rank} 1, see \cite{MR0152989, MR0296881} and
\cite{nagata:62}. Non-Archimedean spaces were defined earlier by
D. Kurepa \cite{zbMATH03023961,zbMATH03027256} under the name of
spaces with \emph{ramified bases}. A space is said to have a
\emph{ramified basis} (or being an \emph{$R$-space}), if it has a
basis which is a tree with respect to the reverse inclusion
``$\supset$''.  Recall that a partially ordered set $(T,\leq)$ is a
\emph{tree} if for every $t\in T$, the set $\{s\in T: s \leq t\}$ is
well-ordered. In Kurepa's terminology (see \cite{zbMATH03023961}), a
tree was called ``un tableau ramifi\'e''. It was shown by P. Papi\v{c}
\cite{MR0054940,MR0073964} that each non-Archimedean space is
ultranormal; and that a space $X$ has a compatible non-Archimedean
metric if and only if $X$ is a metrizable non-Archimedean space.

\begin{question}
  \label{question-ext-sel-v19:1}
  Let $X$ be a non-Archimedean space with a continuous weak
  selection. Then, is it true that for every $Z\in \mathscr{F}(X)$,
  each continuous weak selection for $Z$ can be extended to a
  continuous weak selection for $X$?
\end{question}

Regarding the role of strong zero-dimensionality, Theorems
\ref{theorem-hsp-ver-10:9} and \ref{theorem-ext-sel-v11:1} also
suggest the following question.

\begin{question}
  \label{question-ext-sel-v11:4}
  Let $X$ be a zero-dimensional metrizable space with a continuous
  weak selection. Then, is it true that for every
  $Z\in \mathscr{F}(X)$, each continuous weak selection for $Z$ can be
  extended to a continuous weak selection for $X$?
\end{question}

Evidently, by Theorems \ref{theorem-hsp-ver-10:9},
\ref{theorem-ext-sel-v46:1} and \ref{theorem-ext-sel-v11:1}, the
answer to Question \ref{question-ext-sel-v11:4} is in the affirmative
precisely when $X$ is strongly zero-dimensional. Thus, Question
\ref{question-ext-sel-v11:4} is actually equivalent to the following
question stated in \cite[Question 2]{MR3430989}.

\begin{question}[\cite{MR3430989}]
  \label{question-ext-sel-v19:2}
  Let $X$ be a zero-dimensional metrizable space with a continuous
  weak selection. Then, is it true that $X$ is strongly
  zero-dimensional?
\end{question}

For a family $\{\mathscr{T}_\alpha: \alpha\in \mathscr{A}\}$ of
topologies on a set $X$, the \emph{supremum topology}
$\bigvee_{\alpha\in \mathscr{A}}\mathscr{T}_\alpha$ is the smallest
topology on $X$ which contains all topologies $\mathscr{T}_\alpha$,
${\alpha\in \mathscr{A}}$. A space $(X,\mathscr{T})$ is called a
\emph{continuous} \emph{weak selection space}
\cite{garcia-nogura-2011a}, abbreviated a \emph{CWS-space}, if
$\mathscr{T}=\bigvee_{\sigma\in \Sigma}\mathscr{T}_\sigma$ for some
collection $\Sigma\subset \sel[\mathscr{F}_2(X)]$. Some basic
properties of these spaces, also several examples, were provided in
\cite{garcia-nogura-2011a}. In particular, it was shown in
\cite[Theorem 2.7]{garcia-nogura-2011a}, see also \cite{MR3430989},
that each zero-dimensional space $X$ with a continuous weak selection
is a CWS-space.  Thus, Question \ref{question-ext-sel-v11:4} is also a
special case of the following question stated in \cite[Question
1]{MR3430989}.

\begin{question}[\cite{MR3430989}]
  \label{question-ext-sel-v11:5}
  Let $X$ be a metrizable CWS-space. Then, is it true that $X$ is
  suborderable? 
\end{question}

Another aspect is the role of metrizability in such hyperspace
selection-exten\-sion properties, it brings the following general
question.

\begin{question}
  \label{question-ext-sel-v13:1}
  Let $X$ be a totally disconnected suborderable space. Then, is it
  true that for every $Z\in \mathscr{F}(X)$, each continuous weak
  selection for $Z$ can be extended to a continuous weak selection for
  $X$?
\end{question}

A good starting point to consider Question
\ref{question-ext-sel-v13:1} is when $X$ is the first uncountable
ordinal $\omega_1$, or in the compact case when $X=\omega_1+1$.

\begin{question}
  \label{question-ext-sel-v13:2}
  Let $\omega_1\subset X\subset \omega_1+1$. Then, is it true that for
  every ${Z\in \mathscr{F}(X)}$, each continuous weak selection for $Z$
  can be extended to a continuous weak selection for $X$?
\end{question}

Evidently, the answer is ``yes'' if $Z$ is a countable set. The other
case is when $Z$ is uncountable. A subset of $\omega_1$ is uncountable
iff it is cofinal and unbounded in $\omega_1$. Hence, the remaining
case is when $Z\subset X$ is a \emph{club} set. It is well known that
$Z\subset \omega_1$ is a club set if and only if there is a continuous
order-preserving bijection from $\omega_1$ onto $Z$.  Accordingly, in
this case, a continuous weak selection for a club set
$Z\subset \omega_1$ is the same as a continuous weak selection for
$\omega_1$.\medskip

The relationship between continuous weak selections for $\omega_1$ and
those for $\omega_1+1$ is essentially known. Namely, since
$\beta \omega_1=\omega_1+1$, Theorem \ref{theorem-ext-sel-v13:1}
implies that Question \ref{question-ext-sel-v13:2} is reduced only to
the case of $X=\omega_1$.

\begin{corollary}
  \label{corollary-ext-sel-v13:1}
  Each continuous weak selection for $\omega_1$ can be extended to a
  continuous weak selection for $\omega_1+1$. 
\end{corollary}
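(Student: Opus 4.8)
The plan is to deduce this directly from Theorem~\ref{theorem-ext-sel-v13:1} by checking its two hypotheses for $X=\omega_1$, namely that $\omega_1$ is pseudocompact and that its {\v C}ech-Stone compactification is $\omega_1+1$.

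For the first point, I would note that $\omega_1$, being an ordinal space, is orderable and hence Tychonoff, and that it is countably compact: every countable $A\subset\omega_1$ is bounded, so $\sup A<\omega_1$ is an accumulation point of $A$. A countably compact Tychonoff space is pseudocompact, so $\omega_1$ is pseudocompact. For the second point, I would recall the classical fact that $\beta\omega_1=\omega_1+1$: the space $\omega_1+1$ is a Hausdorff compactification of $\omega_1$, and since $\omega_1$ has uncountable cofinality every bounded continuous real-valued function on $\omega_1$ is eventually constant, hence extends continuously over the point $\omega_1$; this is precisely the universal extension property that characterises the {\v C}ech-Stone compactification among all compactifications of $\omega_1$.

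Granting these two facts, Theorem~\ref{theorem-ext-sel-v13:1} applied to the pseudocompact space $\omega_1$ states that each continuous weak selection for $\omega_1$ can be extended to a continuous weak selection for $\beta\omega_1$, and $\beta\omega_1=\omega_1+1$, which is the assertion. I do not anticipate any genuine obstacle here; the proof is essentially a one-line appeal to Theorem~\ref{theorem-ext-sel-v13:1}, the only mild care being needed to record the two standard facts about $\omega_1$ above (either as sketched, or by citing a standard reference).
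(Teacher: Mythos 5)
Your proposal is correct and follows essentially the same route as the paper, which obtains the corollary by applying Theorem \ref{theorem-ext-sel-v13:1} together with the classical fact that $\beta\omega_1=\omega_1+1$ (pseudocompactness of $\omega_1$ being implicit there, justified exactly as you do via countable compactness). No gaps.
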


In fact, it is also enough to consider Question
\ref{question-ext-sel-v13:2} only in the case of
$X=\omega_1+1$. Indeed, let $Z\subset \omega_1$ be a club set and
$Y=Z\cup\{\omega_1\}$ be the closure of $Z$ in
$\omega_1+1=\beta \omega_1$. Then $\beta Z=Y$ because $\omega_1$ is
normal, see \cite[Corollary 3.6.8]{engelking:89}. Moreover, $Z$ is
pseudocompact being a sequentially compact Tychonoff space, see
Theorem \ref{theorem-ext-sel-v21:1}. Hence, by Theorem
\ref{theorem-ext-sel-v13:1}, each continuous weak selection for $Z$
can be extended to a continuous weak selection for $Y=\beta Z$. Thus,
if the answer to Question \ref{question-ext-sel-v13:2} is ``yes'' for
$X=\omega_1+1$, then it is also ``yes'' for $X=\omega_1$.\medskip

Regarding the role of closed sets in such hyperspace
selection-extension problems, it is interesting to consider the
following general question which is complementary to Question
\ref{question-ext-sel-v13:3}. 

\begin{question}
  \label{question-ext-sel-v50:1}
  Let $X$ be a space such that for every subspace ${Z\subset X}$, each
  continuous weak selection for $Z$ can be extended to a continuous
  weak selection for $X$. Then what can one say about $X$? Does there
  exist a non-discrete space $X$ with this property?
\end{question}

Evidently, if $X$ is as in Question \ref{question-ext-sel-v50:1}, then
it doesn't contain any non-trivial convergent sequence, see
Proposition \ref{proposition-ext-sel-v13:2}. In fact, all results
regarding the case of a discrete subspace $Z\subset X$ remain valid
for a space $X$ as in this question.  Here is a related observation
where $Z\subset X$ is not required to be a discrete subspace. A point
$p\in Y$ in a space $Y$ is called \emph{butterfly} (or a
\emph{b-point}) \cite{Sapirovskii1975} if
$\overline{F\setminus\{p\}}\cap \overline{G\setminus\{p\}}=\{p\}$ for
some closed sets $F,G\subset Y$. Following the idea of cut points in
connected spaces, $p\in Y$ was called a \emph{cut point} of a space
$Y$ \cite{gutev-nogura:03a} if $Y\setminus\{p\}=U\cup V$, where $U$
and $V$ are disjoint sets such that
$\overline{U}\cap \overline{V}=\{p\}$. Points with this property were
also introduced in \cite{Dow2008}, where they were called
\emph{tie-points}. In some sources, cut points (equivalently,
tie-points) are called \emph{strong butterfly points}. Regarding
butterfly points of subspaces, we shall say that $p\in X$ is a
\emph{weak butterfly point} if there are disjoint sets
$S,T\subset X\setminus \{p\}$ such that
$p\in \overline{S}\cap \overline{T}$. Evidently, each butterfly point
of some subset $Y\subset X$ is a weak butterfly point of $X$.

\begin{proposition}
  \label{proposition-ext-sel-v14:1}
  Let $X$ be a space such that for every $Z\subset X$, each continuous
  weak selection for $Z$ can be extended to a continuous weak
  selection for $X$. Then $X$ doesn't contain any weak butterfly point. 
\end{proposition}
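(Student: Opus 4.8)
The plan is to follow the scheme of the proof of Proposition~\ref{proposition-ext-sel-v13:2}, replacing the two convergent sequences there by two suitably thinned sets with \emph{mutually disjoint closures}. Suppose, towards a contradiction, that some $p\in X$ is a weak butterfly point, and fix disjoint sets $S,T\subset X\setminus\{p\}$ with $p\in\overline{S}\cap\overline{T}$. Since $p$ is a limit point of $S$ and of $T$ while $X$ is $T_1$, both $S$ and $T$ are infinite; hence we may also assume there is a point $q\in X\setminus(S\cup T\cup\{p\})$, for if $S\cup T\cup\{p\}=X$ we simply pick $q\in S$ and replace $S$ by $S\setminus\{q\}$, which still has $p$ in its closure because $p\neq q$.

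The first step is to shrink $S$ and $T$ to subsets $S'\subset S$ and $T'\subset T$ with $p\in\overline{S'}\cap\overline{T'}$, $\overline{S'}\cap T'=\overline{T'}\cap S'=\emptyset$, and $q\notin\overline{S'}\cup\overline{T'}$. Granting this, $Z:=S'\cup\{q\}\cup T'$ is the union of the three pairwise-disjoint, relatively clopen pieces $S'$, $\{q\}$, $T'$: the point $q$ is isolated in $Z$, and each of $S'$, $T'$ coincides with its closure in $Z$. Gluing along this clopen partition, define a weak selection $\eta$ for $Z$ by declaring $t<_\eta q<_\eta s$ and $t<_\eta s$ whenever $t\in T'$ and $s\in S'$, and on $S'$ and on $T'$ let $\eta$ agree with the restriction of a fixed continuous weak selection of $X$. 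Then $\eta$ is continuous: for $x<_\eta y$ in $Z$, the open sets $U\ni x$, $V\ni y$ with $U<_\eta V$ required by the continuity criterion \cite[Theorem~3.1]{gutev-nogura:01a} can be taken inside the relevant clopen pieces, using $U$ or $V$ equal to a whole piece when $x$ and $y$ lie in different pieces.

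By hypothesis, $\eta$ extends to a continuous weak selection $\sigma$ for $X$, so $\sigma\uhr Z=\eta$; in particular $q<_\sigma s$ for all $s\in S'$ and $t<_\sigma q$ for all $t\in T'$, while $p\notin Z$ gives $p\neq q$. Now apply the same continuity criterion to $\sigma$: if $p<_\sigma q$, choose open $U\ni p$, $V\ni q$ with $U<_\sigma V$; since $p\in\overline{S'}$ there is $s\in S'\cap U$, whence $s<_\sigma q$, contradicting $q<_\sigma s$. Hence $q<_\sigma p$. Symmetrically, if $q<_\sigma p$, choose open $U\ni q$, $V\ni p$ with $U<_\sigma V$; since $p\in\overline{T'}$ there is $t\in T'\cap V$, whence $q<_\sigma t$, contradicting $t<_\sigma q$. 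Hence $p<_\sigma q$. Since $q<_\sigma p$ and $p<_\sigma q$ cannot both hold, we reach a contradiction, and therefore $X$ has no weak butterfly point.

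The part I expect to require genuine work is the shrinking step: arranging $\overline{S'}\cap T'=\overline{T'}\cap S'=\emptyset$ together with the existence of an isolated $q$, which is exactly what makes the glued $\eta$ continuous and is the feature that was automatic in Proposition~\ref{proposition-ext-sel-v13:2} because there $S$ and $T$ were convergent sequences (each equal to its own closure once the limit $p$ is removed). Here I would exploit the structure afforded by the hypothesis — in particular that $X$ carries a continuous weak selection and is therefore totally disconnected by Corollary~\ref{corollary-ext-sel-v5:1} — to disentangle the two sets accumulating at $p$ into sets with disjoint closures; once $\eta$ is continuous on $Z$, the remainder is the two-sided-limit argument above, in the same spirit as Proposition~\ref{proposition-ext-sel-v13:2}.
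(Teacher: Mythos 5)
Your core argument is exactly the paper's: on $Z=S\cup\{q\}\cup T$ one glues a weak selection that places $q$ strictly below every point of the piece clustering at $p$ on one side and strictly above every point of the other piece, extends it by hypothesis to a continuous weak selection $\sigma$ for $X$, and then continuity of $\sigma$ at the pair $\{p,q\}$ rules out both $p<_\sigma q$ and $q<_\sigma p$ --- the same squeeze as in Proposition~\ref{proposition-ext-sel-v13:2} (whether the blocks are arranged linearly, as you do, or cyclically as $q<_\eta S<_\eta T<_\eta q$, as in the paper, is immaterial). The only divergence is your preliminary shrinking step: the paper performs no thinning at all, but takes the given disjoint witnesses $S,T\subset X\setminus\{p,q\}$, continuous weak selections on $S$ and on $T$, declares $q<_\eta S<_\eta T<_\eta q$, asserts that the glued $\eta$ is continuous on $Z$, and runs the squeeze.

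The gap in your write-up is precisely the step you flag and postpone. Your instinct that separation is needed is correct: if some $t\in T'$ lies in $\overline{S'}$, then at the pair $\{t,q\}$ every open $U\ni t$ in $Z$ meets $S'$ at some $s$, and $U<_\eta V$ would force $s<_\eta q$, contradicting $q<_\eta S'$; so any selection on $Z$ with $q$ below all of $S'$ and above all of $T'$ is discontinuous unless $\overline{S'}\cap T'=\emptyset=S'\cap\overline{T'}$ (and keeping $q$ off $\overline{S'}\cup\overline{T'}$ is what makes your three-piece gluing clean). But you never produce such $S'$, $T'$, $q$, and the tool you invoke --- total disconnectedness via Corollary~\ref{corollary-ext-sel-v5:1} --- only separates finitely many points by clopen sets; it gives no mechanism for disentangling two sets that accumulate at the same point $p$. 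A continuous weak selection $\sigma_0$ for $X$ does settle the ``opposite sides'' case for free: $(\gets,p)_{\leq_{\sigma_0}}$ and $(p,\to)_{\leq_{\sigma_0}}$ are open, partition $X\setminus\{p\}$, and each is disjoint from the closure of the other, so if $S$ clusters at $p$ from one side and $T$ from the other, the intersections with these intervals are already separated. The unhandled case is when all witnesses accumulate at $p$ from the same $\leq_{\sigma_0}$-side (and the existence of a suitable $q$ outside $\overline{S'}\cup\overline{T'}$ also needs an argument when $S'\cup T'$ is dense). Nothing in your sketch addresses this, so the proposal is incomplete at exactly that point --- a point, it should be said, that the paper's own proof passes over without comment by asserting the continuity of the glued selection directly for the given $S$ and $T$.
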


\begin{proof}
  Assume that $S,T\subset X\setminus\{p,q\}$ are disjoint subsets for
  some points $p,q\in X$ such that
  $p\in \overline{S}\cap \overline{T}$. Next, take selections
  $\eta_S\in\sel[\mathscr{F}_2(S)]$ and
  $\eta_T\in \sel[\mathscr{F}_2(T)]$. Finally, set
  $Z=S\cup\{q\}\cup T$ and define a continuous weak selection $\eta$
  for $Z$ by $\eta\uhr \mathscr{F}_2(S)= \eta_S$,
  $\eta\uhr \mathscr{F}_2(T)= \eta_T$ and $q<_\eta S<_\eta T<_\eta
  q$. Then, by condition, $\eta$ can be extended to a continuous weak
  selection $\sigma$ for $X$. Since $q<_\sigma S$ and
  $p\in \overline{S}$, the continuity of $\sigma$ implies that
  $q<_\sigma p$. Similarly, we also have that $p<_\sigma q$ because
  $T<_\sigma q$ and $p\in \overline{T}$. Clearly, this is impossible.
\end{proof}

We have an interesting consequence of the above result. To this end,
let us recall that a space $X$ is \emph{extremally disconnected} if
$\overline{U}$ is open for every open $U\subset X$. 

\begin{corollary}
  \label{corollary-ext-sel-v50:1}
    Let $X$ be a space such that for every $Z\subset X$, each continuous
  weak selection for $Z$ can be extended to a continuous weak
  selection for $X$. Then $X$ is extremally disconnected. 
\end{corollary}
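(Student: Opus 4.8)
The plan is to derive this immediately from Proposition~\ref{proposition-ext-sel-v14:1}, which under the standing hypothesis says that $X$ contains no weak butterfly point; the argument is a contrapositive one. First I would suppose that $X$ is \emph{not} extremally disconnected and fix an open set $U\subset X$ whose closure $\overline{U}$ is not open. Since a non-open set is nonempty, $\overline{U}\neq\emptyset$, and because $\overline{U}$ is not open it has a point $p$ lying outside its interior. I would then record two consequences: as $U$ is open it is contained in the interior of $\overline{U}$, so $p\notin U$; and as $p\in\overline{U}$, the point $p$ does not belong to the open set $V=X\setminus\overline{U}$.

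Next I would check that the disjoint sets $S=U$ and $T=V$ exhibit $p$ as a weak butterfly point of $X$. They are disjoint by the definition of $V$; they lie in $X\setminus\{p\}$ by the two observations just made; and $p\in\overline{S}=\overline{U}$ trivially. Finally $p\in\overline{T}=\overline{V}$: since $p$ is not interior to $\overline{U}$, no neighbourhood of $p$ is contained in $\overline{U}$, i.e.\ every neighbourhood of $p$ meets $V$. Thus $p\in\overline{S}\cap\overline{T}$ with $S,T$ disjoint subsets of $X\setminus\{p\}$, so $p$ is a weak butterfly point, contradicting Proposition~\ref{proposition-ext-sel-v14:1}. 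Hence $X$ is extremally disconnected.

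I do not expect any real obstacle: the only mildly delicate step is the passage from ``$\overline{U}$ is not open'' to ``$\overline{U}$ has a point outside its interior which lies in $\overline{X\setminus\overline{U}}$'', which is a routine point-set manipulation. Alternatively one could route the argument through the standard characterisation that a space is extremally disconnected precisely when disjoint open sets have disjoint closures, picking $p\in\overline{U}\cap\overline{V}$ for witnessing disjoint open sets $U,V$; but the direct route above avoids invoking that fact.
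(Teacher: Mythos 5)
Your proposal is correct and follows essentially the same route as the paper: both arguments take an open $U$ with $\overline{U}$ not open, exhibit a point $p$ in $\overline{U}\cap\overline{X\setminus\overline{U}}$ lying outside both $U$ and $V=X\setminus\overline{U}$ (your $p$ outside the interior of $\overline{U}$ is exactly the paper's $p\in\overline{V}\setminus V$), and conclude that $p$ is a weak butterfly point, contradicting Proposition~\ref{proposition-ext-sel-v14:1}. No gaps.
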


\begin{proof}
  Assume that $\overline{U}$ is not open for some open set
  $U\subset X$. Then $V=X\setminus\overline{U}$ is not closed and,
  consequently, there is a point $p\in \overline{V}\setminus V$.
  Thus, we also have that $p\in \overline{U}\setminus U$, so $p$ is a
  weak butterfly point of $X$. However, by Proposition
  \ref{proposition-ext-sel-v14:1}, this is impossible.
\end{proof}

We conclude this section considering the same extension problem but
with respect to arbitrary continuous selections. For such selections,
we have the following result which is complementary to Theorem
\ref{theorem-ext-sel-v11:1}.

\begin{theorem}
  \label{theorem-ext-sel-v48:2}
  For a metrizable space $X$, the following are equivalent\textup{:}
  \begin{enumerate}
  \item\label{item:ext-sel-v21:1} $X$ is totally disconnected and
    compact.
  \item\label{item:ext-sel-v21:2} For every $\tau_V$-closed set
    $\mathscr{G}\subset \mathscr{C}(X)$, each continuous selection for
    $\mathscr{G}$ can be extended to a continuous selection
    $\mathscr{F}(X)$.
  \item\label{item:ext-sel-v21:3} For every countable discrete
    $Z\in \mathscr{F}(X)$, each selection for $\Sigma(Z)$ can
    be extended to a continuous selection for $\mathscr{F}(X)$.
  \end{enumerate}
\end{theorem}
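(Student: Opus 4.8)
The plan is to establish the cycle $(a)\Rightarrow(b)\Rightarrow(c)\Rightarrow(a)$, mirroring the architecture of Theorem \ref{theorem-ext-sel-v11:1} while inserting the compactness refinement supplied by Corollary \ref{corollary-ext-sel-v25:1} and Proposition \ref{proposition-ext-sel-v25:1}. For $(a)\Rightarrow(b)$, I would argue exactly as in the proof of the implication \ref{item:ext-sel-v11:3}$\implies$\ref{item:ext-sel-v11:1} in Theorem \ref{theorem-ext-sel-v11:1}: a totally disconnected compact metrizable space is strongly zero-dimensional (Theorem \ref{theorem-hsp-ver-10:9}), so it carries a compatible non-Archimedean metric $d$; the Hausdorff metric $H(d)$ on $\mathscr{F}(X)=\mathscr{C}(X)$ is again non-Archimedean, so $\mathscr{F}(X)$ is itself strongly zero-dimensional, hence ultraparacompact (being also compact metric). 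Given a $\tau_V$-closed $\mathscr{G}\subset\mathscr{C}(X)$ and $g\in\sel[\mathscr{G}]$, the associated map $\varphi(S)=\{g(S)\}$ if $S\in\mathscr{G}$ and $\varphi(S)=S$ otherwise is l.s.c.\ (\cite{MR0077107}), and Michael's Theorem \ref{th:set-valued-selections-1} — in the $\mathscr{C}(X)$ version, since $X$ is compact — yields a continuous selection $f$ for $\varphi$, which is the desired extension of $g$ to $\mathscr{F}(X)=\mathscr{C}(X)$.

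The implication $(b)\Rightarrow(c)$ should be essentially formal: for a countable discrete $Z\in\mathscr{F}(X)$, the set $\Sigma(Z)=\mathscr{C}(Z)$ is a discrete space, hence every selection for it is continuous; but $\Sigma(Z)$ need not be $\tau_V$-closed in $\mathscr{C}(X)$, so a direct appeal to $(b)$ is not available. Instead I would observe that $X$ compact metric forces $Z$ to be finite (an infinite discrete subset of a compact space cannot be closed), so in fact $\Sigma(Z)=\mathscr{F}(Z)$ is $\tau_V$-clopen in $\mathscr{C}(X)=\mathscr{F}(X)$, and then $(c)$ follows from $(b)$ — or even directly from Proposition \ref{proposition-ext-sel-v32:2} together with Proposition \ref{pr:extension-1}. (This is the part of the argument where compactness does genuine work, reducing "countable discrete closed" to "finite".)

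The main obstacle is $(c)\Rightarrow(a)$, which must recover both total disconnectedness and compactness from the selection-extension property for finite/countable-discrete sets. Here I would invoke Corollary \ref{corollary-ext-sel-v25:1}: the hypothesis in $(c)$ is precisely its right-hand condition (and $\sel[\mathscr{F}(X)]\neq\emptyset$ holds because a metrizable space admitting such extensions in particular admits a continuous selection for $\mathscr{F}(X)$ — take $Z$ a singleton, or note that $(c)$ with $Z=\emptyset$/trivial $Z$ already gives one, or use that $X$ metric with a continuous weak selection extension is completely metrizable via Theorem \ref{theorem-ext-sel-v47:2} once a selection exists). Thus $X$ is countably compact and totally disconnected; being metrizable, countable compactness upgrades to compactness. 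The only delicate point is confirming $\sel[\mathscr{F}(X)]\neq\emptyset$ in the first place: I expect that taking a two-point $Z=\{a,b\}$ with its unique weak selection and extending it already forces existence of a continuous weak selection on $X$, and then a short argument (or a direct appeal to the finite-$Z$ case of $(c)$ producing a continuous selection on $\mathscr{F}(X)$ by extending a selection on $\Sigma(\{a\})=\{\{a\}\}$) closes the gap; I would spell this out carefully, since without it Corollary \ref{corollary-ext-sel-v25:1} cannot be applied.
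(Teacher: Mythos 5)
Your steps (a)$\Rightarrow$(b) and (c)$\Rightarrow$(a) are sound and essentially coincide with the paper's: (a)$\Rightarrow$(b) is the compact case of the implication \ref{item:ext-sel-v11:3}$\implies$\ref{item:ext-sel-v11:1} of Theorem \ref{theorem-ext-sel-v11:1} (using $\mathscr{C}(X)=\mathscr{F}(X)$ for compact $X$), and (c)$\Rightarrow$(a) is Corollary \ref{corollary-ext-sel-v25:1} (in its countable version, as remarked after that corollary; your singleton trick does settle $\sel[\mathscr{F}(X)]\neq\emptyset$) together with ``countably compact metrizable $\Rightarrow$ compact''.

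The genuine gap is in (b)$\Rightarrow$(c). You prove it by first noting that compactness of $X$ forces every countable discrete $Z\in\mathscr{F}(X)$ to be finite; but compactness is the content of (a), not of (b), so it is not available in this step of the cycle. As written, your argument only establishes (a)$\Rightarrow$(b), (a)$\Rightarrow$(c) and (c)$\Rightarrow$(a), so statement (b) is never used as a hypothesis and the equivalence with (b) is not proved. Moreover, the premise that drove you to this detour is false: for \emph{any} closed discrete $Z\in\mathscr{F}(X)$ (no compactness needed), the family $\Sigma(Z)=\mathscr{C}(Z)$ \emph{is} $\tau_V$-closed in $\mathscr{C}(X)$. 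Indeed, if a compact $S\notin\Sigma(Z)$, then $S\not\subset Z$ (a compact subset of a closed discrete set is finite), so picking $x\in S\setminus Z$ and $U=X\setminus Z$, the basic set $\langle X,U\rangle$ is a $\tau_V$-neighbourhood of $S$ missing $\Sigma(Z)$. Since $Z$ is discrete, $\Sigma(Z)$ is $\tau_V$-discrete, so every selection for it is continuous, and (b) applies directly to $\mathscr{G}=\Sigma(Z)$ --- this is exactly the paper's one-line argument. With that correction your proof closes the cycle and agrees with the paper's.
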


\begin{proof}
  A separable metrizable space $X$ is totally disconnected if and only
  if there exists a continuous injective map of $X$ into the Cantor
  set $\mathbf{C}=\{0,1\}^{\omega}$, see Kuratowski
  \cite{zbMATH03030217} and \cite[Theorem 3,
  p. 148]{MR0259835}. Hence, the implication
  \ref{item:ext-sel-v21:1}$\implies$\ref{item:ext-sel-v21:2} is a
  special case of the implication
  \ref{item:ext-sel-v11:3}$\implies$\ref{item:ext-sel-v11:1} in
  Theorem \ref{theorem-ext-sel-v11:1}.  The implication
  \ref{item:ext-sel-v21:2}$\implies$\ref{item:ext-sel-v21:3} is
  trivial because $\mathscr{G}=\Sigma(Z)\subset \mathscr{C}(X)$ is a
  $\tau_V$-closed set for every discrete $Z\in
  \mathscr{F}(X)$. Finally,
  \ref{item:ext-sel-v21:3}$\implies$\ref{item:ext-sel-v21:1} follows
  from Corollary \ref{corollary-ext-sel-v25:1} because each countably
  compact metrizable space is compact, see e.g.\ \cite[Theorem
  4.1.17]{engelking:89}.
\end{proof}

Theorem \ref{theorem-ext-sel-v48:2} implies the following interesting 
consequence.

\begin{corollary}
  \label{corollary-ext-sel-v21:4}
  Let $X$ be a metrizable space such that for every
  $Z\in \mathscr{F}(X)$, each continuous selection for
  $\mathscr{C}(Z)$ can be extended to a continuous selection for
  $\mathscr{F}(X)$. Then $X$ is a closed subset of the Cantor set
  $\mathbf{C}$ and
  \begin{equation}
    \label{eq:ext-sel-v21:1}
    \sel[\mathscr{F}(X)]=\left\{h\uhr \mathscr{F}(X): h\in
      \sel[\mathscr{F}(\mathbf{C})]\right\}. 
  \end{equation}
\end{corollary}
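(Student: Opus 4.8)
The plan is to reduce the statement to Theorem~\ref{theorem-ext-sel-v48:2}, applied first to $X$ and then to the Cantor set $\mathbf{C}$. First I would observe that the hypothesis, restricted to those closed sets $Z$ that happen to be discrete, is exactly condition~\ref{item:ext-sel-v21:3} of Theorem~\ref{theorem-ext-sel-v48:2}: if $Z\in \mathscr{F}(X)$ is a (countable) discrete set, then a compact subset of $Z$ is finite, so $\mathscr{C}(Z)=\Sigma(Z)$, and every selection for $\Sigma(Z)$ is automatically continuous because $\Sigma(Z)$ is a discrete subspace of the Vietoris hyperspace (see page~\pageref{page-ref-discrete}). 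Thus the hypothesis says precisely that each selection for $\Sigma(Z)$ extends to a continuous selection for $\mathscr{F}(X)$, so Theorem~\ref{theorem-ext-sel-v48:2} yields that $X$ is totally disconnected and compact.

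Next I would embed $X$ into $\mathbf{C}$. Being compact, totally disconnected and metrizable (hence separable), $X$ admits a continuous injection into $\mathbf{C}$ by the theorem of Kuratowski quoted in the proof of Theorem~\ref{theorem-ext-sel-v48:2}; since $X$ is compact this injection is an embedding with closed image, so we may regard $X$ as a closed subset of $\mathbf{C}$. Then $\mathscr{F}(X)\subset \mathscr{C}(X)\subset \mathscr{C}(\mathbf{C})=\mathscr{F}(\mathbf{C})$ (using compactness of $X$ and of $\mathbf{C}$), the Vietoris topology of $\mathscr{F}(X)$ coincides with the subspace topology inherited from $\mathscr{F}(\mathbf{C})$, and $\mathscr{F}(X)=\{S\in \mathscr{F}(\mathbf{C}):S\subset X\}$ is $\tau_V$-closed in $\mathscr{F}(\mathbf{C})$, since its complement is the basic open set $\langle \mathbf{C},\mathbf{C}\setminus X\rangle$.

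For the identity \eqref{eq:ext-sel-v21:1}, the inclusion ``$\supset$'' is the point of the argument: applying the implication \ref{item:ext-sel-v21:1}$\implies$\ref{item:ext-sel-v21:2} of Theorem~\ref{theorem-ext-sel-v48:2} to the compact totally disconnected space $\mathbf{C}$, with the $\tau_V$-closed subset $\mathscr{G}=\mathscr{F}(X)$ of $\mathscr{C}(\mathbf{C})$, every continuous selection $g$ for $\mathscr{F}(X)$ extends to a continuous selection $h\in \sel[\mathscr{F}(\mathbf{C})]$, i.e.\ $g=h\uhr \mathscr{F}(X)$. The reverse inclusion is trivial, since for any $h\in \sel[\mathscr{F}(\mathbf{C})]$ the restriction $h\uhr \mathscr{F}(X)$ is a continuous selection for $\mathscr{F}(X)$.

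There is no genuinely hard step here; the content is entirely packaged in Theorem~\ref{theorem-ext-sel-v48:2}. The two points that need a little care are recognizing that the hypothesis applied to discrete closed sets already gives condition~\ref{item:ext-sel-v21:3}, and checking that $\mathscr{F}(X)$ is a $\tau_V$-closed subset of $\mathscr{C}(\mathbf{C})$, so that the ``universal'' selection-extension property of $\mathbf{C}$ can be used with $\mathscr{G}=\mathscr{F}(X)$.
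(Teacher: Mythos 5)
Your argument is correct and is exactly the intended unpacking of the paper's one-line justification: the hypothesis applied to (countable) discrete closed sets gives condition (c) of Theorem \ref{theorem-ext-sel-v48:2}, hence $X$ is compact and totally disconnected, embeds as a closed subset of $\mathbf{C}$, and \eqref{eq:ext-sel-v21:1} follows by applying the implication \ref{item:ext-sel-v21:1}$\implies$\ref{item:ext-sel-v21:2} of that theorem to $\mathbf{C}$ with $\mathscr{G}=\mathscr{F}(X)$. Only a cosmetic slip: the nontrivial inclusion you establish is ``$\subset$'' (every $g\in\sel[\mathscr{F}(X)]$ is a restriction of some $h\in\sel[\mathscr{F}(\mathbf{C})]$), not ``$\supset$''; the content of your argument is right.
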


This brings the following natural question.

\begin{question}
  \label{question-ext-sel-v26:3}
  Let $X$ be a metrizable space such that for every compact
  $Z\subset X$, each continuous selection for
  $\mathscr{F}(Z)=\mathscr{C}(Z)$ can be extended to a continuous
  selection for $\mathscr{F}(X)$.  Then what can one say about $X$?
\end{question}

Evidently, by Theorem \ref{theorem-ext-sel-v48:2}, each totally
disconnected compact metrizable $X$ is as in Question
\ref{question-ext-sel-v26:3}. However, there are also non-compact
metrizable spaces with this hyperspace selection-extension
property. For instance, each discrete space has the
selection-extension property in Question
\ref{question-ext-sel-v26:3}. Namely, as mentioned before, each
discrete space $X$ has a continuous selection for $\mathscr{F}(X)$,
see page \pageref{page-ref-discrete}. Moreover, each nonempty compact
$Z\subset X$ is finite, so $\mathscr{C}(Z)=\Sigma(Z)$ is
$\tau_V$-clopen in $\mathscr{F}(X)$ and the selection extension
property follows from Proposition
\ref{proposition-ext-sel-v32:2}.\medskip  

If $X$ is as in Question \ref{question-ext-sel-v26:3}, then by Theorem
\ref{theorem-ext-sel-v4:1}, $X$ is totally disconnected. In
particular, each compact subset of $X$ is (strongly)
zero-dimensional. Accordingly, Theorem \ref{theorem-ext-sel-v11:1}
brings another natural question.

\begin{question}
  \label{question-ext-sel-v26:5}
  Let $X$ be a strongly zero-dimensional completely metrizable space
  Is it true that for every $Z\in \mathscr{F}(X)$, each continuous
  selection for $\mathscr{F}(Z)$ can be extended to a continuous
  selection for $\mathscr{F}(X)$? What about if $Z$ is compact?
\end{question}

Regarding Question \ref{question-ext-sel-v26:5}, let us remark that we
cannot apply Theorem \ref{th:set-valued-selections-1} as we did in
Theorem \ref{theorem-ext-sel-v11:1}. Namely, the Vietoris hyperspace
$\mathscr{F}(X)$ is normal precisely when $X$ is compact, see Keesling
\cite{keesling:70} and Velichko \cite{velichko:75}. Let us also remark
that the conditions on the space $X$ in Question
\ref{question-ext-sel-v26:5} are necessary. Namely, by Theorem
\ref{theorem-ext-sel-v46:1}, such a space $X$ must be strongly
zero-dimensional. Moreover, by Theorem \ref{theorem-ext-sel-v47:2},
each metrizable space $X$ with $\sel[\mathscr{F}(X)]\neq\emptyset$,
is completely metrizable. Finally, by Theorem
\ref{theorem-ext-sel-v26:4}, each completely metrizable strongly
zero-dimensional space $X$ has a continuous selection for
$\mathscr{F}(X)$.\medskip

Another point of view is to consider special ``extreme-like''
selections, similar to those in the connected case. A selection
$f:\mathscr{F}(X)\to X$ is called \emph{monotone}
\cite{gutev-nogura:01a} if $f(S)=f(T)$ for every
$S,T\in \mathscr{F}(X)$, with $f(T)\in S\subset T$. It was shown in
\cite[Proposition 4.7]{gutev-nogura:01a} that each monotone selection
$f$ is continuous, and in \cite[Proposition 4.8]{gutev-nogura:01a}
that $\leq_f$ is a linear order on $X$. Hence, in particular, $X$ is
weakly orderable with respect to $\leq_f$ for each monotone selection
$f$ for $\mathscr{F}(X)$. In fact, it was shown a stronger property in
\cite[Lemma 4.9]{gutev-nogura:01a} that in this case,
$f(S)=\min_{\leq_f} S$ for every $S\in \mathscr{F}(X)$. Thus,
we also have the following question.

\begin{question}
  \label{question-ext-sel-v26:6}
  Let $X$ be a strongly zero-dimensional completely metrizable
  space. Is it true that for every $Z\in \mathscr{F}(X)$, each
  monotone selection for $\mathscr{F}(Z)$ can be extended to a
  continuous (monotone) selection for $\mathscr{F}(X)$?
\end{question}

Here are also two questions for particular spaces.

\begin{question}
  \label{question-ext-sel-v26:7}
  Let $\iR$ be the irrational numbers, $\emptyset\neq Z\subset \iR$ be
  a closed set and $g\in \sel[\mathscr{F}(Z)]$. Is it possible to
  extend $g$ to a continuous selection $f\in\sel[\mathscr{F}(\iR)]$?
\end{question}

\begin{question}
  \label{question-ext-sel-v26:8}
  Let $\emptyset\neq Z\subset \omega_1$ be a closed set and
  $g\in \sel[\mathscr{F}(Z)]$. Is it possible to extend $g$ to a
  continuous selection $f\in\sel[\mathscr{F}(\omega_1)]$?
\end{question}

\section{Extension of Selections and Dense Hyperspaces}
\label{sec:extens-probl-hypersp}

In this section, we consider the hyperspace \emph{selection-extension
  property} for pairs $(\mathscr{G},\mathscr{H})$ in the particular
case when $\mathscr{G}$ is $\tau_V$-dense in $\mathscr{H}$, see
Definition \ref{definition-ext-sel-v4:1}.

\begin{question}
  \label{question-ext-sel-v32:1}
  Does there exist a space $X$ and subfamilies
  $\mathscr{G}\subsetneq \mathscr{H}\subset \mathscr{F}(X)$ such that
  $\mathscr{G}$ is $\tau_V$-dense in $\mathscr{H}$ and the pair
  $(\mathscr{G},\mathscr{H})$ has the selection-extension property?
\end{question}

We will consider the above question in the setting of the basic
hyperspace-ideals
$\Sigma(X)\subset \mathscr{C}(X)\subset \mathscr{F}(X)$, namely when
\begin{align}
  \label{eq:ext-sel-v26:2}
  \sel[\Sigma(X)]&=\big\{f\uhr \Sigma(X): f\in
                   \sel[\mathscr{C}(X)]\big\}\neq\emptyset,\\
  \label{eq:ext-sel-v26:1}
  \sel[\mathscr{C}(X)]&=\big\{f\uhr \mathscr{C}(X): f\in
  \sel[\mathscr{F}(X)]\big\}\neq\emptyset \quad \text{and} \\
  \label{eq:ext-sel-v26:3}
  \sel[\Sigma(X)]&=\big\{f\uhr \Sigma(X): f\in
                                 \sel[\mathscr{F}(X)]\big\}\neq\emptyset.   
\end{align}

Evidently, \eqref{eq:ext-sel-v26:2} and \eqref{eq:ext-sel-v26:1}
together imply \eqref{eq:ext-sel-v26:3}. Conversely,
\eqref{eq:ext-sel-v26:3} implies both \eqref{eq:ext-sel-v26:2} and
\eqref{eq:ext-sel-v26:1} which is based on the following simple
observation.

\begin{proposition}
  \label{proposition-ext-sel-v22:1}
  Let $X$ be a space,
  $\Sigma(X)\subset \mathscr{H}\subset \mathscr{F}(X)$ and
  $g,h\in \sel[\mathscr{H}]$ be such that
  $g\uhr \Sigma(X)= h\uhr \Sigma(X)$. Then $g=h$.
\end{proposition}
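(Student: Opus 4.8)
The plan is to reduce the statement to the elementary fact that two continuous maps into a Hausdorff space that agree on a dense subset must coincide. The only point requiring an argument is that $\Sigma(X)$ is $\tau_V$-dense in $\mathscr{H}$; granting this, the conclusion is immediate because $X$ is Hausdorff and $g,h$ are Vietoris continuous.

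First I would show that $\Sigma(X)$ is $\tau_V$-dense in $\mathscr{F}(X)$, hence in $\mathscr{H}$. It suffices to check that $\Sigma(X)$ meets every nonempty basic open set $\langle V_1,\dots,V_n\rangle$, where $V_1,\dots,V_n$ are open in $X$. Picking any $S\in\langle V_1,\dots,V_n\rangle$ and a point $x_i\in S\cap V_i$ for each $i\le n$, the finite set $F=\{x_1,\dots,x_n\}$ satisfies $F\subset S\subset V_1\cup\dots\cup V_n$ and $F\cap V_i\ne\emptyset$, so $F\in\langle V_1,\dots,V_n\rangle\cap\Sigma(X)$. Since $\Sigma(X)\subset\mathscr{H}\subset\mathscr{F}(X)$, the same density holds in the subspace $\mathscr{H}$.

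Then I would consider $E=\{S\in\mathscr{H}:g(S)=h(S)\}$. As $X$ is Hausdorff, its diagonal is closed in $X\times X$, and since $g,h$ are continuous the map $S\mapsto(g(S),h(S))$ is continuous on $\mathscr{H}$; thus $E$, being the preimage of the diagonal, is closed in $\mathscr{H}$. By hypothesis $\Sigma(X)\subset E$, so by the density established above $E=\mathscr{H}$, i.e.\ $g=h$. There is essentially no obstacle beyond the density computation, which is short; one need only be mildly careful that density of $\Sigma(X)$ in the ambient $\mathscr{F}(X)$ passes to the subspace $\mathscr{H}$, which is immediate from $\Sigma(X)\subset\mathscr{H}$. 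If one prefers to avoid the diagonal, the same argument runs pointwise: for $S\in\mathscr{H}$ with $g(S)\ne h(S)$, separate these two points of $S$ by disjoint open sets $U,V$, use continuity of $g$ and $h$ to obtain a basic Vietoris neighbourhood of $S$ sent by $g$ into $U$ and by $h$ into $V$, and extract a finite $F\subset S$ lying in that neighbourhood to contradict $g(F)=h(F)$.
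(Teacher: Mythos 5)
Your argument is correct and is precisely the one the paper intends: its proof consists of the single remark that $X$ is Hausdorff and $\Sigma(X)$ is $\tau_V$-dense in $\mathscr{H}$, which is exactly the density computation plus the closed-equalizer (or pointwise separation) argument you spell out. No gaps; you have simply written out the details the paper leaves implicit.
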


\begin{proof}
  Follows from the fact that $X$ is Hausdorff and $\Sigma(X)$ is
  $\tau_V$-dense in $\mathscr{H}$. 
\end{proof}

Another related property is the following consequence of Proposition
\ref{proposition-ext-sel-v32:2}.

\begin{proposition}
  \label{proposition-ext-sel-v32:1}
  If a space $X$ satisfies some of the properties
  \eqref{eq:ext-sel-v26:2}, \eqref{eq:ext-sel-v26:1} or
  \eqref{eq:ext-sel-v26:3}, then so does any nonempty clopen subset of
  $X$.
\end{proposition}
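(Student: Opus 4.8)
The plan is to derive this directly from Proposition~\ref{proposition-ext-sel-v32:2}, once we observe that for a nonempty clopen $W\subset X$ and each $\mathscr{D}\in\{\Sigma,\mathscr{C},\mathscr{F}\}$ the hyperspace $\mathscr{D}(W)$ of the subspace $W$ — which, $W$ being clopen, we identify as usual with $\{S\in\mathscr{D}(X):S\subset W\}$ carrying the relative Vietoris topology — is a nonempty $\tau_V$-clopen subset of $\mathscr{D}(X)$. Indeed, $\mathscr{D}(W)=\mathscr{D}(X)\cap\langle W\rangle$ is $\tau_V$-open in $\mathscr{D}(X)$ because $W$ is open, while its complement $\mathscr{D}(X)\setminus\mathscr{D}(W)=\mathscr{D}(X)\cap\langle X,X\setminus W\rangle$ is $\tau_V$-open because $X\setminus W$ is open; and $\{w\}\in\mathscr{D}(W)$ for any $w\in W$. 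We shall also use the trivial fact that a continuous selection $f$ for $\mathscr{D}(X)$ restricts to a continuous selection $f\uhr\mathscr{D}(W)$ for $\mathscr{D}(W)$, since $f(S)\in S\subset W$ whenever $S\in\mathscr{D}(W)$.

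Now each of \eqref{eq:ext-sel-v26:2}, \eqref{eq:ext-sel-v26:1} and \eqref{eq:ext-sel-v26:3} asserts, for a pair $(\mathscr{A},\mathscr{B})$ equal to $(\Sigma,\mathscr{C})$, $(\mathscr{C},\mathscr{F})$ or $(\Sigma,\mathscr{F})$ respectively, that $\sel[\mathscr{A}(X)]=\{f\uhr\mathscr{A}(X):f\in\sel[\mathscr{B}(X)]\}\neq\emptyset$; in particular $\sel[\mathscr{A}(X)]\neq\emptyset$ and $\sel[\mathscr{B}(X)]\neq\emptyset$. I claim the same equality holds with $W$ in place of $X$. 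The inclusion $\supset$ is immediate: by the restriction fact $\sel[\mathscr{B}(W)]\neq\emptyset$, and each $h\in\sel[\mathscr{B}(W)]$ restricts to $h\uhr\mathscr{A}(W)\in\sel[\mathscr{A}(W)]$. For $\subset$, take any $g\in\sel[\mathscr{A}(W)]$. Since $\mathscr{A}(W)$ is $\tau_V$-clopen in $\mathscr{A}(X)$ and $\sel[\mathscr{A}(X)]\neq\emptyset$, Proposition~\ref{proposition-ext-sel-v32:2} applied to the pair $(\mathscr{A}(W),\mathscr{A}(X))$ yields an extension $\bar g\in\sel[\mathscr{A}(X)]$ with $\bar g\uhr\mathscr{A}(W)=g$. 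By the hypothesis on $X$ there is $f\in\sel[\mathscr{B}(X)]$ with $f\uhr\mathscr{A}(X)=\bar g$; putting $h=f\uhr\mathscr{B}(W)\in\sel[\mathscr{B}(W)]$, we get $h(S)=f(S)=\bar g(S)=g(S)$ for every $S\in\mathscr{A}(W)\subset\mathscr{A}(X)$, i.e.\ $g=h\uhr\mathscr{A}(W)$. Hence $\sel[\mathscr{A}(W)]=\{h\uhr\mathscr{A}(W):h\in\sel[\mathscr{B}(W)]\}\neq\emptyset$, which is precisely the property \eqref{eq:ext-sel-v26:2}, \eqref{eq:ext-sel-v26:1} or \eqref{eq:ext-sel-v26:3} for the space $W$.

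I do not foresee a genuine obstacle in carrying this out. The two points deserving a line of care are the (routine, for clopen $W$) identification of the Vietoris hyperspace of the subspace $W$ with $\{S\in\mathscr{D}(X):S\subset W\}$ under the relative topology, and the bookkeeping that, in each of the three cases, the displayed equality for $X$ already forces both $\sel[\mathscr{A}(X)]$ and $\sel[\mathscr{B}(X)]$ to be nonempty — this is exactly what feeds Proposition~\ref{proposition-ext-sel-v32:2} and the restriction fact, so that no new selection-construction is needed.
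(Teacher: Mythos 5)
Your proposal is correct and follows exactly the route the paper intends: the paper states Proposition~\ref{proposition-ext-sel-v32:1} as a consequence of Proposition~\ref{proposition-ext-sel-v32:2}, and your argument --- observing that for clopen $W\subset X$ each hyperspace $\Sigma(W)$, $\mathscr{C}(W)$, $\mathscr{F}(W)$ is a nonempty $\tau_V$-clopen subset of the corresponding hyperspace of $X$, extending a given selection via Proposition~\ref{proposition-ext-sel-v32:2}, applying the hypothesis on $X$, and restricting back to $W$ --- is precisely that derivation, carried out with the right bookkeeping on nonemptiness in each of \eqref{eq:ext-sel-v26:2}, \eqref{eq:ext-sel-v26:1} and \eqref{eq:ext-sel-v26:3}.
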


If $\mathscr{C}(X)=\mathscr{F}(X)$, then the selection-extension
property in \eqref{eq:ext-sel-v26:1} is trivial. Furthermore, in this
case, $X$ is compact because $X\in \mathscr{F}(X)$. This brings the
following special case of Question \ref{question-ext-sel-v32:1}.

\begin{question}
  \label{question-ext-sel-v32:2}
  Does there exist a space $X$ which satisfies
  \eqref{eq:ext-sel-v26:1}, but is not compact?
\end{question}

If a connected space $X$ satisfies \eqref{eq:ext-sel-v26:1}, then it
must be compact. This follows from Corollary
\ref{corollary-hsp-ver-14:1} and Theorem \ref{theorem-hsp-ver-17:1},
see also Theorem \ref{theorem-ext-sel-v15:2}. In fact, we have the
following more general result.

\begin{proposition}
  \label{proposition-ext-sel-v34:2}
  If $X$ is weakly orderable and satisfies
  \eqref{eq:ext-sel-v26:1}, then it is compact. 
\end{proposition}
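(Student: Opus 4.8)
The plan is to verify the hypothesis of Proposition \ref{proposition-ext-sel-v34:1}, i.e.\ to show that \emph{every} nonempty closed subset of $X$ has both a first and a last $\leq$-element for a compatible linear order $\leq$ witnessing weak orderability; compactness of $X$ then follows at once from that proposition. In fact I would isolate one claim and apply it twice: \emph{if $\preceq$ is any compatible linear order on $X$, then every nonempty closed $S\subseteq X$ has a $\preceq$-minimum.} Since the reverse order $\leq^{*}$ has the same order topology as $\leq$, it is also compatible for $X$, so applying the claim to $\leq$ and to $\leq^{*}$ yields a first and a last $\leq$-element of every nonempty closed subset of $X$.

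To prove the claim, fix a compatible linear order $\preceq$. By Theorem \ref{theorem-hsp-ver-5:7} applied to $\preceq$ we have $\mathscr{C}(X)\subseteq\mathscr{K}(X,\preceq)$, and $S\mapsto\min_{\preceq}S$ is a continuous selection on $\mathscr{K}(X,\preceq)$; restricting it, $g:=\min_{\preceq}\uhr\mathscr{C}(X)\in\sel[\mathscr{C}(X)]$. By \eqref{eq:ext-sel-v26:1}, $g$ extends to some $f\in\sel[\mathscr{F}(X)]$, so $f(K)=\min_{\preceq}K$ for every compact $K\subseteq X$. Note also that, $X$ being weakly orderable, every $\preceq$-ray is open in $X$ (the order topology of $\preceq$ being coarser than that of $X$).

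Now suppose, towards a contradiction, that some nonempty closed $S_0\subseteq X$ has no $\preceq$-minimum. Put $p_0:=f(S_0)\in S_0$; as $p_0$ is not a $\preceq$-minimum of $S_0$, choose $q_0\in S_0$ with $q_0\prec p_0$ and set $V:=(q_0,\to)_{\preceq}$, an open neighbourhood of $p_0$. By continuity of $f$ at $S_0$ there is a basic Vietoris neighbourhood $\mathscr{U}=\langle W_1,\dots,W_n\rangle$ of $S_0$ with $f(\mathscr{U})\subseteq V$. For each $i\leq n$ pick $x_i\in S_0\cap W_i$ (nonempty since $S_0\in\mathscr{U}$), and let $T:=\{q_0,x_1,\dots,x_n\}$. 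Then $T$ is finite, hence $T\in\mathscr{C}(X)$; moreover $T\subseteq\bigcup_{i\leq n}W_i$ and $x_i\in T\cap W_i$ for every $i$, so $T\in\mathscr{U}$. Since $T$ is compact, $f(T)=\min_{\preceq}T\preceq q_0$, and therefore $f(T)\notin V$, contradicting $f(\mathscr{U})\subseteq V$. This proves the claim, and hence the proposition via Proposition \ref{proposition-ext-sel-v34:1}.

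The whole argument is elementary once the claim is in place; the one delicate point — and the reason compactness of $X$ itself is forced, not merely of its compact subsets — is the key step above: $S_0$ need not be compact, so $f$ cannot be evaluated on it through $\min_{\preceq}$ directly, and the remedy is to slip a genuinely compact set $T=\{q_0\}\cup\{x_1,\dots,x_n\}$, with $\min_{\preceq}T\preceq q_0$, into an arbitrary Vietoris neighbourhood of $S_0$.
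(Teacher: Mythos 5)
Your proposal is correct and follows essentially the same route as the paper: apply Theorem \ref{theorem-hsp-ver-5:7} to get the $\min_{\preceq}$ selection on $\mathscr{C}(X)$, extend it via \eqref{eq:ext-sel-v26:1}, use Vietoris continuity to insert a finite set containing a point $\preceq$-below $f(S_0)$ into a basic neighbourhood of a closed set and derive a contradiction, and then conclude compactness through Proposition \ref{proposition-ext-sel-v34:1}. The only cosmetic difference is that you handle the last $\preceq$-element by invoking the claim for the reverse order, whereas the paper simply remarks that the inverse order does the job — the same idea.
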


\begin{proof}
  Take a compatible linear order $\leq$ on $X$, and a set
  $T\in \mathscr{F}(X)$. Next, using Theorem
  \ref{theorem-hsp-ver-5:7}, consider the selection
  $g\in \sel[\mathscr{C}(X)]$ defined by $g(S)=\min_\leq S$ for every
  $S\in \mathscr{C}(X)$. Then by \eqref{eq:ext-sel-v26:1}, $g$ can be
  extended to a selection $f\in \sel[\mathscr{F}(X)]$. We are going to
  show that $p=f(T)$ is the first $\leq$-element of $T$. To this end,
  assume that $q<p$ for some $q\in T$. Since $f$ is continuous,
  $(q,\to)_\leq$ is open and $p\in (q,\to)_\leq$, there exists a
  finite family $\mathscr{U}$ of open subsets of $X$ such that
  $T\in \langle\mathscr{U}\rangle$ and
  $f(\langle\mathscr{U}\rangle)\subset (q,\to)_\leq$. Finally, take
  a finite set $S\in \langle\mathscr{U}\rangle$ with $q\in S$. Then
  $f(S)\in (q,\to)_\leq$ and $f(S)=g(S)=\min_\leq S\leq q$ because
  $q\in S$. Clearly, this is impossible. Thus, $p=\min_\leq
  T$. Evidently, using the inverse linear order, $T$ also has a
  last $\leq$-element. According to Proposition
  \ref{proposition-ext-sel-v34:1}, $X$ is compact and orderable.
\end{proof}

It was shown in \cite[Lemma 7.4]{michael:51} that each locally
connected spaces with a continuous weak selection is weakly
orderable. Hence, we also have the following immediate consequence of
Proposition \ref{proposition-ext-sel-v34:2}.

\begin{corollary}
  \label{corollary-ext-sel-v32:1}
  If $X$ is locally connected and satisfies \eqref{eq:ext-sel-v26:1},
  then it is compact. 
\end{corollary}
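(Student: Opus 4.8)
The plan is to deduce this at once from Proposition~\ref{proposition-ext-sel-v34:2}, once we know that a locally connected $X$ satisfying \eqref{eq:ext-sel-v26:1} is weakly orderable. First I would observe that condition \eqref{eq:ext-sel-v26:1} in particular forces $\sel[\mathscr{F}(X)]\neq\emptyset$, so, picking any $f\in\sel[\mathscr{F}(X)]$ and restricting it to $\mathscr{F}_2(X)$, the space $X$ has a continuous weak selection. Next, since $X$ is locally connected, \cite[Lemma~7.4]{michael:51} yields that $X$ is weakly orderable.

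With this in hand, Proposition~\ref{proposition-ext-sel-v34:2} applies verbatim: $X$ is a weakly orderable space satisfying \eqref{eq:ext-sel-v26:1}, whence $X$ is compact. There is essentially no obstacle in the argument; the only point worth a word of care is that \eqref{eq:ext-sel-v26:1} is a property of the whole space $X$ and not of a subspace of it, so it is exactly the hypothesis required by Proposition~\ref{proposition-ext-sel-v34:2} and needs no reformulation.

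(If desired, one can then also note that compactness upgrades "weakly orderable" to "orderable" via Theorem~\ref{theorem-ext-sel-v47:1}, though this is not part of the assertion and need not be included.)
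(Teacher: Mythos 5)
Your proof is correct and follows the paper's own route exactly: extract a continuous weak selection from \eqref{eq:ext-sel-v26:1}, invoke Michael's \cite[Lemma 7.4]{michael:51} to get weak orderability from local connectedness, and conclude compactness by Proposition \ref{proposition-ext-sel-v34:2}. Nothing further is needed.
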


This brings the following question which is complementary to Question
\ref{question-ext-sel-v32:2}.

\begin{question}
  \label{question-ext-sel-v22:1}
  Let $X$ be a space satisfying \eqref{eq:ext-sel-v26:1}. Then, is it
  true that each component of $X$ is compact?
\end{question}

The answer is ``yes'' when $X$ is locally compact.

\begin{corollary}
  \label{corollary-ext-sel-v34:1}
  If $X$ is locally compact and satisfies \eqref{eq:ext-sel-v26:1},
  then each point of $X$ is contained in a clopen compact subset of
  $X$. 
\end{corollary}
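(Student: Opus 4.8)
The plan is to show that every component $C=\mathscr{C}[p]$ of $X$ is compact; the required clopen compact set around $p$ is then extracted routinely. By Theorem~\ref{theorem-ext-sel-v16:1}, $C$ coincides with the quasi-component of $p$, so each $x\in X\setminus C$ is separated from $p$ by a clopen set $V_x$, and then $C\subseteq V_x$. Assuming $C$ is compact, pick a compact neighbourhood $K$ of $C$ (the union of finitely many open sets with compact closure that cover $C$); since $\partial K:=K\setminus\operatorname{int}K$ is compact and disjoint from $C$, finitely many of the sets $V_x$ intersect in a clopen set $V\supseteq C$ with $V\cap\partial K=\emptyset$, whence $D:=V\cap K$ is closed, contained in $\operatorname{int}K$, hence clopen, compact, and contains $p$. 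Thus it is enough to prove that $C$ is compact.

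Now $C$ is connected and, being closed in the locally compact space $X$, is itself locally compact. Fix $f\in\sel[\mathscr{F}(X)]$ (which exists by~\eqref{eq:ext-sel-v26:1}) and put $\sigma=f\uhr\mathscr{F}_2(X)$; then $\sigma\uhr\mathscr{F}_2(C)$ is a continuous weak selection for $C$, so by Theorem~\ref{theorem-ext-sel-v15:2} the relation $\leq\,:=\,\leq_\sigma\uhr C$ is a linear order and $C$ is weakly orderable with respect to $\leq$. By Proposition~\ref{proposition-ext-sel-v34:1} it suffices to show that every $T\in\mathscr{F}(C)$ (note $\mathscr{F}(C)\subseteq\mathscr{F}(X)$ because $C$ is closed) has both a first and a last $\leq$-element. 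By Corollary~\ref{corollary-hsp-ver-14:1} the set $\sel[\mathscr{C}(C)]$ has exactly two members, which by Theorem~\ref{theorem-hsp-ver-5:7} are $S\mapsto\min_\leq S$ and $S\mapsto\max_\leq S$. Since $f\uhr\mathscr{F}(C)$ is a continuous selection for $\mathscr{F}(C)$, its restriction $f\uhr\mathscr{C}(C)$ is one of these two; replacing $\leq$ by $\leq_{\sigma^*}\uhr C$ if necessary (which again makes $C$ weakly orderable, now with $\min_{\leq_{\sigma^*}\uhr C}=\max_\leq$), we may assume $f\uhr\mathscr{C}(C)=\min_\leq$. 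Repeating inside $C$ the argument from the proof of Proposition~\ref{proposition-ext-sel-v34:2} — using that the $\leq_\sigma$-intervals are open in $X$ and that $f$ is continuous, so that a suitable finite $S\in\mathscr{C}(C)$ containing a prospective smaller point can be found near $T$ — yields $f(T)=\min_\leq T$ for every $T\in\mathscr{F}(C)$; in particular each such $T$ has a first $\leq$-element.

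The remaining point, and the main obstacle, is to produce the last $\leq$-elements: it is enough to exhibit some $h\in\sel[\mathscr{F}(X)]$ with $h\uhr\mathscr{C}(C)=\max_\leq$, since then the same argument gives $h(T)=\max_\leq T$ for all $T\in\mathscr{F}(C)$, and Proposition~\ref{proposition-ext-sel-v34:1} completes the proof. By~\eqref{eq:ext-sel-v26:1} this reduces to extending $\max_\leq\uhr\mathscr{C}(C)$ to a continuous selection on $\mathscr{C}(X)$, i.e.\ to showing that the restriction map $\sel[\mathscr{C}(X)]\to\sel[\mathscr{C}(C)]$ is surjective. I expect this to be the hard part: were $C$ clopen, $\mathscr{C}(C)$ would be $\tau_V$-clopen in $\mathscr{C}(X)$ and surjectivity would be immediate from Proposition~\ref{proposition-ext-sel-v32:2}, but a component of a locally compact space need not be open. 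So one must build a continuous selection on $\mathscr{C}(X)$ restricting to $\max_\leq$ on $\mathscr{C}(C)$ by hand, out of $f$ and a compact neighbourhood of $C$; the delicate issue is continuity at compact sets that meet $C$ without being contained in it, and this is precisely the step where local compactness of $X$ is used.
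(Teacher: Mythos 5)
Your reduction and the first half of the argument are sound: the \v{S}ura-Bura-type extraction of a clopen compact set from a compact component (via Theorem \ref{theorem-ext-sel-v16:1} and local compactness) works, and so does the verification that $f(T)=\min_\le T$ for every $T\in\mathscr{F}(C)$, which supplies first $\le$-elements. But the proof is not complete: everything hinges on producing $h\in\sel[\mathscr{F}(X)]$ with $h\uhr\mathscr{C}(C)=\max_\le$, i.e.\ on surjectivity of the restriction map $\sel[\mathscr{C}(X)]\to\sel[\mathscr{C}(C)]$, and you explicitly leave this step unproved. That is a genuine gap, not a routine verification: since the component $C$ need not be open, there is no evident way to splice $\max_\le$ on $\mathscr{C}(C)$ with $f$ at compact sets meeting both $C$ and $X\setminus C$, and the only datum you have, namely $f$, restricts to $\min_\le$ on $\mathscr{C}(C)$, so it cannot by itself produce the opposite extreme selection. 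As written, the argument only shows that closed subsets of $C$ have first $\le$-elements, which is not enough for Proposition \ref{proposition-ext-sel-v34:1}.

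The paper sidesteps exactly this difficulty: by \cite[Corollaries 5.2 and 5.3]{gutev:07b}, in a locally compact space with a continuous weak selection each point lies in a \emph{clopen orderable} subset $V$; by Proposition \ref{proposition-ext-sel-v32:1} the clopen set $V$ inherits \eqref{eq:ext-sel-v26:1}, and Proposition \ref{proposition-ext-sel-v34:2} then makes $V$ compact. Clopenness of $V$ is precisely what renders the selection-extension step trivial (Proposition \ref{proposition-ext-sel-v32:2}), and the cited result from \cite{gutev:07b} is where local compactness does its work --- it is the ingredient your plan would have to rebuild by hand. If you wish to keep the component-based route, you would essentially be reproving that clopen orderable neighbourhood theorem, at which point the paper's short argument is already available.
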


\begin{proof}
  According to \cite[Corollaries 5.2 and 5.3]{gutev:07b}, each point
  of $X$ is contained in a clopen orderable subset of $X$. Since $X$
  satisfies \eqref{eq:ext-sel-v26:1}, the property follows from
  Propositions \ref{proposition-ext-sel-v32:1} and
  \ref{proposition-ext-sel-v34:2}.  
\end{proof}

This brings another related question.

\begin{question}
  \label{question-ext-sel-v34:1}
  If $X$ is locally compact and satisfies \eqref{eq:ext-sel-v26:1},
  then is it true that $X$ is compact?
\end{question}

The answer is ``yes'' when $X$ is also paracompact.

\begin{corollary}
  \label{corollary-ext-sel-v34:2}
  If $X$ is a locally compact paracompact space and satisfies
  \eqref{eq:ext-sel-v26:1}, then it is compact.
\end{corollary}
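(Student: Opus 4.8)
The plan is to exploit Corollary~\ref{corollary-ext-sel-v34:1}, which already supplies a clopen cover of $X$ by compact sets, upgrade this clopen \emph{cover} to a clopen \emph{partition} of $X$ into compact subspaces, and then observe that $X$ becomes weakly orderable, so that Proposition~\ref{proposition-ext-sel-v34:2} finishes the argument.

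First I would invoke the classical structure theorem for locally compact paracompact spaces: $X$ decomposes as a topological sum $X=\bigoplus_{s\in S}X_s$ of pairwise disjoint clopen $\sigma$-compact (hence Lindel\"of) subspaces (see, e.g., \cite[Theorem~3.8.C]{engelking:89}). Each $X_s$ is clopen in $X$, so by Proposition~\ref{proposition-ext-sel-v32:1} it again satisfies \eqref{eq:ext-sel-v26:1}, and it is of course locally compact; hence Corollary~\ref{corollary-ext-sel-v34:1}, applied inside $X_s$, shows that every point of $X_s$ lies in a compact clopen subset of $X_s$. Since $X_s$ is Lindel\"of, countably many such sets $C_{s,n}$, $n<\omega$, cover $X_s$, and the usual disjointification $E_{s,0}=C_{s,0}$, $E_{s,n}=C_{s,n}\setminus\bigcup_{k<n}C_{s,k}$ yields a partition of $X_s$ into compact clopen sets. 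Collecting the nonempty pieces over all $s\in S$ gives a partition $X=\bigsqcup_{j\in J}E_j$ of $X$ into compact clopen subspaces.

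Next, since \eqref{eq:ext-sel-v26:1} forces $\sel[\mathscr{F}(X)]\neq\emptyset$, the space $X$ has a continuous weak selection, and so does each clopen block $E_j$; being compact with a continuous weak selection, $E_j$ is orderable by Theorem~\ref{theorem-ext-sel-v47:1}. I would then fix compatible linear orders $\leq_j$ on the blocks $E_j$, well-order $J$, and equip $X=\bigsqcup_{j\in J}E_j$ with the lexicographic order $\leq$ (points of an earlier block precede points of a later block, and inside a block one uses $\leq_j$). A short check shows that every $\leq$-ray is open in $X$: for $x\in E_j$ one has $(x,\to)_{\leq}=(x,\to)_{\leq_j}\cup\bigcup_{k>j}E_k$ and $(\gets,x)_{\leq}=\bigcup_{k<j}E_k\cup(\gets,x)_{\leq_j}$, where the $\leq_j$-intervals are open because $E_j$ is orderable and clopen in $X$, while the remaining blocks are clopen. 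Hence the $\leq$-interval topology is coarser than the topology of $X$, so $X$ is weakly orderable, and Proposition~\ref{proposition-ext-sel-v34:2} gives that $X$ is compact.

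The step I expect to be the crux is the middle one: converting the compact clopen \emph{cover} produced by Corollary~\ref{corollary-ext-sel-v34:1} into a compact clopen \emph{partition}. This is exactly where paracompactness is used essentially — through the $\sigma$-compact-sum decomposition together with the Lindel\"ofness of the summands — and it is the reason one argues blockwise rather than attempting to disjointify an arbitrary clopen cover directly. Once the partition is in hand, the orderability of the compact blocks and the elementary verification that their lexicographic sum is weakly orderable make Proposition~\ref{proposition-ext-sel-v34:2} directly applicable, and the rest is routine; the only further point worth recording is the harmless one that, for clopen $E_j\subset X$, the Vietoris hyperspace $\mathscr{F}_2(E_j)$ is the subspace $\{S\in\mathscr{F}_2(X):S\subset E_j\}$ of $\mathscr{F}_2(X)$, so that restricting a continuous weak selection for $X$ indeed yields a continuous weak selection for $E_j$.
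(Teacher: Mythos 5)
Your argument is correct, but it takes a genuinely different route from the paper. The paper's proof is a two-line reduction: it quotes \cite[Theorem 5.1]{gutev:07b}, which says that \emph{every} locally compact paracompact space with a continuous weak selection is weakly orderable, and then applies Proposition~\ref{proposition-ext-sel-v34:2}. You instead rebuild the weak orderability by hand under the stronger hypothesis \eqref{eq:ext-sel-v26:1}: the sum decomposition of a locally compact paracompact space into clopen Lindel\"of pieces, Corollary~\ref{corollary-ext-sel-v34:1} applied blockwise (legitimate, since that corollary precedes this one and, via Proposition~\ref{proposition-ext-sel-v32:1}, each clopen summand inherits \eqref{eq:ext-sel-v26:1}), the Lindel\"of disjointification into compact clopen blocks, van Mill--Wattel (Theorem~\ref{theorem-ext-sel-v47:1}) to order each block, and the lexicographic sum to get a coarser orderable topology, before finishing with Proposition~\ref{proposition-ext-sel-v34:2} exactly as the paper does. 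All the individual steps check out: the disjointified pieces are clopen and compact, the restricted weak selection is continuous on each block, and openness of the $\leq$-rays does give that the interval topology is coarser, hence weak orderability. What your route buys is a more self-contained argument that avoids the full strength of the orderability theorem \cite[Theorem 5.1]{gutev:07b}, using only its local consequences already packaged in Corollary~\ref{corollary-ext-sel-v34:1}; what it costs is the extra machinery (the structure theorem from \cite{engelking:89}, van Mill--Wattel on each block, and the lexicographic-sum verification), and note that it only covers the situation of this corollary, where \eqref{eq:ext-sel-v26:1} is assumed, whereas the cited theorem gives weak orderability from a continuous weak selection alone. A minor point: the sum decomposition is Theorem 5.1.27 in \cite{engelking:89} rather than the exercise you point to, but the fact itself is classical and correctly used.
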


\begin{proof}
  According to \cite[Theorem 5.1]{gutev:07b}, each locally compact
  paracompact space with a continuous weak selection is weakly
  orderable. Hence, the property follows from Proposition
  \ref{proposition-ext-sel-v34:2}. 
\end{proof}

Evidently, Corollary \ref{corollary-ext-sel-v34:2} covers the case
when $X$ is a locally compact Lindel\"of space because such a space is
also paracompact, see \cite[Theorem 5.1.2]{engelking:89}. This
suggests the following further question.

\begin{question}
  \label{question-ext-sel-v34:2}
  If $X$ is Lindel\"of and satisfies \eqref{eq:ext-sel-v26:1}, then is
  it true that $X$ is compact?
\end{question}

Every separable paracompact space is Lindel\"of, and for metrizable
spaces separability is equivalent to the Lindel\"of property. Hence,
the answer is ``yes'' in the following special case.

\begin{corollary}
  \label{corollary-ext-sel-v34:3}
  If $X$ is a separable metrizable space which satisfies
  \eqref{eq:ext-sel-v26:1}, then it is compact. 
\end{corollary}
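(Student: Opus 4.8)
The plan is to obtain the statement from Proposition~\ref{proposition-ext-sel-v34:2}: for this it suffices to prove that a separable metrizable space $X$ satisfying~\eqref{eq:ext-sel-v26:1} is weakly orderable, since once that is known, Proposition~\ref{proposition-ext-sel-v34:2} together with~\eqref{eq:ext-sel-v26:1} yields at once that $X$ is compact (in fact orderable).

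First I would dispatch the routine reductions. By~\eqref{eq:ext-sel-v26:1} we have $\sel[\mathscr{F}(X)]\neq\emptyset$; hence $X$ carries a continuous weak selection and, by Theorem~\ref{theorem-ext-sel-v47:2}, $X$ is completely metrizable. Being separable metrizable, $X$ is moreover second countable, Lindel\"of and paracompact, and by Theorem~\ref{theorem-ext-sel-v16:1} its components coincide with its quasi-components. If $X$ is connected it is weakly orderable by Theorem~\ref{theorem-ext-sel-v15:2}; if $X$ is locally connected the conclusion is Corollary~\ref{corollary-ext-sel-v32:1}; and if $X$ is locally compact it is Corollary~\ref{corollary-ext-sel-v34:2} (paracompactness being automatic). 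So the remaining, essential case is that $X$ is nowhere locally compact.

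For this case I would argue by contradiction, assuming $X$ is not compact. Since $X$ is metrizable and hence not countably compact, it contains a countable infinite closed discrete set $D=\{x_{n}:n<\omega\}$, and the finite sets $S_{n}=\{x_{0},\dots,x_{n}\}$ then $\tau_{V}$-converge to $D\in\mathscr{F}(X)\setminus\mathscr{C}(X)$. Starting from a fixed $f\in\sel[\mathscr{F}(X)]$ and using the collectionwise normal (indeed paracompact) structure of $X$ around the discrete set $D$, the aim is to build a continuous selection $g$ for $\mathscr{C}(X)$ whose values $g(S_{n})$ oscillate along $D$ --- for instance alternating between $x_{0}$ and $x_{1}$ --- so that no continuous selection for $\mathscr{F}(X)$ can restrict to $g$, since any such extension would have to send $D$ to $\lim_{n}g(S_{n})$, which does not exist. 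This contradicts~\eqref{eq:ext-sel-v26:1}, so $X$ must be compact. The main obstacle --- and the reason the statement is only recorded as a corollary --- lies exactly in this construction: $X$ is not assumed (strongly) zero-dimensional, so the clopen-partition technique of Section~\ref{sec:extens-hypersp-over} (Propositions~\ref{proposition-ext-sel-v9:1} and~\ref{pr:extension-1}) is not available, and since the weak orderability problem for separable metrizable spaces with a continuous weak selection is open, one must genuinely exploit the full extension hypothesis~\eqref{eq:ext-sel-v26:1}, rather than only the existence of a continuous weak selection, to manufacture (or, dually, to obstruct) the required selections for $\mathscr{C}(X)$.
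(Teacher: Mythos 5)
Your overall strategy --- reduce to Proposition~\ref{proposition-ext-sel-v34:2} by showing that $X$ is weakly orderable --- is exactly the right one, but your execution has a genuine gap, and it stems from a factual error. You assert that the weak orderability problem for separable metrizable spaces with a continuous weak selection is open; it is not. It was resolved affirmatively in \cite[Theorem 1.1]{gutev:07a} (see also \cite[Theorem 4.1]{MR3430989}): every separable metrizable (indeed, second countable) space with a continuous weak selection is weakly orderable. The paper states this explicitly in Section~\ref{sec:weak-extens-probl} and its proof of the corollary is precisely this two-step argument: since \eqref{eq:ext-sel-v26:1} gives $\sel[\mathscr{F}(X)]\neq\emptyset$, $X$ has a continuous weak selection, hence is weakly orderable by \cite{gutev:07a}, and then Proposition~\ref{proposition-ext-sel-v34:2} yields compactness. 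No case distinction (connected, locally connected, locally compact, nowhere locally compact) is needed.

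Because you believed the weak orderability step unavailable, your treatment of the ``essential case'' (nowhere locally compact, non-compact $X$) is only a plan, not a proof: you take a closed discrete set $D=\{x_n\}$, note that $S_n=\{x_0,\dots,x_n\}$ $\tau_V$-converges to $D$, and say the aim is to build a continuous $g\in\sel[\mathscr{C}(X)]$ with $g(S_n)$ oscillating, so that $g$ admits no extension to $\mathscr{F}(X)$. You yourself flag that the construction of such a $g$ is the main obstacle --- without (strong) zero-dimensionality the clopen-cover device of Theorem~\ref{theorem-ext-sel-v32:1} and Propositions~\ref{proposition-ext-sel-v9:1} and~\ref{pr:extension-1} is unavailable, and you offer no substitute that guarantees continuity of $g$ at compact sets meeting $D$ or nearby sets. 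So the decisive step is missing. The fix is simply to invoke \cite[Theorem 1.1]{gutev:07a} instead of attempting the oscillation construction; with that citation your first paragraph already completes the proof.
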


\begin{proof}
  According to \cite[Theorem 1.1]{gutev:07a}, see also \cite[Theorem
  4.1]{MR3430989}, $X$ is weakly orderable because it has a continuous
  weak selection. Hence, just like before, Proposition
  \ref{proposition-ext-sel-v34:2} implies the property. 
\end{proof}

This brings the following separate question.

\begin{question}
  \label{question-ext-sel-v34:3}
  If $X$ is a metrizable space and satisfies \eqref{eq:ext-sel-v26:1},
  then is it true that $X$ is compact?
\end{question}

The answer is ``yes'' when $X$ is zero-dimensional, which is based on
the following considerations.

\begin{theorem}
  \label{theorem-ext-sel-v32:1}
  If $X$ is a space which satisfies \eqref{eq:ext-sel-v26:1}, then
  each locally finite clopen cover of $X$ is finite.
\end{theorem}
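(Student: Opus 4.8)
The plan is to argue by contradiction. Suppose $\mathscr{V}$ is an infinite locally finite clopen cover of $X$; after discarding empty members, fix a sequence $\{V_n:n<\omega\}$ of pairwise distinct nonempty members of $\mathscr{V}$. Being a subfamily of $\mathscr{V}$, it is still locally finite and, in particular, point-finite. The goal is to produce, inside a suitable clopen subspace $Y\subseteq X$, a continuous selection for $\mathscr{C}(Y)$ that cannot be extended to a continuous selection for $\mathscr{F}(Y)$; since $Y$ also satisfies \eqref{eq:ext-sel-v26:1} by Proposition \ref{proposition-ext-sel-v32:1}, this contradiction finishes the proof.

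First I would carve out a clopen subspace partitioned into countably infinitely many nonempty clopen sets. Put $W_k=\bigcup_{n\ge k}V_n$; each $W_k$ is clopen, being a locally finite union of clopen sets, and $W_0\supseteq W_1\supseteq\cdots$. The crucial observation is that $W_k\setminus W_{k+1}\neq\emptyset$ for infinitely many $k$: otherwise $W_K=W_{K+1}=\cdots$ for some $K$, so any $x\in V_K$ lies in $W_K=W_m=\bigcup_{n\ge m}V_n$ for every $m\ge K$, hence $x$ belongs to $V_n$ for arbitrarily large $n$, contradicting point-finiteness. Let $k_0<k_1<\cdots$ enumerate the indices with $W_k\setminus W_{k+1}\neq\emptyset$ and set $U_j=W_{k_j}\setminus W_{k_j+1}$. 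These sets are nonempty and clopen, pairwise disjoint (the $W_k$ decrease), and $U_j\subseteq V_{k_j}$, so $\{U_j:j<\omega\}$ is locally finite; therefore $Y=\bigcup_j U_j$ is clopen in $X$ and $\{U_j\}$ is a clopen partition of $Y$.

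Next I would imitate the construction in Proposition \ref{proposition-ext-sel-v25:1}. Choose $x_j\in U_j$ and put $E=\{x_j:j<\omega\}$; since $\{U_j\}$ is a clopen partition of $Y$, $E$ is an infinite closed discrete subspace of $Y$. By Proposition \ref{proposition-ext-sel-v25:1} there is a selection $\sigma\in\sel[\Sigma(E)]$ (recall every selection on the discrete hyperspace $\Sigma(E)$ is continuous) that does not extend to a continuous selection for $\mathscr{F}(E)$. Fix also some $f_0\in\sel[\mathscr{C}(Y)]$, which exists by \eqref{eq:ext-sel-v26:1} for $Y$. For $S\in\mathscr{C}(Y)$, compactness forces $S$ to meet only finitely many $U_j$, so $Z_S=\{x_j:S\cap U_j\neq\emptyset\}$ is a nonempty element of $\Sigma(E)$, and $S\mapsto Z_S$ is locally constant (witnessed by the Vietoris neighbourhood $\langle U_{j_1},\dots,U_{j_r}\rangle$ when $Z_S=\{x_{j_1},\dots,x_{j_r}\}$). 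Define $g:\mathscr{C}(Y)\to Y$ by $g(S)=f_0(S\cap U_m)$, where $x_m=\sigma(Z_S)$. This is well defined and a selection, since $x_m\in Z_S$ forces $\emptyset\neq S\cap U_m\subseteq S$; it is continuous, since $S\mapsto\sigma(Z_S)$ is locally constant and, on the clopen piece $\{S:\sigma(Z_S)=x_m\}$, $g$ is the composition of the map $S\mapsto S\cap U_m$ (Vietoris continuous because $U_m$ is clopen) with $f_0$. Finally, on $\Sigma(E)$ one has $Z_S=S$, hence $g\uhr\Sigma(E)=\sigma$.

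To finish, \eqref{eq:ext-sel-v26:1} for $Y$ yields $f\in\sel[\mathscr{F}(Y)]$ with $f\uhr\mathscr{C}(Y)=g$. Since $E$ is closed in $Y$ we have $\mathscr{F}(E)\subseteq\mathscr{F}(Y)$, so $f\uhr\mathscr{F}(E)$ is a continuous selection for $\mathscr{F}(E)$, and it extends $f\uhr\Sigma(E)=g\uhr\Sigma(E)=\sigma$, contradicting the choice of $\sigma$. I expect the only genuinely delicate step to be the combinatorial extraction of the sets $U_j$: it is precisely there that local finiteness of the cover (rather than mere point-finiteness) is used, guaranteeing that $Y=\bigcup_jU_j$ is closed; everything after that is a routine reprise of the dense-hyperspace arguments already developed for Propositions \ref{pr:extension-1} and \ref{proposition-ext-sel-v25:1}.
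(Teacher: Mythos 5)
Your proof is correct; I checked the key steps (the pigeonhole argument showing $W_k\setminus W_{k+1}\neq\emptyset$ infinitely often, clopenness of the $W_k$ and of $Y$, disjointness of the $U_j$, local constancy of $S\mapsto Z_S$ on $\mathscr{C}(Y)$, and the identification of $\mathscr{F}(E)$ with a subspace of $\mathscr{F}(Y)$ for the closed set $E$) and they all go through. Your route is, however, organised differently from the paper's. The paper does not disjointify the cover: it enumerates a countable infinite locally finite clopen cover $\{U_n\}$, defines $\kappa(S)=\max\{n:S\cap U_n\neq\emptyset\}$ for compact $S$, sets $g(S)=g_{\kappa(S)}(S\cap U_{\kappa(S)})$ with $g_n\in\sel[\mathscr{C}(U_n)]$, extends $g$ to $f\in\sel[\mathscr{F}(X)]$ by \eqref{eq:ext-sel-v26:1}, and then reaches a contradiction directly: it builds points $y_k$ with strictly increasing maximal indices, so that $Y=\{y_k\}$ is infinite closed discrete, $f(Y_k)=y_k$ for $Y_k=\{y_0,\dots,y_k\}$, and $Y_k\to Y$ in $\tau_V$ forces $f(Y)=\lim_k y_k$, which is impossible. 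You instead extract a pairwise-disjoint clopen family $\{U_j\}$ with clopen union $Y$, pass to $Y$ via Proposition \ref{proposition-ext-sel-v32:1}, glue a non-extendable selection on $\Sigma(E)$ (supplied by Proposition \ref{proposition-ext-sel-v25:1}) with some $f_0\in\sel[\mathscr{C}(Y)]$ in the style of Proposition \ref{pr:extension-1}, and let \eqref{eq:ext-sel-v26:1} produce the forbidden extension over $\mathscr{F}(E)$. The underlying obstruction is the same in both arguments (a $\tau_V$-convergent sequence of finite sets whose union is an infinite closed discrete set), but the paper's version is self-contained and avoids the combinatorial disjointification, while yours buys modularity: the convergence computation is outsourced to Proposition \ref{proposition-ext-sel-v25:1}, at the price of the $W_k$-extraction, which is where local finiteness (as opposed to mere point-finiteness) is needed both for the clopenness of the $W_k$ and of $Y$ and for the pigeonhole step.
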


\begin{proof}
  Assume that $X$ has an infinite locally finite clopen cover. Then,
  it also has a locally finite clopen cover $\mathscr{U}$ which is
  both infinite and countable. Take an enumeration
  $\mathscr{U}=\{U_n:n<\omega\}$ of this cover. Next, for each
  $S\in \mathscr{C}(X)$, let
  $\kappa(S)=\max\{n<\omega: S\cap U_n\neq \emptyset\}$. Finally, for
  each $n<\omega$, take a selection $g_n\in \sel[\mathscr{C}(U_n)]$
  and define another selection $g:\mathscr{C}(X)\to X$ by
  \begin{equation}
    \label{eq:ext-sel-v22:1}
    g(S)=g_{\kappa(S)}(S\cap U_{\kappa(S)})\quad \text{for every
      $S\in \mathscr{C}(X)$.}
  \end{equation}
  It is easy to verify that $g$ is continuous because each $U_n$,
  $n<\omega$, is clopen. So, by \eqref{eq:ext-sel-v26:1}, $g$ can be
  extended to a selection $f\in\sel[\mathscr{F}(X)]$. To see that this
  is impossible, we will construct a sequence
  $\{y_k:k<\omega\}\subset X$ and a subsequence
  $\left\{U_{n_k}:k<\omega\right\}$ of $\{U_n:n<\omega\}$ such that
  \begin{equation}
    \label{eq:ext-sel-v34:1}
    n_k=\max\{n<\omega: y_k\in U_n\}\quad \text{for every $k<\omega$.}
  \end{equation}
  Briefly, take $y_0\in U_0$, and let $n_0\geq 0$ be as in
  \eqref{eq:ext-sel-v34:1}. Next, take $y_1\in U_{n_0+1}$ and just
  like before, let $n_1>n_0$ be the unique natural number as in
  \eqref{eq:ext-sel-v34:1}. The construction can be carried on by
  induction.\smallskip

  We finish the proof as follows. Since the family
  $\{U_{n_k}:k<\omega\}$ is locally finite, it follows from
  \eqref{eq:ext-sel-v34:1} that $Y=\{y_k:k<\omega\}$ is an infinite
  closed discrete set. For the same reason, if
  $Y_k=\{y_0,\dots, y_k\}$, then $\kappa(Y_k)=n_k$ and by
  \eqref{eq:ext-sel-v22:1}, $f(Y_k)=g(Y_k)=y_k$. Furthermore, the
  sequence $Y_k$, $k<\omega$, is $\tau_V$-convergent to
  $\overline{\bigcup_{k<\omega}Y_n}=Y$. Therefore,
  $f(Y)=\lim_{k\to\infty}f(Y_k)= \lim_{k\to\infty} y_k$ but this
  contradicts the fact that $Y$ is discrete.
\end{proof}
  
A \emph{$\pi$-base} for a space $X$ (called also a \emph{pseudobase},
Oxtoby \cite{oxtoby:60}) is a family $\mathscr{P}$ of open subsets
such that each nonempty open subset of $X$ contains some nonempty
member of $\mathscr{P}$.

\begin{corollary}
  \label{corollary-ext-sel-v34:4}
  Let $X$ be a strongly $\omega$-cwH space which has a clopen
  $\pi$-base. If $X$ satisfies \eqref{eq:ext-sel-v26:1}, then it is
  compact.
\end{corollary}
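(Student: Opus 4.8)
The plan is to feed the two structural hypotheses into Theorem~\ref{theorem-ext-sel-v32:1} in order to force $X$ to be countably compact, and then to upgrade countable compactness to compactness by the orderability machinery already available. Note at the outset that \eqref{eq:ext-sel-v26:1} gives $\sel[\mathscr{F}(X)]\neq\emptyset$, so $X$ has a continuous weak selection; this fact will only be used at the very end.

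The key step is to show that $X$ admits no infinite discrete family of nonempty open sets. Suppose $\{O_n:n<\omega\}$ were such a family. Since the family is discrete, its members are pairwise disjoint, hence (being nonempty) pairwise distinct, so there are infinitely many of them. Using the clopen $\pi$-base, I would pick for each $n$ a nonempty clopen set $B_n\subset O_n$. Then $\{B_n:n<\omega\}$ is again a discrete (in particular locally finite) family of pairwise-disjoint nonempty clopen sets, so $\bigcup_{n<\omega}B_n$ is clopen, and therefore so is $U=X\setminus\bigcup_{n<\omega}B_n$. Consequently $\{B_n:n<\omega\}\cup\{U\}$ is an infinite locally finite clopen cover of $X$, contradicting Theorem~\ref{theorem-ext-sel-v32:1}. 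Next, if $X$ were not countably compact it would contain a countably infinite closed discrete set $Z=\{z_n:n<\omega\}$; since $X$ is strongly $\omega$-cwH there is a discrete collection $\{O_n:n<\omega\}$ of open sets with $z_n\in O_n$ for each $n$, and as the $z_n$ are distinct so are the $O_n$. This is exactly the configuration just ruled out, so $X$ must be countably compact.

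It remains to conclude compactness. Since $X$ is countably compact and has a continuous weak selection, Theorem~\ref{theorem-ext-sel-v37:1} gives that $X$ is weakly orderable; and since $X$ satisfies \eqref{eq:ext-sel-v26:1}, Proposition~\ref{proposition-ext-sel-v34:2} then yields that $X$ is compact. The only genuinely non-routine point is the key step above: the clopen $\pi$-base is precisely what lets one replace an abstract discrete family of open sets by a clopen one, after which ``padding by the complement of the union'' produces a bona fide infinite locally finite clopen cover to which Theorem~\ref{theorem-ext-sel-v32:1} applies; everything else is an assembly of results already in hand.
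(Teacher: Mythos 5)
Your proof is correct and follows essentially the same route as the paper: use strong $\omega$-cwH plus the clopen $\pi$-base to turn a hypothetical infinite closed discrete set into an infinite locally finite clopen cover, contradict Theorem~\ref{theorem-ext-sel-v32:1} to get countable compactness, then conclude via Theorem~\ref{theorem-ext-sel-v37:1} and Proposition~\ref{proposition-ext-sel-v34:2}. Your only addition is to spell out the step the paper states tersely (shrinking the discrete open family to clopen sets and padding by the clopen complement of their union), which is a correct elaboration rather than a different argument.
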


\begin{proof}
  First of all, let us show that $X$ is countably compact. Assume that
  this fails. Then $X$ has an infinite countable discrete set
  $Z\in \mathscr{F}(X)$, see \cite[Theorem
  3.10.3]{engelking:89}. Hence, since $X$ is strongly $\omega$-cwH,
  there exists a discrete open family $\{V_z:z\in Z\}$ with
  $z\in V_z$, for each $z\in Z$. Accordingly, $X$ also has an infinite
  clopen partition because it has a clopen $\pi$-base. However, by
  Theorem \ref{theorem-ext-sel-v32:1}, this is impossible because $X$
  satisfies \eqref{eq:ext-sel-v26:1}. Thus, $X$ is countably
  compact. Therefore, by Theorem \ref{theorem-ext-sel-v37:1} and
  Proposition \ref{proposition-ext-sel-v34:2}, $X$ is also compact.
\end{proof}

As remarked before, each normal (i.e.\ $\omega$-collectionwise normal)
space is strongly $\omega$-cwH. Moreover, each suborderable space is
both collectionwise normal and countably paracompact. However, there
are many examples of countably paracompact spaces which are not
normal. For instance, $X=(\omega_1 + 1) \times \omega_1$ is even
countably compact, but is not normal and doesn't satisfy
\eqref{eq:ext-sel-v26:1} because it is not compact, see Theorem
\ref{theorem-ext-sel-v37:1} and Proposition
\ref{proposition-ext-sel-v34:2}. Thus, we have the following related
question.

\begin{question}
  \label{question-ext-sel-v37:2}
  Let $X$ be a countably paracompact space with a clopen
  $\pi$-base. If $X$ satisfies \eqref{eq:ext-sel-v26:1}, then is it
  true that $X$ is normal?
\end{question}

Evidently, if the answer is ``yes'', then such a space must be also
compact, as per Corollary \ref{corollary-ext-sel-v34:4}.\medskip

A space $X$ is \emph{weakly paracompact} (called also
\emph{metacompact}) if every open cover of $X$ has a point-finite open
refinement. Each weakly paracompact normal space is countably
paracompact, see \cite[Theorem 5.2.6]{engelking:89} and
\cite[Corollary 6.4]{gutev:11}. Accordingly, we also have the
following question.

\begin{question}
  \label{question-ext-sel-v37:3}
  Let $X$ be a weakly paracompact space with a clopen $\pi$-base. If
  $X$ satisfies \eqref{eq:ext-sel-v26:1}, then is it true that $X$ is
  compact (equivalently, normal)?
\end{question}

Spaces with clopen $\pi$-bases have a very interesting interpretation
in terms of hyperspace selections. The following result was
obtained in \cite[Theorem 2.1]{gutev:05a}.

\begin{theorem}[\cite{gutev:05a}]
  \label{theorem-ext-sel-v26:2}
  Let $X$ be a space with $\sel[\mathscr{F}(X)]\neq \emptyset$.  Then
  $X$ has a clopen $\pi$-base if and only if the set
  $\left\{f(X) : f\in \sel[\mathscr{F}(X)]\right\}$ is dense in $X$.
\end{theorem}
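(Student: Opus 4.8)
The plan is to prove the two implications separately. Throughout write $\mathscr{E}=\{f(X):f\in\sel[\mathscr{F}(X)]\}$ and fix once and for all some $f_0\in\sel[\mathscr{F}(X)]$, which exists by hypothesis.

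\emph{Clopen $\pi$-base $\implies$ $\mathscr{E}$ dense.} This direction is obtained by a direct modification of $f_0$. Given a nonempty open $U\subset X$, pick a nonempty clopen $V\subset U$ from the $\pi$-base. Since $V$ is clopen, $\langle V,X\rangle=\{S\in\mathscr{F}(X):S\cap V\neq\emptyset\}$ is $\tau_V$-clopen in $\mathscr{F}(X)$, its complement being $\langle X\setminus V\rangle$, and the intersection map $S\mapsto S\cap V$ is $\tau_V$-continuous on $\langle V,X\rangle$. Define $f\colon\mathscr{F}(X)\to X$ by $f(S)=f_0(S\cap V)$ when $S\cap V\neq\emptyset$ and $f(S)=f_0(S)$ otherwise. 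Then $f$ is a selection (if $S\cap V\neq\emptyset$ then $f_0(S\cap V)\in S\cap V\subset S$), and it is continuous because $\mathscr{F}(X)$ is the union of the $\tau_V$-clopen sets $\langle V,X\rangle$ and $\langle X\setminus V\rangle$, on each of which $f$ is continuous. As $f(X)=f_0(V)\in V\subset U$, we get $\mathscr{E}\cap U\neq\emptyset$; since $U$ was arbitrary, $\mathscr{E}$ is dense.

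\emph{$\mathscr{E}$ dense $\implies$ clopen $\pi$-base.} Let $U\subset X$ be nonempty open; I must produce a nonempty clopen $W\subset U$. By density pick $f\in\sel[\mathscr{F}(X)]$ with $p:=f(X)\in U$. Since $f^{-1}(U)$ is a $\tau_V$-open neighbourhood of $X$, there is a finite open cover $\{V_1,\dots,V_n\}$ of $X$ with $X\in\langle V_1,\dots,V_n\rangle\subset f^{-1}(U)$; after reindexing assume $p\in V_1$, and for each $i$ with $p\notin V_i$ choose $q_i\in V_i\cap\mathscr{E}$ (possible, as $\mathscr{E}$ is dense). Put $P_0=\{p\}\cup\{q_i:p\notin V_i\}$, a finite subset of $\mathscr{E}$. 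Every closed $S\supset P_0$ then lies in $\langle V_1,\dots,V_n\rangle$, so $f(S)\in U$; in particular the map $g\colon X\to X$, $g(x)=f(P_0\cup\{x\})$, is continuous (it factors through the continuous maps $x\mapsto\{x\}$ and $S\mapsto S\cup P_0$), satisfies $g(x)\in P_0\cup\{x\}$, and takes all its values in $U$. Hence $g$ maps $X\setminus U$ into the finite set $P_0\cap U$, so $O:=\{x\in X:g(x)\notin P_0\cap U\}=X\setminus g^{-1}(P_0\cap U)$ is open and contained in $U$; moreover $g(q)=f(P_0)$ for every $q\in P_0$, so $W_1:=g^{-1}(f(P_0))$ is a set containing $P_0$, whence $f(W_1)\in U$, and if $g$ has finite range then $g$ is a continuous map into the finite discrete set $P_0\cap U$, so its fibres — in particular $W_1$ — are clopen.

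The main obstacle is to upgrade these observations into an honest nonempty clopen subset of $U$. The idea is to iterate: if $O\neq\emptyset$, then $O$ is a nonempty open subset of $U$ which meets $\mathscr{E}$, so the argument may be rerun with a new selection pointing into $O$; if instead $g$ has finite range, one replaces $X$ by the clopen set $W_1$ and $f$ by $f\uhr\mathscr{F}(W_1)$ (whose value $f(W_1)$ still lies in $U\cap W_1$, since $W_1\supset P_0$) and repeats inside $W_1$. The delicate point — and where the real work lies — is to organise this recursion so that it terminates in a clopen subset of $U$ rather than producing an infinite descending chain of open, respectively clopen, sets; this is precisely where the full strength of the density hypothesis is needed, since for a \emph{connected} $X$ one has $|\mathscr{E}|\le2$ by Corollary~\ref{corollary-hsp-ver-14:1} while no proper open set contains a nonempty clopen set, so both sides of the equivalence simply fail together.
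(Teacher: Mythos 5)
Your first implication is complete and correct: since $V$ is clopen, $\langle V,X\rangle$ and $\langle X\setminus V\rangle$ are complementary $\tau_V$-clopen subsets of $\mathscr{F}(X)$, the trace map $S\mapsto S\cap V$ is indeed Vietoris continuous on $\langle V,X\rangle$, and the glued map $f$ is a continuous selection with $f(X)=f_0(V)\in V\subset U$. Keep in mind, though, that the paper offers no proof of this theorem --- it is quoted from \cite{gutev:05a} --- and the substance of the cited result is precisely the other implication.

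That converse is not proved in your proposal: you establish a dichotomy and then explicitly defer the main point. Concretely, given a nonempty open $U$ and $f$ with $f(X)\in U$, your construction yields either a nonempty open set $O\subset U$ (which is not clopen), or, when $g$ has finite range, clopen fibres of $g$, among them $W_1=g^{-1}(f(P_0))$ --- but $W_1$ need not be contained in $U$, so neither branch produces the required nonempty clopen subset of $U$. The proposed remedy, iterating inside $O$ or inside $W_1$, carries no decreasing parameter: rerunning the argument in $O$ merely reproduces the same dichotomy for a smaller open set, and passing to $W_1$ can likewise be repeated indefinitely, so nothing forces the recursion to terminate in a clopen subset of $U$. (A secondary point you would also need: to rerun the argument inside $W_1$ you must know that $\left\{h(W_1):h\in\sel[\mathscr{F}(W_1)]\right\}$ is dense in $W_1$; this does follow from density of your $\mathscr{E}$ by gluing selections along the clopen partition $\{W_1,X\setminus W_1\}$, in the spirit of Proposition \ref{proposition-ext-sel-v32:2}, but it is not automatic from restriction alone. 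Also, finite range of $g$ gives clopen fibres simply because $X$ is then a finite union of closed fibres --- the range need not lie in $P_0\cap U$ as you assert.) Tellingly, the one place where you invoke density, the choice $q_i\in V_i\cap\mathscr{E}$, is never actually used (any $q_i\in V_i$ would do), so density enters your argument only through the unproven termination step. Since density of $\left\{f(X):f\in\sel[\mathscr{F}(X)]\right\}$ is exactly what separates this theorem from the still-open Question \ref{question-ext-sel-v26:2} (total disconnectedness plus $\sel[\mathscr{F}(X)]\neq\emptyset$ is not known to yield a clopen $\pi$-base), the missing step is not routine tidying-up but the actual content of Theorem 2.1 of \cite{gutev:05a}; as written, the hard direction remains unproved.
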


It was further shown in \cite[Corollary 2.3]{gutev:05a}, see also
\cite[Theorem 1.5]{gutev-nogura:00b}, that if $X$ has a clopen
$\pi$-base and $\sel[\mathscr{F}(X)]\neq \emptyset$, then $X$ is
totally disconnected. On the other hand, as commented before, each
normal space is strongly $\omega$-cwH.  Thus, in view of Corollary
\ref{corollary-ext-sel-v34:4}, we also have the following question.

\begin{question}
  \label{question-ext-sel-v26:1}
  Let $X$ be a totally disconnected normal space. If $X$  satisfies 
  \eqref{eq:ext-sel-v26:1}, then is it true that $X$ is compact? What
  about if $X$ is metrizable?
\end{question}

In fact, Question \ref{question-ext-sel-v26:1} is very much related to
\cite[Question 391]{gutev-nogura:06b} and the following more general
question.

\begin{question}[\cite{gutev-nogura:06b}]
  \label{question-ext-sel-v26:2}
  Let $X$ be a totally disconnected space with
  $\sel[\mathscr{F}(X)]\neq\emptyset$. Then, is it true that $X$ has a
  clopen $\pi$-base?
\end{question}

According to Theorem \ref{theorem-ext-sel-v26:2} and \cite[Theorem
6.2]{MR3122363}, the answer to the above question is ``yes'' when $X$
is a countably compact regular space. In fact, it was shown in
\cite[Corollary 6.4]{MR3122363} that for a countably compact regular
space with $\sel[\mathscr{F}(X)]\neq\emptyset$, total disconnectedness
of $X$ is equivalent to zero-dimensionality of $X$. Evidently, by
Theorem \ref{theorem-ext-sel-v37:1}, the result remains valid for
every countably compact regular spaces with a continuous weak
selection because each suborderable totally disconnected space is
zero-dimensional, see page \pageref{page:suborderable-zero}. This
brings the following special case of Question
\ref{question-ext-sel-v26:2}. 

\begin{question}
  \label{question-ext-sel-v40:1}
  Let $X$ be a totally disconnected weakly orderable space with
  $\sel[\mathscr{F}(X)]\neq\emptyset$. Then, is it true that $X$ has a
  clopen $\pi$-base?
\end{question}

The property in \eqref{eq:ext-sel-v26:3} is very similar to that of
\eqref{eq:ext-sel-v26:1} and as remarked before, it implies both
\eqref{eq:ext-sel-v26:2} and \eqref{eq:ext-sel-v26:1}. Hence, all
results obtained for spaces with property \eqref{eq:ext-sel-v26:1}
remain valid for those with property \eqref{eq:ext-sel-v26:3}. To
study the difference between \eqref{eq:ext-sel-v26:1} and
\eqref{eq:ext-sel-v26:3}, we first consider the property in
\eqref{eq:ext-sel-v26:2}. For this property, we have the following two
crucial examples.

\begin{example}
  \label{example-ext-sel-v35:1}
  There exists a suborderable space $X$ which is not discrete and
  $\mathscr{C}(X)=\Sigma(X)$.  For instance, take $X$ to be the
  subspace of $\omega_1+1$ obtained by removing all countable limit
  ordinals. Evidently, by Theorem \ref{theorem-hsp-ver-5:7},
  $\sel[\mathscr{C}(X)]\neq \emptyset$ and $X$ satisfies
  \eqref{eq:ext-sel-v26:2}.
\end{example}

\begin{example}
  \label{example-ext-sel-v35:2}
  The closed unit interval $X=[0,1]$ is compact and orderable, and
  clearly $\Sigma(X)\neq \mathscr{C}(X)$. However, $X$ satisfies
  \eqref{eq:ext-sel-v26:2}, see Theorems \ref{theorem-ext-sel-v15:2}
  and \ref{theorem-hsp-ver-5:8}. In particular, in this case, the pair
  $(\Sigma(X),\mathscr{C}(X))$ gives an affirmative answer to Question
  \ref{question-ext-sel-v32:1}.
\end{example}

In fact, Example \ref{example-ext-sel-v35:2} is valid for any
connected space with a continuous weak selection, which suggests the
following natural question.

\begin{question}
  \label{question-ext-sel-v35:1}
  Let $X$ be a locally connected space with a continuous weak
  selection. Then, is it true that $X$ satisfies
  \eqref{eq:ext-sel-v26:2}? 
\end{question}

If a space $X$ has a clopen partition $\mathscr{P}$ with
$\sel[\mathscr{C}(P)]\neq \emptyset$, $P\in \mathscr{P}$, then
$\sel[\mathscr{C}(X)]\neq \emptyset$. This brings the following more
general question.

\begin{question}
  \label{question-ext-sel-v35:2}
  Let $X$ be a space which has a clopen partition $\mathscr{P}$ such
  that each $P\in \mathscr{P}$ satisfies
  \eqref{eq:ext-sel-v26:2}. Then, is it true that $X$ also satisfies
  \eqref{eq:ext-sel-v26:2}?
\end{question}

If $X$ is a space which satisfies \eqref{eq:ext-sel-v26:2} and each
point of $X$ has a countable clopen base, then $X$ must be a discrete
space. In fact, this is true in the following more general
situation. A point $p\in X$ is \emph{countably-approachable}
\cite{gutev:05a} if it is either isolated or has a countable clopen
base in $\overline{U}$ for some open set ${U\subset X\setminus\{p\}}$
with $\overline{U}=U\cup\{p\}$. One can easily see that a non-isolated
point $p\in X$ is countably-approachable if and only if $X$ contains a
pairwise-disjoint sequence $\{S_n\}$ of nonempty clopen sets such that
$p\notin S_n$, $n < \omega$, and $\{S_n\}$ is convergent to $p$, see
\cite{gutev:05a}. Here, $\{S_n\}$ is \emph{convergent} to $p$ if every
neighbourhood of $p$ contains all but finitely many members of this
sequence; equivalently, if $\{S_n\}$ is $\tau_V$-convergent to
$\{p\}$. It was shown in \cite[Lemma 4.2]{gutev:05a} that if $p\in X$
is countably-approachable and $\sel[\mathscr{F}(X)]\neq\emptyset$,
then there exists a selection $f\in\sel[\mathscr{F}(X)]$ with
$f(X)=p$. Based on this, the following result was obtained in
\cite[Theorem 4.1]{gutev:05a}.

\begin{theorem}[\cite{gutev:05a}]
  \label{theorem-ext-sel-v28:1}
  Let $X$ be a space with $\sel[\mathscr{F}(X)]\neq \emptyset$. Then
  the set $\left\{f(X) : f\in \sel[\mathscr{F}(X)]\right\}$ is dense
  in $X$ if and only if the set of all countably-approachable points
  of X is dense in X.
\end{theorem}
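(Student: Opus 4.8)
The plan is to prove the two implications separately, relying on the two results of \cite{gutev:05a} recalled just above --- \cite[Lemma 4.2]{gutev:05a} and \cite[Corollary 2.3]{gutev:05a} --- together with Theorem \ref{theorem-ext-sel-v26:2}.

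For the implication ``countably-approachable points dense $\implies \{f(X):f\}$ dense'', nothing beyond \cite[Lemma 4.2]{gutev:05a} is needed: that lemma says every countably-approachable point $p$ of $X$ equals $f(X)$ for some $f\in\sel[\mathscr{F}(X)]$, so the set $\left\{f(X):f\in\sel[\mathscr{F}(X)]\right\}$ contains every countably-approachable point of $X$; hence if the latter are dense, so is the former set.

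For the converse, assume $D=\left\{f(X):f\in\sel[\mathscr{F}(X)]\right\}$ is dense in $X$. By Theorem \ref{theorem-ext-sel-v26:2}, $X$ has a clopen $\pi$-base, and since $\sel[\mathscr{F}(X)]\neq\emptyset$ as well, $X$ is totally disconnected by \cite[Corollary 2.3]{gutev:05a}. To prove that the countably-approachable points are dense it suffices, using the clopen $\pi$-base, to exhibit a countably-approachable point inside an arbitrary nonempty clopen set $W\subseteq X$. If $W$ has an isolated point, that point is isolated in $X$ (as $W$ is clopen), hence countably-approachable, and we are done; so assume $W$ is crowded. Fix any $f\in\sel[\mathscr{F}(X)]$; since $W$ is clopen, $g=f\uhr\mathscr{F}(W)$ is a continuous selection for $\mathscr{F}(W)$, and $g(W)$ is not isolated in $W$. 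Hence, by the continuity analysis of selections --- concretely \cite[Proposition 4.4]{garcia-ferreira-gutev-nogura-sanchis-tomita:99}, as already used in the proof of Corollary \ref{corollary-ext-sel-v13:3} --- the space $W$ contains a non-trivial convergent sequence, say $x_n\to q$ with $q\in W$ and the $x_n$ distinct and different from $q$. It then remains to ``inflate'' this sequence to a pairwise-disjoint sequence $\{S_n\}$ of nonempty clopen subsets of $W$ with $q\notin S_n$ and $\{S_n\}$ $\tau_V$-convergent to $\{q\}$; by the characterization of countable-approachability recalled above, $q$ is then countably-approachable, and $q\in W$. The inflation is carried out recursively: at stage $n$ one has disjoint nonempty clopen sets $S_0,\dots,S_{n-1}$ whose union is disjoint from a shrinking open neighbourhood $N_{n-1}\ni q$; one picks a term $x_{k_n}\in N_{n-1}$ of the sequence, uses total disconnectedness to separate $x_{k_n}$ from $q$ by a clopen set $C$, then uses the clopen $\pi$-base to place a nonempty clopen set $S_n\subseteq C\cap N_{n-1}$ (so $q\notin S_n$ and $S_n\subseteq N_{n-1}$), and sets $N_n=N_{n-1}\setminus S_n$.

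The main obstacle, and the delicate point of the construction, is to carry out this inflation so that $\{S_n\}$ genuinely $\tau_V$-converges to the single point $q$ --- i.e.\ so that \emph{every} open neighbourhood of $q$ (not merely every clopen one) contains all but finitely many $S_n$ --- rather than merely having the $S_n$ accumulate near $q$ along a chain $\{N_n\}$ that may fail to be a neighbourhood base at $q$. This is precisely where the hypothesis $\sel[\mathscr{F}(X)]\neq\emptyset$ must be invoked beyond producing the clopen $\pi$-base: e.g.\ $\{0,1\}^{\omega_1}$ has a clopen base but no countably-approachable point, so without a continuous selection the ``only if'' direction is false. I expect one must use the selection to control the local structure at the target point --- for instance by replacing $q$ with the value of $g$ on the closure of the family constructed so far and iterating, or by invoking a countable-pseudocharacter bound at selected points --- so as to force the recursion to close up with the correct limit.
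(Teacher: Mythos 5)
Your first implication is fine and is exactly the observation the paper itself makes: by \cite[Lemma 4.2]{gutev:05a}, every countably-approachable point lies in $\left\{f(X):f\in \sel[\mathscr{F}(X)]\right\}$, so density of the former set gives density of the latter. (Note the paper does not reprove this theorem; it only cites \cite[Theorem 4.1]{gutev:05a}, so the comparison below is with what a complete argument would require.)

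The converse direction, however, has a genuine gap, and it sits exactly where you flag it. Your reductions are sound (Theorem \ref{theorem-ext-sel-v26:2} gives the clopen $\pi$-base, total disconnectedness follows, and it suffices to find a countably-approachable point in each nonempty clopen $W$), and \cite[Proposition 4.4]{garcia-ferreira-gutev-nogura-sanchis-tomita:99} does give a non-trivial convergent sequence $x_n\to q$ in a crowded clopen $W$. But the plan of ``inflating'' that sequence cannot be the engine of the proof: countable approachability of $q$ demands a pairwise-disjoint sequence of nonempty clopen sets that is $\tau_V$-convergent to $\{q\}$, i.e.\ \emph{every} open neighbourhood of $q$ must eventually contain the whole sets $S_n$, and this is strictly stronger than $q$ being the limit of a sequence of points. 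Your recursion (separate $x_{k_n}$ from $q$ by a clopen set, drop a $\pi$-base element $S_n$ into it, pass to $N_n=N_{n-1}\setminus S_n$) gives no control whatsoever forcing all but finitely many $S_n$ into a prescribed neighbourhood of $q$: the chain $\{N_n\}$ need not be a neighbourhood base at $q$, and the $S_n$ are not even required to meet the original sequence. Indeed, without further input the implication ``limit of a non-trivial convergent sequence $\implies$ countably-approachable'' is false -- in $\{0,1\}^{\omega_1}$ (zero-dimensional, with many convergent sequences) no point is countably-approachable, since any countable family of nonempty clopen sets depends on countably many coordinates only. So the selection hypothesis must enter the construction itself, not merely supply the $\pi$-base; and since the point $q$ produced by \cite[Proposition 4.4]{garcia-ferreira-gutev-nogura-sanchis-tomita:99} has no a priori relation to the selection, your suggestion to ``use the selection to control the local structure at the target point'' remains a hope rather than an argument. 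The known proofs build the clopen sets $S_n$ \emph{by means of} a selection for $\mathscr{F}(W)$ (for instance, after modifying $f$ on the clopen piece $W$ so that the selection value of the relevant closed sets steers the recursion), and the convergence is then extracted from continuity of the selection at suitably chosen closed sets such as $\overline{\{p\}\cup\bigcup_n S_n}$ -- compare the way continuity at $\tau_V$-limits is exploited in the proofs of Proposition \ref{proposition-ext-sel-v25:1} and Theorem \ref{theorem-ext-sel-v26:3}. As it stands, the nontrivial half of the theorem -- which is precisely the content of \cite[Theorem 4.1]{gutev:05a} -- is not proved by your proposal.
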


Another interesting property is that each non-isolated
countably-approachable point of $X$ is a strong butterfly point
(equivalently, a cut point, or a tie-point of $X$). This follows
easily by taking a pairwise-disjoint sequence $\{S_n\}$ of nonempty
clopen sets such that $p\notin S_n$, $n < \omega$, and $\{S_n\}$ is
convergent to $p$. Then we can set $U=\bigcup_{k=0}^\infty S_{2k}$ and
$V=X\setminus\overline{U}$, see the proof of \cite[Corollary
3.2]{gutev-nogura:03a}.

\begin{theorem}
  \label{theorem-ext-sel-v26:3}
  If a space $X$ satisfies \eqref{eq:ext-sel-v26:2}, then each
  countably-approachable point is isolated.
\end{theorem}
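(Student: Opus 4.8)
The plan is to argue by contradiction, producing — from a non-isolated countably-approachable point — a continuous selection for $\Sigma(X)$ that extends to no continuous selection for $\mathscr{C}(X)$, which violates \eqref{eq:ext-sel-v26:2}.

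So suppose $p\in X$ is countably-approachable but not isolated, and fix a pairwise-disjoint sequence $\{S_n:n<\omega\}$ of nonempty clopen subsets of $X$ with $p\notin S_n$ and $\{S_n\}$ $\tau_V$-convergent to $\{p\}$; since the $S_n$ are clopen, $\overline{\bigcup_n S_n}=\{p\}\cup\bigcup_n S_n$. Choose $x_n\in S_n$ and $\sigma_n\in\sel[\mathscr{C}(S_n)]$ (possible, since each nonempty clopen $S_n$ inherits \eqref{eq:ext-sel-v26:2}, hence $\sel[\mathscr{C}(S_n)]\neq\emptyset$, by Proposition~\ref{proposition-ext-sel-v32:1}), and put $K_m=\{x_0,\dots,x_m\}$, $K=\{p\}\cup\{x_n:n<\omega\}\in\mathscr{C}(X)$, so that $K_m$ is $\tau_V$-convergent to $K$. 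The selection $g$ to be built will satisfy $g(K_m)=x_0$ for even $m$ and $g(K_m)=x_1$ for odd $m$; any $f\in\sel[\mathscr{C}(X)]$ extending $g$ would then force $f(K)=\lim_m f(K_m)=\lim_m g(K_m)$, which does not exist — the contradiction.

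The model for $g$ is the convergent sequence: $\Sigma(\omega+1)$ carries a continuous selection $g_0$ given by $g_0(F)=\omega$ if $\omega\in F$, $g_0(\{0,\dots,m\})=0$ (resp.\ $1$) for $m$ even (resp.\ odd), and $g_0(F)=\max F$ for every other finite $F\subseteq\omega$; since $\{0,\dots,m\}$ is $\tau_V$-convergent to $\omega+1$ while $g_0(\{0,\dots,m\})$ oscillates, $g_0$ has no continuous extension to $\mathscr{F}(\omega+1)=\mathscr{C}(\omega+1)$. I would transplant $g_0$ to $\Sigma(X)$ via the clopen pieces: setting $A(F)=\{n:F\cap S_n\neq\emptyset\}$, let $g(F)=\sigma_0(F\cap S_0)$ or $\sigma_1(F\cap S_1)$ according to the parity of $m$ when $A(F)=\{0,\dots,m\}$; let $g(F)=\sigma_{\max A(F)}(F\cap S_{\max A(F)})$ when $A(F)$ is nonempty but not an initial segment of $\omega$; let $g(F)=p$ when $p\in F\subseteq\overline{\bigcup_n S_n}$; and let $g(F)=\sigma(F)$ when $F$ meets $X\setminus\overline{\bigcup_n S_n}$, where $\sigma\in\sel[\mathscr{C}(X)]$ is chosen (see below) so that $\sigma(C)=p$ whenever $p\in C\subseteq\overline{\bigcup_n S_n}$. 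This clearly makes $g$ a selection with the stated values on the $K_m$.

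The heart of the argument is the $\tau_V$-continuity of $g$. When $p\notin F_0$ this is routine: each point of $F_0$ can be confined either to the clopen piece $S_n$ containing it or to the open set $X\setminus\overline{\bigcup_n S_n}$, which fixes $A(F)$ on a neighbourhood of $F_0$ and reduces everything to continuity of the $\sigma_n$ and of $\sigma$; and at any $F_0$ meeting $X\setminus\overline{\bigcup_n S_n}$ one stays inside that open region, where $g=\sigma$. I expect the main obstacle to be continuity at the finite sets $\{p\}\cup F'$ with $F'\subseteq\bigcup_n S_n$. Approaching such a set through subsets of $\bigcup_n S_n$, the perturbing sets have index set $A(F')\cup B$ with $B$ a nonempty set of arbitrarily large integers, hence never an initial segment, so $g$ is given there by the clause $g(F)=\sigma_{\max A(F)}(F\cap S_{\max A(F)})$ and its values lie in $S_n$ for $n$ large and therefore tend to $p$; approaching it through sets meeting $X\setminus\overline{\bigcup_n S_n}$, $g$ equals $\sigma$, so the one-sided limit is $\sigma(\{p\}\cup F')$. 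Both limits must equal $g(\{p\}\cup F')=p$, which is exactly why $\sigma$ is required to pick $p$ on compact sets contained in $\overline{\bigcup_n S_n}$ and containing $p$. The existence of such a $\sigma\in\sel[\mathscr{C}(X)]$ is the one auxiliary fact I would isolate, an $\mathscr{C}(X)$-analogue of \cite[Lemma~4.2]{gutev:05a}: starting from any element of $\sel[\mathscr{C}(X)]$, redirect its value to $p$ on every compact $C$ with $p\in C\subseteq\overline{\bigcup_n S_n}$, and check that continuity is preserved, using that each such $C$ meets the clopen neighbourhoods $X\setminus(S_0\cup\dots\cup S_{n_0})$ of $p$ cofinally. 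Granted this, the continuity checks close, $g\in\sel[\Sigma(X)]$ has no extension in $\sel[\mathscr{C}(X)]$, contradicting \eqref{eq:ext-sel-v26:2}; hence $p$ must be isolated.
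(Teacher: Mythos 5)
Your overall scheme is the same as the paper's: modify a selection for $\Sigma(X)$ so that it oscillates on finite sets meeting an initial segment $S_0,\dots,S_m$ of the witnessing clopen sequence, then let the finite sets $K_m$ $\tau_V$-converge to the compact set $K=\{p\}\cup\{x_n:n<\omega\}$ and contradict extendability via \eqref{eq:ext-sel-v26:2}. The difference is how continuity of the modified selection $g$ is secured, and this is where there is a genuine gap. Your construction prescribes $g(F)=p$ on finite sets $F$ with $p\in F\subseteq\overline{\bigcup_n S_n}$, and this forces the auxiliary claim you isolate: the existence of $\sigma\in\sel[\mathscr{C}(X)]$ with $\sigma(C)=p$ for every compact (or at least every finite) $C$ with $p\in C\subseteq\overline{\bigcup_n S_n}$. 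This claim is not proved, and the justification you sketch — take any $f_0\in\sel[\mathscr{C}(X)]$ and ``redirect'' its values to $p$ on this family — does not work as stated. The family $\mathscr{A}=\{C: p\in C\subseteq\overline{\bigcup_n S_n}\}$ is $\tau_V$-closed but in general not $\tau_V$-open: a set $\{p\}\cup F'\in\mathscr{A}$ is a Vietoris limit of compacta outside $\mathscr{A}$, obtained by replacing $p$ with a nearby point of $S_k$ for large $k$, or with a nearby point of $X\setminus\overline{\bigcup_n S_n}$ whenever $\overline{\bigcup_n S_n}$ is not a neighbourhood of $p$. Along such approximations the redirected selection reverts to $f_0$, so continuity of the redirected map at $\{p\}\cup F'$ would force $f_0(\{p\}\cup F')=p$ already — which there is no reason to have. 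Your appeal to the analogue of \cite[Lemma 4.2]{gutev:05a} does not fill this in either: that lemma only produces a selection with a prescribed value at the single set $X$, not on a whole non-open family of sets containing $p$. Whether your auxiliary statement is even true requires an argument at least as delicate as the theorem itself, so as written the proof is incomplete at its crucial step (continuity of $g$ at the finite sets containing $p$).

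Note that the gap is avoidable, and the paper's proof shows how: it leaves the given selection $h\in\sel[\Sigma(X)]$ completely unchanged outside the family $\Omega$ of finite subsets of $\bigcup_n S_n$ whose index set is an initial segment, prescribing nothing at sets containing $p$ or meeting $X\setminus\overline{\bigcup_n S_n}$; the whole continuity bookkeeping is then the (nontrivial, but purely combinatorial) verification that $\Omega$ is $\tau_V$-clopen in $\Sigma(X)$, after which the oscillating values $h(T\cap S_0)$ versus $h(T\cap S_{n(T)})$ give the contradiction with $t_0$ versus $t_{2k+1}\to p$. Your clauses assigning $p$ on $p$-containing sets and the ``maximal piece'' rule on non-initial-segment sets are exactly what create the need for the special $\sigma$; dropping them and keeping the untouched selection there removes the unproven lemma. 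A minor further point: your four defining clauses overlap as literally written (a set with $A(F)$ an initial segment may contain $p$ or meet $X\setminus\overline{\bigcup_n S_n}$), so the definition of $g$ needs an explicit priority among the cases, though your continuity discussion makes the intended priority clear.
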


\begin{proof}
  Assume that $X$ has a non-isolated countably-approachable point
  $p\in X$. Then by condition, there exists a pairwise-disjoint
  sequence $\{S_n\}$ of nonempty clopen sets such that $p\notin S_n$,
  $n < \omega$, and $\{S_n\}$ is convergent to $p$. Next, set
  \begin{equation}
    \label{eq:ext-sel-v26:4}
    \Omega_n=\left\{T\in \Sigma(X): T\in\langle S_0,\dots,
      S_n\rangle\right\}\quad\text{and}\quad
    \Omega=\bigcup_{n<\omega}\Omega_n.   
  \end{equation}
  Evidently, $\Omega_n$, $n<\omega$, is a pairwise-disjoint sequence
  of nonempty $\tau_V$-clopen sets in $\Sigma(X)$. We are going to
  show that $\Omega$ is also $\tau_V$-clopen in $\Sigma(X)$. To see
  this, let us observe that
  $Y=\overline{\bigcup_{n<\omega}S_n}=\{p\}\cup
  \left(\bigcup_{n<\omega}S_n\right)$ because the sequence
  $\{S_n\}\subset \mathscr{F}(X)$ is convergent to $p$. Next, take a
  set $S\in\Sigma(X)\setminus \Omega$. If $S\setminus Y\neq\emptyset$,
  then $\Theta=\{T\in \Sigma(X): T\setminus Y\neq\emptyset\}$ is
  $\tau_V$-open in $\Sigma(X)$ with $S\in \Theta$ and
  $\Theta\cap \Omega=\emptyset$. Otherwise, if $S\subset Y$, set
  $\mu=\{n<\omega: S\cap S_n\neq\emptyset\}$. If ${\mu=\emptyset}$,
  then $S=\{p\}$ and
  $\Theta=\left\{T\in \Sigma(X): T\cap S_0=\emptyset\right\}$ has the
  property that ${\Theta\cap \Omega=\emptyset}$. If $p\in S$ and
  $\mu\neq \emptyset$, let $m=\max \mu$ and, for convenience, set
  ${S_\mu=X\setminus \bigcup_{n=0}^{m+1}S_n}$ and
  $\mu^*=\mu\cup\{\mu\}$. Then
  $\Theta=\left\{T\in\Sigma(X): T\in\langle S_k: k\in
    \mu^*\rangle\right\}$ is a $\tau_V$-open set with $S\in \Theta$
  and $\Theta\cap \Omega=\emptyset$. Finally, if $p\notin S$, then
  $\mu$ is not an initial segment of $\omega$. Accordingly,
  $S\in \Theta$ and $\Theta\cap \Omega=\emptyset$ where
  $\Theta=\left\{T\in\Sigma(X): T\in\langle S_k: k\in
    \mu\rangle\right\}$.\smallskip

  We can now finish the proof as follows. Take a selection
  $h\in \sel[\Sigma(X)]$, and modify it on $\Omega$ to a selection
  $g\in\sel[\Sigma(X)]$ with the following property.  If
  $T\notin \Omega$, then set $g(T)=h(T)$. Otherwise, if $T\in \Omega$,
  then there is a unique $n(T)<\omega$ with $T\in \Omega_{n(T)}$, see
  \eqref{eq:ext-sel-v26:4}. In this case, set
  \begin{equation}
    \label{eq:ext-sel-v26:5}
    g(T)=
    \begin{cases}
      h(T\cap S_0) &\text{if $n(T)$ is even, and} \\
      h\left(T\cap S_{n(T)}\right) &\text{if $n(T)$ is odd.}
    \end{cases}
  \end{equation}
  Finally, let $f\in\sel[\mathscr{C}(X)]$ be an extension of $g$. To
  get a contradiction, take points $t_n\in S_n$, $n<\omega$, and set
  $T_n=\{t_0,\dots, t_n\}\in \Sigma(X)$ and
  $T=\overline{\bigcup_{n<\omega}T_n}\in \mathscr{C}(X)$. Then $T_n$,
  $n<\omega$, is $\tau_V$-convergent to $T$, so 
   $f(T)=\lim_{n\to\infty}f(T_n)=\lim_{n\to\infty}g(T_n)$.
  However, by \eqref{eq:ext-sel-v26:5}, this is impossible because
  $f(T)=\lim_{k\to\infty}g(T_{2k})=t_0$ and
  $f(T)=\lim_{k\to\infty}g(T_{2k+1})=p\neq t_0$. The proof is
  complete.
\end{proof}

For first countable spaces, Theorem \ref{theorem-ext-sel-v26:3}
implies the following consequence.

\begin{corollary}
  \label{corollary-ext-sel-v35:1}
  Let $X$ be a first countable space with a clopen $\pi$-base. If $X$
  satisfies \eqref{eq:ext-sel-v26:2}, then it is a discrete space and,
  in particular, $\mathscr{C}(X)=\Sigma(X)$.
\end{corollary}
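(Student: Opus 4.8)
The plan is to argue by contradiction: assume $X$ is not discrete and manufacture a non-isolated countably-approachable point, which is forbidden by Theorem \ref{theorem-ext-sel-v26:3} (applicable since $X$ satisfies \eqref{eq:ext-sel-v26:2}). Once discreteness is established, $\mathscr{C}(X)=\Sigma(X)$ is immediate because the compact subsets of a discrete space are precisely the finite ones.

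So I would fix a non-isolated point $p\in X$. Using first countability, take a decreasing countable neighbourhood base $\{V_n:n<\omega\}$ at $p$, and first record the Hausdorff consequence $\bigcap_{n<\omega}\overline{V_n}=\{p\}$: if $q\neq p$, separate $q$ and $p$ by disjoint open sets $A\ni q$ and $B\ni p$, pick $V_m\subseteq B$, and note $V_m\cap A=\emptyset$, whence $q\notin\overline{V_m}$. The key preliminary step is then that $V_n\setminus\overline{V_{n+1}}\neq\emptyset$ for infinitely many $n<\omega$. Indeed, were this to fail, we would have $V_n\subseteq\overline{V_{n+1}}$, and hence $\overline{V_n}=\overline{V_{n+1}}$, for all $n\geq N$ and some $N$; consequently $\overline{V_N}=\bigcap_{n<\omega}\overline{V_n}=\{p\}$, forcing $V_N=\{p\}$ since $p\in V_N\subseteq\overline{V_N}$, contradicting the fact that $p$ is not isolated.

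Next I would invoke the clopen $\pi$-base. Choosing an increasing sequence $n_0<n_1<\cdots$ along which $V_{n_k}\setminus\overline{V_{n_k+1}}$ is nonempty and open, pick a nonempty clopen set $W_k\subseteq V_{n_k}\setminus\overline{V_{n_k+1}}$ for each $k<\omega$. Then $\{W_k:k<\omega\}$ is a pairwise-disjoint sequence of nonempty clopen sets with $p\notin W_k$, which converges to $p$: we have $p\notin W_k$ because $p\in V_{n_k+1}\subseteq\overline{V_{n_k+1}}$; for $k<\ell$ we have $W_\ell\subseteq V_{n_\ell}\subseteq V_{n_k+1}\subseteq\overline{V_{n_k+1}}$, which is disjoint from $W_k$, so $W_k\cap W_\ell=\emptyset$; and if $U$ is a neighbourhood of $p$, then $V_m\subseteq U$ for some $m$, so $W_k\subseteq V_{n_k}\subseteq V_m\subseteq U$ for all but finitely many $k$. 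By the characterisation of countable-approachability recalled just before Theorem \ref{theorem-ext-sel-v26:3}, this means $p$ is a non-isolated countably-approachable point — contradicting Theorem \ref{theorem-ext-sel-v26:3}. Hence $X$ is discrete, and then $\mathscr{C}(X)=\Sigma(X)$.

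The main obstacle I anticipate is engineering the pairwise disjointness of the clopen sets: a clopen $\pi$-base only produces, inside a prescribed open set, \emph{some} nonempty clopen subset, with no control over its position relative to $p$. The trick that makes disjointness automatic is to shrink the ambient open set to $V_{n_k}\setminus\overline{V_{n_k+1}}$ before applying the $\pi$-base, so that every later term, being contained in $V_{n_k+1}$, is trapped away from $W_k$; the rest of the argument (the neighbourhood-base bookkeeping and the "converges to $p$" verification) is routine.
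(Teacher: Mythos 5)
Your argument is correct and is essentially the paper's own proof: assume a non-isolated point $p$, use a decreasing local base and the clopen $\pi$-base to build a pairwise-disjoint sequence of nonempty clopen sets converging to $p$, conclude $p$ is countably-approachable, and contradict Theorem \ref{theorem-ext-sel-v26:3}. The only (harmless) difference is bookkeeping: the paper picks each clopen $S_k\subset V_{n_k}\setminus\{p\}$ and then chooses $n_{k+1}$ so that $V_{n_{k+1}}\cap S_k=\emptyset$, while you place the clopen sets in the ``annuli'' $V_{n_k}\setminus\overline{V_{n_k+1}}$ after a short auxiliary argument that these are nonempty infinitely often.
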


\begin{proof}
  We follow the proof of \cite[Corollary 4.5]{gutev:05a}. Briefly,
  assume that $X$ has a non-isolated point $p\in X$, and take a
  decreasing open local base $\{V_n:n<\omega\}$ at this point. Since
  $X$ has a clopen $\pi$-base, there exists a nonempty clopen set
  $S_0\subset V_0\setminus\{p\}$. Set $n_0=0$ and take $n_1>n_0$ such
  that $S_0\cap V_{n_1}=\emptyset$. Just like before, we can take a
  nonempty clopen set $S_1\subset V_{n_1}\setminus\{p\}$ and to extend
  this construction by induction. Thus, there exists a subsequence
  $\{V_{n_k}:k<\omega\}$ of $\{V_n:n<\omega\}$ and nonempty clopen
  sets $S_k\subset V_{n_k}\setminus\{p\}$ such that
  $S_k\cap V_{n_{k+1}}=\emptyset$ for every $k<\omega$. Accordingly,
  the sequence $\{S_k:k<\omega\}$ is convergent to $p$, so $p$ is a
  countably-approachable point. However, this is impossible because
  $X$ satisfies \eqref{eq:ext-sel-v26:2} and by Theorem
  \ref{theorem-ext-sel-v26:3}, the point $p\in X$ must be isolated.
\end{proof}

As mentioned before, it was shown in \cite[Theorem
1.5]{gutev-nogura:00b} that $X$ is totally disconnected whenever the
set $\left\{f(X) : f\in \sel[\mathscr{F}(X)]\right\}$ is dense in
$X$. According to Theorem \ref{theorem-ext-sel-v28:1}, this is also
valid for a space $X$ which has a continuous selection for
$\mathscr{F}(X)$ and a dense set of countably-approachable
points. Furthermore, each space which has a dense set of
countably-approachable points also has a clopen $\pi$-base.  We now
have the following relaxed version of \cite[Corollary 2.3]{gutev:05a}.

\begin{proposition}
  \label{proposition-ext-sel-v35:1}
  Let $X$ be a space which has a continuous weak selection and a
  clopen $\pi$-base. Then $X$ is totally disconnected.
\end{proposition}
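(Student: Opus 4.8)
The plan is to argue by contradiction. Suppose $X$ is not totally disconnected. Since $X$ has a continuous weak selection $\sigma$, Theorem \ref{theorem-ext-sel-v16:1} lets me pass from ``quasi-component'' to ``component'', so there is a connected component $C\subset X$ with $|C|\geq 2$. The goal is to manufacture, using only the clopen $\pi$-base, a nonempty clopen set $O\subset X$ all of whose points lie strictly $\leq_\sigma$-between two fixed points of $C$; this will be incompatible with $C$ being connected. Note that this keeps the whole argument at the level of a \emph{weak} selection, which is what distinguishes the statement from the cited results for $\sel[\mathscr{F}(X)]$.

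The first ingredient I would isolate is an \emph{order-convexity} fact: for any space with a continuous weak selection $\eta$, every connected subset $C$ is $\leq_\eta$-order-convex, i.e. $a,b\in C$ and $a<_\eta x<_\eta b$ force $x\in C$. This is short: continuity of $\eta$ makes the open $\leq_\eta$-intervals $(\gets,x)_{\leq_\eta}$ and $(x,\to)_{\leq_\eta}$ open in the space, while totality and antisymmetry of $\leq_\eta$ give a disjoint decomposition $X\setminus\{x\}=(\gets,x)_{\leq_\eta}\cup(x,\to)_{\leq_\eta}$. So if $x\notin C$, the traces of these two intervals on $C$ would be nonempty (they contain $a$ and $b$), disjoint, and relatively open, contradicting the connectedness of $C$.

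Next I would produce the open interval to work in. By Theorem \ref{theorem-ext-sel-v15:2}, $\leq_\sigma$ restricted to $C$ is a genuine linear order (and $C$ is weakly orderable by it). Fixing $a<_\sigma b$ in $C$, a parallel connectedness argument shows there is $c\in C$ with $a<_\sigma c<_\sigma b$: otherwise the relatively open sets $C\cap(\gets,b)_{\leq_\sigma}$ and $C\cap(a,\to)_{\leq_\sigma}$ would be disjoint (a common point would be such a $c$) and, thanks to the transitivity of $\leq_\sigma$ on $C$, would cover $C$, again disconnecting it. Hence $W:=(a,b)_{\leq_\sigma}=(a,\to)_{\leq_\sigma}\cap(\gets,b)_{\leq_\sigma}$ is a nonempty open subset of $X$.

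Finally I would invoke the clopen $\pi$-base to get a nonempty clopen $O\subset W$, pick a point $x\in O$, note $a<_\sigma x<_\sigma b$, and conclude $x\in C$ by the order-convexity step; thus $C\cap O\neq\emptyset$, and since $O$ is clopen and $C$ is connected, $C\subset O\subset (a,b)_{\leq_\sigma}$, whence the absurdity $a<_\sigma a$. I expect the only delicate point to be this density step --- securing $c$ strictly $\sigma$-between $a$ and $b$ --- where the linearity (in particular the transitivity) of $\leq_\sigma$ on $C$ coming from Theorem \ref{theorem-ext-sel-v15:2} is genuinely needed, since mere totality of $\leq_\sigma$ on $X$ would not suffice; everything else is routine.
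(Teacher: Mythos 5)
Your proof is correct and follows essentially the same route as the paper: reduce via Theorem \ref{theorem-ext-sel-v16:1} to a nontrivial component, use Theorem \ref{theorem-ext-sel-v15:2} together with the openness of the $\leq_\sigma$-intervals to exhibit a nonempty open subset of $X$ lying inside that component, and then let the clopen $\pi$-base supply a nonempty clopen set there, contradicting connectedness. The paper compresses your order-convexity and density steps into the brief assertion that the cut points of the component form an infinite open connected subset of $X$, so your write-up simply makes those details explicit.
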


\begin{proof}
  Since $X$ has a continuous weak selection, by Theorem
  \ref{theorem-ext-sel-v16:1}, it suffices to show that each component
  of $X$ is a singleton. Assume that $X$ has a nontrivial
  component. Then the cut points of this component form an infinite
  open connected subset $U\subset X$. Moreover, by condition, any
  nonempty open subset of $U$ contains a nonempty clopen set. Hence,
  $U$ contains a nonempty clopen proper subset. Since $U$ is
  connected, this is impossible.
\end{proof}

This proposition and Corollary \ref{corollary-ext-sel-v35:1} suggest
the following natural question.

\begin{question}
  \label{question-ext-sel-v35:3}
  Let $X$ be a first countable totally disconnected space which
  satisfies \eqref{eq:ext-sel-v26:2}. Then, is it true that
  $X$ is discrete?
\end{question}

Regarding this question, let us remark that the space $X$ in Example
\ref{example-ext-sel-v35:1} is not discrete, but it has a dense set of
isolated points and $\Sigma(X)=\mathscr{C}(X)$. Hence, we also have
the following complementary question.

\begin{question}
  \label{question-ext-sel-v21:4}
  Let $X$ be a totally disconnected space which satisfies
  \eqref{eq:ext-sel-v26:2}. Then, is it true that
  $\Sigma(X)=\mathscr{C}(X)$? 
\end{question}

In the presence of the hyperspace selection-extension property in
\eqref{eq:ext-sel-v26:3}, we have the following further result which
is complementary to Corollary \ref{corollary-ext-sel-v35:1}.

\begin{corollary}
  \label{corollary-ext-sel-v35:2}
  Let $X$ be a first countable space which has a clopen $\pi$-base. If
  $X$ satisfies \eqref{eq:ext-sel-v26:3}, then it is a finite set.
\end{corollary}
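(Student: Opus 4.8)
The plan is to combine two results already in hand: Corollary \ref{corollary-ext-sel-v35:1}, which forces discreteness of $X$, and Proposition \ref{proposition-ext-sel-v25:1}, which characterises finiteness of a discrete space through a $\Sigma$-to-$\mathscr{F}$ selection-extension property.

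First I would note that property \eqref{eq:ext-sel-v26:3} implies property \eqref{eq:ext-sel-v26:2} --- this is observed immediately before Proposition \ref{proposition-ext-sel-v22:1}. Thus $X$ is a first countable space with a clopen $\pi$-base satisfying \eqref{eq:ext-sel-v26:2}, so Corollary \ref{corollary-ext-sel-v35:1} applies and $X$ is a discrete space (with $\mathscr{C}(X)=\Sigma(X)$).

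Next I would translate \eqref{eq:ext-sel-v26:3} into the discrete setting. When $X$ is discrete, $\mathscr{F}(X)$ is the hyperspace of all nonempty subsets of $X$, and the Vietoris hyperspace $\Sigma(X)$ is itself discrete; hence every selection $\sigma:\Sigma(X)\to X$ is automatically continuous, so $\sel[\Sigma(X)]$ is the set of all selections for $\Sigma(X)$. Consequently the set-equality asserted in \eqref{eq:ext-sel-v26:3} says precisely that each selection for $\Sigma(X)$ can be extended to a continuous selection for $\mathscr{F}(X)$. Applying Proposition \ref{proposition-ext-sel-v25:1} with $Z=X$ then gives that $X$ is finite, which completes the argument.

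I do not anticipate a substantive obstacle: once discreteness is available, the rest is a matter of matching definitions. The only point requiring a line of care is the translation step --- confirming that the abstract equality in \eqref{eq:ext-sel-v26:3} coincides with the extension hypothesis of Proposition \ref{proposition-ext-sel-v25:1}, and noting that its nonemptiness clause is automatic for a discrete $X$, since a well-order on $X$ yields a continuous selection for $\mathscr{F}(X)$ by taking minima (as recalled earlier for discrete spaces).
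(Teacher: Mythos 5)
Your proposal is correct, and its first step coincides with the paper's: both invoke the implication \eqref{eq:ext-sel-v26:3}$\implies$\eqref{eq:ext-sel-v26:2} (Proposition \ref{proposition-ext-sel-v22:1}) and then Corollary \ref{corollary-ext-sel-v35:1} to conclude that $X$ is discrete. Where you diverge is the passage from ``discrete'' to ``finite''. The paper observes that a discrete space is strongly cwH, uses \eqref{eq:ext-sel-v26:3}$\implies$\eqref{eq:ext-sel-v26:1}, and applies Corollary \ref{corollary-ext-sel-v34:4} to get compactness (hence finiteness), a route that ultimately rests on Theorem \ref{theorem-ext-sel-v32:1} together with Theorem \ref{theorem-ext-sel-v37:1} and Proposition \ref{proposition-ext-sel-v34:2}. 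You instead note that for discrete $X$ the hyperspace $\Sigma(X)$ is $\tau_V$-discrete, so $\sel[\Sigma(X)]$ consists of all selections for $\Sigma(X)$ and the equality in \eqref{eq:ext-sel-v26:3} says exactly that every selection for $\Sigma(X)$ extends to a member of $\sel[\mathscr{F}(X)]$; Proposition \ref{proposition-ext-sel-v25:1} with $Z=X$ then yields finiteness directly. Your translation is sound (for discrete $X$ every subset is closed, so $\mathscr{F}(Z)=\mathscr{F}(X)$ when $Z=X$, and the nonemptiness clause is automatic via the min of a well-order). The trade-off: the paper's argument recycles its compactness criterion Corollary \ref{corollary-ext-sel-v34:4}, fitting the section's theme of deducing compactness from \eqref{eq:ext-sel-v26:1}, but imports heavier machinery (countable compactness, weak orderability of countably compact spaces); your argument is more elementary and self-contained, using only the convergent-sequence construction inside Proposition \ref{proposition-ext-sel-v25:1}, at the cost of exploiting the full strength of \eqref{eq:ext-sel-v26:3} rather than just \eqref{eq:ext-sel-v26:1}.
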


\begin{proof}
  By Corollary \ref{corollary-ext-sel-v35:1}, $X$ is discrete because
  it satisfies \eqref{eq:ext-sel-v26:2}. Hence, $X$ is strongly cwH
  and satisfies \eqref{eq:ext-sel-v26:1}, so the property follows from
  Corollary \ref{corollary-ext-sel-v34:4}.
\end{proof}

We also have the following similar result in the setting of totally
disconnected countably compact spaces.

\begin{theorem}
  \label{theorem-ext-sel-v40:1}
  If $X$ is a countably compact totally disconnected space which
  satisfies \eqref{eq:ext-sel-v26:3}, then it is a finite set. 
\end{theorem}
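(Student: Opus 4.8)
The plan is to first reduce the statement to the case of a \emph{compact orderable} space using the earlier machinery, and then obtain a contradiction from Theorem~\ref{theorem-ext-sel-v26:3} by producing a non-isolated countably-approachable point. Throughout I will use that \eqref{eq:ext-sel-v26:3} implies both \eqref{eq:ext-sel-v26:1} and \eqref{eq:ext-sel-v26:2}, as observed right after these properties were introduced; in particular $\sel[\mathscr{F}(X)]\neq\emptyset$.

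\textbf{The reduction.} First I would note that the restriction of any $f\in\sel[\mathscr{F}(X)]$ to $\mathscr{F}_2(X)\subset\mathscr{C}(X)\subset\mathscr{F}(X)$ is a continuous weak selection for $X$. Since $X$ is countably compact, Theorem~\ref{theorem-ext-sel-v37:1} gives that $X$ is weakly orderable, and then \eqref{eq:ext-sel-v26:1} together with Proposition~\ref{proposition-ext-sel-v34:2} yields that $X$ is compact. Being compact with a continuous weak selection, $X$ is orderable by Theorem~\ref{theorem-ext-sel-v47:1}; I fix a compatible linear order $\leq$. Finally, being compact and totally disconnected, $X$ is zero-dimensional (\cite[Theorem 6.2.9]{engelking:89}), so every nonempty open subset of $X$ contains a nonempty clopen set.

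\textbf{The main step.} The heart of the argument, and the step I expect to require the most care, is to show that an infinite such $X$ has a non-isolated countably-approachable point. Assuming $X$ infinite, it has a non-$\leq$-isolated point $p$, which (after possibly replacing $\leq$ by its reverse order) I may take to be a limit from the right, i.e.\ $(p,x)_{\leq}\neq\emptyset$ for every $x>p$. Choosing $x_0>p$ and then $x_{n+1}\in(p,x_n)_{\leq}$ for each $n<\omega$, I obtain a strictly $\leq$-decreasing sequence; its infimum $q=\inf_{\leq}\{x_n:n<\omega\}$ exists (compact orderable space), satisfies $q<x_n$ for all $n$, and $x_n\to q$ (a monotone sequence converges to its infimum in a compact orderable space). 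Using zero-dimensionality I would pick clopen sets $C_n$ with $x_n\in C_n\subseteq(x_{n+1},x_{n-1})_{\leq}$ for $n\geq1$ and $x_0\in C_0\subseteq(x_1,\to)_{\leq}$; then $C_m\cap C_n=\emptyset$ whenever $|m-n|\geq2$, so $\{C_{2k}:k<\omega\}$ is a pairwise-disjoint sequence of nonempty clopen sets, none containing $q$ (as $C_{2k}\subseteq(x_{2k+1},\to)_{\leq}$ and $q<x_{2k+1}$), and it is convergent to $q$ because $C_{2k}\subseteq(q,x_{2k-1})_{\leq}$ while $x_{2k-1}\to q$. The delicate point is exactly this bookkeeping: keeping the clopen pieces pairwise disjoint while still forcing convergence to $q$, a point which need not have countable character — this is why the order structure, and not merely total disconnectedness, is exploited.

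\textbf{Conclusion.} Once such $q$ is in hand, it is non-isolated and countably-approachable in the sense recalled before Theorem~\ref{theorem-ext-sel-v28:1}. But $X$ satisfies \eqref{eq:ext-sel-v26:2}, so Theorem~\ref{theorem-ext-sel-v26:3} forces every countably-approachable point of $X$ to be isolated — a contradiction. Hence $X$ must be finite.
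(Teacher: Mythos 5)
Your proof is correct, and its second half is genuinely different from the paper's. The reduction coincides with the paper's opening move: compactness of $X$ follows from Theorem \ref{theorem-ext-sel-v37:1} together with Proposition \ref{proposition-ext-sel-v34:2}, using that \eqref{eq:ext-sel-v26:3} implies \eqref{eq:ext-sel-v26:1} and \eqref{eq:ext-sel-v26:2}, and the final contradiction is likewise drawn from Theorem \ref{theorem-ext-sel-v26:3}. The divergence is in how the non-isolated countably-approachable point is produced. The paper never fixes a compatible order at this stage: it uses Theorem \ref{theorem-ext-sel-v26:3} only to exclude non-trivial convergent sequences (quoting an external result that the limit of such a sequence is countably-approachable), and then still has to derive finiteness, which it does by showing that every $f\in \sel[\mathscr{F}(X)]$ is a zero-selection and invoking the Fujii--Nogura characterisation of compact ordinal spaces. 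You instead invoke van Mill--Wattel (Theorem \ref{theorem-ext-sel-v47:1}) to fix a compatible linear order and build the witness by hand: a strictly $\leq$-decreasing sequence converging to its infimum $q$, with clopen interpolants $C_n\subset (x_{n+1},x_{n-1})_{\leq}$ whose even-indexed members are pairwise disjoint, miss $q$, and converge to $q$ because $C_{2k}\subset (q,x_{2k-1})_{\leq}$ and $x_n\to q$. This bookkeeping checks out, and the background facts you use (existence of the infimum and monotone convergence in a compact orderable space) are standard, cf.\ Theorem \ref{theorem-ext-sel-v33:1}; in effect you re-prove, in the orderable setting, the cited fact that limit points of non-trivial convergent sequences are countably-approachable, while simultaneously producing the convergent sequence that the paper only obtains at the very end from the ordinal-space structure. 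What your route buys is self-containedness --- it dispenses with the zero-selection argument and the ordinal-space characterisation --- at the price of leaning on the deep van Mill--Wattel theorem; the paper's longer route yields, along the way, the extra structural information that such an $X$ has no non-trivial convergent sequences, that every selection for $\mathscr{F}(X)$ is a zero-selection, and that $X$ is an ordinal space.
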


\begin{proof}
  Since $X$ has a continuous weak selection and satisfies
  \eqref{eq:ext-sel-v26:1}, by Theorem \ref{theorem-ext-sel-v37:1} and
  Proposition \ref{proposition-ext-sel-v34:2}, $X$ is
  compact. Accordingly, $X$ is also zero-dimensional because it is
  totally disconnected. Suppose that $X$ contains a non-trivial
  sequence convergent to some point $p\in X$. Then as shown in
  \cite[Proposition 6.3]{MR3122363}, this point $p\in X$ must be
  countably-approachable. However, $p$ is a non-isolated point and by
  Theorem \ref{theorem-ext-sel-v26:3} this is impossible because $X$
  satisfies \eqref{eq:ext-sel-v26:2}. Thus, $X$ has no non-trivial
  convergent sequences which implies that each selection
  $f\in \sel[\mathscr{F}(X)]$ is a zero-selection for
  $\mathscr{F}(X)$. Indeed, take $f\in \sel[\mathscr{F}(X)]$ and
  assume that $f(T)$ is a non-isolated point of $T$ for some
  $T\in \mathscr{F}(X)$. Then by \cite[Proposition
  4.4]{garcia-ferreira-gutev-nogura-sanchis-tomita:99}, $T$ contains a
  non-trivial convergent sequence, but this is impossible. So, $f$ is
  a zero-selection and by \cite[Theorem 1]{fujii-nogura:99}, $X$ is an
  ordinal space. Since $X$ has no non-trivial convergent
  sequences, it must be a finite set being an ordinal space.  
\end{proof}

A scattered space is a special case of a space with a clopen
$\pi$-base. In fact, such a space has a clopen $\pi$-base of
singleton. According to the proof of Theorem
\ref{theorem-ext-sel-v40:1}, the space $X$ in this theorem is
scattered. In fact, as shown in that proof, each selection
$f\in \sel[\mathscr{F}(X)]$ is a zero-selection. This is also valid
for Corollaries \ref{corollary-ext-sel-v35:1} and
\ref{corollary-ext-sel-v35:2}, which were based on the fact that the
space has no non-trivial convergent sequences. Hence, it is
interesting to consider the following question.  

\begin{question}
  \label{question-ext-sel-v36:1}
  Let $X$ be a scattered space which satisfies
  \eqref{eq:ext-sel-v26:3}. Then, is it true that $X$ is a finite set
  (equivalently, compact)?
\end{question}

Another common element in Theorem \ref{theorem-ext-sel-v40:1} and
Corollaries \ref{corollary-ext-sel-v35:1} and
\ref{corollary-ext-sel-v35:2} is that each selection $f\in
\sel[\mathscr{F}(X)]$ is a zero-selection. This was explicitly shown
in Theorem \ref{theorem-ext-sel-v40:1}. The same argument is also
valid for Corollaries \ref{corollary-ext-sel-v35:1} and
\ref{corollary-ext-sel-v35:2} because the space in these consequences
has no non-trivial convergent sequences. Accordingly, we have the
following special case of Question \ref{question-ext-sel-v36:1}.

\begin{question}
  \label{question-ext-sel-v41:1}
  Let $X$ be space which satisfies \eqref{eq:ext-sel-v26:3}. If each
  selection $f\in \sel[\mathscr{F}(X)]$ is a zero-selection, then is
  it true that $X$ is a finite set (equivalently, compact)?
\end{question}

Regarding the possible difference between Questions
\ref{question-ext-sel-v36:1} and \ref{question-ext-sel-v41:1}, we also
have the following complementary question.

\begin{question}
  \label{question-ext-sel-v41:2}
  Let $X$ be a scattered space which satisfies
  \eqref{eq:ext-sel-v26:3}. Then, is it true that each selection
  $f\in \sel[\mathscr{F}(X)]$ is a zero-selection?
\end{question}

\section{Selections and Complete Graphs}
\label{sec:weak-extens-probl}

According to Theorem \ref{theorem-hsp-ver-5:7}, each weakly orderable
space $X$ has a continuous selection for $\mathscr{C}(X)$. In
particular, each weakly orderable space $X$ has a continuous selection
for $\mathscr{F}_n(X)$, for every $n\geq 2$. In view of the weak
orderability problem (Question \ref{question-ext-sel-v19:3}), this led
to another natural question posed in
\cite{gutev-nogura:02a,gutev-nogura:06b}.

\begin{question}[\cite{gutev-nogura:02a,gutev-nogura:06b}]
  \label{question-rpgt-29:5}
  Does there exist a space $X$ which has a continuous weak selection,
  but $\mathscr{F}_n(X)$ has no continuous selection for some $n > 2$?
\end{question}  

In contrast to Question \ref{question-ext-sel-v19:3}, the above
question is still open. In fact, it was shown by Hru{\v s}{\'a}k and
Mart{\'\i}nez-Ruiz \cite{hrusak-martinez:09} that the space in their
counterexample to Question \ref{question-ext-sel-v19:3} has a
continuous selection for $\mathscr{F}_n(X)$ for every $n\geq
2$.\medskip
 
The problem stated in Question \ref{question-rpgt-29:5} had an
interesting development. The first result obtained in this regard was
in \cite[Corollary 4.1]{gutev-nogura-garcia:04a}.

\begin{theorem}[\cite{gutev-nogura-garcia:04a}]
  \label{theorem-Selections-GRF:1}
  Let $X$ be a space which has a continuous selection for
  $[X]^3$. Then each continuous weak selection
  $g:\mathscr{F}_2(X)\to X$ can be extended to a continuous selection
  $f:\mathscr{F}_3(X)\to X$.
\end{theorem}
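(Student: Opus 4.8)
The plan is to combine the given data economically: the continuous weak selection $g$ will govern the ``order type'' of a three-point set, while the hypothesised continuous selection for $[X]^3$, which I call $\psi$, will be invoked only for those triples on which $g$ provides no preferred element. Concretely, put $f\uhr\mathscr{F}_2(X)=g$ (this already includes $f(\{p\})=p$), and for a triple $T\in[X]^3$ distinguish the two possible shapes of the total antisymmetric relation $\leq_g$ on $T$: if $\leq_g$ is a linear order on $T$, set $f(T)=\min_{\leq_g}T$; if $\leq_g$ is cyclic on $T$, i.e.\ $T=\{x,y,z\}$ with $x<_g y<_g z<_g x$, set $f(T)=\psi(T)$. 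Since a total antisymmetric relation on a three-element set is either a linear order or a $3$-cycle, this does define a selection $f:\mathscr{F}_3(X)\to X$, and the whole content of the theorem is the continuity of $f$.

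The first step is continuity of $f$ on $[X]^3$, which is $\tau_V$-open in $\mathscr{F}_3(X)$: a set of three distinct points has a basic neighbourhood $\langle U_x,U_y,U_z\rangle$ with the $U$'s pairwise disjoint, and a member of $\mathscr{F}_3(X)$ meeting all three disjoint $U$'s has exactly three points. The crucial observation is that the set $\mathcal{T}$ of triples linearly ordered by $\leq_g$ and the set $\mathcal{C}$ of triples cyclically ordered by $\leq_g$ are \emph{both} $\tau_V$-open in $[X]^3$. This is immediate from the criterion recalled above --- $g$ is continuous precisely when $x<_g y$ yields open $U\ni x$, $V\ni y$ with $U<_g V$ --- for, given a triple realising either shape, intersecting the three resulting pairs of open sets produces pairwise disjoint neighbourhoods $U_x,U_y,U_z$ of its three points such that every choice of one point from each $U_{\bullet}$ realises the \emph{same} shape, and, in the ordered case, has its $\leq_g$-least element in a prescribed $U_{\bullet}$. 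Hence $\mathcal{T}$ and $\mathcal{C}$ are clopen in $[X]^3$ and partition it; on $\mathcal{C}$ we have $f=\psi\uhr\mathcal{C}$, which is continuous, and on $\mathcal{T}$ the same construction shows $T\mapsto\min_{\leq_g}T$ to be continuous. So $f$ is continuous at every point of $[X]^3$.

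The second step is continuity of $f$ at a point $S\in\mathscr{F}_2(X)$; since $f\uhr\mathscr{F}_2(X)=g$ is already continuous, what must be checked is the behaviour of $f(T)$ as $T\in[X]^3$ degenerates onto $S$. For a singleton $S=\{p\}$ there is nothing to do, because $f(T)\in T$ and $T\to\{p\}$ forces $T$ into every neighbourhood of $p$. Let $S=\{p,q\}$ with $p\neq q$ and, say, $g(S)=p$, i.e.\ $p<_g q$; choose disjoint open $V_p\ni p$ and $V_q\ni q$ with $V_p<_g V_q$. It suffices to treat $T\in\langle V_p,V_q\rangle$, i.e.\ a two- or three-point subset of $V_p\cup V_q$ meeting both $V_p$ and $V_q$, and to show $f(T)\in V_p$. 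For a two-point set this is just $f=g$ and $V_p<_g V_q$. For a three-point $T$, either exactly one point of $T$ lies in $V_p$ --- and it is then $\leq_g$-below the other two, hence has no $\leq_g$-predecessor in $T$ --- or exactly one point of $T$ lies in $V_q$ --- and it is then $\leq_g$-above the other two, hence has two $\leq_g$-predecessors in $T$. In a $3$-cycle every element has exactly one predecessor, so in either case $\leq_g$ is a linear order on $T$ and $\min_{\leq_g}T\in V_p$. Thus $f(T)\in V_p$ throughout, and $f$ is continuous at $S$. Together with the first step this gives the theorem.

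The step I expect to be the real obstacle is controlling the seam between the two clauses defining $f$: a priori, perturbing a $\mathcal{T}$-triple (where $f$ is the harmless $g$-minimum) into a $\mathcal{C}$-triple (where $f$ is the uncontrolled value $\psi(T)$) could destroy continuity, and $\psi$ might also misbehave as a triple collapses onto a two-point set. The resolution --- the technical heart of the argument --- is the openness observation of the second paragraph: $\mathcal{T}$ and $\mathcal{C}$ are clopen in $[X]^3$, so no $\mathcal{T}$-to-$\mathcal{C}$ perturbation ever occurs and $\psi$ is never consulted near a transitive triple; and, as the case analysis of the third paragraph records, a cyclic triple can never be $\tau_V$-close to a two-point set, so $\psi$ plays no role near the boundary $\mathscr{F}_2(X)$ either. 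What remains is only routine checking.
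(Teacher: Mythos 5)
Your construction is correct: the paper itself states Theorem \ref{theorem-Selections-GRF:1} only as a quoted result from \cite{gutev-nogura-garcia:04a} and gives no proof, but your argument is sound and is essentially the natural (and, in substance, the original) one --- extend $g$ by the $\leq_g$-minimum on triples where $\leq_g$ is transitive and fall back on the given selection for $[X]^3$ only on the $\leq_g$-cyclic triples. The two decisive points are exactly the ones you isolate and verify via the continuity criterion for weak selections recalled in Section 2: the transitive and cyclic triples form complementary $\tau_V$-open (hence clopen) subsets of $[X]^3$, which is itself open in $\mathscr{F}_3(X)$; and no cyclic triple lies in a neighbourhood $\langle V_p,V_q\rangle$ with $V_p<_g V_q$ of a two-point set, so near $\mathscr{F}_2(X)$ the extension is governed by $g$ alone (only the routine shrinking of $V_p$ into a prescribed neighbourhood of $g(\{p,q\})$ is left implicit). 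I see no gap.
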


Subsequently, another special case was resolved in \cite[Theorem
4.1]{gutev-nogura:08a}, where the following further result was
obtained.

\begin{theorem}[\cite{gutev-nogura:08a}]
  \label{theorem-Selections-GRF:2}
  Let $X$ be a space which has a continuous selection for
  $\mathscr{F}_3(X)$. Then there exists a continuous selection for
  $\mathscr{F}_4 (X)$.
\end{theorem}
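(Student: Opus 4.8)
The plan is to keep $g=f$ on $\mathscr{F}_3(X)$ and to define $g$ on $[X]^4$ by a recipe of the form $g(S)=f\big(S\setminus\{\delta(S)\}\big)$, where $\delta(S)\in S$ is a point singled out by the combinatorial type of $S$. Since $\mathscr{F}_3(X)$ is $\tau_V$-closed in $\mathscr{F}_4(X)$ and $[X]^4$ is $\tau_V$-open (a basic neighbourhood $\langle U_1,U_2,U_3,U_4\rangle$ of a $4$-point set built from pairwise disjoint $U_k$ lies inside $[X]^4$), such a $g$ is automatically a selection, and it is continuous as soon as \emph{(i)} $g\uhr[X]^4$ is continuous and \emph{(ii)} $g(S_i)\to f(T)$ whenever a net $S_i\in[X]^4$ is $\tau_V$-convergent to some $T\in\mathscr{F}_3(X)$. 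All of the work is in choosing $\delta$.

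First I would record the relevant \emph{local rigidity}. Writing $\sigma=f\uhr\mathscr{F}_2(X)$, the continuity criterion for weak selections \cite[Theorem 3.1]{gutev-nogura:01a} gives, for each $S=\{x_1,x_2,x_3,x_4\}\in[X]^4$, pairwise disjoint open $U_k\ni x_k$ such that $\leq_\sigma$ between corresponding points of any $S'=\{x_1',\dots,x_4'\}\in\langle U_1,U_2,U_3,U_4\rangle$ agrees with $\leq_\sigma$ on $S$; shrinking further and using that locally around a $3$-point set the ``slot'' selected by $f$ does not change, one may also assume $f(S'\setminus\{x_k'\})$ occupies the same slot as $f(S\setminus\{x_k\})$ for each $k$. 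Thus on such a neighbourhood the datum
\[
\Delta(S)=\Big(\,{\leq_\sigma}\uhr S,\ \big(f(S\setminus\{x_k\})\big)_{k=1}^{4}\,\Big)
\]
is constant, so any $\delta(S)$ depending on $\Delta(S)$ alone makes $g\uhr[X]^4$ continuous, settling (i).

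The limit analysis for (ii) runs as follows. If $S_i\to T$ with $|T|=3$, then after passing to a subnet the four slots of $S_i$ split $2+1+1$ over the points of $T$; by continuity of $\sigma$ the two slots of the doubled cluster are $\leq_\sigma$-twins (identical $\leq_\sigma$-behaviour towards the other two slots), and a short computation shows $g(S_i)=f(S_i\setminus\{\delta(S_i)\})\to f(T)$ provided $\delta(S_i)$ lies in that $2$-cluster, and also in certain coincidental cases where $\delta(S_i)$ is a singleton slot but the relevant value $\sigma(\{\,\cdot\,,\cdot\,\})$ equals $f(T)$. The cases $|T|=2$ (patterns $3+1$ or $2+2$) and $|T|=1$ are analogous: for $3+1$ one needs $\delta$ in the $3$-cluster, while for $2+2$ and for $|T|=1$ any choice works. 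So the design requirement is: whenever $\Delta(S)$ is \emph{compatible with} such a degeneration, $\delta(S)$ must land in a maximal cluster (up to the coincidental exceptions). The $\leq_\sigma$-twin structure of $S$ locates the candidate clusters, and the secondary coordinate $\big(f(S\setminus\{x_k\})\big)_k$ is used to break ties among twin blocks, exploiting that for a genuinely degenerating pair $\{x_i,x_j\}$ the triples $S\setminus\{x_i\}$ and $S\setminus\{x_j\}$ carry ``corresponding'' $f$-values, whereas $S\setminus\{x_k\}$ with $k\notin\{i,j\}$ carries the $\sigma$-value of a degenerating pair.

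The hard part, and the bulk of the proof, is to exhibit one explicit assignment $\Delta\mapsto\delta$ and to verify this requirement uniformly. Concretely one enumerates the $4$-tournaments by score sequence — $(3,2,1,0)$, $(3,1,1,1)$, $(2,2,2,0)$, $(2,2,1,1)$ — together with the patterns of sub-triple values that can occur (each $f(S\setminus\{x_k\})\ne x_k$; when $k\mapsto f(S\setminus\{x_k\})$ is a bijection it is a $4$-cycle or a product of two transpositions), and checks that the constraints imposed by the \emph{different} degenerations a fixed $\Delta$ could belong to never contradict one another. They do not: in every potential conflict — typically when $\leq_\sigma\uhr S$ is transitive and hence has both a top and a bottom twin pair — the incompatible cluster demands are reconciled precisely because a $\sigma$-value agrees with $f(T)$, which is exactly what makes some singleton slot admissible; one has to push this bookkeeping through the whole list, which is where I expect the real obstacle to lie. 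Once a $\delta$ passing all these checks is fixed, $g=f$ on $\mathscr{F}_3(X)$ together with $g(S)=f\big(S\setminus\{\delta(S)\}\big)$ on $[X]^4$ is the desired continuous selection for $\mathscr{F}_4(X)$.
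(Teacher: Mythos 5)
Your proposal sets up a reasonable framework---extend $f$ over $[X]^4$ by $g(S)=f\bigl(S\setminus\{\delta(S)\}\bigr)$ with $\delta$ depending only on the locally constant datum $\Delta(S)$, and reduce continuity to compatibility with the $2{+}1{+}1$, $3{+}1$, $2{+}2$ and one-point degenerations---but it stops exactly where the theorem begins. The existence of an assignment $\Delta\mapsto\delta$ meeting all these boundary demands simultaneously is asserted (``they do not contradict one another'') and then explicitly deferred (``where I expect the real obstacle to lie''); no rule is written down and no case is checked. The difficulty is genuine: when ${\leq_\sigma}\uhr S$ is transitive there are three twin pairs (top, middle, bottom), each of which can be the doubled cluster of a $2{+}1{+}1$ degeneration, while $\delta(S)$ can lie in only one of them; making the other degenerations harmless requires \emph{proving} that the needed coincidences $\sigma(\{\cdot,\cdot\})=f(T)$ are forced by continuity of $f$ on $\mathscr{F}_3(X)$ and are detectable from the secondary coordinate $\bigl(f(S\setminus\{x_k\})\bigr)_k$ through one uniform rule, and even the feasibility of a purely $\Delta$-based choice is not established. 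As it stands this is a plan for a proof, not a proof: the combinatorial core is missing.

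For comparison, the argument of \cite{gutev-nogura:08a} that this paper summarises does not hand-tune such a $\delta$. One passes to the selection flow $\xi_S$ on $K_4=S$ induced by $\sigma=f\uhr\mathscr{F}_2(X)$ and invokes Proposition \ref{proposition-Selections-GRF-v2:1}: either some vertex has $|\varphi_{\xi_S}|=3$ or every vertex has $|\varphi_{\xi_S}|=1$, so (using Proposition \ref{pr:flow-general}) $A_S=\{a\in S:\varphi_{\xi_S}(a)>0\}$ is a nonempty proper subset of $S$ determined by ${\leq_\sigma}\uhr S$ alone and constant on $\sigma$-decisive neighbourhoods; hence $S\mapsto f(A_S)$ is a continuous selection for $[X]^4$. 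The remaining step---passing from continuous selections on $\mathscr{F}_3(X)$ and on $[X]^4$ to a single one on $\mathscr{F}_4(X)$---is precisely the reduction to Theorem \ref{theorem-Selections-GRF:3} described in Section \ref{sec:weak-extens-probl}; that gluing is a nontrivial theorem in its own right, and it is the same boundary-compatibility problem that your sketch tries to settle by inspection but leaves unverified.
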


Theorems \ref{theorem-Selections-GRF:1} and
\ref{theorem-Selections-GRF:2} motivated the following two questions
in \cite[Questions 384 and 385]{gutev-nogura:06b}, see also
\cite[Question 1]{gutev-nogura-garcia:04a}.

\begin{question}[\cite{gutev-nogura-garcia:04a,gutev-nogura:06b}]
  \label{question-Selections-GRF:1}
  Let $X$ be a space such that for some $n\geq 3$, both
  $\mathscr{F}_n(X)$ and $[X]^{n+1}$ have continuous selections. Then
  is it true that $\mathscr{F}_{n+1} (X)$ also has a continuous
  selection?
\end{question}

\begin{question}[\cite{gutev-nogura:06b}]
  \label{question-ext-sel-v41:3}
  Let $X$ be a space which has a continuous selection for
  $\mathscr{F}_{2n+1}(X)$ for some $n\geq 2$. Then, is it true that
  $\mathscr{F}_{2n+2} (X)$ also has a continuous selection?
\end{question}

These questions were resolved in the affirmative in
\cite[Theorems 1.1 and 5.1]{gutev:08a}.

\begin{theorem}[\cite{gutev:08a}]
  \label{theorem-Selections-GRF:3}
    Let $n\geq 2$ and $X$ be a space such that both
  $\mathscr{F}_n(X)$ and $[X]^{n+1}$ have continuous
  selections. Then $\mathscr{F}_{n+1}(X)$ also has a continuous
  selection.  
\end{theorem}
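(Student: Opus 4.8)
The plan is to reduce the problem to extending $f$ over $[X]^{n+1}$. As sets, $\mathscr{F}_{n+1}(X)=\mathscr{F}_n(X)\cup[X]^{n+1}$, where $\mathscr{F}_n(X)$ is $\tau_V$-closed and $[X]^{n+1}$ is $\tau_V$-open. Fixing continuous selections $f\in\sel[\mathscr{F}_n(X)]$ and $h\in\sel[[X]^{n+1}]$, the task becomes to produce a selection $g_0\colon[X]^{n+1}\to X$ so that $g=f\cup g_0$ is $\tau_V$-continuous; since $[X]^{n+1}$ is open, this amounts to $g_0$ being continuous on $[X]^{n+1}$ together with the \emph{gluing condition}: whenever $S_i\to T$ in $\tau_V$ with $S_i\in[X]^{n+1}$ and $T\in\mathscr{F}_n(X)$, one has $g_0(S_i)\to f(T)$. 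First I would record the weak selection $\sigma=f\uhr\mathscr{F}_2(X)$ and recall from \cite[Theorem 3.1]{gutev-nogura:01a} that $\{(x,y):x<_\sigma y\}$ is open in $X^2$; this makes the tournament $\leq_\sigma\uhr S$, and more generally the value $f(S')$ on each $k$-subset $S'$ of a finite set $S$ with $k\le n$, locally constant under small non-collapsing $\tau_V$-perturbations of $S$.

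The next step is a careful analysis of $\tau_V$-convergence into $\mathscr{F}_n(X)$. If $S_i\to T$ with $|S_i|=n+1$ and $T=\{t_1,\dots,t_k\}$, $k\le n$, then after passing to a subsequence $S_i=\bigsqcup_{l=1}^k A_i^l$, where every point of $A_i^l$ converges to $t_l$, $|A_i^l|=m_l\ge1$ and $\sum_l m_l=n+1$; since $k\le n$, some cluster has $m_l\ge2$. The key observation is that an $n$-subset $S_i'\subset S_i$ converges to $T$ precisely when $S_i'$ meets every $A_i^l$, and in that case $f(S_i')\to f(T)$. Hence the "correct" value of $g_0(S_i)$ must lie, for large $i$, in the cluster over $f(T)$ and must tend to $f(T)$, and the real difficulty is to read this off from the internal combinatorics of $S_i$ alone.

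The heart of the proof is then to define $g_0$ on $[X]^{n+1}$ through a $\tau_V$-clopen dichotomy: a clopen set $\mathscr{Z}\subseteq[X]^{n+1}$ of "essentially circular" configurations, on which $g_0$ is produced from $h$, together with its complement, on which $g_0$ is an $f$-driven choice obtained by iterating $f$ on $f$-derived subsets of $S$. One then checks that the partition is $\tau_V$-clopen, that $g_0$ is continuous on each piece, and — using the convergence analysis — that every $S_i\to T\in\mathscr{F}_n(X)$ eventually falls into the $f$-driven piece with $g_0(S_i)\to f(T)$, which yields the gluing condition. I expect \emph{this} to be the main obstacle, and the naive dichotomies do not survive it: declaring $S$ "circular" when it has no $f$-source (no $x$ with $f(S\setminus\{y\})=x$ for every $y\neq x$), or when the $n+1$ values $f(S\setminus\{x\})$ are pairwise distinct, and using $h$ on the circular part, does produce a clopen partition with continuity on each piece, but it violates the gluing condition — a circular $(n+1)$-set can $\tau_V$-converge to a set $T$ with $|T|\ge2$ on which $f$ makes a definite choice. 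Already for $n=3$ one can arrange a $4$-point set $S_i$ clustering over a $3$-point set $T$ so that $x\mapsto f(S_i\setminus\{x\})$ is a fixed-point-free permutation of $S_i$ (two disjoint transpositions, one over each multiple cluster), whence $S_i$ lies in the "circular" part while $h(S_i)$ carries no information about $f(T)$. The correct construction must therefore first peel off the $f$-determined part of each $S\in[X]^{n+1}$ by an iterated application of $f$, reduce to a genuinely undetermined sub-configuration, and invoke $h$ only there; and this peeling must be simultaneously $\tau_V$-continuous and compatible with the $\mathscr{F}_n(X)$ boundary. Carrying this out is precisely the content of \cite[Theorems 1.1 and 5.1]{gutev:08a}, while the remaining bookkeeping (clopenness, piecewise continuity, and the gluing verification via the cluster analysis) is routine around it.
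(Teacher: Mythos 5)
You have set the problem up correctly --- the decomposition of $\mathscr{F}_{n+1}(X)$ into the $\tau_V$-closed part $\mathscr{F}_n(X)$ and the $\tau_V$-open part $[X]^{n+1}$, the reduction to a gluing condition along that boundary, and the cluster analysis of $\tau_V$-convergent sequences are all sound --- but what you have written is a framing of the problem, not a proof. The entire mathematical content of Theorem \ref{theorem-Selections-GRF:3} is the construction of a selection $g_0$ on $[X]^{n+1}$ that is simultaneously continuous there and compatible with $f$ along $\mathscr{F}_n(X)$, and at exactly that point you stop: you show that two naive clopen dichotomies violate the gluing condition, assert that the correct construction must ``peel off the $f$-determined part of each $S$ by an iterated application of $f$'', and then declare that carrying this out ``is precisely the content of \cite[Theorems 1.1 and 5.1]{gutev:08a}''. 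Since \cite[Theorem 1.1]{gutev:08a} \emph{is} the statement being proved, this appeal is circular; no definition of $g_0$ is ever written down, and consequently none of the three required verifications (clopenness of the dichotomy in $[X]^{n+1}$, continuity on each piece, and the boundary gluing) is actually performed for any concrete construction. The present survey offers no proof to fall back on either: it cites \cite{gutev:08a} and, in the discussion around Question \ref{question-Selections-GRF-v2:1}, explains that --- unlike Theorems \ref{theorem-Selections-GRF:2} and \ref{theorem-Selections-GRF-v2:1}, which reduce to a partition problem on complete graphs --- here the whole difficulty is ``to modify the given selections on $\mathscr{F}_{n}(X)$ and $[X]^{n+1}$ by transforming them into a continuous selection for $\mathscr{F}_{n+1}(X)$'', which is precisely the step you leave blank. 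The actual argument in \cite{gutev:08a} runs on the technique of $\sigma$-decisive basic neighbourhoods $\langle\mathscr{V}\rangle$ (recalled in Example \ref{ex:selection-flow}), used both to define the merged selection and to test its continuity at sets of cardinality at most $n$; neither that machinery nor any substitute for it appears in your proposal.

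A secondary point: your $n=3$ illustration of why the ``pairwise distinct values'' dichotomy fails is garbled as stated. A $4$-point set clustering over a $3$-point set has cluster sizes $2+1+1$, so there is only one multiple cluster and the parenthetical ``two disjoint transpositions, one over each multiple cluster'' does not describe it; and over a $2$-point set (clusters $2+2$) continuity of $f$ forces all four values $f(S_i\setminus\{x\})$ into the single cluster over $f(T)$ for large $i$, so the two-transposition pattern cannot persist along such a sequence. This does not affect the main verdict, which is that the central construction is missing, but it should be repaired if you intend to keep the counterexample discussion.
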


\begin{theorem}[\cite{gutev:08a}]
  \label{theorem-Selections-GRF-v2:1}
  Let $n\geq 1$ and $X$ be a space which has a continuous selection
  for $\mathscr{F}_{2n+1}(X)$. Then $\mathscr{F}_{2n+2}(X)$ also has a
  continuous selection.
\end{theorem}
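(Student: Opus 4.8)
The plan is to reduce the problem to the construction of a continuous selection for $[X]^{2n+2}$, and then to build that selection out of the given one on $\mathscr{F}_{2n+1}(X)$ by a maximal-out-degree construction on the tournaments that the induced weak selection puts on finite sets — a construction that succeeds precisely because $2n+2$ is even. First I would invoke Theorem~\ref{theorem-Selections-GRF:3} with its ``$n$'' taken to be $2n+1$ (permissible since $n\geq 1$ gives $2n+1\geq 2$): once $[X]^{(2n+1)+1}=[X]^{2n+2}$ is known to carry a continuous selection, combining it with the hypothesised continuous selection for $\mathscr{F}_{2n+1}(X)$ yields one for $\mathscr{F}_{2n+2}(X)$. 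This detour through Theorem~\ref{theorem-Selections-GRF:3} cannot be avoided by a cruder piecing argument, since $[X]^{2n+2}$ is only $\tau_V$-open — and in general not $\tau_V$-closed — in $\mathscr{F}_{2n+2}(X)$; the behaviour at limits of $(2n+2)$-point sets with strictly fewer points is exactly what that theorem supplies.

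So fix a continuous selection $f$ for $\mathscr{F}_{2n+1}(X)$ and let $\sigma=f\uhr\mathscr{F}_2(X)$ be the associated continuous weak selection, with relation $\leq_\sigma$. For $S\in[X]^{2n+2}$, the restriction of $\leq_\sigma$ to $S$ is a tournament; writing $d_S(x)=\bigl|\{y\in S:x<_\sigma y\}\bigr|$ for the out-degree, I would set $A(S)=\{x\in S: d_S(x)=\max_{y\in S}d_S(y)\}$. The one combinatorial input is that a \emph{regular} tournament (one in which all out-degrees coincide) can occur only on an odd number of vertices, since $\sum_{x\in S}d_S(x)=\binom{|S|}{2}$ forces the common value to be $(|S|-1)/2$. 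As $|S|=2n+2$ is even, $A(S)$ is a nonempty \emph{proper} subset of $S$; hence $1\leq|A(S)|\leq 2n+1$ and $A(S)\in\mathscr{F}_{2n+1}(X)$, so $f(A(S))$ is defined and $f(A(S))\in A(S)\subseteq S$.

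It then remains to verify that $S\mapsto A(S)$ is Vietoris continuous, so that $g(S):=f(A(S))$ is a continuous selection for $[X]^{2n+2}$. This is the routine part: given $S=\{x_1,\dots,x_{2n+2}\}$, continuity of $\sigma$ (in the form of \cite[Theorem~3.1]{gutev-nogura:01a}) lets me pick, using only the finitely many pairwise comparisons on $S$, pairwise-disjoint open sets $W_i\ni x_i$ so small that any $S'=\{x_1',\dots,x_{2n+2}'\}$ with $x_i'\in W_i$ has $x_i'<_\sigma x_j'$ iff $x_i<_\sigma x_j$; thus the tournament on such an $S'$ is isomorphic to that on $S$ via $x_i'\leftrightarrow x_i$, the out-degrees are locally constant, and $A(S')=S'\cap\bigcup_{i\in I}W_i$, where $I=\{i:d_S(x_i)=\max_j d_S(x_j)\}$ is the same index set for all nearby $S'$. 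Hence on the neighbourhood $\langle W_1,\dots,W_{2n+2}\rangle$ of $S$ one has $g(S')=f\bigl(S'\cap\bigcup_{i\in I}W_i\bigr)$, which is continuous by continuity of $f$ — the same bookkeeping that appears in the proofs of Theorems~\ref{theorem-ext-sel-v4:1} and~\ref{theorem-ext-sel-v32:1}. Applying Theorem~\ref{theorem-Selections-GRF:3} as in the first paragraph then finishes the proof. The substantive point is the parity observation — precisely the feature that separates the step $2n+1\to 2n+2$ from the open problem $2n\to 2n+1$ — together with the imported Theorem~\ref{theorem-Selections-GRF:3}; I expect the only genuinely technical obstacle to be the careful choice of the boxes $W_i$ and the resulting continuity verification, which is otherwise a standard hyperspace-selection argument.
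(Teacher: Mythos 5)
Your proposal is correct and follows essentially the same route as the paper: produce, for each $S\in[X]^{2n+2}$, a canonical nonempty proper subset determined by the tournament that $\leq_\sigma$ induces on $S$ (possible only because $|S|$ is even), use it together with the given selection on $\mathscr{F}_{2n+1}(X)$ to get a continuous selection for $[X]^{2n+2}$, and then invoke Theorem~\ref{theorem-Selections-GRF:3}. The only (immaterial) difference is that you take the set of vertices of maximal degree, whereas the paper uses the flow-based partition of Proposition~\ref{proposition-partition-even}, i.e.\ $\{a: \varphi_\xi(a)>0\}$; both rest on the same parity observation.
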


On the other hand, Theorem \ref{theorem-Selections-GRF:1} claims a
stronger extension property which suggests the following natural
question.

\begin{question}
  \label{question-Selections-GRF-v4:1}
  Let $X$ be a space which has a continuous selection for
  $\mathscr{F}_{n+1}(X)$ for some $n> 2$. Then, is it true that each
  continuous selection $g:\mathscr{F}_n(X)\to X$ can be extended to a
  continuous selection $f:\mathscr{F}_{n+1}(X)\to X$?  In particular,
  is it true that each continuous selection $g:\mathscr{F}_3(X)\to X$
  can be extended to a continuous selection $f:\mathscr{F}_4(X)\to X$?
\end{question}

Here is also a question is about the role of the hyperspace
$[X]^n$ in Theorem \ref{theorem-Selections-GRF:3}.

\begin{question}
  \label{question-ext-sel-v41:5}
  Let $X$ be a space with a continuous weak selection. If $[X]^{n}$
  has continuous selection for some $n>2$, then is it true that
  $\mathscr{F}_{n}(X)$ also has a continuous selection?
\end{question}

Each separable metrizable space with a continuous weak selection is
weakly orderable, see \cite[Theorem 1.1]{gutev:07a} and \cite[Theorem
4.1]{MR3430989}. Similarly, each strongly zero-dimensional metrizable
space is (weakly) orderable as well, see Theorem
\ref{theorem-hsp-ver-10:9}. Thus, complementary to Question
\ref{question-ext-sel-v19:2}, we also have the following question.

\begin{question}
  \label{question-Selections-GRF-v4:3}
  Let $X$ be a (zero-dimensional) metrizable space with a continuous
  weak selection. Then, is it true that $[X]^3$ also has a continuous
  selection?
\end{question}

Another related hyperspace selection-extension property was obtained
in \cite[Theorem 5.3]{gutev-nogura-garcia:04a}, compare with Theorem
\ref{theorem-Selections-GRF:1}.

\begin{theorem}[\cite{gutev-nogura-garcia:04a}]
  \label{theorem-ext-sel-v41:1}
  Let $X$ be a space and $p\in X$ be such that
  $\mathscr{F}_3(X\setminus\{p\})$ has a continuous selection. Then
  each continuous weak selection $g:\mathscr{F}_2(X)\to X$ can be
  extended to a continuous selection $f:\mathscr{F}_3(X)\to X$.
\end{theorem}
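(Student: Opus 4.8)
The plan is to build the extension $f\colon\mathscr{F}_3(X)\to X$ by a case split on the $\leq_g$-tournament of a triple, invoking the given selection on $\mathscr{F}_3(X\setminus\{p\})$ only on those triples for which $g$ offers no canonical choice, and then to verify continuity by reducing it to statements about clopen pieces. Write $Y=X\setminus\{p\}$ and let $\psi\colon\mathscr{F}_3(Y)\to Y$ be the given continuous selection. Two structural facts are used throughout. Since $g$ is a continuous weak selection, the $\leq_g$-intervals $(\gets,p)_{\leq_g}$ and $(p,\to)_{\leq_g}$ are open, so, being complementary in $Y$, the sets $B=(\gets,p)_{\leq_g}$ and $A=(p,\to)_{\leq_g}$ are \emph{clopen} in $Y$. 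Also, by the continuity criterion for weak selections, the subsets $\mathscr{T},\mathscr{O}$ of $[X]^3$ consisting of the $\leq_g$-transitive, resp.\ $\leq_g$-circular, triples are clopen in $[X]^3$; the source map $S\mapsto\min_{\leq_g}S$ is continuous on $\mathscr{T}$; and every $S_0=\{u_0,v_0,w_0\}\in\mathscr{O}$, oriented $u_0\to v_0\to w_0\to u_0$, has a basic neighbourhood $\langle U_0,V_0,W_0\rangle$ all of whose triples carry the orientation $(\text{point in }U_0)\to(\text{point in }V_0)\to(\text{point in }W_0)$.

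Put $f\uhr\mathscr{F}_2(X)=g$ and $f(S)=\min_{\leq_g}S$ for $S\in\mathscr{T}$. I claim the resulting $f$ will be continuous on all of $\mathscr{F}_3(X)$ as soon as $f\uhr\mathscr{O}$ is a continuous selection. Indeed, a net converging to a triple is eventually a net of triples, hence eventually lies in the clopen piece $\mathscr{T}$ or $\mathscr{O}$ of its limit, so continuity at points of $[X]^3$ reduces to the continuity of the source map and of $f\uhr\mathscr{O}$. A net of triples converging to a pair $\{a,b\}$ has two of its points in a common small neighbourhood of one of $a,b$, so by continuity of $g$ it consists eventually of transitive triples whose source tends to $g(\{a,b\})$; in particular no circular triple converges to a pair. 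Continuity at singletons is automatic. So everything reduces to defining $f$ continuously on $\mathscr{O}$.

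On $\mathscr{O}$ I set: $f(S)=\psi(S)$ if $S\subseteq A$ or $S\subseteq B$; for any other circular triple $S\subseteq Y$ (so $1\leq|S\cap A|\leq 2$), oriented $u\to v\to w\to u$, let $f(S)$ be the unique vertex $t\in S$ that lies in $A$ and whose cyclic successor lies in $B$ (uniqueness is a two-line check from $|S\cap A|\in\{1,2\}$); and $f(\{p,a,b\})=g(\{a,b\})$ for the circular triples through $p$. Each rule is a selection. The set $\{S\in[X]^3:p\notin S\}$ is $\tau_V$-open in $\mathscr{F}_3(X)$, and inside it the circular triples split — according to which of the first two rules applies — into two pieces that are clopen there, because $A$ and $B$ are clopen in $Y$; the circular triples through $p$ form a $\tau_V$-closed subset of $\mathscr{O}$. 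Hence continuity of $f\uhr\mathscr{O}$ at a triple avoiding $p$ is immediate (nearby triples in $\mathscr{O}$ lie in the same clopen piece, where $f$ equals $\psi$, respectively equals ``the point of $S$ in a fixed one of three disjoint open boxes'', which is continuous by the local constancy of orientations and of the $A$/$B$-labels of vertices). Continuity at $\{p,a,b\}$ along $p$-triples is immediate too, since $\{p,a_\alpha,b_\alpha\}\to\{p,a,b\}$ forces $\{a_\alpha,b_\alpha\}\to\{a,b\}$ and $g$ is continuous.

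The only continuity check not delivered by clopen-ness — and, I expect, the crux — is the \emph{gluing at a circular triple through $p$}: a net $S_\alpha=\{x_\alpha,y_\alpha,z_\alpha\}\subseteq Y$ of circular triples with $S_\alpha\to S_0=\{p,a,b\}$, where $x_\alpha\to p$ and, after relabelling, $y_\alpha\to a$ and $z_\alpha\to b$. This is exactly what the middle rule was tailored for. Taking $a<_g b$ without loss of generality, circularity of $\{p,a,b\}$ forces the orientation $p\to a\to b\to p$, so $a\in A$ and $b\in B$; by continuity of $g$, eventually $y_\alpha\in A$, $z_\alpha\in B$, and $y_\alpha$ beats $z_\alpha$, whence $S_\alpha$ is circular with orientation $x_\alpha\to y_\alpha\to z_\alpha\to x_\alpha$ and $|S_\alpha\cap A|\in\{1,2\}$; so $S_\alpha$ falls under the middle rule, and the unique $A$-vertex whose cyclic successor lies in $B$ is $y_\alpha$. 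Therefore $f(S_\alpha)=y_\alpha\to a=g(\{a,b\})=f(S_0)$; the opposite relabelling and the case $b<_g a$ are symmetric, and when $p$ is isolated this situation does not occur. Combining this with the easy convergences above shows $f\uhr\mathscr{O}$ is a continuous selection, so $f$ is a continuous selection for $\mathscr{F}_3(X)$ extending $g$. Everything else is routine bookkeeping with clopen sets and the continuity criterion for weak selections; the one genuinely delicate point is the interaction, settled above, between the clopen partition $Y=A\cup B$ and the cyclic orientations of circular triples collapsing onto $p$.
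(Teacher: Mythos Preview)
The paper does not supply its own proof of this theorem; it is quoted from \cite{gutev-nogura-garcia:04a} as Theorem~5.3 there, so there is no in-paper argument to compare against. Assessed on its own, your proof is correct. The reduction to continuity on the clopen piece $\mathscr{O}\subset[X]^3$ is standard (circular triples cannot accumulate at pairs, and the source map is continuous on $\mathscr{T}$), and your handling of $\mathscr{O}$ is clean: the clopen partition $Y=A\cup B$ given by the $\leq_g$-intervals at $p$ lets you split the circular $Y$-triples into a clopen piece where $\psi$ is used and a clopen piece where the ``$A$-vertex with $B$-successor'' rule applies. The decisive point---and you identify it correctly---is the gluing at a circular triple $\{p,a,b\}$: regardless of whether the point $x_\alpha\to p$ lands in $A$ or in $B$, the cyclic orientation inherited from $\{p,a,b\}$ forces the rule to select $y_\alpha\to a=g(\{a,b\})$. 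Your verification of this (the two cases $x_\alpha\in A$ and $x_\alpha\in B$ both yield $f(S_\alpha)=y_\alpha$) is exactly what is needed, and the uniqueness of the $A$-vertex with $B$-successor is immediate from $|S\cap A|\in\{1,2\}$ as you say.

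One small clarification worth making explicit: the sets $A=(p,\to)_{\leq_g}$ and $B=(\gets,p)_{\leq_g}$ are open not just in $Y$ but in $X$ itself (this is the Michael property of continuous weak selections), which is what allows you to shrink the boxes of a decisive neighbourhood for a piece-2 triple into the appropriate one of $A$ or $B$; you use this implicitly. Otherwise the argument is complete as written.
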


Combining Theorems \ref{theorem-Selections-GRF:1} and
\ref{theorem-ext-sel-v41:1}, we have the following immediate
consequence.

\begin{corollary}
  \label{corollary-ext-sel-v41:1}
  Let $X$ be a space and $p\in X$ be such that $[X\setminus\{p\}]^3$
  has a continuous selection. Then each continuous weak selection
  $g:\mathscr{F}_2(X)\to X$ can be extended to a continuous selection
  $f:\mathscr{F}_3(X)\to X$.
\end{corollary}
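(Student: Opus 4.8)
The plan is to chain Theorems \ref{theorem-Selections-GRF:1} and \ref{theorem-ext-sel-v41:1}. The hypothesis gives a continuous selection for $[X\setminus\{p\}]^3$, but Theorem \ref{theorem-ext-sel-v41:1} wants a continuous selection for the whole hyperspace $\mathscr{F}_3(X\setminus\{p\})$; the role of Theorem \ref{theorem-Selections-GRF:1} is precisely to upgrade the former into the latter, once we have a continuous weak selection on $X\setminus\{p\}$ to feed into it.

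First I would fix a continuous weak selection $g:\mathscr{F}_2(X)\to X$, which exists by hypothesis, and set $Y=X\setminus\{p\}$. Then $Y$ is again an infinite Hausdorff space, and the restriction $g\uhr Y$ (i.e.\ $g\uhr\mathscr{F}_2(Y)$) is a continuous weak selection for $Y$. Since by assumption $[Y]^3=[X\setminus\{p\}]^3$ admits a continuous selection, Theorem \ref{theorem-Selections-GRF:1}, applied to the space $Y$, extends $g\uhr Y$ to a continuous selection $\mathscr{F}_3(Y)\to Y$. In particular, $\mathscr{F}_3(X\setminus\{p\})$ has a continuous selection.

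Now Theorem \ref{theorem-ext-sel-v41:1} applies verbatim to $X$ and the point $p$: from a continuous selection for $\mathscr{F}_3(X\setminus\{p\})$ one obtains that the continuous weak selection $g:\mathscr{F}_2(X)\to X$ extends to a continuous selection $f:\mathscr{F}_3(X)\to X$, which is exactly the assertion. I do not anticipate a real obstacle here, since the argument is just a composition of the two quoted theorems; the only point to be careful about is that Theorem \ref{theorem-Selections-GRF:1} is invoked for the subspace $Y=X\setminus\{p\}$ (not for $X$), and that the weak selection supplied to it must be taken to be the restriction $g\uhr Y$, so that the final extension in Theorem \ref{theorem-ext-sel-v41:1} is genuinely an extension of the original $g$.
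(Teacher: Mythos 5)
Your proposal is correct and follows exactly the paper's route: the corollary is stated there as an immediate consequence of combining Theorem \ref{theorem-Selections-GRF:1} (applied to the subspace $X\setminus\{p\}$ to upgrade the selection on $[X\setminus\{p\}]^3$ to one on $\mathscr{F}_3(X\setminus\{p\})$) with Theorem \ref{theorem-ext-sel-v41:1}. Your only extra care---feeding the restriction $g\uhr\mathscr{F}_2(X\setminus\{p\})$ into the first step---is harmless but not needed, since Theorem \ref{theorem-ext-sel-v41:1} extends the original $g$ as soon as $\mathscr{F}_3(X\setminus\{p\})$ admits \emph{some} continuous selection.
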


In \cite[Corollary 5.4]{gutev-nogura-garcia:04a}, Theorem
\ref{theorem-ext-sel-v41:1} was used to show that for a space $X$ with
only one non-isolated point $p\in X$, each continuous weak selection
$g: \mathscr{F}_2(X)\to X$ can be extended to a continuous selection
$f:\mathscr{F}_3(X)\to X$. This brings the following question for
scattered spaces which is complementary to Question
\ref{question-Selections-GRF-v4:3}. 

\begin{question}
  \label{question-ext-sel-v42:1}
  Let $X$ be a (zero-dimensional) scattered space with a continuous
  weak selection. Then, is it true that $[X]^3$ also has a continuous
  selection?
\end{question}

Regarding scattered spaces, the following special case of Question
\ref{question-ext-sel-v42:1} was resolved in \cite[Theorem
3.2]{Jiang2007}.

\begin{theorem}[\cite{Jiang2007}]
  \label{theorem-ext-sel-v41:2}
  If a scattered hereditarily paracompact space $X$ has a continuous
  weak selection, then it also has a continuous selection for
  $\Sigma(X)$.
\end{theorem}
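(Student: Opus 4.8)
The plan is to construct the required continuous selection $f\colon\Sigma(X)\to X$ by transfinite recursion along the Cantor--Bendixson filtration of $X$, using the continuous weak selection $\sigma$ to break ties and using hereditary paracompactness to reorganise the structure level by level. First some reductions: since $X$ is paracompact Hausdorff it is regular, hence (being regular and scattered) zero-dimensional, and every subspace of $X$ is paracompact, hence collectionwise normal. Write $\rho(x)$ for the Cantor--Bendixson rank of $x\in X$ and put $G_\alpha=\{x\in X:\rho(x)\le\alpha\}$; then $G_\alpha$ is open in $X$ (so again scattered, zero-dimensional and hereditarily paracompact, carrying the continuous weak selection $\sigma\uhr G_\alpha$), $G_{<\alpha}=\bigcup_{\beta<\alpha}G_\beta$ is open in $G_\alpha$, and $D_\alpha=G_\alpha\setminus G_{<\alpha}$ is a closed discrete subspace of $G_\alpha$ consisting of the points of rank exactly $\alpha$. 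Since each $\Sigma(G_\alpha)$ is open in $\Sigma(X)$ and $\Sigma(X)=\bigcup_\alpha\Sigma(G_\alpha)$, it is enough to build coherent continuous selections $f_\alpha\colon\Sigma(G_\alpha)\to G_\alpha$ with $f_\alpha\uhr\Sigma(G_\beta)=f_\beta$ for $\beta<\alpha$; their union is then $f$, and its continuity is a local matter.

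For $\alpha=0$ the space $G_0$ is discrete, $\Sigma(G_0)$ is a discrete Vietoris hyperspace, and any selection works (say the minimum of a fixed well-order). Given $f_\beta$ for all $\beta<\alpha$, their union $f_{<\alpha}$ is a continuous selection on $\Sigma(G_{<\alpha})$ by the same reasoning, and if $D_\alpha=\emptyset$ I take $f_\alpha=f_{<\alpha}$. Otherwise, using zero-dimensionality and collectionwise normality of $G_\alpha$, fix a discrete family $\{U_e:e\in D_\alpha\}$ of clopen subsets of $G_\alpha$ with $e\in U_e$ (automatically $U_e\cap D_\alpha=\{e\}$); then $\bigcup_e U_e$ is clopen, $W=G_\alpha\setminus\bigcup_eU_e$ is clopen and contained in $G_{<\alpha}$, and each $U_e$ is clopen in $G_\alpha$ with unique point of rank $\alpha$ equal to $e$ and with $U_e\setminus\{e\}$ clopen in $G_{<\alpha}$. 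Thus $f_{<\alpha}$ already provides continuous selections on $\Sigma(W)$ and on each $\Sigma(U_e\setminus\{e\})$. Since $\{W\}\cup\{U_e:e\in D_\alpha\}$ is a discrete clopen partition of $G_\alpha$, once each selection on $\Sigma(U_e\setminus\{e\})$ is extended to a continuous selection $h_e$ on $\Sigma(U_e)$, amalgamating the $h_e$ and $f_{<\alpha}\uhr\Sigma(W)$ into $f_\alpha$ is essentially the mechanism of Propositions \ref{proposition-ext-sel-v9:1} and \ref{pr:extension-1}: for a finite $S\subseteq G_\alpha$ the finite list of pieces that $S$ meets is locally constant on $\Sigma(G_\alpha)$, so one chooses one such piece (via a selection on the discrete hyperspace of finite subsets of the index set, made compatibly so that coherence with $f_{<\alpha}$ survives) and applies the corresponding $h_e$ or $f_{<\alpha}$.

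The heart of the argument, and the step I expect to be the main obstacle, is the extension of a continuous selection on $\Sigma(U_e\setminus\{e\})$ across the single fresh limit point $e$ to a continuous selection $h_e$ on $\Sigma(U_e)$. The difficulty is purely one of continuity at finite sets containing $e$: since $e$ is a limit of $U_e\setminus\{e\}$, such a set is approximated, within $\Sigma(X)$, by finite sets in which $e$ has been replaced by nearby points of lower rank, and $h_e$ must converge along every such approximation --- a local, zero-dimensional argument does not secure this. The remedy is to strengthen the recursive hypothesis so that each $f_\beta$ is built ``$\le_\sigma$-monotonically'': using that the one-sided intervals $(\gets,e)_{\le_\sigma}$ and $(e,\to)_{\le_\sigma}$ are open (continuity of $\sigma$), one arranges along the recursion that on a suitable clopen neighbourhood base at $e$ the selection of a finite set $T\cup\{e'\}$ always prefers the point of $T$ on the side of $e$ prescribed by $\le_\sigma$; this forces the relevant one-sided limits toward $e$ to exist, and defining $h_e(T\cup\{e\})$ to be that common limit yields continuity. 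Propagating this strengthened hypothesis through successor and limit stages and through the amalgamation above is the technical core; the rest is gluing of continuous maps over clopen partitions. (Equivalently, the whole construction can be packaged as a proof that $X$ is weakly orderable, after which Theorem \ref{theorem-hsp-ver-5:7} immediately delivers a continuous selection even for $\mathscr{C}(X)\supseteq\Sigma(X)$; the compatible order is assembled level by level from the clopen pieces above, and its compatibility with the topology of $X$ is precisely the obstacle just described.)
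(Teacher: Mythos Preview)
The paper does not contain a proof of this theorem: it is stated as a result of \cite{Jiang2007} and cited without argument, so there is no proof in the paper to compare your proposal against.

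On its own merits, your Cantor--Bendixson induction is a natural strategy, and your reductions (zero-dimensionality, collectionwise normality of subspaces, the discrete clopen separation of $D_\alpha$, amalgamation over discrete clopen partitions via the mechanism of Propositions \ref{proposition-ext-sel-v9:1} and \ref{pr:extension-1}) are correct. But you accurately identify, and then do not close, the decisive gap: extending a continuous selection from $\Sigma(U_e\setminus\{e\})$ across the single non-isolated point $e$ to one on $\Sigma(U_e)$. Your proposed remedy --- strengthening the inductive hypothesis so that the selections are ``$\leq_\sigma$-monotonic'' in a way that forces the relevant one-sided limits toward $e$ to exist --- is not formulated precisely, and you yourself defer it (``Propagating this strengthened hypothesis \ldots\ is the technical core''). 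Bear in mind that the one-point extension problem already for $\mathscr{F}_3$ is exactly Theorem \ref{theorem-ext-sel-v41:1}, whose proof in \cite{gutev-nogura-garcia:04a} is nontrivial; you need the analogue for every $\mathscr{F}_n$ simultaneously and coherently, which is substantially harder. Your parenthetical alternative (proving $X$ weakly orderable and invoking Theorem \ref{theorem-hsp-ver-5:7}) is not a shortcut either: it repackages the same obstacle as the construction of a compatible order, and weak orderability of this class is not established anywhere in your argument. As written, the proposal is a plausible plan rather than a proof.
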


The following similar result was obtained in \cite[Corollary
3.7]{hrusak-martinez:09}.

\begin{theorem}[\cite{hrusak-martinez:09}]
  \label{theorem-ext-sel-v29:1}
  If a separable space $X$ has a continuous weak selection, then it
  also has a continuous selection for $\Sigma(X)$.
\end{theorem}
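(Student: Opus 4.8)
The plan is to produce the selection for $\Sigma(X)$ directly, using a countable dense set $D=\{d_m:m<\omega\}$ of $X$ to ``bisect'' finite sets in a controlled way, falling back on the given continuous weak selection $\sigma$ only at the bottom. Since $\sigma$ is continuous, every half-line $(\gets,d_m)_{\leq_\sigma}$ and $(d_m,\to)_{\leq_\sigma}$ is open in $X$ (see the discussion after \cite[Theorem 3.1]{gutev-nogura:01a}); these open sets are the raw material. For a finite set $F\in\Sigma(X)$ call an index $m$ \emph{$F$-active} if $\emptyset\ne F\cap(\gets,d_m)_{\leq_\sigma}\ne F$, and define $f(F)$ recursively: if some index is $F$-active, let $m$ be the least such and put $f(F)=f\bigl(F\cap(\gets,d_m)_{\leq_\sigma}\bigr)$; if no index is $F$-active, then $F$ lies entirely on one side of each $d_m$, and in this residual situation one selects a point of $F$ from $\sigma$ alone --- for a pair this is just $\sigma(F)$, and one would check that a ``$D$-indivisible'' $F$ with more than two points is linearly ordered by $\leq_\sigma$, so that $\min_{\leq_\sigma}F$ is available (or else arrange the recursion so that this case always reduces to pairs). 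That $f$ is a selection is immediate, since each step passes to a strictly smaller nonempty subset of $F$ and $F$ is finite.

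The substance of the proof is continuity, checked against the Vietoris criterion: given $F$ and a basic neighbourhood $W$ of $f(F)$, we must exhibit a finite family $\mathscr{U}$ of open sets with $F\in\langle\mathscr{U}\rangle$ and $f(\langle\mathscr{U}\rangle\cap\Sigma(X))\subset W$. The recursion for $F$ visits finitely many working sets $F=G_0\supsetneq G_1\supsetneq\cdots\supsetneq G_j$ via a strictly increasing list of indices $m_0<m_1<\cdots$, so it suffices to take tiny neighbourhoods $U_x$ of the points $x\in F$, with $U_{f(F)}\subset W$, so that every $F'\in\langle U_x:x\in F\rangle$ runs the same sequence of choices and ends in $U_{f(F)}$. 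Two phenomena must be controlled. First, when a perturbation splits a point $x\in G_k$ into several points of $F'$, those points must land on the side of $d_{m_k}$ that $x$ occupied; this is exactly where continuity of $\sigma$ enters, through the separation of $x<_\sigma y$ by open sets together with the openness of $(\gets,d_{m_k})_{\leq_\sigma}$. Second, the ``skips'' made by $F$ at indices $m<m_k$ must persist for $F'$: a skip because $G_k\subset(\gets,d_m)_{\leq_\sigma}$ is robust (that set is open), while a skip because $G_k\cap(\gets,d_m)_{\leq_\sigma}=\emptyset$ is fragile only at a point of $G_k$ that equals $d_m$, and there a direct argument using continuity of $\sigma$ shows every other perturbed point still sits on the correct side of the perturbed copy of $d_m$, so the skip survives.

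The main obstacle, then, is precisely this continuity bookkeeping: controlling the list of separating indices and the skip decisions under passage to nearby --- and possibly larger --- finite sets $F'$, and pinning down the structure of the $D$-indivisible residual sets so the fallback to $\sigma$ is legitimate and continuous. Separability is used exactly here: $D$ must supply enough separating half-lines for the bisection to make progress and must cut the residual sets down to a size that $\sigma$ can handle. As an alternative organisation, one can instead build coherent continuous selections $f_n\colon\mathscr{F}_n(X)\to X$ level by level, invoking the bootstrapping results (Theorems~\ref{theorem-Selections-GRF:1}, \ref{theorem-Selections-GRF:3} and \ref{theorem-Selections-GRF-v2:1}) for the inductive step; this reduces everything to producing a continuous selection for each $[X]^{n}$, which is again what the dense-set bisection above delivers, and one must still verify that $\bigcup_n f_n$ is Vietoris continuous on $\Sigma(X)$ at sets whose Vietoris neighbourhoods contain members of strictly larger cardinality.
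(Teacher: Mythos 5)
Your construction, as described, is not continuous, and the place it breaks is exactly the ``fragile skip'' you wave at. Suppose $d_m\in F$ and $F\cap(\gets,d_m)_{\leq_\sigma}=\emptyset$, so index $m$ is skipped. A Vietoris-small perturbation replaces $d_m$ by a nearby point $x'$, and nothing in the continuity of $\sigma$ prevents $x'<_\sigma d_m$; your ``direct argument'' compares the perturbed points with the \emph{perturbed copy} of $d_m$, but the recursion compares them with the fixed point $d_m$ itself, so index $m$ becomes active for $F'$ and the recursion is rerouted. Because $\leq_\sigma$ need not be transitive, the branch that $F$ actually followed may have discarded $d_m$, so $f(F)$ can be far from $d_m$ while $f(F')$ is forced to be near it. Concretely: let $X=A\cup B\cup C$ be three disjoint open intervals of $\mathbb{R}$, let $\sigma$ be the usual minimum inside each piece and cyclic between pieces ($A<_\sigma B<_\sigma C<_\sigma A$); this $\sigma$ is continuous. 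Take $D=\mathbb{Q}\cap X$ enumerated with $d_0\in B$ and $d_1\in A$, and put $F=\{d_0,c\}$ with $c\in C$. Then index $0$ is inactive, index $1$ is the least active one, $F\cap(\gets,d_1)_{\leq_\sigma}=\{c\}$, so $f(F)=c$; but for $F_\varepsilon=\{d_0-\varepsilon,c\}$ index $0$ is active and $f(F_\varepsilon)=d_0-\varepsilon\to d_0\neq c$, although $F_\varepsilon\to F$ in $\tau_V$. So already on two-point sets the scheme fails, and the failure is intrinsic to $\sigma$-cycles, which is precisely the case the theorem is about. (Your structural claim about $D$-indivisible sets is also unproved and misstated --- in fact one can show every $F$ with $|F|\geq 3$ \emph{is} separated by some $d\in D$, by taking a vertex of the tournament $(F,<_\sigma)$ with both an in- and an out-neighbour and a dense point inside the intersection of the two decisive neighbourhoods --- but that is the least of the difficulties.)

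The fallback organisation at the end does not repair this. First, it still relies on the same bisection to produce selections for $[X]^n$. Second, even granting continuous selections for every $\mathscr{F}_n(X)$, Theorems \ref{theorem-Selections-GRF:1}, \ref{theorem-Selections-GRF:3} and \ref{theorem-Selections-GRF-v2:1} do not make the $f_n$ coherent, and passing from ``all finite levels'' to a single continuous selection on $\Sigma(X)$ is not a verification one ``must still do'': it is, in general, Question \ref{question-ext-sel-v29:1} of this very paper, which is open. So separability has to enter the argument globally, not level by level. Note also that the paper contains no proof of this statement to compare against --- it is quoted from \cite[Corollary 3.7]{hrusak-martinez:09}, whose proof rests on the decisive-partition technique of \cite{gutev:08a} --- so your argument has to stand on its own, and in its present form it does not: the key continuity step is false as stated, and the alternative route concedes an open problem.
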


Theorem \ref{theorem-ext-sel-v29:1} was based on the technique of
decisive partitions developed in \cite{gutev:08a}, see \cite[Theorem
3.3]{gutev:08a}, which brings the following natural question.

\begin{question}
  \label{question-ext-sel-v29:1}
  Let $X$ be a space which has a continuous selection for $[X]^n$ for
  every $n\geq 2$. Then, is it true that $\Sigma(X)$ also has a
  continuous selection?
\end{question}

It was shown in \cite[Proposition 3.8]{hrusak-martinez:09} that there
exists a separable space $X$ which is not weakly orderable, but
$\sel[\mathscr{C}(X)]\neq \emptyset$. On this basis, the following
question was stated in \cite[Question 4.3]{hrusak-martinez:09}.

\begin{question}[\cite{hrusak-martinez:09}]
  \label{question-ext-sel-v41:4}
  Does every (separable) space $X$ with a continuous weak selection
  also admit a continuous selection for $\mathscr{C}(X)$?
\end{question}

We now briefly discuss the proof of these theorems which was based on
some combinatorial arguments. In fact, the proof of Theorem
\ref{theorem-Selections-GRF:2} in \cite{gutev-nogura:08a} was based on
a natural interpretation of weak selections as flows on complete
graphs. A \emph{graph} is a pair $G=(V,E)$ of sets satisfying
$E\subset [V]^2$. The elements of $V$ are the \emph{vertices} (or
\emph{nodes}, or \emph{points}) of the graph $G$, while the elements
of $E$ are its \emph{edges} (or \emph{lines}). The vertex set of a
graph $G$ is referred to as $\mathbf{V}(G)$, its edge set as
$\mathbf{E}(G)$.  In what follows, we will consider only finite
graphs. Moreover, for convenience, we will identify the vertex set
$\mathbf{V}(G)$ of a graph $G$ with the graph itself, namely
$\mathbf{V}(G)=G$. Thus, the edges $\mathbf{E}(G)$ of $G$ are simply a
subset of $[G]^2$.\medskip

Two vertices $x,y\in G$ of a graph $G$ are \emph{adjacent}, or
\emph{neighbours}, if the set $\{x,y\}$ is an edge of $G$, i.e.\
$\{x,y\}\in \mathbf{E}(G)$. If all vertices of $G$ are pairwise
adjacent, then $G$ is \emph{complete}. A complete graph of $n$
vertices is denoted by $K_n$, i.e.\ $K_n=(K_n,[K_n]^2)$.\medskip

A graph $G$ can be considered as a \emph{network} when its edges carry
some kind of a flow. Namely, let
\begin{equation}
  \label{eq:ext-sel-v42:2}
  \vec{\mathbf{E}}(G)=\big\{(x,y),(y,x): \{x,y\}\in \mathbf{E}(G)\big\}.
\end{equation}
A function $\xi:\vec{\mathbf{E}}(G)\to \Z$ in the integers $\Z$ is
called a \emph{flow} on the graph $G$ if $\xi(x,y)=-\xi(y,x)$,
$(x,y)\in \vec{\mathbf{E}}(G)$. Thus, $\xi(x,y)$ expresses that a flow
of $\xi(x,y)$-units passes through the edge $e=\{x,y\}$ from $x$ to
$y$, while $\xi(y,x)=-\xi(x,y)$ are the units of flow that passes
through $e$ the other way, from $y$ to $x$. Finally, for a flow $\xi$
on $G$ and $a\in G$, we define the \emph{node} $\mathbf{N}(a)$ of this
vertex and the \emph{total flow} $\varphi_\xi(a)$ through this node by
\begin{equation}
  \label{eq:Selections-GRF-v2:1}
  \begin{cases}
  \mathbf{N}(a)=\big\{b\in G\setminus\{a\}: \{a,b\}\in
\mathbf{E}(G)\big\},\quad \text{and}\\
  \varphi_\xi(a)=\sum_{b\in \mathbf{N}(a)} \xi(a,b).
\end{cases}
\end{equation}
Since $\xi(x,y)+\xi(y,x)=0$, we have the following simple observation.

\begin{proposition}[\cite{gutev-nogura:08a}]
  \label{pr:flow-general}
  If $\xi:\vec{\mathbf{E}}(G) \to \Z$ is a flow on $G$, then $\sum_{a\in
    G}\varphi_\xi(a)=0$.
\end{proposition}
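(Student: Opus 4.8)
The plan is to recognise the sum $\sum_{a\in G}\varphi_\xi(a)$ as a double sum over ordered edges and to pair up the two orientations of each edge. Concretely, by the definition of $\varphi_\xi$ in \eqref{eq:Selections-GRF-v2:1},
\[
\sum_{a\in G}\varphi_\xi(a)=\sum_{a\in G}\ \sum_{b\in\mathbf{N}(a)}\xi(a,b),
\]
and the pairs $(a,b)$ appearing on the right-hand side are precisely the elements of $\vec{\mathbf{E}}(G)$: indeed $b\in\mathbf{N}(a)$ means exactly that $\{a,b\}\in\mathbf{E}(G)$, which by \eqref{eq:ext-sel-v42:2} is the same as $(a,b)\in\vec{\mathbf{E}}(G)$. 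So the displayed expression equals $\sum_{(a,b)\in\vec{\mathbf{E}}(G)}\xi(a,b)$.

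Next I would organise $\vec{\mathbf{E}}(G)$ into the disjoint pairs $\{(x,y),(y,x)\}$ indexed by the (unordered) edges $\{x,y\}\in\mathbf{E}(G)$; this is a genuine partition since $G$ is a (loop-free) graph, so $x\neq y$ for every edge and the two orientations are distinct. Grouping the sum accordingly gives
\[
\sum_{(a,b)\in\vec{\mathbf{E}}(G)}\xi(a,b)=\sum_{\{x,y\}\in\mathbf{E}(G)}\bigl(\xi(x,y)+\xi(y,x)\bigr),
\]
and each summand on the right vanishes because $\xi$ is a flow, i.e.\ $\xi(x,y)=-\xi(y,x)$. Hence $\sum_{a\in G}\varphi_\xi(a)=0$.

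There is essentially no obstacle here; the only point requiring a word of care is the bookkeeping that the reindexing of $\sum_a\sum_{b\in\mathbf{N}(a)}$ as a sum over $\vec{\mathbf{E}}(G)$ is exact (each ordered pair counted once), which is immediate from the definitions since $G$ is finite and simple. No appeal to any earlier result is needed; the statement is purely formal, relying only on the antisymmetry $\xi(x,y)=-\xi(y,x)$ built into the notion of a flow.
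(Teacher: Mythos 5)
Your argument is correct and is exactly the one the paper intends: the sum $\sum_{a\in G}\varphi_\xi(a)$ is rewritten as a sum over $\vec{\mathbf{E}}(G)$ and the two orientations of each edge cancel by the antisymmetry $\xi(x,y)=-\xi(y,x)$, which is precisely the remark (``Since $\xi(x,y)+\xi(y,x)=0$\dots'') with which the paper introduces the proposition. No differences worth noting.
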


In this setting, we say $\xi:\vec{\mathbf{E}}(G)\to \Z$ is a
\emph{selection flow} on a graph $G$ if it takes values in the
$0$-dimensional sphere $\s^0=\{-1,1\}\subset \Z$, i.e.\ when
$\xi:\vec{\mathbf{E}}(G)\to \s^0$. This is motivated by the following
example in \cite[Example 3.2]{gutev-nogura:08a}.

\begin{example}[\cite{gutev-nogura:08a}]
  \label{ex:selection-flow}
  For a weak selection $\sigma$ on $X$ and $S\in [X]^n$, where
  ${n\geq 2}$, consider the complete graph $K_n=S$, and the selection
  flow ${\xi_S:\vec{\mathbf{E}}(S)\to \s^0}$ defined for $x,y \in S$
  by $\xi_S(x,y)=1$ if $x<_\sigma y$ and $\xi_S(x,y)=-1$ if
  $y<_\sigma x$. Then such flows are ``locally constant'' on $[X]^n$
  whenever $\sigma$ is continuous. Namely, continuity of $\sigma$
  implies the existence of a family $\mathscr{V}=\{V_x: x\in S\}$ of
  open subsets of $X$ such that $x\in V_x$, $x\in S$, and
  $V_x<_\sigma V_y$ precisely when $x<_\sigma y$, see \cite[Corollary
  2.2]{gutev-nogura:08a}.  Evidently, in this case, $\mathscr{V}$ is a
  pairwise disjoint family such that $|\mathscr{V}|=|S|=n$ and
  $V<_\sigma W$ or $W<_\sigma V$ for every two distinct members
  $V,W\in \mathscr{V}$. Such a basic $\tau_V$-open set
  $\langle\mathscr{V}\rangle$ was called \emph{$\sigma$-decisive} in
  \cite{gutev-nogura:08a}.  Thus, continuity of $\sigma$ means that
  for every $T\in \langle\mathscr{V}\rangle\cap [X]^n$, the
  corresponding selection flow $\xi_T:\vec{\mathbf{E}}(T)\to \s^0$ is
  actually identical to $\xi_S$ because $\langle\mathscr{V}\rangle$ is
  $\sigma$-decisive and the flow is defined only in terms of the
  relation $\leq_\sigma$. Accordingly, each continuous weak selection
  $\sigma$ defines a discrete partition of $[X]^n$, and each element
  of this partition corresponds to a fixed (selection) flow
  $\xi:\vec{\mathbf{E}}(K_n)\to \s^0$ on $K_n$. Evidently, there are
  only finitely many such flows on $\vec{\mathbf{E}}(K_n)$.\qed
\end{example}

The converse is also true and if $\xi:\vec{\mathbf{E}}(K_n)\to \s^0$
is a selection flow, then it defines a weak selection
$\sigma_\xi:[K_n]^2\to K_n$ by letting $x<_{\sigma_\xi} y$ precisely
when $\xi(x,y)=1$. In view of this interpretation, Theorems
\ref{theorem-Selections-GRF:2} and \ref{theorem-Selections-GRF-v2:1}
are based on the following ``partition'' problem on complete graphs.

\begin{question}
  \label{question-Selections-GRF-v2:1}
  Let $m>n\geq 2$ and $\sigma:[K_{m}]^n\to K_{m}$ be a
  selection. Does there exist a partition $\{A,B\}$ of $K_{m}$ which
  depends only on $\sigma$?
\end{question}

To understand properly Question \ref{question-Selections-GRF-v2:1},
let us consider the following example. For the complete graph
$K_3=\{x,y,z\}$ consisting of $3$ points, define a selection
$\sigma:[K_3]^2\to K_3$ by $\sigma(\{x,y\})=x$,
$\sigma(\{y,z\})=y$ and $\sigma(\{z,x\})=z$. The relation
$\leq_\sigma$ generated by this weak selection has the property that
\[
  \dots <_\sigma x<_\sigma y<_\sigma z<_\sigma x<_\sigma \cdots
\]
In other words, $\sigma$ cannot distinguish between the points of
$K_3$, and for this selection the answer to Question
\ref{question-Selections-GRF-v2:1} is ``No''. \medskip

In contrast, Question \ref{question-Selections-GRF-v2:1} doesn't make
sense for Theorem \ref{theorem-Selections-GRF:3}. A selection $\sigma$
for $[K_{n+1}]^{n+1}$ defines in a trivial way a partition
$A=\{\sigma(K_{n+1})\}$ and $B=K_{n+1}\setminus A$ of
$K_{n+1}$. Hence, the problem in Theorem
\ref{theorem-Selections-GRF:3} is to modify the given selections on
$\mathscr{F}_{n}(X)$ and $[X]^{n+1}$ by transforming them into a
continuous selection for $\mathscr{F}_{n+1}(X)$. The situation with
Theorems \ref{theorem-Selections-GRF:2} and
\ref{theorem-Selections-GRF-v2:1} is different, they are based on a
solution of Question \ref{question-Selections-GRF-v2:1} which gives a
selection on $[K_{n+1}]^{n+1}$, i.e.\ on $[X]^{n+1}$. In other words,
these theorems are presenting a reduction to Theorem
\ref{theorem-Selections-GRF:3}. This is briefly discussed below.

\begin{proposition}[\cite{gutev-nogura:08a}]
  \label{proposition-Selections-GRF-v2:1}
  Let $\xi :\vec{\mathbf{E}}(K_4)\to\s^0$ be a selection flow on
  $K_4$, and $\varphi_\xi: K_4\to \Z$ be defined as in
  \eqref{eq:Selections-GRF-v2:1}. Then one of the following
  holds.
  \begin{enumerate}[label=\upshape{(\roman*)}]
  \item There exists a point $a\in K_4$ with $|\varphi_\xi (a)| = 3$.
  \item  $|\varphi_\xi (a)| = 1$ for every $a\in K_4$.
  \end{enumerate}
\end{proposition}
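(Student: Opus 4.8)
The plan is to observe that, for $K_4$, the statement is nothing more than a parity count on sums of three $\pm 1$'s, so the proof reduces to unwinding the definitions in \eqref{eq:Selections-GRF-v2:1}. First I would record that $K_4$ is complete on four vertices, so that for each $a\in K_4$ the neighbourhood $\mathbf{N}(a)=K_4\setminus\{a\}$ has exactly three elements. Consequently the total flow $\varphi_\xi(a)=\sum_{b\in \mathbf{N}(a)}\xi(a,b)$ is a sum of precisely three summands, each of which lies in $\s^0=\{-1,1\}$ since $\xi$ is a selection flow.

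Next I would invoke the elementary fact that a sum of three members of $\{-1,1\}$ is an odd integer in the interval $[-3,3]$, and hence belongs to $\{-3,-1,1,3\}$; equivalently, $|\varphi_\xi(a)|\in\{1,3\}$ for every $a\in K_4$. (Spelled out: either all three summands have the same sign, giving $\pm 3$, or two are of one sign and one of the other, giving $\pm 1$.) From this the dichotomy is immediate: if (ii) fails, then $|\varphi_\xi(a)|\neq 1$ for some $a\in K_4$, and the previous line forces $|\varphi_\xi(a)|=3$, which is exactly (i).

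The only ``obstacle'' here is one of exposition rather than of mathematics: the proposition is a tautology once $\varphi_\xi(a)$ is recognized as a sum of three elements of $\s^0$, so I would merely make sure the reader sees why the values $0$ and $\pm 2$ cannot occur, namely by the parity argument above. Let me also note that Proposition \ref{pr:flow-general} is not needed for this particular statement; rather, it is the tool one invokes afterwards to decide which combinations of the node values $\varphi_\xi(a)$ can actually be realized (for instance, to exclude a configuration with a single node of absolute value $3$).
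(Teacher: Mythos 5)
Your proof is correct. The paper states this proposition without proof, citing \cite{gutev-nogura:08a}, and your parity argument --- each $\varphi_\xi(a)$ is a sum of the three values $\xi(a,b)\in\s^0$, $b\in K_4\setminus\{a\}$, hence odd and of absolute value at most $3$, so $|\varphi_\xi(a)|\in\{1,3\}$, from which the stated dichotomy is immediate --- is exactly the intended justification; you are also right that Proposition \ref{pr:flow-general} is only needed afterwards, to pass from the node values to the partition $A=\{a: \varphi_\xi(a)>0\}$, $B=\{b: \varphi_\xi(b)<0\}$ of $K_4$.
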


Since $\xi :\vec{\mathbf{E}}(K_4)\to\s^0$ is a flow, it follows easily
from Proposition \ref{proposition-Selections-GRF-v2:1} that $K_4$ has
a partition into subsets $A$ and $B$. In fact, by
\eqref{eq:Selections-GRF-v2:1} and Proposition \ref{pr:flow-general},
we can take $A=\{a\in K_4: \varphi_\xi(a)>0\}$ and
$B=\{b\in K_4: \varphi_\xi(b)<0\}$. For an arbitrary $n\geq1$, this
property was extended to $K_{2n+2}$ in \cite{gutev:08a}.

\begin{proposition}[\cite{gutev:08a}]
  \label{proposition-partition-even}
  For a selection flow $\xi:\vec{\mathbf{E}}(K_{2n+2})\to \s^0$ on
  $K_{2n+2}$, $n\geq 1$, let\,\
  $A=\{a\in K_{2n+2}: \varphi_\xi(a)>0\}$\,\ and\,\
  $B=\{b\in K_{2n+2}: \varphi_\xi(b)<0\}$.  Then $A\cap B=\emptyset$
  and $A\neq\emptyset\neq B$.
\end{proposition}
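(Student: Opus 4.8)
The plan is to run a parity argument. In the complete graph $K_{2n+2}$ every vertex $a$ is adjacent to all the remaining $2n+1$ vertices, so $\mathbf{N}(a)=K_{2n+2}\setminus\{a\}$ has odd cardinality $2n+1$. Hence, by \eqref{eq:Selections-GRF-v2:1}, the total flow $\varphi_\xi(a)=\sum_{b\in\mathbf{N}(a)}\xi(a,b)$ is a sum of $2n+1$ terms, each lying in $\s^0=\{-1,1\}$; such a sum has the same parity as the number of terms, so $\varphi_\xi(a)$ is an odd integer. In particular $\varphi_\xi(a)\neq 0$ for every $a\in K_{2n+2}$, and therefore each vertex satisfies either $\varphi_\xi(a)>0$ or $\varphi_\xi(a)<0$. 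This already gives $A\cup B=K_{2n+2}$, while $A\cap B=\emptyset$ is immediate from the definitions of $A$ and $B$. So $\{A,B\}$ is a partition of $K_{2n+2}$, provided neither piece is empty.

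To rule out $A=\emptyset$ or $B=\emptyset$, I would invoke Proposition \ref{pr:flow-general}, which gives $\sum_{a\in K_{2n+2}}\varphi_\xi(a)=0$. Splitting this sum along the partition yields $0=\sum_{a\in A}\varphi_\xi(a)+\sum_{b\in B}\varphi_\xi(b)$, where the first summand is a sum of strictly positive integers and the second a sum of strictly negative integers. If $B=\emptyset$, then, since $A\cup B=K_{2n+2}\neq\emptyset$, the set $A$ is nonempty and the right-hand side is strictly positive, contradicting that it equals $0$; symmetrically, $A=\emptyset$ is impossible. Hence $A\neq\emptyset\neq B$, which completes the argument.

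I do not anticipate a genuine obstacle here: the whole proof is a two-line consequence of Proposition \ref{pr:flow-general} once the parity observation is in place. The only point requiring care is the count of neighbours of a vertex in $K_{2n+2}$ — it is $2n+1$, an odd number, precisely because the vertex set has even cardinality $2n+2$ — together with the fact that $\xi$ takes values in $\s^0$, so that each edge contributes an odd summand and the node totals cannot vanish. The evenness of $2n+2$ is exactly what makes this work, mirroring the distinction between Proposition \ref{proposition-Selections-GRF-v2:1} (the case $K_4$) and the odd-order situation, where a vertex can have total flow $0$.
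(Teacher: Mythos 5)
Your proof is correct and follows essentially the same route the paper indicates: the parity observation that each $\varphi_\xi(a)$ is a sum of $2n+1$ values in $\s^0$, hence odd and nonzero, combined with Proposition \ref{pr:flow-general} to force both $A$ and $B$ to be nonempty. Nothing is missing.
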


This simple property was used in \cite{gutev:08a} to show that
$[X]^{2n+2}$ has a continuous selection provided so does
$\mathscr{F}_{2n+1}(X)$. Accordingly, Theorem
\ref{theorem-Selections-GRF-v2:1} follows from Theorem
\ref{theorem-Selections-GRF:3}. In fact, Theorems
\ref{theorem-Selections-GRF:3} and \ref{theorem-Selections-GRF-v2:1}
can be combined in the following general statement, it follows easily
by these theorems.

\begin{theorem}
  \label{theorem-ext-sel-v42:1}
  For a space $X$ with a continuous weak selection, the following are
  equivalent\textup{:}
  \begin{enumerate}
  \item $\mathscr{F}_n(X)$ has a continuous selection for every
    $n\geq 3$.
  \item $[X]^n$ has a continuous selection for every
    $n\geq 3$.
  \item $[X]^n$ has a continuous selection for every odd $n\geq 3$.
  \end{enumerate}
\end{theorem}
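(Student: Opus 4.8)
The plan is to establish the cycle of implications (a)$\Rightarrow$(b)$\Rightarrow$(c)$\Rightarrow$(a), observing that the first two are immediate and that essentially all the substance has already been packaged into the two deep results Theorem \ref{theorem-Selections-GRF:3} and Theorem \ref{theorem-Selections-GRF-v2:1}. For (a)$\Rightarrow$(b): since $[X]^n\subset \mathscr{F}_n(X)$ for every $n$, the restriction $f\uhr [X]^n$ of any continuous selection $f\colon\mathscr{F}_n(X)\to X$ is a continuous selection for $[X]^n$, and this applies for each $n\geq 3$. For (b)$\Rightarrow$(c): the odd integers $n\geq 3$ form a subfamily of all integers $n\geq 3$, so (c) is a formal consequence of (b). Hence the only implication with content is (c)$\Rightarrow$(a).

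For (c)$\Rightarrow$(a) I would argue by induction on $n\geq 3$ that $\mathscr{F}_n(X)$ has a continuous selection. The induction is anchored by the standing hypothesis that $X$ has a continuous weak selection, which is precisely a continuous selection for $\mathscr{F}_2(X)$ (recall that selections for $[X]^2$ and for $\mathscr{F}_2(X)$, together with their continuity, are identified). For the base case $n=3$: by (c) the hyperspace $[X]^3$ has a continuous selection, and $\mathscr{F}_2(X)$ has one by hypothesis, so Theorem \ref{theorem-Selections-GRF:3}, applied with its parameter equal to $2$, produces a continuous selection for $\mathscr{F}_3(X)$.

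For the inductive step, assume $\mathscr{F}_n(X)$ has a continuous selection for some $n\geq 3$, and distinguish two cases by the parity of $n$. If $n$ is odd, write $n=2k+1$ with $k\geq 1$; then Theorem \ref{theorem-Selections-GRF-v2:1}, applied to the continuous selection for $\mathscr{F}_{2k+1}(X)$, yields a continuous selection for $\mathscr{F}_{2k+2}(X)=\mathscr{F}_{n+1}(X)$. If $n$ is even (so $n\geq 4$), then $n+1$ is odd and at least $5$, hence $[X]^{n+1}$ has a continuous selection by (c); since $\mathscr{F}_n(X)$ has one by the inductive hypothesis and $n\geq 2$, Theorem \ref{theorem-Selections-GRF:3} again gives a continuous selection for $\mathscr{F}_{n+1}(X)$. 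In both cases the induction advances, so $\mathscr{F}_n(X)$ has a continuous selection for every $n\geq 3$, which is (a).

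There is no genuine obstacle in this argument, since the two cited theorems carry all the combinatorial weight (the selection-flow and decisive-partition machinery on complete graphs); the equivalence ``follows easily'' from them. The only points needing a little care are starting the induction from $\mathscr{F}_2(X)$ rather than $\mathscr{F}_3(X)$, and managing the parity so that Theorem \ref{theorem-Selections-GRF-v2:1} is always invoked with an odd index $2k+1$ and Theorem \ref{theorem-Selections-GRF:3} always with its parameter at least $2$.
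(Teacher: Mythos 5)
Your proof is correct and follows exactly the route the paper intends: the only nontrivial implication is (c)$\Rightarrow$(a), and your parity-based induction anchored at $\mathscr{F}_2(X)$, alternating Theorem \ref{theorem-Selections-GRF:3} (even-to-odd step, using the hypothesis (c)) with Theorem \ref{theorem-Selections-GRF-v2:1} (odd-to-even step), is precisely the ``follows easily by these theorems'' argument the paper leaves implicit. Nothing is missing.
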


This result was recently generalised in \cite[Theorem
5]{CruzChapital2022} by replacing odd numbers with prime numbers.

\begin{theorem}[\cite{CruzChapital2022}]
  \label{theorem-Selections-GRF-v2:2}
  For a space $X$, the following are equivalent\textup{:}
  \begin{enumerate}
  \item $\mathscr{F}_n(X)$ has a continuous selection for every
    $n\geq 2$.
  \item $[X]^n$ has a continuous selection for every $n\geq 2$.
  \item $[X]^n$ has a continuous selection for every prime $n\geq 2$.
  \end{enumerate}
\end{theorem}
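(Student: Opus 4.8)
The plan is to establish the cycle $(1)\Rightarrow(2)\Rightarrow(3)\Rightarrow(1)$. The first two implications are formal: since $[X]^n\subset\mathscr{F}_n(X)$, restricting a continuous selection for $\mathscr{F}_n(X)$ to $[X]^n$ gives $(1)\Rightarrow(2)$, and $(2)\Rightarrow(3)$ is immediate because every prime is $\geq 2$. Everything of substance is in $(3)\Rightarrow(1)$. I would first observe that $(3)$, applied to the prime $n=2$, furnishes a continuous weak selection $\sigma$ for $X$, so the selection-flow apparatus of Example \ref{ex:selection-flow} is available: each $S\in[X]^n$ carries the selection flow $\xi_S:\vec{\mathbf{E}}(K_n)\to\s^0$ on $K_n\cong S$ induced by $\sigma$, and on a suitable $\sigma$-decisive $\tau_V$-neighbourhood of $S$ the flow $\xi_T$ (hence any combinatorial object defined from it) is literally constant in $T$.

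The core of the argument is then to prove, by strong induction on $n$, that $[X]^n$ has a continuous selection for every $n\geq 2$. For $n$ prime the assertion is hypothesis $(3)$, so fix a \emph{composite} $n$ and assume the assertion for every $j$ with $2\leq j<n$. What is needed is a purely combinatorial lemma generalising Proposition \ref{proposition-partition-even}: for composite $n$, every selection flow $\xi:\vec{\mathbf{E}}(K_n)\to\s^0$ determines, by a rule depending on $\xi$ alone, a partition of $K_n$ into finitely many nonempty blocks each of cardinality strictly less than $n$. When some total flow $\varphi_\xi(a)$ is nonzero one simply uses the sets $\{a:\varphi_\xi(a)>0\}$, $\{a:\varphi_\xi(a)<0\}$, $\{a:\varphi_\xi(a)=0\}$, which by Proposition \ref{pr:flow-general} are of the required form; this covers every even $n$, since then each $\varphi_\xi(a)$ is a sum of $n-1$ terms $\pm1$ and hence odd. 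The genuinely new case is that of a \emph{regular} flow, all of whose total flows vanish — which forces $n$ odd — and there one must iterate the flow construction on canonically selected sub-configurations, compositeness of $n$ being exactly what prevents the iteration from stalling the way the $3$-cycle on $K_3$ does.

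Granting this lemma, the inductive step is routine. For $S\in[X]^n$ let $B(S)\subset S$ be a canonically chosen block of the partition of $S$ obtained by applying the lemma to $\xi_S$; then $1\leq|B(S)|<n$, so by the induction hypothesis there is a continuous selection $g_{|B(S)|}$ for $[X]^{|B(S)|}$, and I put $f(S)=g_{|B(S)|}(B(S))$ (the case $|B(S)|=1$ giving the value directly). On a $\sigma$-decisive neighbourhood of $S$ the flow, hence the block decomposition, is constant, so the assignment $T\mapsto B(T)\in[X]^{|B(S)|}$ is continuous there — it is a ``coordinate projection'' inside the decisive basic open set — and composing with the continuous $g_{|B(S)|}$ shows $f$ is continuous at $S$. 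This completes the induction and proves $(2)$.

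It remains to deduce $(1)$ from $(2)$. A continuous selection for $[X]^2$ extends uniquely to a continuous selection for $\mathscr{F}_2(X)$, and if $\mathscr{F}_n(X)$ has a continuous selection then, since $[X]^{n+1}$ does as well, Theorem \ref{theorem-Selections-GRF:3} produces one for $\mathscr{F}_{n+1}(X)$; induction on $n\geq2$ gives $(1)$ and closes the cycle. The main obstacle is plainly the combinatorial lemma in the odd composite case — exhibiting a canonically definable proper sub-structure inside an arbitrary regular tournament on a composite number of vertices — and this is precisely where the primality in hypothesis $(3)$ enters.
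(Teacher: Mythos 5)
Your overall architecture — the trivial implications, a strong induction for $(3)\Rightarrow(2)$ in which each $S\in[X]^n$ is assigned a canonically determined proper nonempty subset $B(S)$ and one sets $f(S)=g_{|B(S)|}(B(S))$, continuity via $\sigma$-decisive neighbourhoods, and finally $(2)\Rightarrow(1)$ by Theorem \ref{theorem-Selections-GRF:3} — is the right skeleton. The gap is the combinatorial lemma you rely on: it is false that for composite $n$ every selection flow $\xi:\vec{\mathbf{E}}(K_n)\to\s^0$ (equivalently, every tournament on $n$ vertices) determines, by a rule depending on $\xi$ alone, a proper nonempty canonically chosen block. For your construction to be continuous, the rule must commute with tournament isomorphisms: inside a $\sigma$-decisive set $\langle\mathscr{V}\rangle$ the flow on $T$ is identified with the flow on $S$ only through the bijection given by the members of $\mathscr{V}$, so $B(T)$ must correspond to $B(S)$ under that bijection; in particular $B(S)$ must be invariant under every automorphism of the tournament on $S$. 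But for every odd $n$ — composite or not — there are vertex-transitive regular tournaments, e.g.\ the rotational tournament on $\Z_n$ with $i\to_\sigma i+s$ for $1\leq s\leq (n-1)/2$ (take $n=9$), whose rotation $i\mapsto i+1$ is an automorphism; transitivity forces any invariant subset to be empty or all of $K_n$. So compositeness gives no traction whatsoever, and ``iterating the flow construction'' stalls at $n=9$ exactly as it does at the $3$-cycle; your proposal also never uses hypothesis (3) for the odd primes except when $n$ itself is prime, which is a structural sign that it cannot close the induction.

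What the actual argument of \cite{CruzChapital2022} does at this point (and this is precisely how the paper encapsulates it in Proposition \ref{proposition-Selections-GRF-v2:2} and Corollary \ref{corollary-Selections-GRF-v2:2}) is to break the symmetry with richer data: for composite $n$ choose a prime $p$ dividing $n$, restrict the continuous selection for $[X]^p$ — available by hypothesis (3) — to the $p$-element subsets of $S\in[X]^n$, and count preimages via $\varkappa_\sigma(x)=|\sigma^{-1}(x)|$. The divisibility lemma shows that $\varkappa_\sigma$ cannot be constant when $p$ divides $n$, so $A(S)=\{x\in S:\varkappa_\sigma(x)=\min\}$ is a nonempty proper subset determined equivariantly by the selection pattern on $[S]^p$; with this $A(S)$ in place of your $B(S)$, your inductive step and your decisive-neighbourhood continuity argument go through, and your even case and the reduction $(2)\Rightarrow(1)$ are fine as written. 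The essential point you are missing is that the canonical proper subset cannot be manufactured from the weak selection (the tournament) alone when $n$ is odd; it genuinely requires the $[X]^p$-selections for the odd prime divisors $p$ of $n$.
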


Theorem \ref{theorem-Selections-GRF-v2:2} was based on another
solution of Question \ref{question-Selections-GRF-v2:1} in
\cite{CruzChapital2022}. Briefly, for $m> p\geq 2$ and a selection
$\sigma:[K_m]^p\to K_m$, define a function
$\varkappa_\sigma:K_m\to \omega$ by
\begin{equation}
  \label{eq:Selections-GRF-v2:2}
  \varkappa_\sigma(x)=|\sigma^{-1}(x)|\quad \text{for every $x\in K_m$.}
\end{equation}
In other words, to each $x\in K_m$ the function $\varkappa_\sigma$
associates the number of the elements of the preimage
$\sigma^{-1}(x)$. The following interesting result was obtained in
\cite[Lemma 3]{CruzChapital2022}.

\begin{proposition}[\cite{CruzChapital2022}]
  \label{proposition-Selections-GRF-v2:2}
  If $p$ is a prime number, $m> p$ and $\sigma:[K_m]^p\to K_m$ is a
  selection such that $\varkappa_\sigma:K_m\to \omega$ is a constant
  function, then $p$ doesn't divide~$m$.  
\end{proposition}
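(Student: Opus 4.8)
The plan is to reduce the statement to an elementary divisibility property of binomial coefficients; the only input from the hypothesis is the trivial fact that the fibres of a selection partition its domain. First I would observe that, since $\sigma(S)\in S\subseteq K_m$ for every $S\in[K_m]^p$, the sets $\sigma^{-1}(x)$, $x\in K_m$, form a partition of $[K_m]^p$. Counting elements, $\sum_{x\in K_m}\varkappa_\sigma(x)=\big|[K_m]^p\big|=\binom{m}{p}$. If $\varkappa_\sigma$ is constant, say equal to $c\in\omega$, then $mc=\binom{m}{p}$; in particular $m$ divides $\binom{m}{p}$.

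Next I would rewrite this divisibility so that the prime $p$ is isolated. From the identity $\binom{m}{p}=\tfrac{m}{p}\binom{m-1}{p-1}$ one obtains $\binom{m}{p}/m=\binom{m-1}{p-1}/p$, and since the left-hand side is a nonnegative integer, so is the right-hand side; that is, $p\mid\binom{m-1}{p-1}$. It therefore remains only to show that $p\mid m$ would force $p\nmid\binom{m-1}{p-1}$: this contradicts what has just been derived, and so proves $p\nmid m$.

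To establish this last point, suppose $p\mid m$ and write $m=kp$. I would start from the integer identity
\[
  \binom{kp-1}{p-1}\,(p-1)!=(kp-1)(kp-2)\cdots(kp-p+1)
\]
and reduce it modulo $p$. For $1\le j\le p-1$ we have $kp-j\equiv -j\pmod p$, so the right-hand side is congruent to $(-1)^{p-1}(p-1)!\pmod p$. Since none of $1,\dots,p-1$ is divisible by $p$, the integer $(p-1)!$ is invertible modulo $p$; cancelling it gives $\binom{kp-1}{p-1}\equiv(-1)^{p-1}\equiv 1\pmod p$ (the last step is immediate for $p=2$, and holds for odd $p$ because then $p-1$ is even). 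Hence $p\nmid\binom{m-1}{p-1}$, the desired contradiction. Alternatively, one may read $\binom{m-1}{p-1}\bmod p$ off Lucas' theorem: since $(m-1)\bmod p=p-1$ precisely when $p\mid m$, in that case $\binom{m-1}{p-1}\equiv\binom{p-1}{p-1}=1\pmod p$.

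I do not expect a genuine obstacle; the argument is short. The only points requiring a little care are that the implication from $\varkappa_\sigma$ being constant to $m\mid\binom{m}{p}$ be phrased for honest nonnegative integers, so that passing to $\binom{m-1}{p-1}/p$ is legitimate, and that the small modular computation be carried out correctly, including the degenerate behaviour of $(-1)^{p-1}$ when $p=2$.
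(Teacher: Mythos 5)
Your argument is correct. Note that the paper itself gives no proof of this proposition, citing \cite[Lemma 3]{CruzChapital2022} instead; your write-up supplies a complete elementary one, and it is the natural argument: the fibres of $\sigma$ partition $[K_m]^p$, so constancy of $\varkappa_\sigma$ yields $mc=\binom{m}{p}$, i.e.\ $p\mid\binom{m-1}{p-1}$ via $\binom{m}{p}=\frac{m}{p}\binom{m-1}{p-1}$, while the congruence $\binom{kp-1}{p-1}\equiv(-1)^{p-1}\equiv1\pmod p$ (where primality of $p$ is genuinely used, to invert $(p-1)!$ modulo $p$) rules out $p\mid m$; your handling of the case $p=2$ and the Lucas-theorem alternative are both fine.
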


The following consequence of Proposition
\ref{proposition-Selections-GRF-v2:2} illustrates the corresponding
solution of Question \ref{question-Selections-GRF-v2:1} related to the
proof of Theorem \ref{theorem-Selections-GRF-v2:2}.

\begin{corollary}[\cite{CruzChapital2022}]
  \label{corollary-Selections-GRF-v2:2}
  Let $p$ be a prime number and $\sigma:[K_m]^p\to K_m$ be a selection
  for some $m> p$. If $p$ divides $m$, then
  $\varkappa_\sigma(x)\neq\varkappa_\sigma(y)$ for some points $x,y\in
  K_m$. 
\end{corollary}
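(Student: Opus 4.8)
The plan is to observe that this corollary is nothing more than the contrapositive of Proposition \ref{proposition-Selections-GRF-v2:2}, so the argument should be very short. I would argue by contradiction: suppose $p$ is a prime dividing $m$ (with $m > p$) and $\sigma:[K_m]^p\to K_m$ is a selection for which, contrary to the claim, $\varkappa_\sigma(x)=\varkappa_\sigma(y)$ for \emph{all} points $x,y\in K_m$. Then $\varkappa_\sigma:K_m\to \omega$ is a constant function.

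Applying Proposition \ref{proposition-Selections-GRF-v2:2} directly to this $\sigma$ (its hypotheses being exactly that $p$ is prime, $m>p$, and $\varkappa_\sigma$ constant), we conclude that $p$ does not divide $m$. This contradicts the standing assumption that $p$ divides $m$. Hence $\varkappa_\sigma$ cannot be constant, i.e.\ there exist points $x,y\in K_m$ with $\varkappa_\sigma(x)\neq\varkappa_\sigma(y)$, which is the assertion.

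As a sanity check on why the conclusion is plausible, note that $\sum_{x\in K_m}\varkappa_\sigma(x)=\big|[K_m]^p\big|=\binom{m}{p}$, so a constant value would have to equal $\binom{m}{p}/m=\tfrac1p\binom{m-1}{p-1}$; the real content — that this forced value is incompatible with $p\mid m$ — is precisely what Proposition \ref{proposition-Selections-GRF-v2:2} supplies, and I would simply invoke it rather than reprove it. Accordingly, there is no genuine obstacle here: the only ``step'' is recognizing that the statement is the logical contrapositive of the already-established proposition, and the proof is complete once that is spelled out.
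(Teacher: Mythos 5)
Your proof is correct and matches the paper's intent exactly: the corollary is stated there as an immediate consequence of Proposition \ref{proposition-Selections-GRF-v2:2}, being precisely its contrapositive, which is what you spell out. Nothing further is needed.
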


Intuitively, in the setting of Corollary
\ref{corollary-Selections-GRF-v2:2}, let
$\kappa=\min\{\varkappa_\sigma(x): x\in K_m\}$. Then by this
corollary, $A=\left\{x\in K_m: \varkappa_\sigma(x)=\kappa\right\}$ is
a nonempty subset of $K_m$ with $A\neq K_m$. Hence, it can be used to
define the required partition of $K_m$.\medskip

Answering a question of \cite{hrusak-martinez:09}, it was shown in
\cite[Example 2.5]{MR3705772} that for every $n\geq 2$ there exists a
connected second countable space $X$ such that $[X]^{n+1}$ has a
continuous selection, but $[X]^k$ has no continuous selection for
every $2\leq k\leq n$. It was further shown in \cite[Corollary
5.5]{MR3705772} that if $X$ is a connected space with a continuous
selection for $[X]^n$ for some $n\geq 2$, then $[X]^{n+1}$ has at
least two continuous selections. Evidently, in this case, $[X]^k$
has a continuous selection for every $k\geq n$. Based on this, the
following question was posed in \cite[Problem 3]{CruzChapital2022}.

\begin{question}[\cite{CruzChapital2022}]
  \label{question-ext-sel-v42:2}
  For a space $X$ and $m>n\geq 2$, does the existence of a continuous
  selection for $\mathscr{F}_m(X)\setminus \mathscr{F}_n(X)$ imply
  that $[X]^k$ also has a continuous selection for some $k>m$?
\end{question}

Here is also a related question.

\begin{question}
  \label{question-ext-sel-v42:3}
  Does there exist an integer $n\geq 2$ such that if $X$ is a space
  which has a continuous selection for $\mathscr{F}_n(X)$, then
  $\mathscr{F}_{k}(X)$ also has a continuous selection for every
  $k\geq n$?
\end{question}

\section{Appendix: Selections and Tournaments}
\label{sec:append-tourn-select}

In this Appendix, we will briefly summarise several captivating
interpretations of weak selections in terms of tournaments on complete
graphs which are complementary to the results of the previous
section. They illustrate a natural relationship with the hyperspace
selection problem --- a problem with a fascinating history and
appealing applications.

\subsection*{Tournaments}

A \emph{directed graph}, or \emph{digraph} for short, consists of a
finite set $V$ of points (its vertices) together with a subset of
$V\times V$, whose elements are called \emph{lines}. Each line
$(u, v) =uv$ is directed and goes from its first point $u$ to a
different second point $v$, so that a digraph is
\emph{irreflexive}. In this case, we say that $u$ is \emph{adjacent
  to} $v$ and $v$ is \emph{adjacent from} $u$.  A digraph is
\emph{asymmetric} if whenever a line $uv$ is in it, then $vu$ is not
in it. A digraph is \emph{transitive} if for every three distinct
points $u, v$ and $w$, the existence of lines $uv$ and $vw$ implies
that $uw$ is also a line in it. In a complete digraph, for any two
vertices $u$ and $v$, there exists one of the lines $uv$ or $vu$. A
\emph{tournament} is a complete asymmetric digraph. \medskip

To each digraph we may associate the ``reverted'' direct graph
commonly called the \emph{converse}. It is simply defined by reverting
each line, in other words changing adjacent ``to'' to adjacent
``from''. A valuable principle in the theory of directed graphs is
that each theorem about digraphs has a corresponding theorem which is
obtained by replacing every concept by its converse. So, in some
sense, studying digraphs we may identify each such graph with its
converse.

\subsection*{Weak Selections}

Each digraph with a vertex set $V$ can be identified with a binary
relation ``$<$'' on $V$ such that $uv$ is line precisely when $v<u$.
This relation has all properties of a partial order on $V$ except of
being transitive. In fact, in these terms, a digraph is transitive
precisely when its vertex set is partially ordered in the sense of the
above relation. Furthermore, for a tournament $T$ on $V$, the
corresponding relation $<_T$ on $V$ is a ``strict'' \emph{selection
  relation} on $V$ in the sense of \cite{gutev-nogura:03b}. Namely,
the tournament $T$ on $V$ is identical to a weak selection
$\sigma:[V]^2\to V$ for which $u<_T v$ precisely when $u<_\sigma v$,
i.e.\ when $\sigma(\{u,v\})=u$, see page
\pageref{page-strict-order}. Evidently, the converse is also true, and
each weak selection for $V$ defines a tournament on $V$. Thus,
tournaments and weak selections are actually identical in the setting
of finite sets. In fact, this interpretation was implicitly used in
\cite{hrusak-martinez:09} and \cite{MR2944781}, where for a weak
selection $\sigma$ on $V$, the authors used $u\to_\sigma v$ to express
that $\sigma(\{u,v\})=v$. In what follows, a tournament on $V$ will be
identified with the corresponding weak selection $\sigma:[V]^2\to
V$. In this setting, $uv$ is a line for some $\{u,v\}\in [V]^2$ if and
only if
\begin{equation}
  \label{eq:ext-sel-v43:1}
  \sigma(\{u,v\})=v\quad\iff\quad v<_\sigma u\quad\iff\quad
  u\to_\sigma v.
\end{equation}
We may extend this idea to digraphs, but here we will be mainly
interested in tournaments.

\subsection*{Pecking Order}

It was through the careful observation of a backyard chicken flock
that one of the most important principles of social biology was
uncovered --- one that applies equally well to humans. For over 4,000
years, since the Red Jungle Fowl was first domesticated in Southern
Asia, farmers had noticed that a flock of hens was a very orderly
group.  At feeding time, the dominant birds in the flock would eat
first, picking out the best morsels, then came the less-dominant
birds, and finally the least dominant who got whatever was
left. Farmers knew that if anything happened to disrupt this order,
there would be a brief period of discord as birds fought with each
other to re-establish dominance, then peace would reign once
again. The dominant chicken asserts its dominance by pecking the other
chicken on the head and neck, whence the phrase ``\emph{pecking
  order}''. However, it rarely happens that this pecking order is
linear. That is, it is rare that there is a first chicken who pecks
all the others, a second chicken who pecks all but the first, and so
on. The term ``\emph{pecking order}'' was coined by Thorleif
Schjelderup-Ebbe in 1921, when he studied dominance hierarchy in
chickens and other birds. Not surprisingly, this ``pecking order'' is
nothing else but the relation $<_\sigma$ generated by a weak selection
$\sigma$ on a finite flock of chickens $V$, i.e.\ a tournament
$\sigma:[V]^2\to V$.

\begin{question}
  \label{question-Selections-GRF-v3:1}
  Given that the pecking order may not be linear, is there still a
  reasonable way to designate a most dominant chicken, i.e.\ a
  \emph{king}?
\end{question}

In 1951, the question was resolved by H. G. Landau
\cite{MR0041412,MR54932,MR54933} (see also \cite{MR567954}) who showed
that any finite flock of chickens has a most dominant one, namely a
\emph{king}. Landau's mathematical model was based on Graph Theory and
became known as ``The King Chicken Theorem''. To state his result, we
briefly review some concepts regarding tournaments or, equivalently,
weak selections. \medskip

For a tournament $\sigma:[V]^2\to V$ on a finite set $V$, the
\emph{outdegree} of $v\in V$, denoted by $\od_\sigma(v)$, is the
number of points adjacent from $v$; its \emph{indegree},
$\id_\sigma(v)$, is the number of points adjacent to $v$. In other
words, according to \eqref{eq:ext-sel-v43:1}, 
\begin{equation}
  \label{eq:ext-sel-v43:2}
  \begin{cases}
    \od_\sigma(v)=\big|\{u\in V: v\to_\sigma u\}\big|=\big|\{u\in V:
    u<_\sigma v\}\big|,\quad \text{and}\\
    \id_\sigma(v)\hspace{3pt}=\big|\{u\in V: u\to_\sigma v\}\big|=\big|\{u\in V:
    v<_\sigma u\}\big|= |\sigma^{-1}(v)|.
  \end{cases}
\end{equation}

A \emph{transmitter} is a point with positive outdegree and zero
indegree; a \emph{receiver} has positive indegree and zero
outdegree. In terms of the pecking order $<_\sigma$, a point $v\in V$
is a transmitter precisely when it is the $\leq_\sigma$-maximal
element of $V$:
\begin{equation}
  \label{eq:ext-sel-v43:3}
  v\to_\sigma u\quad\text{for every $u\in V\setminus\{v\}$.}
\end{equation}
Similarly, $v\in V$ is a receiver when it is the $\leq_\sigma$-minimal
element of $V$. Here, let us explicitly remark that $<_\sigma$ is not
necessarily a linear order on $V$, hence it may fail to be
transitive. In \cite{MR567954}, a point $v\in V$ was called a
\emph{$\sigma$-emperor} if it is the $\leq_\sigma$-maximal element of
$V$, i.e.\ when $v$ is a transmitter for the tournament $\sigma$ on
$V$. Thus, $V$ can have at most one $\sigma$-emperor (equivalently, a
transmitter). The dual concept was also defined in \cite{MR567954},
and $v\in V$ was called a \emph{$\sigma$-slave} if it is the
$\leq_\sigma$-minimal element of $V$, i.e.\ a receiver. In other
words, a slave $v\in V$ is ``pecked'' by all other elements (chickens)
of $V$. An element of $V$ which is neither a $\sigma$-emperor nor a
$\sigma$-slave, was called a \emph{$\sigma$-serf} \cite{MR567954}. The
concept of a $\sigma$-emperor is very restrictive, somehow
representing an extreme case. A less restrictive concept is a
$\sigma$-king. An element $v\in V$ is a \emph{$\sigma$-king}
\cite{MR567954} if for every other $u\in V$ there exists $w\in V$ such
that $u \leq_\sigma w\leq_\sigma v$, namely when 
\begin{equation}
  \label{eq:ext-sel-v43:4}
  v\to_\sigma u\qquad\text{or}\qquad v\to_\sigma w\to_\sigma
  u,\ \ \text{for some $w\in V$.}
\end{equation}
The following theorem was proved by Landau \cite{MR54933}, for the
second part see \cite{MR567954}.  

\begin{theorem}[\cite{MR54933,MR567954}]
  \label{theorem-Selections-GRF-v3:1}
  Every tournament $\sigma:[V]^2\to V$ on a finite set $V$ has a
  $\sigma$-king. Moreover, $V$ has exactly one $\sigma$-king if and
  only if that $\sigma$-king is a $\sigma$-emperor.
\end{theorem}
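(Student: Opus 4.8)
The plan is to prove the two assertions of Theorem~\ref{theorem-Selections-GRF-v3:1} separately, using a counting argument for the existence of a $\sigma$-king and a short direct argument for the uniqueness characterisation.

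\emph{Existence of a $\sigma$-king.} First I would pick a point $v\in V$ of maximum outdegree, i.e.\ $\od_\sigma(v)=\max_{u\in V}\od_\sigma(u)$; such a point exists since $V$ is finite. I claim that $v$ is a $\sigma$-king in the sense of \eqref{eq:ext-sel-v43:4}. Suppose not; then there is some $u\in V\setminus\{v\}$ with $v\not\to_\sigma u$ (so $u\to_\sigma v$) and, for every $w$ with $v\to_\sigma w$, we do \emph{not} have $w\to_\sigma u$, hence $u\to_\sigma w$. Thus $u$ is adjacent to $v$ and to every point in the out-neighbourhood $\{w: v\to_\sigma w\}$ of $v$. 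Consequently $\od_\sigma(u)\geq \od_\sigma(v)+1$, since $u$ dominates everything $v$ dominates plus $v$ itself. This contradicts the maximality of $\od_\sigma(v)$, so $v$ is a $\sigma$-king. I expect this to be the cleanest step; the only thing to be careful about is that $\leq_\sigma$ is total and antisymmetric (stated on page~\pageref{page-strict-order}), so ``$v\not\to_\sigma u$ and $u\neq v$'' really does force $u\to_\sigma v$, and similarly for the $w$'s.

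\emph{Uniqueness characterisation.} For the second assertion, one direction is immediate: if the unique $\sigma$-king $v$ is a $\sigma$-emperor, that is already a statement to be \emph{proved}, not assumed, so I will argue as follows. First, if $v$ is a $\sigma$-emperor (a transmitter), then $v\to_\sigma u$ for all $u\neq v$ by \eqref{eq:ext-sel-v43:3}, so $v$ is trivially a $\sigma$-king; and no other $u\neq v$ can be a $\sigma$-king, because reaching $v$ from $u$ in at most two steps would require either $u\to_\sigma v$ or $u\to_\sigma w\to_\sigma v$ for some $w$, both impossible since $v$ is a transmitter. Hence a $\sigma$-emperor, when it exists, is the unique $\sigma$-king. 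Conversely, suppose $v$ is the unique $\sigma$-king but is \emph{not} a $\sigma$-emperor; I must produce a second $\sigma$-king. Since $v$ is not a transmitter, the in-neighbourhood $I=\{u\in V: u\to_\sigma v\}$ is nonempty. Consider the sub-tournament $\sigma\uhr I$ (the restriction of $\sigma$ to $[I]^2$, which is again a tournament on the finite set $I$). By the already-proved existence part, $I$ has a $\sigma$-king $k$ \emph{within $I$}. I claim $k$ is a $\sigma$-king of $V$, contradicting uniqueness. Indeed, take any $u\in V\setminus\{k\}$. If $u\in I$, then $k$ reaches $u$ in at most two steps within $I$. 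If $u=v$, then $k\to_\sigma v$ since $k\in I$. If $u\in V\setminus(I\cup\{v\})$, then $v\to_\sigma u$ (as $u$ is not an in-neighbour of $v$), and $k\to_\sigma v\to_\sigma u$ gives a length-two path from $k$ to $u$. So $k$ is a $\sigma$-king of $V$ with $k\neq v$ (as $k\in I$ and $v\notin I$), contradicting uniqueness. Therefore $v$ must be a $\sigma$-emperor.

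\emph{Where the difficulty lies.} The existence argument via maximum outdegree is standard and short. The genuine work is in the converse of the uniqueness claim, where the key device is to restrict the tournament to the in-neighbourhood of the presumed king and invoke the existence part recursively; the subtlety is to check that a king of that sub-tournament is genuinely a king of the whole $V$, which uses that every vertex outside $I\cup\{v\}$ is dominated by $v$ (hence reached from the sub-king in two steps through $v$). One should also note explicitly that $\leq_\sigma$ need not be transitive, so these length-two paths cannot be collapsed, but that does not obstruct the argument since the definition \eqref{eq:ext-sel-v43:4} of $\sigma$-king already allows two steps.
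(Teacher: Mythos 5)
Your proof is correct. Note that the paper does not prove this statement itself (it is quoted from Landau and Maurer); your existence step is exactly the classical maximal-outdegree argument, which the paper records separately as Theorem \ref{theorem-Selections-GRF-v3:3} (a point of maximal $\sigma$-score is a $\sigma$-king), and your converse step --- taking a king of the sub-tournament on the nonempty in-neighbourhood $I=\{u\in V: u\to_\sigma v\}$ of a non-emperor king and checking it is a king of all of $V$ via the two-step routes through $v$ --- is the standard argument from the cited sources, so there is nothing essentially different to compare.
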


Landau's result was extended to infinite tournaments on topological
spaces by Nagao and Shakhmatov. In \cite{MR2944781}, they called a
topological space $X$ to be a \emph{king space} if $X$ has a
continuous weak selection, and every continuous weak selection
$\sigma$ for $X$ has a \emph{$\sigma$-king}, i.e.\ a point $q\in X$
such that for every $x\in X$ there exists $y\in X$ with
$x\leq_\sigma y\leq_\sigma q$. Next, they showed that every compact
space with a continuous weak selection is a king space \cite[Theorem
2.3]{MR2944781}. In the inverse direction, Nagao and Shakhmatov showed
that each king space is compact whenever it is linearly ordered
(\cite[Corollary 3.3]{MR2944781}); or pseudocompact, or
zero-dimensional, or locally connected (\cite[Theorem
3.5]{MR2944781}). Subsequently, answering a question of
\cite{MR2944781}, it was shown in \cite[Theorem 4.1]{MR3640030} that
each locally pseudocompact king space is also compact. A somewhat
weaker concept of a king space, called a \emph{quasi-king space}, was
introduced in \cite{Gutev2018b}. It was motivated by some related
results about coarser compact topologies, see \cite[Theorem
1.1]{Gutev2018b}.

\subsection*{``Bums'' and ``Deadbeats''}

The directional dual of the next theorem was stated by Silverman
\cite{silverman1962problem} in the following picturesque terminology:
Consider a club in which among any two members, one is a creditor of
the other. A ``\emph{bum}'' is defined as a member who is in debt to
everyone else. A ``\emph{deadbeat}'' is not a ``bum'', but for each
member he does not owe, he owes someone who owes this member. Then if
the club has no bums, it has at least three deadbeats.\medskip

\begin{theorem}
  \label{theorem-Selections-GRF-v4:1}
  If a tournament $\sigma$ on $V$ has no $\sigma$-emperor, then it contains
  at least three points each of which can reach every point in at most
  two steps.
\end{theorem}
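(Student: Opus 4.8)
The plan is to pass to the tournament/weak-selection dictionary of \eqref{eq:ext-sel-v43:1}--\eqref{eq:ext-sel-v43:4} and prove the sharper statement that a tournament $\sigma$ on a finite set $V$ with no $\sigma$-emperor has at least three $\sigma$-kings. Under this dictionary, ``$x$ can reach every point in at most two steps'' is precisely the condition \eqref{eq:ext-sel-v43:4} that $x$ is a $\sigma$-king, while ``$\sigma$ has no $\sigma$-emperor'' says that no $v\in V$ is $\leq_\sigma$-maximal, i.e.\ $\id_\sigma(v)\geq 1$ (equivalently $\sigma^{-1}(v)\neq\emptyset$) for every $v\in V$; in particular $|V|\geq 3$. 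So the whole statement reduces to producing three $\sigma$-kings.

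The engine of the argument is the following lemma, which I would prove first: \emph{if $y\in V$ has $\id_\sigma(y)\geq 1$, then there is a $\sigma$-king $k$ with $k\to_\sigma y$.} To see this, let $A=\{x\in V:x\to_\sigma y\}$, which is nonempty by assumption, and choose $k\in A$ of maximal outdegree $\od_\sigma$ among the finitely many members of $A$. If $k$ were not a $\sigma$-king, there would be some $z\in V\setminus\{k\}$ with $k\not\to_\sigma z$ and with no $w$ satisfying $k\to_\sigma w\to_\sigma z$; since $\sigma$ is a tournament this forces $z\to_\sigma k$, and for every $w$ with $k\to_\sigma w$ it forces $z\to_\sigma w$ (note $w\neq z$, else $k\to_\sigma z$). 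Hence the out-neighbourhood of $k$ is contained in that of $z$, while $k$ lies in the out-neighbourhood of $z$ but not in its own; therefore $\od_\sigma(z)\geq\od_\sigma(k)+1$. Since $k\to_\sigma y$, we also get $z\to_\sigma y$, i.e.\ $z\in A$, contradicting the maximality of $\od_\sigma(k)$ over $A$. Thus $k$ is a $\sigma$-king, and $k\to_\sigma y$ by construction. I expect this outdegree-swap --- in particular checking that the obstructing vertex $z$ really falls back inside $A$ --- to be the one delicate spot; everything else is bookkeeping with \eqref{eq:ext-sel-v43:1}.

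With the lemma available, assume $\sigma$ has no $\sigma$-emperor and let $\mathscr{K}\subset V$ be the set of $\sigma$-kings. By Theorem \ref{theorem-Selections-GRF-v3:1}, $\mathscr{K}\neq\emptyset$, and since no $\sigma$-king is a $\sigma$-emperor, the same theorem gives $|\mathscr{K}|\geq 2$. Suppose, towards a contradiction, that $\mathscr{K}=\{k_1,k_2\}$ and, relabelling if necessary, that $k_1\to_\sigma k_2$ (one of $k_1\to_\sigma k_2$, $k_2\to_\sigma k_1$ holds, $\sigma$ being a tournament). As $\id_\sigma(k_1)\geq 1$, the lemma produces a $\sigma$-king $k'$ with $k'\to_\sigma k_1$, so $k'\in\{k_1,k_2\}$. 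But $k'=k_1$ is impossible by irreflexivity of $\sigma$, and $k'=k_2$ gives $k_2\to_\sigma k_1$, contradicting $k_1\to_\sigma k_2$ by asymmetry. Hence $|\mathscr{K}|\geq 3$, which is exactly the assertion of the theorem: translating back, the three members of $\mathscr{K}$ are three points of $V$ each of which reaches every other point along a directed path of length at most two.
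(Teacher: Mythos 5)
Your proposal is correct, but it follows a genuinely different route from the paper. The paper's proof is a short cycle argument: it takes a $\sigma$-king $v$ (Theorem \ref{theorem-Selections-GRF-v3:1}), uses the absence of a $\sigma$-emperor to find $u$ with $u\to_\sigma v$, uses \eqref{eq:ext-sel-v43:4} to find $w$ with $v\to_\sigma w\to_\sigma u$, and then declares that the three points of the resulting cycle $v\to_\sigma w\to_\sigma u\to_\sigma v$ are all $\sigma$-kings. You instead prove the sharper lemma that every point of positive indegree is dominated by some $\sigma$-king, by taking a point of maximal outdegree in $A=\{x\in V: x\to_\sigma y\}$ and running the standard outdegree-swap (your verification that the obstructing vertex $z$ lands back in $A$, via $y\in\mathbf{N}(k)$ being beaten by $z$, is exactly the step that makes the restriction to $A$ work), and you then combine this with both clauses of Theorem \ref{theorem-Selections-GRF-v3:1} and asymmetry to exclude $|\mathscr{K}|\le 2$. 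What the paper's sketch buys is brevity and a vivid picture of three mutually reachable kings; what your argument buys is rigour precisely where the paper is terse: for an arbitrary choice of $u$ and $w$ as above, the companions $u$ and $w$ on that cycle need not be $\sigma$-kings (one must choose them more carefully, e.g.\ of maximal score among the relevant dominators), so the paper's ``evidently'' is hiding the real work, which is in effect supplied by your lemma. Your lemma is also a stronger intermediate statement in its own right: from it one can read off directly the chain $k_3\to_\sigma k_2\to_\sigma k_1$ of three distinct kings, so your contradiction via the uniqueness clause could even be replaced by that one-line iteration.
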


The idea of Theorem \ref{theorem-Selections-GRF-v4:1} is simple. By
Theorem \ref{theorem-Selections-GRF-v3:1}, $V$ has a $\sigma$-king
$v\in V$. Since $v$ is not a $\sigma$-emperor, by
\eqref{eq:ext-sel-v43:3}, $u\to_\sigma v$ for some $u\in V$. However,
$v$ is a $\sigma$-king, and it follows from \eqref{eq:ext-sel-v43:4}
that $v\to_\sigma w\to_\sigma u$ for some $w\in V$. Evidently, this
defines a cycle $v\to_\sigma w\to_\sigma u\to_\sigma v$ and each one
of these points is a $\sigma$-king.

\subsection*{Round-Robin Tournaments}

A \emph{round-robin} tournament (or \emph{all-play-all} tournament) is
a competition in which each contestant meets all other contestants in
turn. It follows from \eqref{eq:ext-sel-v43:2} that if $v\in V$ is any
point in a tournament $\sigma$ on a set $V$ with $n\geq2$ points, then
$\od_\sigma(v)+\id_\sigma(v)=n-1$. In the tournament of a round-robin
competition, the outdegree of a point is the number of victories won
by that player. For this reason, we shall call the outdegree of a
point $v$ of a tournament its \emph{score}, denoted by
$s_\sigma(v)$. In case the elements of $V$ are ordered as
$\{v_1,v_2,\dots, v_n\}$, we will simply write that
$s_i=s_\sigma(v_i)$. The \emph{score sequence} of a tournament
$\sigma$ is the ordered sequence of integers
$\mathbf{S}_\sigma=(s_1, s_2,\dots, s_n)$ corresponding to the ordered
set of vertices $V=\{v_1,v_2,\dots, v_n\}$. It will be convenient to
order the points $v_i$ in such a way that
$s_1\leq s_2\leq \dots \leq s_n$. The following theorem by Landau
\cite{MR54933} gives a necessary and sufficient condition for a
sequence of non-negative integers to be the scores of some
tournament. It is based on the fact that the total number of lines in
a tournament $\sigma$ on $V$ is $\binom{n}2=\frac{n(n-1)}2$.

\begin{theorem}[\cite{MR54933}]
  \label{theorem-Selections-GRF-v3:2}
  A sequence of non-negative integers $s_1\leq s_2\leq \dots \leq s_n$
  is a score sequence for a tournament $\sigma$ on a set
  $V=\{v_1,v_2,\dots, v_n\}$ if and only if
  \begin{equation}
    \label{eq:Selections-GRF-v3:1}
    \sum_{i=1}^n s_i= \frac{n(n-1)}2\qquad \text{and}\qquad \sum_{i=1}^k
    s_i\geq \frac{k(k-1)}2,\ 1\leq k<n. 
  \end{equation}
\end{theorem}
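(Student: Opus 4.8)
The plan is to prove necessity by a short counting argument and sufficiency by converting the existence of a tournament with prescribed scores into a transportation (assignment) feasibility statement. For necessity, fix a tournament $\sigma$ on $V=\{v_1,\dots,v_n\}$. For any $W\subseteq V$ the restriction $\sigma\uhr W$ is a tournament on $W$, so $\sum_{v\in W}s_{\sigma\uhr W}(v)$ equals its number of lines, $\binom{|W|}{2}$; since $s_\sigma(v)\ge s_{\sigma\uhr W}(v)$ for every $v\in W$, we get $\sum_{v\in W}s_\sigma(v)\ge\binom{|W|}{2}$, with equality when $W=V$. Reading this with $W=\{v_1,\dots,v_k\}$, the $k$ points of least score, yields exactly the relations in \eqref{eq:Selections-GRF-v3:1}.

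For sufficiency, assume $s_1\le\dots\le s_n$ satisfy \eqref{eq:Selections-GRF-v3:1}. First I would strengthen the hypothesis to the subset form $\sum_{v\in W}s_v\ge\binom{|W|}{2}$ for all $W\subseteq V$: if $|W|=k$, the scores of the members of $W$ are $k$ of the numbers $s_1,\dots,s_n$, hence sum to at least $s_1+\dots+s_k\ge\binom{k}{2}$. Now a tournament on $V$ with $s_\sigma(v_i)=s_i$ amounts to choosing, for each pair $e=\{v_i,v_j\}\in[V]^2$, a ``winner'' $f(e)\in e$ with $|f^{-1}(v)|=s_v$ for every $v\in V$ (declare $v_iv_j$ a line exactly when $f(\{v_i,v_j\})=v_i$, so that the outdegree of $v_i$ is $|f^{-1}(v_i)|=s_i$). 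Such an $f$ exists by the max-flow--min-cut theorem applied to the network consisting of a source joined to each pair-node $e$ with capacity $1$, each $e$ joined to its two endpoint vertices with capacity $\infty$, and each vertex $v$ joined to a sink with capacity $s_v$: every finite cut is described by the set $W\subseteq V$ of vertices left on the source side (every source-side pair-node must then lie inside $W$), with value $\binom{n}{2}-\binom{|W|}{2}+\sum_{v\in W}s_v\ge\binom{n}{2}$ by the strengthened hypothesis, and the value $\binom{n}{2}$ is attained at $W=\emptyset$, so the maximum flow equals the minimum cut $\binom{n}{2}=|[V]^2|$. Since also $\sum_v s_v=\binom{n}{2}$, an integral maximum flow saturates every source- and every sink-arc, so each pair $e$ sends its unit of flow to exactly one endpoint; taking that endpoint to be $f(e)$ gives $|f^{-1}(v)|=s_v$ for all $v$ and hence the desired tournament.

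The one step needing care is the equivalence, in the last paragraph, between the cut inequalities and feasibility of $f$; this is a standard consequence of max-flow--min-cut (equivalently of the defect form of Hall's theorem, or Gale's feasibility criterion for transportation problems), which I would either cite or prove by the short cut-counting indicated above. If a self-contained argument is preferred, one can instead induct on $n$: when some proper prefix sum in \eqref{eq:Selections-GRF-v3:1} is tight, say $\sum_{i\le k}s_i=\binom{k}{2}$ with $k$ minimal, the sequences $(s_1,\dots,s_k)$ and $(s_{k+1}-k,\dots,s_n-k)$ each satisfy the hypotheses for fewer vertices (using $s_{k+1}\ge\binom{k+1}{2}-\binom{k}{2}=k$ and the identity $\binom{a+b}{2}=\binom{a}{2}+ab+\binom{b}{2}$), so induction realizes each by a tournament and one glues them by orienting every edge from the second block to the first; when every proper prefix sum is strict, one deletes $v_n$, making it lose to a suitably chosen set of $n-1-s_n$ vertices whose scores then drop by $1$, and checks that after re-sorting the resulting $(n-1)$-term sequence still satisfies \eqref{eq:Selections-GRF-v3:1}, the slack provided by strictness being exactly what pays for the decrements. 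In this second route the main obstacle — the part I would be most careful about — is handling ties in the score sequence when re-sorting after the deletion, so as to keep every prefix sum at least $\binom{k}{2}$.
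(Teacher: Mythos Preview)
The paper does not give its own proof of this theorem: it is stated in the appendix as a classical result of Landau, attributed to \cite{MR54933}, with only the motivating remark that the total number of lines in a tournament on $n$ points is $\binom{n}{2}$; the text then moves directly to illustrative examples. So there is no proof in the paper to compare against.

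That said, your argument is sound. The necessity half is the standard counting: restricting $\sigma$ to any $W\subset V$ forces $\sum_{v\in W}s_\sigma(v)\ge\binom{|W|}{2}$, and taking $W$ to be the $k$ vertices of least score gives \eqref{eq:Selections-GRF-v3:1}. For sufficiency, your max-flow construction is correct and your cut computation is accurate --- the finite cuts are parametrised exactly by the subsets $W\subset V$ of vertices kept on the source side, with value $\binom{n}{2}-\binom{|W|}{2}+\sum_{v\in W}s_v$, and the subset form of the hypothesis shows this is always at least $\binom{n}{2}$. The alternative inductive route you sketch is essentially Landau's original proof; the one point you flag (re-sorting after decrementing some scores in the non-tight case) does need care, but the slack in every proper prefix sum is precisely what absorbs the possible drop of one unit in each prefix, so the verification goes through. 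Either route would serve as a complete proof; the flow argument is cleaner and avoids the bookkeeping with ties.
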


To illustrate the above theorem, consider a basketball league
consisting of ten teams in which each team plays every other team
once, see \cite{Harary1966}. Since no game can end in a tie, the
digraph of the outcomes of all games at the end of the season is a
tournament. What are the possible distributions of the number of
victories among the teams?  Clearly, each distribution must satisfy
condition \eqref{eq:Selections-GRF-v3:1} of Theorem
\ref{theorem-Selections-GRF-v3:2}. This fact provides information
concerning certain questions that may be asked about the final
standings of the team. For example, what is the largest number of
teams that can have a winning season? The answer is nine, since the
sequence of integers $(0, 5, 5, 5, 5, 5, 5, 5, 5, 5)$ satisfies the
condition of Theorem \ref{theorem-Selections-GRF-v3:2} and no sequence
containing ten integers, all greater than 4, does. Can the season end
in a complete tie? Clearly not, since the average of the scores is not
an integer.\medskip

Here is another example. For the complete graph $K_5$ of five
vertices, according to Theorem \ref{theorem-Selections-GRF-v3:2},
there exists a tournament $\sigma:[K_5]^2\to K_5$ with a score
sequence $(2,2,2,2,2)$. In other words, $\sigma$ cannot distinguish
between the vertices of $K_5$, see Question
\ref{question-Selections-GRF-v2:1}. Evidently, this can be generalised
for any complete graph $K_{2n+1}$ having an odd number of
vertices.\medskip

For an interesting discussion on round-robin tournaments, the
interested reader is referred to \cite{Harary1966}. 

\subsection*{Communication Networks}

A \emph{communication network} consists of a set of people
$v_1,v_2,\dots, v_n$ such that between some pairs of persons there is
a communication link. Such a link may be either one-way or two-way. A
two-way communication link might be made by telephone or radio, and a
one-way link by sending a messenger, email, lighting a signal light,
setting off an explosion, etc.  The symbol ``$\to$'' is used to
indicate the latter sort of connection; thus $v_i \to v_j$ means that
individual $v_i$ can communicate with individual $v_j$ (in that
direction). The only requirement that is placed on the symbol
``$\to$'' is that $v_i\to v_i$ is false for any $i$; that is, an
individual cannot (or need not) communicate with himself. This brings
an interpretation of a communication network as a directed graph in
which lines $v_iv_j$ and $v_jv_i$ are allowed. 

\subsection*{Communication Matrices}

Communication networks can be represented by means of square matrices
$C$ having only $0$ and $1$ entries, which are called
\emph{communication matrices}. The entry in the $i$-th row and $j$-th
column of $C$ is equal to $1$ if $v_i$ can communicate with $v_j$ (in
that direction) and otherwise equal to $0$. The diagonal entries of
$C$ are all equal to $0$. This is true in general for any
communication matrix, since the matrix restatement of the condition of
a communication network is that all entries $c_{ii}$ of this matrix
$C$ must be $0$. Moreover, it is not hard to see that any matrix
having only $0$ and $1$ entries, and with all zeros down the main
diagonal, is a communication matrix of some network.  Regarding
examples of such communication matrices, the interested reader is
referred to \cite{MR0084454,MR0360048}.

\subsection*{Dominance Relations}

A \emph{dominance relation} is a special kind of a communication
network which is a tournament. Namely, for each $i\neq j$, we have
either $v_i\to v_j$ or $v_j\to v_i$, but not both. Dominance relations
can also be defined by communication matrices, which are called
\emph{dominance matrices}. 
In these terms, the following property of communication networks was
stated in \cite{MR0360048}.

\begin{theorem}[\cite{MR0360048}]
  \label{theorem-Selections-GRF-v3:4}
  Let a communication network of $n$ individuals be such that, for
  every pair of individuals, at least one can communicate in one stage
  with the other. Then there is at least one who can communicate with
  every other person in either one or two stages. Similarly, there is
  at least one person who can be communicated within one or two stages
  by every other person.
\end{theorem}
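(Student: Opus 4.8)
The plan is to reduce the statement to Landau's King Chicken Theorem (Theorem \ref{theorem-Selections-GRF-v3:1}) by passing to a spanning sub-tournament. Identify the communication network with an irreflexive digraph $D$ on a finite vertex set $V$ such that for every $\{u,v\}\in[V]^2$ at least one of the lines $uv$, $vu$ belongs to $D$. The only feature that distinguishes $D$ from a tournament is the presence of two-way links, i.e.\ pairs $\{u,v\}$ for which both $uv$ and $vu$ lie in $D$. So the first move is: for each such pair, delete one of the two lines, chosen arbitrarily. This produces a spanning subdigraph $T$ of $D$ in which every pair is joined by exactly one line; that is, $T$ is a tournament on $V$, and — crucially — its line set is contained in that of $D$.

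Next, apply Theorem \ref{theorem-Selections-GRF-v3:1} to the tournament $T$: there is a $\sigma_T$-king $q\in V$, i.e.\ a vertex such that for every $u\in V\setminus\{q\}$ one has $q\to u$ in $T$, or $q\to w\to u$ in $T$ for some $w\in V$, in the sense of \eqref{eq:ext-sel-v43:4}. Since every line of $T$ is a line of $D$, the very same witness (or pair of witnesses) shows that in the network $D$ the individual $q$ can communicate with every other person in one or two stages. This proves the first assertion.

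For the second assertion, apply the first one to the converse digraph $D'$, obtained from $D$ by reversing every line. The converse of a communication network is again a communication network with the required completeness property, so $D'$ has an individual $p$ that reaches all others in at most two stages in $D'$; unwinding the reversal, this says precisely that every other person can communicate with $p$ in one or two stages in $D$.

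The only point that needs care is the passage from $D$ to the tournament $T$: one must be sure that deleting a redundant direction of a two-way link cannot destroy a short communication chain. This is immediate here because we only ever delete lines, so reachability in $T$ is reachability in $D$, and hence a king of $T$ is automatically a king of $D$; the reduction would fail if one instead tried to \emph{add} lines to turn $D$ into a tournament. (Alternatively, one may bypass Theorem \ref{theorem-Selections-GRF-v3:1} and argue directly: a vertex $q$ of maximum outdegree in $D$ is a king, for if some $u$ were not reachable from $q$ in at most two stages, then $u\to q$ and $u\to w$ for every $w$ with $q\to w$, which would force the outdegree of $u$ to exceed that of $q$.)
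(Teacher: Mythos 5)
Your proposal is correct and follows the same route the paper indicates: the hypothesis guarantees that the network contains a spanning tournament (the remark following the theorem), to which Landau's king theorem (Theorem \ref{theorem-Selections-GRF-v3:1}, or the maximal-score version in Theorem \ref{theorem-Selections-GRF-v3:3}) applies, and since you only delete lines, every king of the sub-tournament remains a king of the network; the second assertion then follows by passing to the converse digraph. Your parenthetical direct argument via a vertex of maximum outdegree is likewise the standard proof underlying Theorem \ref{theorem-Selections-GRF-v3:3}, so there is nothing essentially different here.
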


Every dominance relation satisfies the hypotheses of the above
theorem, but there are communication networks, not dominance relations
(tournaments), that also satisfy these hypotheses. On the other hand,
each such communication network contains a tournament.

\subsection*{Walks, Paths, Cycles and Distance}

A \emph{$\sigma$-walk} from $u\in V$ to $v\in V$ of a tournament
$\sigma$ on $V$ is an alternating sequence of points and lines of the
form $u=u_1, u_1u_2, u_2, \dots, u_{k-1}, u_{k-1}u_k, u_k=v$, or in
terms of the pecking order $<_\sigma$:
\[
  v=u_k\leq_\sigma u_{k-1}\leq_\sigma\dots \leq_\sigma u_2\leq_\sigma u_1=u.
\]
For brevity, a $\sigma$-walk is often written as $u_1u_2\cdots u_k$
since then the lines are clear from context. A $\sigma$-walk in which
all points (and hence all lines) are distinct is called a
\emph{$\sigma$-path}. A $\sigma$-walk of positive length in which only
the first and last points are the same is called a
\emph{$\sigma$-cycle}. The length of a $\sigma$-path or a
$\sigma$-cycle is the number of lines in it. A \emph{complete}
$\sigma$-path or $\sigma$-cycle is the one which contains all the
points of the given digraph. If there is a $\sigma$-path from $u$ to
$v$, then $v$ is called \emph{$\sigma$-reachable} from $u$. The
\emph{$\sigma$-distance} from $u$ to $v$, denoted $d_\sigma (u, v)$,
is the length of a shortest such path. Let us explicitly remark that
$d_\sigma(u,v)$ is not symmetric. For instance, for a set $V$ of three
points $u,v$ and $w$, we may consider the tournament $\sigma$ given by
the cycle $u<_\sigma v<_\sigma w<_\sigma u$ or, in other words, by
$u\to_\sigma w\to_\sigma v\to_\sigma u$, see \eqref{eq:ext-sel-v43:1}.
Then $d_\sigma(u,w)=1$ but $d_\sigma(w,u)=2$.\medskip

We now have the following refinement of Theorem
\ref{theorem-Selections-GRF-v3:1}, the proof can be found in
\cite{MR0084454}, see also the third edition of the book
\cite{MR0360048}. 

\begin{theorem}
  \label{theorem-Selections-GRF-v3:3}
  Let $\sigma$ be a tournament on a finite set $V$. Then the
  $\sigma$-distance from a point with maximal $\sigma$-score to any
  other point is 1 or 2. In particular, each element of $V$ with a
  maximal $\sigma$-score is a $\sigma$-king.
\end{theorem}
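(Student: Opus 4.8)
The plan is to run the classical maximum-score argument. First I would fix a point $v\in V$ with $s_\sigma(v)=\od_\sigma(v)$ as large as possible, and set $D=\{w\in V:v\to_\sigma w\}$, so that $|D|=\od_\sigma(v)$ and, since $\sigma$ is asymmetric, $v\notin D$. Unwinding the definitions of $\sigma$-walk and $\sigma$-distance through \eqref{eq:ext-sel-v43:1}, the statement ``$d_\sigma(v,u)\in\{1,2\}$ for every $u\in V\setminus\{v\}$'' says precisely that $v\to_\sigma u$, or else $v\to_\sigma w\to_\sigma u$ for some $w\in V$; this is exactly condition \eqref{eq:ext-sel-v43:4}, so the ``in particular'' clause will follow at once from the distance claim.

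For the distance claim I would argue by contradiction: suppose some $u\in V$ has $d_\sigma(v,u)\geq 3$. Then $u\neq v$, and since $v$ cannot reach $u$ in one step, completeness of the tournament forces $u\to_\sigma v$; in particular $u\notin D$. Since $v$ also cannot reach $u$ in two steps, there is no $w\in D$ with $w\to_\sigma u$; as every $w\in D$ differs from both $v$ and $u$, asymmetry then gives $u\to_\sigma w$ for all $w\in D$. Hence $u$ is adjacent to every point of $\{v\}\cup D$, a set of $\od_\sigma(v)+1$ points not containing $u$, so $\od_\sigma(u)\geq\od_\sigma(v)+1$, contradicting the maximality of $s_\sigma(v)$. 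Therefore $d_\sigma(v,u)\leq 2$ for every $u\in V\setminus\{v\}$, and $v$ is a $\sigma$-king.

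There is no serious obstacle here; the one place to be a little careful is the bookkeeping in the last step --- that $\{v\}\cup D$ really has $\od_\sigma(v)+1$ elements and that $u$ lies outside it --- but this rests only on $\sigma$ being a tournament (irreflexive and asymmetric) together with the relation $u\to_\sigma v$. Everything else is a direct translation between the outdegree/score, the notions of $\sigma$-path and $\sigma$-distance introduced above, and the king condition \eqref{eq:ext-sel-v43:4}.
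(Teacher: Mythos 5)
Your argument is correct: it is the classical maximum-score argument (if $u$ were not reachable from a vertex $v$ of maximal score in one or two steps, then $u$ would beat $v$ and every out-neighbour of $v$, giving $\od_\sigma(u)\geq\od_\sigma(v)+1$, a contradiction), and the ``in particular'' clause is indeed just the translation of the distance bound into condition \eqref{eq:ext-sel-v43:4}. The paper itself gives no proof but refers to \cite{MR0084454,MR0360048}, where exactly this argument appears, so your proposal coincides with the intended one.
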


The next theorem due to R\'edei \cite{zbMATH03013308} is certainly the
best known result concerning tournaments. Actually, R\'edei showed
that every tournament has an odd number of complete paths.

\begin{theorem}
  \label{theorem-Selections-GRF-v4:2}
  Each tournament has a complete path.
\end{theorem}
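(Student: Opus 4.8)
The plan is to prove that every tournament contains a complete (Hamiltonian) $\sigma$-path by a one-vertex-at-a-time insertion argument, carried out as an induction on the number $n=|V|$ of vertices. The base cases $n=1$ and $n=2$ are immediate: a single vertex is already a complete $\sigma$-path, and for $n=2$ the unique line between the two vertices is one. So assume $n\geq 3$ and that the statement holds for every tournament on $n-1$ vertices. Throughout I will use the convention of the excerpt that the line $uv$ corresponds to $v<_\sigma u$, equivalently $u\to_\sigma v$, and that, since $\sigma$ is a complete asymmetric digraph, for any two distinct vertices exactly one of $u\to_\sigma v$, $v\to_\sigma u$ holds.

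The inductive step goes as follows. Fix a vertex $w\in V$ and apply the inductive hypothesis to the tournament $\sigma$ restricted to $V\setminus\{w\}$: it has a complete $\sigma$-path, which in pecking-order notation I write as
\[
u_{n-1}\leq_\sigma u_{n-2}\leq_\sigma \cdots \leq_\sigma u_1,
\]
so that $u_j\to_\sigma u_{j+1}$ for each $1\leq j\leq n-2$. It remains to splice $w$ into this list. First I would dispose of the case $w\to_\sigma u_1$: then $w\,u_1u_2\cdots u_{n-1}$ has all its points distinct (as $w\notin\{u_1,\dots,u_{n-1}\}$) and each consecutive pair forms a line, so it is a complete $\sigma$-path. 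Otherwise $u_1\to_\sigma w$, and I examine the predicate ``$u_j\to_\sigma w$'' as $j$ runs from $1$ to $n-1$. If it holds at $j=n-1$, then $u_1u_2\cdots u_{n-1}\,w$ is a complete $\sigma$-path (the last line being $u_{n-1}w$). In the only remaining case the predicate is true at $j=1$ and false at $j=n-1$, so there is a least index $i+1$ at which it fails, with $1\leq i<n-1$, $u_i\to_\sigma w$, and $u_{i+1}\not\to_\sigma w$; by completeness and asymmetry the last condition gives $w\to_\sigma u_{i+1}$. Then
\[
u_1u_2\cdots u_i\,w\,u_{i+1}\cdots u_{n-1}
\]
has the two new lines $u_iw$ and $wu_{i+1}$ at the seam and the original lines elsewhere, and all its points are distinct, so it is a complete $\sigma$-path. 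This closes the induction.

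There is no genuine obstacle here; the only things that require care are the bookkeeping with the excerpt's orientation convention and the explicit appeal to completeness plus asymmetry at the point where I pass from ``$u_{i+1}\not\to_\sigma w$'' to ``$w\to_\sigma u_{i+1}$''. For completeness I would also note an alternative, induction-free route using the same combinatorial core: take a $\sigma$-path $u_1u_2\cdots u_k$ of maximal length; if some vertex $w$ lies off it, then exactly the three manoeuvres above (prepend if $w\to_\sigma u_1$, append if $u_k\to_\sigma w$, and otherwise insert at the first place where the predicate ``$u_j\to_\sigma w$'' flips from true to false) produce a strictly longer $\sigma$-path, contradicting maximality; hence $k=n$. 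Either presentation proves the stated existence; the sharper fact mentioned in the text, that the number of complete $\sigma$-paths is odd, would of course need a finer argument and is not required here.
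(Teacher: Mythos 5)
Your argument is correct, and it is the classical insertion proof of R\'edei's theorem: induct on $|V|$, take a complete $\sigma$-path of the restriction to $V\setminus\{w\}$, and splice $w$ in either at the front (if $w\to_\sigma u_1$), at the end (if $u_{n-1}\to_\sigma w$), or at the first index where the predicate ``$u_j\to_\sigma w$'' switches from true to false, using completeness and asymmetry to turn ``$u_{i+1}\not\to_\sigma w$'' into ``$w\to_\sigma u_{i+1}$''. Your handling of the paper's orientation convention ($uv$ a line iff $u\to_\sigma v$ iff $v<_\sigma u$) is consistent, and the alternative maximal-path formulation you sketch is the same combinatorial core packaged without induction. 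Note, however, that the paper itself offers no proof to compare against: it states the result with a citation to R\'edei and remarks that R\'edei in fact proved the sharper statement that the number of complete paths is odd. Your proof therefore supplies a self-contained argument for the existence statement only, which is exactly what the theorem as quoted asserts; as you say, the parity refinement would require a finer count (for instance, an induction showing the number of complete paths changes by an even amount under the relevant modifications) and is not established by the insertion argument.
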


Since every tournament has a complete path, it is possible to order
all players in a round-robin competition so that each defeats the
succeeding one.  However, there are two serious difficulties in such a
procedure. First, there is no necessary relation, in general, between
such a ranking of players and their scores. Second, a tournament may
have more than one complete path, so that several different rankings
may be possible. In contrast, if a tournament $\sigma:[V]^2\to V$ is
generating a linear order $<_\sigma$ on $V$, then each point of $V$
can be assigned a distinct rank. Such a tournament is often called
\emph{transitive} because the corresponding selection relation
$\leq_\sigma$ is transitive. 
The following theorem gives a complete description of transitive
tournaments.

\begin{theorem}
  \label{theorem-Selections-GRF-v4:3}
  The following statements are equivalent for any tournament $\sigma$
  on a set $V$ with $n$ points.
  \begin{enumerate}
  \item $<_\sigma$ is transitive.
  \item $<_\sigma$ is acyclic.
  \item $\sigma$ has a unique complete path.
  \item The score sequence of $\sigma$ is $(0, 1, 2,\dots ,n-1)$.
  \item $\sigma$ has $\frac{n(n - 1) (n - 2)}6$ transitive triples.
  \end{enumerate}
\end{theorem}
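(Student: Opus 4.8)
The plan is to prove the equivalences in the order (a)$\Leftrightarrow$(b)$\Leftrightarrow$(e), then (a)$\Leftrightarrow$(d), then (a)$\Leftrightarrow$(c), throughout exploiting that $\leq_\sigma$ is always total and antisymmetric (as for any weak selection, see page~\pageref{page-strict-order}), so that (a) is precisely the statement that $\leq_\sigma$ is a \emph{linear} order on $V$. The $3$-cycle observation is the hub: a linear order is cycle-free, since a cycle $x_1<_\sigma x_2<_\sigma\dots<_\sigma x_k<_\sigma x_1$ would force $x_1<_\sigma x_1$, which gives (a)$\Rightarrow$(b); for (b)$\Rightarrow$(e) I would note that a triple $\{a,b,c\}\in[V]^3$ fails to be transitive exactly when $\sigma$ restricted to it is a $3$-cycle, so if $<_\sigma$ is acyclic then all $\binom{n}{3}=\frac{n(n-1)(n-2)}{6}$ triples are transitive; and for (e)$\Rightarrow$(a): given $a<_\sigma b$ and $b<_\sigma c$, the triple $\{a,b,c\}$ is transitive, hence not the cycle $a<_\sigma b<_\sigma c<_\sigma a$, so $a<_\sigma c$, and $<_\sigma$ is transitive.

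For the scores, (a)$\Rightarrow$(d) is a direct count: enumerate $V=\{w_0,w_1,\dots,w_{n-1}\}$ with $w_0<_\sigma w_1<_\sigma\dots<_\sigma w_{n-1}$; then by \eqref{eq:ext-sel-v43:2}, $\od_\sigma(w_i)=|\{u\in V:u<_\sigma w_i\}|=i$, so the score sequence is $(0,1,\dots,n-1)$. For (d)$\Rightarrow$(a) I would argue by induction on $n$: if the scores are $0,1,\dots,n-1$ they are pairwise distinct, so there is a unique $v\in V$ with $\od_\sigma(v)=n-1$, i.e.\ $u<_\sigma v$ for every $u\neq v$; since $u$ has no edge leading out to $v$, the value $\od_\sigma(u)$ is unchanged in $\sigma\uhr(V\setminus\{v\})$, so that tournament has score sequence $(0,1,\dots,n-2)$ and is transitive by the inductive hypothesis, and appending $v$ as a largest element leaves the order linear.

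For the complete paths, with $w_0<_\sigma\dots<_\sigma w_{n-1}$ as above, the sequence $w_{n-1},w_{n-2},\dots,w_0$ is a complete $\sigma$-path, and any complete $\sigma$-path $u_1u_2\cdots u_n$ satisfies $u_1>_\sigma u_2>_\sigma\dots>_\sigma u_n$ with $\{u_1,\dots,u_n\}=V$, which (as $<_\sigma$ linearly orders $V$) forces $u_k=w_{n-k}$; this gives (a)$\Rightarrow$(c). The converse (c)$\Rightarrow$(a) I would prove contrapositively: if $<_\sigma$ is not transitive, then $\sigma$ has two distinct complete paths. Arguing by induction on $n$ (with the minimal case $n=3$, a $3$-cycle, checked directly), take a complete $\sigma$-path $P:v_1,v_2,\dots,v_n$ with $v_m\to_\sigma v_{m+1}$ for all $m$ (it exists by Theorem~\ref{theorem-Selections-GRF-v4:2}). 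If $v_1$ is a transmitter, then no $3$-cycle of $\sigma$ can use $v_1$, so $\sigma\uhr(V\setminus\{v_1\})$ is still non-transitive, hence has two complete paths by the inductive hypothesis, and prepending $v_1$ yields two complete $\sigma$-paths of $V$. Otherwise there is a largest index $k$ with $v_k\to_\sigma v_1$; then $k\geq3$ (because $v_1\to_\sigma v_2$) and $v_1\to_\sigma v_m$ for all $m>k$, so
\[
Q:\quad v_2,\,v_3,\,\dots,\,v_k,\ v_1,\ v_{k+1},\,\dots,\,v_n
\]
is again a complete $\sigma$-path — its consecutive steps are those of $P$ among $v_2,\dots,v_k$, then $v_k\to_\sigma v_1$, then $v_1\to_\sigma v_{k+1}$, then those of $P$ among $v_{k+1},\dots,v_n$ — and $Q\neq P$ since $Q$ starts with $v_2$.

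The only real work is expected to be this last construction: the reroute must be arranged so that every consecutive pair of $Q$ is genuinely a $\sigma$-edge, which is exactly why $k$ has to be chosen \emph{maximal} (this is what guarantees $v_1\to_\sigma v_{k+1}$) so that both tails of $P$ can be reused verbatim, and why the case of $v_1$ being a transmitter must be peeled off separately. Everything else reduces to the single fact that failure of transitivity of $<_\sigma$ is always witnessed by a $3$-cycle.
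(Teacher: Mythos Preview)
The paper does not supply a proof of this theorem: it is stated in the appendix as a classical characterisation of transitive tournaments, with no argument given. So there is no ``paper's own proof'' to compare against; your proposal stands on its own.

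Your argument is correct. The cycle of implications (a)$\Leftrightarrow$(b)$\Leftrightarrow$(e) via the $3$-cycle observation is the standard one, and your inductive proofs of (d)$\Rightarrow$(a) and (c)$\Rightarrow$(a) are clean. Two small remarks. First, in Case~2 of (c)$\Rightarrow$(a) you never actually use the non-transitivity hypothesis --- the reroute $Q$ works for \emph{any} tournament in which the first vertex of $P$ is not a transmitter --- which is fine, since the contrapositive only requires producing two paths under either case split. Second, in Case~1 you should note explicitly that the inductive step needs $|V\setminus\{v_1\}|\geq 3$, i.e.\ $n\geq 4$, which is exactly what your base case $n=3$ covers; this is implicit in what you wrote but worth saying. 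The invocation of R\'edei's theorem (Theorem~\ref{theorem-Selections-GRF-v4:2}) for the existence of $P$ is appropriate, as the paper states it just above.
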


The next theorem shows that the number of transitive triples in any
tournament may be easily calculated from the score sequence of the
tournament, see Kendall and Babington Smith \cite{MR2761}.

\begin{theorem}
  \label{theorem-Selections-GRF-v4:4}
  The number of transitive triples in a tournament $\sigma$ on $V$
  with a score sequence\,\ $\mathbf{S}_\sigma=(s_1, s_2,\dots , s_n)$\,\
  is\,\ $\sum_{i=1}^n\frac{s_i(s_i-1)}2$.
\end{theorem}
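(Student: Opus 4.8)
The plan is to count transitive triples by a bijective (double counting) argument that assigns to each transitive triple its unique ``dominant'' vertex. For $v\in V$ put $O_\sigma(v)=\{u\in V: v\to_\sigma u\}$, so that by \eqref{eq:ext-sel-v43:2} we have $|O_\sigma(v)|=\od_\sigma(v)=s_\sigma(v)$. The first step is an elementary observation about $3$-point sub\-tournaments: for a triple $\{a,b,c\}\in[V]^3$, the numbers of lines going out of $a$, $b$, $c$ within this triple are non-negative integers summing to $\binom32=3$, hence the multiset of these three ``local outdegrees'' is either $(1,1,1)$ or $(2,1,0)$. In the first case the triple is a $\sigma$-cycle (a non-transitive triple); in the second case it is a transitive triple, and there is exactly one vertex of the triple whose local outdegree is $2$ — call it the \emph{top} of the triple. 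Thus a triple is transitive if and only if one (necessarily unique) of its three vertices dominates the other two within the triple.

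The second step is to set up the bijection. Given a vertex $v\in V$ and a pair $\{a,b\}\in[O_\sigma(v)]^2$, the triple $T=\{v,a,b\}$ has $v$ dominating both $a$ and $b$; by the observation above, $T$ is transitive with top vertex $v$. Conversely, if $T\in[V]^3$ is a transitive triple with top vertex $v_T$, then $T\setminus\{v_T\}$ is a $2$-element subset of $O_\sigma(v_T)$, and $(v_T,T\setminus\{v_T\})$ is the unique preimage of $T$ under the above assignment. Hence $(v,\{a,b\})\mapsto\{v,a,b\}$ is a bijection from $\bigcup_{v\in V}\bigl(\{v\}\times[O_\sigma(v)]^2\bigr)$ onto the set of transitive triples of $\sigma$. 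Counting the domain gives that the number of transitive triples equals $\sum_{v\in V}\binom{\od_\sigma(v)}{2}=\sum_{v\in V}\binom{s_\sigma(v)}{2}=\sum_{i=1}^n\binom{s_i}{2}=\sum_{i=1}^n\frac{s_i(s_i-1)}2$, as claimed.

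The argument is entirely combinatorial and uses nothing beyond the definitions; the only point requiring care — and the only place a false start is possible — is the first step, namely the justification that a transitive triple has a \emph{unique} top vertex (so that the correspondence is a genuine bijection rather than merely a surjection counted with multiplicity). This is settled once one notes that the local-outdegree multiset of a $3$-tournament can only be $(1,1,1)$ or $(2,1,0)$. I would also remark, as a byproduct, that combining this count with the total number $\binom n3$ of triples yields the companion formula $\binom n3-\sum_{i=1}^n\binom{s_i}{2}$ for the number of $\sigma$-cycles, recovering Theorem \ref{theorem-Selections-GRF-v4:3}(e) in the transitive case (where every $s_i$ is distinct and the sum telescopes to $\binom n3$).
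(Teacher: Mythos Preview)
Your proof is correct and is precisely the standard double-counting argument (count, for each vertex, the pairs of out-neighbours; each such pair determines a unique transitive triple with that vertex as its ``top''). Note, however, that the paper does not actually supply its own proof of this theorem: it is stated in the appendix as a known result with a citation to Kendall and Babington Smith \cite{MR2761}, so there is nothing in the paper to compare your argument against beyond confirming that your approach is the classical one they would have had in mind.
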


The total number of triples in any tournament with $n$ points is
$\binom{n}3=\frac{n(n-1)(n-2)}6$. Since each triple is either
transitive or cyclic, we obtain the following formula for the number
of cyclic triples.

\begin{corollary}
  \label{corollary-Selections-GRF-v4:3}
  The number of cyclic triples in a tournament $\sigma$ on
  $V$ with a score sequence\,\
  $\mathbf{S}_\sigma=(s_1, s_2,\dots , s_n)$\,\ is\,\
  $\frac{n(n-1)(n-2)}6 - \sum_{i=1}^n \frac{s_i(s_i-1)}2$.
\end{corollary}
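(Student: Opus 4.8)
The plan is to obtain this as a direct corollary of Theorem \ref{theorem-Selections-GRF-v4:4} together with an elementary counting dichotomy for subtournaments on three vertices. First I would note that for any $\{u,v,w\}\in [V]^3$, the restriction $\sigma\uhr\{u,v,w\}$ is again a tournament on three points, and up to relabelling there are exactly two such tournaments: the transitive one, in which $<_\sigma$ restricted to the triple is a linear order (so the three outdegrees inside the triple are $2,1,0$), and the cyclic one, a $3$-cycle $u\to_\sigma v\to_\sigma w\to_\sigma u$ (in which each point has outdegree $1$ inside the triple). Hence every element of $[V]^3$ is either a transitive triple or a cyclic triple, and these possibilities are mutually exclusive and exhaustive.

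Next I would use the elementary fact that $|[V]^3|=\binom{n}{3}=\frac{n(n-1)(n-2)}{6}$, so the number of cyclic triples equals $\frac{n(n-1)(n-2)}{6}$ minus the number of transitive triples. Finally, by Theorem \ref{theorem-Selections-GRF-v4:4}, the number of transitive triples in $\sigma$ is $\sum_{i=1}^{n}\frac{s_i(s_i-1)}{2}$, and substituting this into the previous identity yields the asserted formula $\frac{n(n-1)(n-2)}{6}-\sum_{i=1}^{n}\frac{s_i(s_i-1)}{2}$.

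The only step requiring even mild care is the verification of the ``transitive or cyclic'' dichotomy for three-point subtournaments; the rest is bookkeeping with $\binom{n}{3}$ and Theorem \ref{theorem-Selections-GRF-v4:4}. There is no substantive obstacle here, as this is a formal consequence of the preceding theorem, so I would keep the write-up to a couple of lines, essentially just invoking the classification of $3$-vertex tournaments and subtracting the transitive count from $\binom{n}{3}$.
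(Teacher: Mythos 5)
Your proposal is correct and follows exactly the paper's own route: the paper likewise observes that the total number of triples is $\binom{n}{3}=\frac{n(n-1)(n-2)}{6}$, that every triple is either transitive or cyclic, and then subtracts the transitive count given by Theorem \ref{theorem-Selections-GRF-v4:4}. Your extra remark verifying the transitive/cyclic dichotomy for three-point subtournaments is a harmless elaboration of what the paper takes for granted.
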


The next corollary gives the maximum number of cyclic triples that can
occur in any tournament with a given number of points.

\begin{corollary}
  \label{corollary-Selections-GRF-v4:4}
  Among all the tournaments with $n$ points, the maximum number of
  cyclic triples is\,\ $\frac{n^3-n}{24}$\,\ if $n$ is odd, and\,\
  $\frac{n^3-4n}{24}$\,\ if $n$ is even.
\end{corollary}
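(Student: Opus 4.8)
The plan is to combine Corollary \ref{corollary-Selections-GRF-v4:3} with Landau's score-sequence theorem (Theorem \ref{theorem-Selections-GRF-v3:2}) and reduce everything to an elementary extremal problem for integer sequences. By Corollary \ref{corollary-Selections-GRF-v4:3}, a tournament $\sigma$ on $n$ points with score sequence $\mathbf{S}_\sigma=(s_1,\dots,s_n)$ has exactly $\frac{n(n-1)(n-2)}6-\sum_{i=1}^n\frac{s_i(s_i-1)}2$ cyclic triples. Since $\sum_i s_i=\frac{n(n-1)}2$ is the same for every tournament on $n$ points (this is part of \eqref{eq:Selections-GRF-v3:1}), maximising the number of cyclic triples is the same as minimising $\sum_{i=1}^n s_i^2$ over the score sequences of tournaments on $n$ points. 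So the first step is simply to record this reformulation.

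Second, I would solve the relaxed problem of minimising $\sum_{i=1}^n s_i^2$ over \emph{all} sequences of nonnegative integers summing to $\frac{n(n-1)}2$. A one-line smoothing argument — if two entries satisfy $s_i\ge s_j+2$, replace them by $s_i-1$ and $s_j+1$, which preserves the sum and strictly decreases $\sum_i s_i^2$ — shows the minimiser is the ``most balanced'' sequence: for $n$ odd the constant sequence $s_i=\frac{n-1}2$, and for $n$ even the sequence with $\frac n2$ entries equal to $\frac n2$ and $\frac n2$ entries equal to $\frac n2-1$. The key step, and the only one I expect to require real work, is to verify that this balanced sequence is genuinely realisable by a tournament. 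For that I would check the Landau inequalities $\sum_{i=1}^k s_i\ge\frac{k(k-1)}2$ from \eqref{eq:Selections-GRF-v3:1}: for $n$ odd this is immediate since $k\le n-1$, and for $n$ even a short computation (writing $m=n/2$ and treating $k\le m$ and $k=m+j$, $1\le j\le m$, separately) reduces it to $m(m-1)\ge j(j-1)$, which holds because $j\le m$. Then Theorem \ref{theorem-Selections-GRF-v3:2} supplies a realising tournament. Because the relaxed minimiser is itself an honest score sequence, it is simultaneously the minimiser over score sequences, so nothing is lost in the relaxation.

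Finally I would substitute and simplify. For $n$ odd, $\sum_i s_i^2=\frac{n(n-1)^2}4$, and plugging into the formula of Corollary \ref{corollary-Selections-GRF-v4:3} gives $\frac{n(n-1)(n-2)}6-\frac{n(n-1)(n-3)}8=\frac{n(n-1)(n+1)}{24}=\frac{n^3-n}{24}$. For $n$ even, $\sum_i s_i^2=\frac n2\cdot\frac{n^2}4+\frac n2\cdot\frac{(n-2)^2}4=\frac{n(n^2-2n+2)}4$, and the same substitution yields $\frac{n(n-1)(n-2)}6-\frac{n(n-2)^2}8=\frac{n(n-2)(n+2)}{24}=\frac{n^3-4n}{24}$. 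These last reductions are routine algebra once the extremal score sequence has been pinned down, so the substance of the argument is entirely contained in the realisability check of the balanced sequence.
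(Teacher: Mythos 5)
Your proof is correct. The paper states this corollary without proof (it is the classical bound going back to Kendall and Babington Smith), and your route --- using Corollary \ref{corollary-Selections-GRF-v4:3} to reduce the problem to minimising $\sum_i s_i^2$ at fixed $\sum_i s_i=\frac{n(n-1)}2$, identifying the balanced score sequence by the smoothing exchange, checking the Landau inequalities of Theorem \ref{theorem-Selections-GRF-v3:2} to realise it by an actual tournament, and then substituting --- is precisely the standard argument the paper implicitly intends, with all the computations ($\frac{n(n-1)(n+1)}{24}$ for odd $n$ and $\frac{n(n-2)(n+2)}{24}$ for even $n$) carried out correctly.
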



\newcommand{\noopsort}[1]{} \newcommand{\singleletter}[1]{#1}
\providecommand{\bysame}{\leavevmode\hbox to3em{\hrulefill}\thinspace}
\smallskip

\end{document}